\tikzstyle{littledot}=[circle, fill, inner sep=.4pt,minimum size=.4pt]
\newtheorem{thm}{Theorem}[section]
\newtheorem{lem}[thm]{Lemma}
\newtheorem{prop}[thm]{Proposition}
\newtheorem{cor}[thm]{Corollary}
\newtheorem{defn}[thm]{Definition}
\newtheorem{exmp}[thm]{Example}
\newtheorem{remark}[thm]{Remark}
\newcommand{\R}{{\mathbb R}}
\newcommand{\C}{{\mathbb C}}
\newcommand{\ek}{\stackrel{k}{e}}
\newcommand{\ekp}{\overset{k+1}{e}}
\newcommand{\gh}{\hat{\Gamma}}
\newcommand*{\upSmallFrown}{\mathbin{\raisebox{0.9ex}{$\smallfrown$}}}
\newcommand{\FP}{\textrm{FP}}
\newcommand{\Tr}{\textrm{TC}}
\newcommand{\resp}{\textrm{r}}
\title[Group theoretic properties of Clifford multiplication]{Group theoretic properties of Clifford multiplication on 2-torsion points on the Dirac Spinor Abelian Variety}
\author[J. Brown, I. Grzegorczyk, and R. Su\'arez]{Jennifer Brown, Ivona Grzegorczyk, and Ricardo Su\'arez}
\address{Department of Mathematics, California State University Channel Islands, Camarillo, CA, USA}
\email{jennifer.brown@csuci.edu, ivona.grzegorczyk@csuci.edu, ricardo.suarez532@csuci.edu}
\keywords{Clifford algebras, Abelian varieties, Dirac spinors, actions on torsion points, symmetry groups}
\subjclass{15A66, 11E88, 11G10, 14J81, 14K30, 20K10}
\begin{document}

\normalsize

\begin{abstract}

In this manuscript we consider a special complex torus, denoted $S_{\Delta_{2k}}$ (for each $k \in \mathbb{N},\, k \geq 1$) and called the Dirac spinor torus. It is an Abelian variety of complex dimension $2^{k}$  whose covering space is the space of Dirac spinors, $\Delta_{2k}$, for the Clifford algebra $Cl(\C^{2k})$ associated with the vector space $\C^{2k}$. Fixing an isomorphism $\rho:Cl(\C^{2k})\rightarrow End (\Delta_{2k})$, we define Clifford multiplication on $S_{\Delta_{2k}}$ as the actions of those endomorphisms in the image of $\rho$ that preserve the full rank lattice. We analyze the properties of that Clifford multiplication on the 2-torsion points of the Dirac spinor torus.
We identify the Clifford actions with permutation maps that represent all isomorphism classes of these actions on the group of 2-torsion points. We provide a structure theorem describing these isomorphism classes of Clifford actions in a way that is independent of the choice of representatives. We conclude by extending the scope of our analysis to the group of $n$-torsion points and analyzing the fixed points and translation constants of entry-permuting maps, a broader class of actions of which the Clifford actions on the 2-torsion points of $S_{\Delta_{2k}}$ is a subset.

\end{abstract}
\maketitle

\section{Introduction} 

This manuscript focuses on describing Clifford multiplication on the 2-torsion points of a special complex Abelian variety called the Dirac spinor Abelian variety (see \cite{RS}). 
Note that for any complex Abelian variety, the 2-torsion points form a finite group of order $4^{g}$ isomorphic to $(\mathbb{Z}/2\mathbb{Z})^{2g}$ under the operation of translation, where $g$ is the dimension of the Abelian variety. The 2-torsion points on a complex Abelian variety also carry geometric significance, in the sense that they parameterize the set of symmetric line bundles with the first Chern class  $c_{1}(L)$, where $L$ is the symmetric line bundle that defines the polarization of our complex Abelian variety (see \cite{BL}).

Clifford multiplication on Dirac spinors is a well-studied  concept in physics and spin geometry, as the Dirac spinor module is often  used to construct spinor bundles on spin manifolds.  We generally consider elements of a vector space acting via multiplication on the space of spinors by their suitable matrix representatives (see \cite{FR}). Here, we study Clifford multiplication on the Dirac spinor Abelian variety, which we denote by  $S_{\Delta_{2k}}$
(for $k \in \mathbb{N},\, k \geq 1$). This variety is a complex torus formed by taking the quotient of the space of Dirac spinors $\Delta_{2k}=\C^{2^{k}}$ associated to the complex Clifford algebra $Cl(\C^{2k})$ by the standard square lattice, $\Delta_{2k}^{\mathbb{Z}}=\mathbb{Z}^{2^k}\oplus i\mathbb{Z}^{2^k}$. In our case, we define Clifford multiplication as the actions of those endomorphisms in the image of $\rho:Cl(\C^{2k})\xrightarrow{\cong} End(\Delta_{2k})$ that preserve the standard square lattice on $\Delta_{2k}$ (where $\rho$ is a fixed isomorphism mapping each Clifford algebra element in $Cl(\C^{2k})$ to its matrix representation in $End(\Delta_{2k})$); hence we can view these actions as endomorphisms that descend to the complex torus $S_{\Delta_{2k}}$. Specifically, we focus here on Clifford multiplication actions on the 2-torsion points $J_2^{S_{\Delta_{2k}}}$ of our spinor variety. 
The general outline of the paper is as follows.

In Section \ref{section: background} we provide definitions, notation and terminology, and some basic facts related to spinor Abelian varieties.

In Section \ref{Properties of the multiplicative group ...}, we define an equivalence relation $\sim$ on the multiplicative group $\gh_{2k}$ of generators of the Clifford algebra $Cl(\mathbb{C}^{2k})$ and their negatives by declaring two actions to be equivalent if they act identically on the 2-torsion points $J_2^{S_{\Delta_{2k}}}$. In Theorem \ref{number of unique Clifford actions} we prove that the $2^{2k}$ generators in $\gh_{2k}$ yield only $2^{k+1}$ distinct actions on $J_2^{S_{\Delta_{2k}}}$, i.e.\ that the equivalence relation $\sim$ has $2^{k+1}$ equivalence classes. By analyzing the way in which the matrix representations of generators in $\gh_{2k}$ give rise to those of generators in $\gh_{2k+2}$, we are able to describe, in Theorem \ref{that corollary}, the structure of the equivalence classes of Clifford actions under this relation of indistinguishability modulo 2-torsion points.

In Section \ref{section: Clifford permutations}, we show how the actions of the generators of the Clifford algebra on 2-torsion points $\vec{v}$ can be represented as certain permutations of order 2 acting on the rows of the vector form of $\vec{v}$ (combined with scalar multiplication by $i$, depending on the form of the generator). These permutations, which we call induced Clifford permutations, provide a way of viewing the actions of $\gh_{2k}$ on $J_2^{S_{\Delta_{}2k}}$ in a way that is independent of the choice of representatives of the equivalence classes defined in Section \ref{Properties of the multiplicative group ...}. 

In Section \ref{section: the group of actions on 2-torsion points}, we show that the strictly real induced Clifford permutations defined in Section \ref{section: Clifford permutations}, i.e.\ those associated with generators $e_\mu \in \gh_{2k}$ whose matrix representations have all real entries, form an Abelian subgroup of the alternating group $Alt(2^k)$, which we call $\operatorname{Cliff}(Alt(2^k))$. This way we obtain an isomorphism between the group $\gh_{2k} / J_2^{S_{\Delta_{2k}}}$ of generators acting on 2-torsion points and the product $\mathbb{Z}_2 \times \operatorname{Cliff}(Alt(2^k))$.

In Section \ref{sec: fixed points and translation constants}, we prove in Proposition \ref{FP is Tr} that 
for all generators $e_\mu$ considered as actions on the 2-torsion points $J_2^{S_{\Delta_{}2k}}$, the set of fixed points of $e_\mu$ is equal to its set of translation constants. For this, we need to obtain permutations representing all types of generators in $\gh_{2k}$: not only those $e_\mu$ whose matrix representations $\rho(e_\mu)$ have all real entries, but also those $e_\mu$ for which the nonzero entries of $\rho(e_\mu)$ are purely imaginary. We accomplish this by representing 2-torsion points of $S_{\Delta_{2k}}$ as $2^k \times 2$ matrices with entries in $\{0, 1\}$. Whereas the strictly real induced Clifford permutations defined in Section \ref{section: Clifford permutations} are permutations of the $2^k$ rows of the vector form of a 2-torsion point $\vec{v}$, the strictly imaginary induced Clifford permutations are permutations of the $2^{k+1}$ entries in the matrix form of $\vec{v}$.

 In Section \ref{entry permuting maps} we conclude by extending our analysis to the actions of entry-permuting maps on the 
group of $n$-torsion points of $S_{\Delta_{2k}}$. We define entry-permuting maps as permutations that act on the matrix forms of $n$-torsion points by permuting their entries; and when 2-torsion points in $S_{\Delta_{2k}}$ are represented in matrix form as in Section \ref{sec: fixed points and translation constants}, the Clifford actions are entry-permuting maps. In Corollary \ref{Tr times FP is ...}, we find a general relationship between the number of fixed points and the number of translation constants of entry-permuting maps acting on $n$-torsion points, partially extending Proposition \ref{FP is Tr}.

\section{Background material}\label{section: background}

Our focus is on the space of Dirac spinors $\Delta_{2k}=\C^{2^{k}}$ for $Cl(\mathbb{C}^{2k})$, the Clifford algebra of the complex vector space $\mathbb{C}^{2k}$ (where $k \in \mathbb{N},\, k \geq 1$). Note that $\Delta_{2k}$ is a $Cl(\mathbb{C}^{2k})$ module. Hence the module multiplication on $\Delta_{2k}$, which we call \textbf{Clifford multiplication}, is given by $g\cdot x:=\rho(g)(x)$, where  $\rho(g)$ is the $2^k \times 2^k$ complex matrix defined  by the isomorphism $\rho:Cl(\C^{2k})\xrightarrow{\cong} End(\Delta_{2k})$ for any $g\in Cl(\C^{2k})$ and $x\in\Delta_{2k}$. (See \cite{FR}.)

Note that when we take the quotient of $\Delta_{2k}$ by the square lattice 
\[\Delta_{2k}^{\mathbb{Z}} = \mathbb{Z}^{2^{k}}\oplus i\cdot \mathbb{Z}^{2^{k}}=\{m+i\cdot n: \,m,n\in\mathbb{Z}^{2^{k}}\},\] we obtain a $2^{k}$-dimensional complex torus; this is our \textbf{Dirac spinor torus} $S_{\Delta_{2k}}$. 

\begin{remark}
The Dirac spinor torus $S_{\Delta_{2k}}$ has as its period matrix $i \cdot I_{2^k}$ (here $I_{2^k}$ denotes the identity matrix in dimension $2^k$). This period matrix lies in the Siegel upper half-space; as a consequence, our torus $S_{\Delta_{2k}}$ is a principally polarized Abelian variety, which we refer to as the Dirac spinor Abelian variety. (See \cite{BL}, \cite{GH}.)
\end{remark}

\begin{defn}
We define \textbf{lattice Clifford actions} on the Dirac spinor Abelian variety $S_{\Delta_{2k}}$ as the restriction  $\rho:C l(\C^{2k})_{\mathbb{Z}}\rightarrow End(S_{\Delta_{2k}})$ of the  isomorphism $\rho$ to the integral subring $C l(\C^{2k})_{\mathbb{Z}}$.
\end{defn}

This makes $S_{\Delta_{2k}}$ a  $Cl(\C^{2k})_{\mathbb{Z}}$-module, since only integral combinations of generators $e_{\mu}$ of our Clifford algebra preserve the square lattice.

\begin{defn}
	The Clifford group $\Gamma_{2k}$ is defined as the subgroup of invertible elements $g$ in the Clifford algebra $C l(\C^{2k})$ that satisfy the property $g\cdot v\cdot g^{-1}\in \C^{2k}$ for all $v\in \C^{2k}$. We denote by $\hat{\Gamma}_{2k}$ the subset of $\Gamma_{2k}$ consisting of the multiplicative generators of the Clifford algebra and their negatives, and we refer to $\gh_{2k}$ as the \textbf{multiplicative group of generators} of $Cl(\mathbb{C}^{2k})$.
\end{defn}

(This terminology for $\gh_{2k}$ is justified by Lemma \ref{size of Gamma hat 2k} below.) If $\mu = \{i_1, \ldots, i_l\}$ is a sequence of numbers from $\{1, \ldots, 2k\}$, then to simplify the notation we write
\[
e_\mu = e_{i_1, \ldots, i_l}:= e_{i_1} \cdots e_{i_l},
\]
where $e_1, \ldots, e_{2k}$ are the vector generators of the vector space $\mathbb{C}^{2k}$ viewed as a vector subspace of the Clifford algebra $Cl(\mathbb{C}^{2k})$. Hence, $\gh_{2k}$ consists of elements $e_\mu$ where $\mu$ is such a sequence. Note now that  elements of $\gh_{2k}$ are of two kinds: the generators of the Clifford algebra, which are those $e_\mu$ for which $\mu$ is (or can be written as) an increasing sequence in $\{1, \ldots, 2k\}$; and their negatives, which are those $e_\mu$ for which $\mu$ cannot be written as an increasing sequence in $\{1, \ldots, 2k\}$. This follows from  basic facts on Clifford algebras; see \cite{EM} and \cite{LM}.

\begin{lem}\label{size of Gamma hat 2k}
	Let $\hat{\Gamma}_{2k}$ be defined as above. Then $\hat{\Gamma}_{2k}$ is a group of order $2^{2k+1}$.
	
\end{lem}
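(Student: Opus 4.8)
The plan is to count $\hat{\Gamma}_{2k}$ directly from its description as the set of products $e_\mu = e_{i_1}\cdots e_{i_l}$ together with their negatives. First I would recall the basic structural fact about the complex Clifford algebra $Cl(\mathbb{C}^{2k})$: it has a basis consisting of the $2^{2k}$ monomials $e_{i_1}\cdots e_{i_l}$ indexed by increasing sequences (equivalently, subsets) $\{i_1 < \cdots < i_l\} \subseteq \{1,\ldots,2k\}$, including the empty product $1$. These basis monomials are precisely the generators of the Clifford algebra in the sense used here. Because the generators $e_\mu$ satisfy $e_ie_j = -e_je_i$ for $i \neq j$ and $e_i^2 = \pm 1$ (depending on the chosen quadratic form; in any case a unit scalar), any product $e_{i_1}\cdots e_{i_l}$ of vector generators can be reordered and reduced to $\pm e_\nu$ for a unique increasing sequence $\nu$. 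Hence every element of $\hat{\Gamma}_{2k}$ has the form $\pm e_\nu$ with $\nu$ an increasing sequence in $\{1,\ldots,2k\}$.

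Next I would show the map $(\epsilon, \nu) \mapsto \epsilon\, e_\nu$, where $\epsilon \in \{+1,-1\}$ and $\nu$ ranges over increasing sequences (including the empty one), is a bijection onto $\hat{\Gamma}_{2k}$. Surjectivity is exactly the reduction argument of the previous step. For injectivity, suppose $\epsilon\, e_\nu = \epsilon'\, e_{\nu'}$; since the monomials $e_\nu$ are linearly independent basis vectors of $Cl(\mathbb{C}^{2k})$ and are nonzero, comparing coordinates in the monomial basis forces $\nu = \nu'$ and then $\epsilon = \epsilon'$. (One should note $e_\nu \neq -e_\nu$, i.e.\ that $2 \neq 0$, which holds over $\mathbb{C}$.) The number of increasing sequences in $\{1,\ldots,2k\}$ is the number of subsets of a $2k$-element set, namely $\sum_{l=0}^{2k}\binom{2k}{l} = 2^{2k}$, so $|\hat{\Gamma}_{2k}| = 2 \cdot 2^{2k} = 2^{2k+1}$.

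Finally I would verify that $\hat{\Gamma}_{2k}$ is indeed closed under multiplication and inverses, so that calling it a group is justified: the product of $\pm e_\mu$ and $\pm e_\nu$ reduces, by the same anticommutation-and-squaring rules, to some $\pm e_\lambda$, and each $e_\mu$ is invertible in $Cl(\mathbb{C}^{2k})$ with inverse again of the form $\pm e_\mu$ (reversing the word and using $e_i^{-1} = \pm e_i$). It should also be checked that each such element lies in the Clifford group $\Gamma_{2k}$, i.e.\ that conjugation by $e_\mu$ preserves $\mathbb{C}^{2k}$; this is a standard computation since conjugation by a single $e_i$ sends $e_j$ to $\pm e_j$, and one composes. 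I expect the only mildly delicate point — really the crux of the bookkeeping — to be the reduction step establishing that every word in the $e_i$ collapses to a unique signed increasing monomial; everything else is either the standard basis theorem for Clifford algebras (which we may cite, e.g.\ \cite{LM}, \cite{EM}) or a short counting argument.
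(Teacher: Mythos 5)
Your proposal is correct and follows essentially the same route as the paper: count the $2^{2k}$ increasing-sequence monomials $e_\nu$, observe that products of generators reduce via anticommutativity and $e_j^2=-1$ to signed monomials so that all negatives (including $-1$) are attained, and conclude $|\hat{\Gamma}_{2k}| = 2\cdot 2^{2k}$. Your version is somewhat more careful than the paper's in explicitly verifying injectivity (distinctness of $\pm e_\nu$ via linear independence of the monomial basis) and closure under products and inverses, but these are refinements of the same argument rather than a different approach.
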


\begin{proof}
	The Clifford algebra $Cl(\C^{2k})$ has a total of $2^{2k}$ generators $e_{\mu}$. 
	The generators form a multiplicative group in the sense that the product of any two generators is either a generator of the Clifford algebra or its negative. 
	Now all negatives of the generators, except for $-1$, can be recovered from the anticommutativity property of the Clifford algebra; and $-1$ is in the group because of the negative definite generators: $e_{j}^{2}=-1$ for any one vector $e_j \in \{e_1, \ldots ,e_{2k}\}$. 
	Hence $|\hat{\Gamma}_{2k}|=2\cdot 2^{2k}=2^{2k+1}$. 

\end{proof}

\begin{defn}
For $k \in \mathbb{N}, \,k \geq 1$, we define the $2$-\textbf{torsion points} of the Abelian variety $S_{\Delta_{2k}}$ as $J_2^{S_{\Delta_{2k}}}=\{x\in S_{\Delta_{2k}}:2\cdot x=0\}\subset S_{\Delta_{2k}}$.
\end{defn}

\begin{lem}\label{number of points in J_2^{2^k}}
	The set of $2$-torsion points of our spinor Abelian variety $S_{\Delta_{2k}}$ is of order  $2^{(2^{k+1})}$.
	
\end{lem}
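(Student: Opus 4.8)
The plan is to work directly with the explicit description of $S_{\Delta_{2k}}$ as the quotient $\Delta_{2k}/\Delta_{2k}^{\mathbb{Z}}$ and to identify the $2$-torsion points with the image of the half-lattice. First I would observe that a point $x \in S_{\Delta_{2k}}$, represented by a spinor $v \in \Delta_{2k} = \mathbb{C}^{2^{k}}$, satisfies $2 \cdot x = 0$ precisely when $2v \in \Delta_{2k}^{\mathbb{Z}}$, i.e.\ when $v \in \tfrac{1}{2}\Delta_{2k}^{\mathbb{Z}}$. Hence the composite $\tfrac12 \Delta_{2k}^{\mathbb{Z}} \hookrightarrow \Delta_{2k} \twoheadrightarrow S_{\Delta_{2k}}$ has image exactly $J_2^{S_{\Delta_{2k}}}$ and kernel $\Delta_{2k}^{\mathbb{Z}}$, yielding a group isomorphism
\[
J_2^{S_{\Delta_{2k}}} \;\cong\; \tfrac{1}{2}\Delta_{2k}^{\mathbb{Z}} \big/ \Delta_{2k}^{\mathbb{Z}}.
\]

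Next I would compute the right-hand side. Since $\Delta_{2k}^{\mathbb{Z}} = \mathbb{Z}^{2^{k}} \oplus i\cdot\mathbb{Z}^{2^{k}}$ is a free $\mathbb{Z}$-module of rank $2 \cdot 2^{k} = 2^{k+1}$, scaling by $\tfrac12$ gives $\tfrac12\Delta_{2k}^{\mathbb{Z}} \cong (\tfrac12\mathbb{Z})^{2^{k+1}}$, and quotienting by $\Delta_{2k}^{\mathbb{Z}} \cong \mathbb{Z}^{2^{k+1}}$ produces $(\tfrac12\mathbb{Z}/\mathbb{Z})^{2^{k+1}} \cong (\mathbb{Z}/2\mathbb{Z})^{2^{k+1}}$. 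Counting elements, $|J_2^{S_{\Delta_{2k}}}| = 2^{2^{k+1}}$, as claimed. Equivalently, one may simply invoke the general fact recalled in the Introduction that the $2$-torsion of a complex Abelian variety of dimension $g$ is isomorphic to $(\mathbb{Z}/2\mathbb{Z})^{2g}$, of order $4^{g}$, and substitute $g = \dim_{\mathbb{C}} S_{\Delta_{2k}} = 2^{k}$, so that $4^{2^{k}} = 2^{2\cdot 2^{k}} = 2^{2^{k+1}}$.

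There is essentially no serious obstacle here; the lemma is a direct consequence of the construction of $S_{\Delta_{2k}}$ together with standard facts about torsion on complex tori. The only point requiring a small amount of care is the bookkeeping of the real rank of the period lattice: one must use that $\Delta_{2k}^{\mathbb{Z}}$ has rank $2^{k+1}$ as a free $\mathbb{Z}$-module (because it records both the real and the imaginary integer parts of a spinor), not rank $2^{k}$. Once that is noted, the count of half-lattice cosets is immediate, and it will also be convenient later to keep in mind the concrete identification $J_2^{S_{\Delta_{2k}}} \cong \tfrac12\Delta_{2k}^{\mathbb{Z}}/\Delta_{2k}^{\mathbb{Z}}$, since it is this description that makes the Clifford actions on $2$-torsion points amenable to the matrix- and permutation-theoretic analysis carried out in the subsequent sections.
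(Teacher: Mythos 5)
Your proposal is correct and amounts to essentially the same computation as the paper's: the paper counts $J_2^{S_{\Delta_{2k}}}$ by choosing one of the four $2$-torsion points of the square elliptic curve $\C/(\Z\oplus i\Z)$ in each of the $2^k$ components, giving $4^{(2^k)}=2^{(2^{k+1})}$, which is exactly your coset count $\bigl|\tfrac12\Delta_{2k}^{\mathbb{Z}}/\Delta_{2k}^{\mathbb{Z}}\bigr|$ organized by lattice basis vectors rather than by vector components. Both routes are valid; yours just makes the half-lattice identification explicit.
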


\begin{proof}
	Any element of $J_2^{S_{\Delta_{2k}}}$ is represented as a $2^k$-vector of points in the 4-element set of the 2-torsion points on the square elliptic curve $\dfrac{\C}{\mathbb{Z}\oplus i \mathbb{Z}}$, which we denote here as 
\[ 
J_2^{S_{\Delta_0}} = \left\{0, \dfrac{1}{2}, \dfrac{i}{2}, \dfrac{1+i}{2}\right\} \subset S_{\Delta_0}.
\]
Then to specify an element $\vec{v} \in J_2^{S_{\Delta_{2k}}}$ in the form of a $2^k$-vector, we must choose from among the 4 elements of $J_2^{S_{\Delta_0}}$ for each of the $2^k$ components of $\vec{v}$. Thus we have $$|J_2^{S_{\Delta_{2k}}}| = 4^{(2^k)} = (2^2)^{2^k} = 2^{2 \cdot 2^k} = 2^{(2^{k+1})}.$$ 
\end{proof}

\section{Properties of the multiplicative group of generators acting on the 2-torsion points of the Dirac spinor Abelian variety}\label{Properties of the multiplicative group ...}

 In this section we show that not all of the $2^{2k+1}$ actions in $\hat{\Gamma}_{2k}$ are distinct when considered as actions on the 2-torsion points $J_2^{S_{\Delta_{2k}}}$. In particular, in Theorem \ref{number of unique Clifford actions} we prove that the $2^{2k}$ generators of the Clifford algebra $Cl(\mathbb{C}^{2k})$ yield only $2^{k+1}$ unique actions on the 2-torsion points $J_2^{S_{\Delta_{2k}}}$.
As is well known, for each endomorphism associated to a generator of the Clifford algebra, we have an associated unitary matrix representation given by  a complex unitary  $2^{k}\times 2^{k}$ matrix.  The matrix representations $\rho(e_\mu)$ of generators $e_\mu \in \gh_{2k}$ come from the isomorphism $Cl(\C^{2k})\cong \underbrace{\C(2)\otimes_{\R} \cdots \otimes_{\R} \C(2)}_{k \textrm{ times}}\cong \C(2^{k})$ (see [Fr]).  The isomorphism stems from an inductive process generated by the isomorphism $Cl(\C^{2})\cong \C(2)$, given by the associations 
\begin{align*}
& e_1\cong E_1  = \left[\begin{array}{cc} i & 0 \\ 0 & -i\end{array}\right],\quad
e_2\cong E_2  = \left[\begin{array}{cc} 0 & i \\ i& 0\end{array}\right],  \textrm{ and } \\
& e_{12}\cong E_1E_2=E_{12}  = \left[\begin{array}{cc} 0& -1 \\ 1& 0\end{array}\right]. 
\end{align*}
With the representative matrices  $E_1, E_2$, and \[iE_{12}=:B = \left[\begin{array}{cc} 0& -i \\ i& 0\end{array}\right],\] along with the $2\times 2$ identity matrix $I_2$, it is possible to construct matrix representations for all generators of the complex Clifford algebras  $Cl(\C^{2k})$. (Note that $\mu = \emptyset$ counts as an increasing sequence, and that $\rho(e_\emptyset)$ is always the identity matrix  in the appropriate dimension.)

\begin{prop}[See \cite{FR}]\label{Dirac representations}
Let $I_2,E_1,E_2,$ and $B$ be as above. For all $k \in \mathbb{N},\, k \geq 1$, we have an isomorphism from $Cl(\C^{2k})$ to $\C(2^{k})$ given explicitly by the following $k$-Kronecker product identification:

\[
\begin{array}{llll}
	 e_{2j-1} & \xrightarrow{\cong} & I_2^{\otimes k-j  }\otimes E_1\otimes B^{\otimes j-1} & \textrm{ for } j=1, \ldots ,k, \textrm{ and } \\ 
	 e_{2j} & \xrightarrow{\cong} & I_2^{\otimes k-j  }\otimes E_2\otimes B^{\otimes j-1} & \textrm{ for } j=1, \ldots ,k.
	 \end{array}
\] \qed

\end{prop}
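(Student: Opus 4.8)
The plan is to prove this by induction on $k$, following the inductive structure of the isomorphism $Cl(\C^{2k}) \cong \C(2) \otimes_\R \cdots \otimes_\R \C(2) \cong \C(2^k)$ that the statement refers to. The base case $k=1$ is exactly the explicitly given identification $Cl(\C^2) \cong \C(2)$: one checks directly that the assignment $e_1 \mapsto E_1$, $e_2 \mapsto E_2$ satisfies the Clifford relations $E_1^2 = E_2^2 = -I_2$ and $E_1 E_2 = -E_2 E_1$ (indeed $E_1 E_2 = E_{12}$ and $E_2 E_1 = -E_{12}$), and since $Cl(\C^2)$ is the universal algebra on two anticommuting square roots of $-1$, this extends to an algebra homomorphism, which is an isomorphism by a dimension count ($\dim_\C Cl(\C^2) = 4 = \dim_\C \C(2)$). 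Note $B = i E_{12}$ also satisfies $B^2 = -(E_{12})^2 = I_2$, so $B$ is an involution; this fact will be used in the inductive step.

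For the inductive step, I would assume the stated Kronecker-product formulas define an isomorphism $Cl(\C^{2k}) \xrightarrow{\cong} \C(2^k)$ and use the standard fact (see \cite{EM}, \cite{LM}) that $Cl(\C^{2k+2}) \cong Cl(\C^{2k}) \otimes_\C Cl(\C^2)$, realized concretely so that a generator $e_\mu$ of the lower algebra is sent to a $2\times 2$-matrix tensor factor built from the new pair of generators. The key computation is to verify that the proposed matrices for $e_1, \ldots, e_{2k+2}$ in dimension $2^{k+1}$ satisfy the defining Clifford relations $e_a e_b + e_b e_a = -2\delta_{ab} I_{2^{k+1}}$. This breaks into cases: (i) both indices among the "old" generators $e_3, \ldots, e_{2k+2}$ — here one observes that the formula for the degree-$(2k+2)$ algebra's generator $e_{2j-1}$ (resp. $e_{2j}$) is precisely the degree-$2k$ formula with an extra $B$ appended and an extra $I_2$ prepended appropriately, so the relations follow from the inductive hypothesis together with $I_2 \otimes (-) $ and $(-) \otimes B$ preserving the relevant commutation data; (ii) both indices equal to $1$ or $2$ (the newly introduced innermost pair) — handled by the base case applied to the innermost tensor slot; and (iii) one old index and one new index — here anticommutativity must be extracted from the appearance of the involution $B$ in the relevant slot: the mismatch in tensor slots forces a sign $(-1)$ because $B$ anticommutes with $E_1$ and $E_2$ while $I_2$ commutes with everything, and one tracks parity through the Kronecker product. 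Once the relations hold, universality gives an algebra homomorphism $Cl(\C^{2k+2}) \to \C(2^{k+1})$, which is an isomorphism by the dimension count $\dim_\C Cl(\C^{2k+2}) = 2^{2k+2} = (2^{k+1})^2 = \dim_\C \C(2^{k+1})$ together with the fact that the images of the $e_\mu$ span (since they already do in each tensor factor).

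The main obstacle I anticipate is case (iii) of the relation-checking — the careful bookkeeping of signs when one generator carries a $B$ or $E_1, E_2$ in a tensor slot where the other carries $I_2$, and vice versa, across the $k$-fold Kronecker product. Concretely, for $e_{2j-1} = I_2^{\otimes k+1-j} \otimes E_1 \otimes B^{\otimes j-1}$ and $e_{2j'} = I_2^{\otimes k+1-j'} \otimes E_2 \otimes B^{\otimes j'-1}$ with $j \neq j'$, one must see that the product of the slotwise (anti)commutators contributes exactly one factor of $-1$: in the slot where one has $E_1$ (resp. $E_2$) and the other has $B$, they anticommute, and in all other slots the factors either both equal $I_2$ or one equals $I_2$, so they commute. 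Isolating that single sign and confirming it is independent of how $j, j'$ compare is the delicate part; everything else is bookkeeping that the inductive hypothesis and the base case handle cleanly. Since the proposition is attributed to \cite{FR}, it would also be legitimate to simply cite that reference and sketch only the base case and the tensor-decomposition principle, but I would include at least the structure of the induction for completeness.
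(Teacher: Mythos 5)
Your proposal is correct in outline, but note that the paper itself offers no proof of this proposition at all: it is stated with a citation to \cite{FR} and closed immediately with a \verb|\qed|, so there is nothing in the paper to match your argument against. What you supply is the standard verification that the reference would contain. The structure is sound: $E_1^2=E_2^2=-I_2$, $E_1E_2=-E_2E_1$, and $B^2=I_2$ give the base case; and since $B=iE_1E_2$ anticommutes with both $E_1$ and $E_2$, your case (iii) analysis is right — for two generators with indices $j\neq j'$ the slotwise comparison produces commuting pairs ($I_2$ against anything, $B$ against $B$) in every tensor slot except the single slot where one factor is $E_1$ or $E_2$ and the other is $B$, yielding exactly one factor of $-1$ and hence anticommutativity of the full Kronecker products. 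Universality of $Cl(\C^{2k})$ then gives the homomorphism. The one place I would tighten the write-up is the final surjectivity/injectivity step: rather than asserting that the images of the $e_\mu$ ``span since they already do in each tensor factor,'' either exhibit that products of the proposed generators yield (up to scalars) all elementary tensors $M_1\otimes\cdots\otimes M_k$ with $M_l\in\{I_2,E_1,E_2,E_1E_2\}$ — e.g.\ $\rho(e_{2j-1})\rho(e_{2j})=I_2^{\otimes k-j}\otimes E_1E_2\otimes I_2^{\otimes j-1}$ lands $E_1E_2$ in a single slot — or invoke simplicity of $\C(2^k)$ together with the dimension count $\dim_\C Cl(\C^{2k})=2^{2k}=\dim_\C\C(2^k)$. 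Either repair is routine, so I would count your proposal as a complete and legitimate substitute for the citation.
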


We use the following two lemmas on Kronecker products in order to prove Theorem \ref{number of unique Clifford actions}.

\begin{lem}\label{tensor lemma 1}
	
Let $F_1, G_1 \in \mathbb{C}(r_1), \, F_2, G_2 \in \mathbb{C}(r_2), \ldots,$ and  $F_m, G_m \in \mathbb{C}(r_m)$ for some $m \in \mathbb{N}$ and $r_1, \ldots, r_m \in \mathbb{N}$ with each $r_i > 0$. Then  \[(F_1\otimes \cdots \otimes F_{m})\cdot (G_1\otimes \cdots \otimes G_{m})=(F_1\cdot G_1)\otimes \cdots \otimes (F_{m}\cdot G_{m}).\]

\end{lem}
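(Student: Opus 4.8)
The statement to prove is Lemma \ref{tensor lemma 1}, the mixed-product (or bilinearity) property of the Kronecker product: that $(F_1\otimes\cdots\otimes F_m)(G_1\otimes\cdots\otimes G_m)=(F_1G_1)\otimes\cdots\otimes(F_mG_m)$.

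My plan is to reduce to the two-factor case $m=2$ and then induct. First I would record the standard block-matrix definition of the Kronecker product: for $A\in\mathbb{C}(p)$ and $B\in\mathbb{C}(q)$, the product $A\otimes B$ is the $pq\times pq$ matrix whose $(i,j)$ block (of size $q\times q$) is $a_{ij}B$. With this in hand, the base case $m=2$ is a direct block-multiplication computation: the $(i,j)$ block of $(F_1\otimes F_2)(G_1\otimes G_2)$ is $\sum_{l}(f_{il}F_2)(g_{lj}G_2)=\bigl(\sum_l f_{il}g_{lj}\bigr)F_2G_2$, which is exactly the $(i,j)$ block of $(F_1G_1)\otimes(F_2G_2)$. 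I would phrase this carefully so that the sizes match up — this is the only place any real computation happens, and it is genuinely routine.

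For the inductive step I would use associativity of the Kronecker product, writing $F_1\otimes\cdots\otimes F_m=F_1\otimes(F_2\otimes\cdots\otimes F_m)$ and likewise for the $G$'s, then apply the $m=2$ case with the second factor being the $(m-1)$-fold product, followed by the inductive hypothesis on $F_2\otimes\cdots\otimes F_m$ and $G_2\otimes\cdots\otimes G_m$ (these are square matrices of matching size $r_2\cdots r_m$, so the hypothesis applies). One should note that associativity of $\otimes$ is itself a standard fact that can be cited or checked on blocks; since the paper treats Kronecker products as background, citing a reference such as \cite{LM} or simply invoking it as well known is acceptable.

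The main obstacle — such as it is — is purely bookkeeping: making sure the index ranges and block sizes are consistent (each $F_i,G_i$ must be square of the same size $r_i$ for the matrix products $F_iG_i$ to be defined and for the two sides to have the same dimension $r_1\cdots r_m$), and being careful that matrix multiplication and the Kronecker product are interleaved in the right order. There is no conceptual difficulty here; this is a foundational linear-algebra identity, and the proof is a short induction on $m$ with a one-line block computation at its heart. In fact, many authors would simply cite it; the value of including a proof is to keep the manuscript self-contained.
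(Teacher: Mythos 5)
Your proposal is correct and follows essentially the same route as the paper: peel off the first factor using associativity of the Kronecker product together with the two-factor mixed-product identity, then recurse on the remaining $m-1$ factors. The only difference is that you prove the $m=2$ case explicitly by block multiplication, whereas the paper simply invokes it as the known ``component product property''; either choice is fine.
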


\begin{proof}
Begin with a product $(F_1\otimes \cdots \otimes F_{m})\cdot (G_1\otimes \cdots \otimes G_{m})$. Note that this product is a $(r_1 \cdots r_m) \times (r_1 \cdots r_m)$ matrix. By the associative property of Kronecker products, as well as the component product property of the Kronecker product of two matrices, we have the following:
\[
\begin{array}{lll}
(F_1\otimes \cdots \otimes F_{m}) \cdot (G_1\otimes \cdots \otimes G_{m}) & = & (F_1\otimes(F_2\otimes \cdots \otimes F_{m})) \cdot (G_1\otimes( G_2\otimes \cdots \otimes G_{m}))\\
& = &(F_1\cdot G_1)\otimes ((F_2\otimes \cdots \otimes F_m)\cdot (G_2\otimes \cdots \otimes G_m)).
\end{array}
\]
Repeating this process for  $(F_2\otimes \cdots \otimes F_m)\cdot (G_2\otimes \cdots \otimes G_m)$, we have \[ (F_2\otimes \cdots \otimes F_m)\cdot (G_2\otimes \cdots \otimes G_m)=(F_2\cdot G_2)\otimes ((F_3\otimes \cdots \otimes F_m)\cdot (G_3\otimes \cdots \otimes G_m)).\] Continuing in this way to the last terms, we obtain the desired result: \[ (F_1\otimes \cdots \otimes F_{m})\cdot (G_1\otimes \cdots \otimes G_{m})=(F_1\cdot G_1)\otimes  \cdots \otimes (F_{m}\cdot G_{m}).\]

\end{proof}

\begin{cor}\label{tensor corollary 1}
Consider  $l, m \in \mathbb{N}$ and $r_1, \ldots, r_m \in \mathbb{N}$ with $r_i \geq 1$ for each $i$; and consider complex matrices 
$F^{1}_1, \ldots ,F^{1}_{m},\, F^{2}_1, \ldots ,F^{2}_{m},\, \ldots, \,$ and  $F^{l}_1, \ldots ,F^{l}_{m}$ with $F^1_1, F^2_1, \ldots, F^l_1 \in \mathbb{C}(r_1)$,  $F^1_2, F^2_2, \ldots, F^l_2 \in \mathbb{C}(r_2), \ldots,$ and $ F^1_m, F^2_m,\  \ldots \ , F^l_m \in \mathbb{C}(r_m)$. Then the $l$ matrix product of the Kronecker products of the matrices $F^i_j$ is of the form 
\[
(F_1^{1}\otimes  \cdots \otimes F^{1}_m)\cdot(F^{2}_1\otimes  \cdots \otimes F^{2}_m) \cdots (F^{l}_1\otimes \cdots \otimes F^{l}_m)=(F^{1}_1 \cdot F^2_1 \cdots  F^{l}_1)\otimes \cdots \otimes (F^{1}_m \cdot F^2_m \cdots  F^{l}_m).
\]
\end{cor}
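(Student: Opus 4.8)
The plan is to prove Corollary \ref{tensor corollary 1} by a straightforward induction on $l$, the number of Kronecker-product factors being multiplied, using Lemma \ref{tensor lemma 1} as the engine. The base case $l = 1$ is trivial (the statement reads $F_1^1 \otimes \cdots \otimes F_m^1 = F_1^1 \otimes \cdots \otimes F_m^1$), and the case $l = 2$ is precisely Lemma \ref{tensor lemma 1}.

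For the inductive step, I would assume the identity holds for a product of $l-1$ such Kronecker products and write
\[
(F_1^1 \otimes \cdots \otimes F_m^1) \cdots (F_1^l \otimes \cdots \otimes F_m^l) = \bigl[(F_1^1 \otimes \cdots \otimes F_m^1) \cdots (F_1^{l-1} \otimes \cdots \otimes F_m^{l-1})\bigr] \cdot (F_1^l \otimes \cdots \otimes F_m^l),
\]
using associativity of matrix multiplication. By the inductive hypothesis the bracketed factor equals $(F_1^1 \cdots F_1^{l-1}) \otimes \cdots \otimes (F_m^1 \cdots F_m^{l-1})$, which is itself a single Kronecker product of $m$ matrices of sizes $r_1, \ldots, r_m$ respectively. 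Applying Lemma \ref{tensor lemma 1} to the product of this Kronecker product with $F_1^l \otimes \cdots \otimes F_m^l$ yields
\[
\bigl[(F_1^1 \cdots F_1^{l-1}) \otimes \cdots \otimes (F_m^1 \cdots F_m^{l-1})\bigr] \cdot (F_1^l \otimes \cdots \otimes F_m^l) = \bigl(F_1^1 \cdots F_1^{l-1} F_1^l\bigr) \otimes \cdots \otimes \bigl(F_m^1 \cdots F_m^{l-1} F_m^l\bigr),
\]
which is the desired expression for $l$ factors, completing the induction.

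There is essentially no obstacle here; the only things to be careful about are bookkeeping points rather than mathematical ones. One should check that the dimensions line up so that all the matrix products are well-defined — each $F_j^1 \cdots F_j^{l-1}$ is again in $\mathbb{C}(r_j)$, so the hypotheses of Lemma \ref{tensor lemma 1} are met at each stage — and that associativity is being invoked correctly when regrouping the $l$-fold product as an $(l-1)$-fold product times one more factor. Since this corollary is used in the excerpt only as a convenient shorthand for iterated application of Lemma \ref{tensor lemma 1}, a short induction write-up suffices and no delicate estimates or case analysis are required.
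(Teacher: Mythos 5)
Your proof is correct and takes essentially the same approach as the paper: both arguments reduce the $l$-fold product to iterated applications of Lemma \ref{tensor lemma 1}, with the paper collapsing factors pairwise from the left while you formalize the same idea as an induction peeling off the last factor. The dimension bookkeeping you note (each partial product $F_j^1 \cdots F_j^{l-1}$ remaining in $\mathbb{C}(r_j)$) is exactly what is needed and is implicit in the paper's version as well.
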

\begin{proof}
Starting with $(F_1^{1}\otimes \cdots \otimes F^{1}_m)\cdot(F^{2}_1\otimes \cdots \otimes F^{2}_m) \cdots\ (F^{l}_1\otimes \cdots \otimes F^{l}_m)$, we use Lemma \ref{tensor lemma 1} on the first two arguments to get $(F_1^{1}\otimes \cdots \otimes F^{1}_m)\cdot(F^{2}_1\otimes \cdots \otimes F^{2}_m) \cdots (F^{l}_1\otimes \cdots \otimes F^{l}_m)=(F^{1}_1\cdot F^{2}_1\otimes \cdots \otimes F^{1}_m\cdot F^{2}_m)\cdot (F^3_{1}\otimes \cdots \otimes F^{3}_m)\cdot(F^{4}_1\otimes \cdots \otimes F^{4}_m) \cdots\ (F^{l}_1\otimes \cdots \otimes F^{l}_m)$. Continuing this pairwise, via Lemma \ref{tensor lemma 1} we obtain the desired result: \[(F_1^{1}\otimes \cdots \otimes F^{1}_m)\cdot(F^{2}_1\otimes \cdots \otimes F^{2}_m) \cdots\ (F^{l}_1\otimes \cdots \otimes F^{l}_m)=(F^{1}_1 \cdots\  F^{l}_1)\otimes \cdots \otimes (F^{1}_m \cdots\ F^{l}_m).\]
\end{proof}

\begin{remark}\label{dim 1 2-torsion points}
The reason that Clifford multiplication by the matrices $\rho(e_\mu)$, for $e_\mu \in \gh_{2k}$, does not always give us different automorphisms on $J_2^{S_{\Delta_{2k}}}$, is as follows: as we saw in the proof of Lemma \ref{number of points in J_2^{2^k}}, each of the $2^k$ entries of a $2$-torsion point on the Dirac spinor Abelian variety $S_{\Delta_{2k}}$ is one of the four elements $v_0 :=0, \, v_1 :=\frac{1}{2}, \, v_2 :=\frac{i}{2}, \, v_3 :=\frac{1+i}{2}\in J_2^{S_{\Delta_0}}$. These four $2$-torsion points obey the following relations:

\begin{enumerate}
\item $2\cdot v_k=v_0$ for $k=0,1,2,3$
\item $v_0+v_k=v_k$ for $k=1,2,3$
\item $v_1+v_2=v_3$
\item $v_1+v_3=v_2$
\item $v_2+v_3=v_1$
\item $-v_k=v_k,-i\cdot v_k=i\cdot v_k$ for $k=0,1,2,3$
\item $i\cdot i\cdot v_k=v_k$.
\end{enumerate}
\end{remark}

Looking at the components of our 2-torsion points, it is immediately clear that not all Clifford actions are unique when we apply them to $J_2^{S_{\Delta_{2k}}}$, since multiplication by $-1$ on each component is the same as multiplication by $1$, and multiplication by $-i$ on each component is the same as multiplication by $i$.  

Now as a consequence of the relations above we have the following lemma. 

\begin{lem}\label{Clifford multiplication descends to ...}
On the $2$-torsion points $J_2^{S_{\Delta_{2k}}}\subset S_{\Delta_{2k}}$, Clifford multiplication descends to $Cl(\C^{2k})_{\mathbb{F}_2}$ multiplication, where   
\[
Cl(\C^{2k})_{\mathbb{F}_2}= \Bigg\{\sum_{\mu\subset \{1, \ldots, n\}} a_{\mu}e_{\mu}:a_{\mu}\in\{0,1\}\Bigg\};
\]
that is, where the integral scalars on a linear combination
of generators take the values of either 0 or 1.
\end{lem}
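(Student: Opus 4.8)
The plan is to reduce Clifford multiplication on the $2$-torsion points to an action of $Cl(\C^{2k})_{\mathbb{F}_2}$ by exploiting the relations listed in Remark \ref{dim 1 2-torsion points}. First I would recall that a lattice Clifford action on $S_{\Delta_{2k}}$ is given by an integral combination $g = \sum_{\mu} a_\mu e_\mu$ with $a_\mu \in \Z$, acting on a $2$-torsion point $\vec{v}$ by $\vec{v} \mapsto \rho(g)(\vec{v}) = \sum_\mu a_\mu \rho(e_\mu)(\vec{v})$ (taken modulo the lattice $\Delta_{2k}^{\Z}$). Since the $2$-torsion points form a group and $2 \cdot x = 0$ for every $x \in J_2^{S_{\Delta_{2k}}}$, I want to show this expression depends only on the residues $a_\mu \bmod 2$, and moreover only on representatives $a_\mu \in \{0,1\}$; i.e.\ that the coefficient ring effectively collapses from $\Z$ to $\mathbb{F}_2 = \{0,1\}$.

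The key computational step is to observe, componentwise, what each $\rho(e_\mu)$ does to a $2$-torsion point. By Proposition \ref{Dirac representations}, every $\rho(e_\mu)$ is a Kronecker product of the matrices $I_2, E_1, E_2, B$ (up to sign and factors of $i$), so each row of $\rho(e_\mu)$ has exactly one nonzero entry, and that entry is one of $\pm 1, \pm i$. Hence each component of $\rho(e_\mu)(\vec{v})$ is $\pm 1$ or $\pm i$ times one of the four points $v_0, v_1, v_2, v_3 \in J_2^{S_{\Delta_0}}$. Relation (6) of Remark \ref{dim 1 2-torsion points}, $-v_j = v_j$ and $-i v_j = i v_j$, shows that the sign is irrelevant; this immediately handles the observation made just before the lemma that multiplication by $-1$ and $-i$ agree with multiplication by $1$ and $i$ respectively on $J_2^{S_{\Delta_0}}$. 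Next, for the reduction of $\Z$-coefficients to $\mathbb{F}_2$: using relation (1), $2 v_j = v_0$, and relation (2), $v_0 + v_j = v_j$, together with commutativity of the group law, I get $a \cdot v_j = (a \bmod 2) \cdot v_j$ for any integer $a$ and any $j$; applying this componentwise to $a_\mu \rho(e_\mu)(\vec{v})$ shows $a_\mu \rho(e_\mu)(\vec{v})$ depends only on $a_\mu \bmod 2$. Therefore $\sum_\mu a_\mu \rho(e_\mu)(\vec{v})$ equals $\sum_\mu \bar{a}_\mu \rho(e_\mu)(\vec{v})$ where $\bar{a}_\mu \in \{0,1\}$ is the residue, which is precisely the action of the element $\sum_\mu \bar{a}_\mu e_\mu \in Cl(\C^{2k})_{\mathbb{F}_2}$.

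To package this cleanly I would phrase it as: the restriction map $Cl(\C^{2k})_{\Z} \to \{\text{maps } J_2^{S_{\Delta_{2k}}} \to J_2^{S_{\Delta_{2k}}}\}$ factors through the reduction $Cl(\C^{2k})_{\Z} \to Cl(\C^{2k})_{\mathbb{F}_2}$ (as a set of coset representatives for the ideal $2\, Cl(\C^{2k})_{\Z}$), so that Clifford multiplication on the $2$-torsion points is governed by $Cl(\C^{2k})_{\mathbb{F}_2}$ as claimed. One should also check that this is well-defined, i.e.\ that $\rho(e_\mu)$ does preserve $J_2^{S_{\Delta_{2k}}}$; this is where it matters that $\rho(e_\mu)$ is a monomial matrix with entries in $\{\pm 1, \pm i\}$, so it permutes the four-element set $\{v_0, v_1, v_2, v_3\}$ in each coordinate (using relations (3)--(6), the set $\{v_0,v_1,v_2,v_3\}$ is closed under multiplication by $i$ and negation).

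The main obstacle — though it is more bookkeeping than genuine difficulty — is making precise the sense in which ``Clifford multiplication descends to $Cl(\C^{2k})_{\mathbb{F}_2}$ multiplication,'' since $Cl(\C^{2k})_{\mathbb{F}_2}$ as written is only a set of coset representatives and not literally a quotient ring (the product of two such representatives must itself be reduced mod $2$). I would address this by being explicit that the statement concerns the induced action on the \emph{additive group} $J_2^{S_{\Delta_{2k}}}$: it is the $\Z$-module structure that descends to an $\mathbb{F}_2$-module structure because $J_2^{S_{\Delta_{2k}}} \cong (\Z/2\Z)^{2^{k+1}}$ is killed by $2$, and the relations (1), (2), (6) of Remark \ref{dim 1 2-torsion points} are exactly the componentwise manifestation of this fact. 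With that clarification, the proof is the short componentwise argument sketched above.
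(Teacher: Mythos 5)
Your proof is correct and follows essentially the same route as the paper's: both arguments rest on the relations of Remark \ref{dim 1 2-torsion points} (in particular $2v_j = v_0$, $-v_j = v_j$, and $i\cdot i\cdot v_j = v_j$ applied componentwise) to collapse the integral coefficients to $\{0,1\}$. Your version is somewhat more explicit than the paper's — notably in checking that each $\rho(e_\mu)$ is a monomial matrix preserving $J_2^{S_{\Delta_{2k}}}$ and in clarifying what ``descends'' means — but the underlying argument is the same.
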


\begin{proof}
Multiplication by $i$ on $J_2^{S_{\Delta_0}}\subset \dfrac{\C}{\mathbb{Z}\oplus i \mathbb{Z}}$ is clearly an involution that fixes $v_0$ and $v_3$. Thus, viewing $J_2^{S_{\Delta_{2k}}}$ as 
$J_2^{S_{\Delta_{2k}}}=\{\left( \begin{array}{c} v_{a_1}\\ \vdots \\v_{a_{2^k}}\\ \end{array} \right) : a_l \in \{0, 1, 2, 3\} \textrm{ for } 1 \leq l \leq 2^k\}\}$, we see that multiplication by $i$ gives us $i\cdot i\cdot v_k=v_k$ on each of the $2^{k}$ components for our 2-torsion points on $S_{\Delta_{2k}}$. Moreover, integral multiplication on $J_2^{S_{\Delta_{2k}}}$ 
reduces to $\mathbb{F}_2$ multiplication, since $2m\cdot v_k=0$ and $(2m+1)\cdot v_k=v_k$ for  $m\in\mathbb{N}$ and $k=0,1,2,3$, from the symmetry relations on the 2-torsion points on $ \dfrac{\C}{\mathbb{Z}\oplus i \mathbb{Z}}$. Therefore Clifford multiplication on our set of 2-torsion points descends to $\mathbb{F}_2$ linear combinations of elements in the standard  basis. Thus multiplication by $Cl(\C^{2k})_{\mathbb{Z}}$ on $J_2^{S_{\Delta_{2k}}}$ is equivalent to  $Cl(\C^{2k})_{\mathbb{F}_2}$ multiplication.

\end{proof}

We are now ready to compute the number of unique Clifford actions given by the generators of the Clifford algebra on our 2-torsion points $J_2^{S_{\Delta_{2k}}}$.

\begin{thm}\label{number of unique Clifford actions}
The $2^{2k}$ generators of the Clifford algebra $Cl(\C^{2k})$  give us a total of $2^{k+1}$ unique involutions on $J_2^{S_{\Delta_{2k}}}$.

\end{thm}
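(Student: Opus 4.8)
The plan is to count, up to the equivalence $\sim$ (equality of the induced action on $J_2^{S_{\Delta_{2k}}}$), the distinct maps coming from the $2^{2k}$ generators $e_\mu$. By Lemma~\ref{Clifford multiplication descends to ...}, the action of $e_\mu$ on a $2$-torsion point depends only on the matrix $\rho(e_\mu)$ read modulo the relations $(1)$--$(7)$ of Remark~\ref{dim 1 2-torsion points}; in particular $-1 \equiv 1$ and $-i \equiv i$ componentwise. So the first step is to record exactly what data of $\rho(e_\mu)$ survives: each $\rho(e_\mu)$ is, by Proposition~\ref{Dirac representations} and Corollary~\ref{tensor corollary 1}, a Kronecker product of the $2\times 2$ matrices $I_2, E_1, E_2, B$ (and their products), so it is a monomial matrix with nonzero entries in $\{\pm 1, \pm i\}$. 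Acting on a vector of elements of $J_2^{S_{\Delta_0}}$, such a matrix is determined by (a) the underlying permutation of the $2^k$ coordinates, and (b) for each coordinate, whether the scalar is real ($\pm1$, which acts trivially) or imaginary ($\pm i$, which acts as the involution $i\cdot$). Thus the induced action is captured by a permutation of $\{1,\dots,2^k\}$ together with a subset of coordinates on which one multiplies by $i$.

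Next I would compute how many such (permutation, imaginary-support) pairs actually arise, and how many distinct involutions they give. Here the key observation, to be extracted from the explicit Kronecker formulas for $e_{2j-1}$ and $e_{2j}$ together with Corollary~\ref{tensor corollary 1}, is that a product $e_\mu = e_{i_1}\cdots e_{i_l}$ decomposes slot-by-slot: in the $j$-th tensor slot one multiplies together the contributions of the indices in $\mu$ lying in $\{2j-1,2j\}$, where each $e_{2j-1}$ contributes $E_1$ in that slot (and $B$ in all lower slots) and each $e_{2j}$ contributes $E_2$. Using $E_1^2 = E_2^2 = -I_2$, $E_1E_2 = E_{12}$, $B^2 = I_2$, one finds that in slot $j$ the possible outcomes are exactly $I_2, E_1, E_2, E_{12}$ up to a scalar in $\{\pm1,\pm i\}$, and that $B$ only ever appears raised to a power whose parity is determined by $\mu$; modulo the scalar identifications the slot contribution is thus one of the four classes $\{I_2\}, \{E_1\}, \{E_2\}, \{E_{12}\}$ — i.e. $4$ choices per slot — but two generators give the same action iff they agree in every slot after this reduction. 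I would then argue there are $4^k = 2^{2k}$ raw generators mapping onto a set of size at most $4 \cdot (\text{something})$; more precisely, the real/imaginary status of the global scalar (one bit) combines with the per-slot class data, and a careful bookkeeping shows the number of distinct induced maps is $2 \cdot 2^k = 2^{k+1}$ — the factor $2^k$ being the number of distinct "shapes" and the factor $2$ the residual scalar bit that is not killed because it is not applied componentwise but globally. Finally, each such map is an involution because $e_\mu^2 = \pm 1$ in $Cl(\C^{2k})$ and $\pm 1$ acts as the identity on $J_2^{S_{\Delta_{2k}}}$ by relation~$(6)$; alternatively, the induced permutation has order dividing $2$ and the imaginary-multiplications square to the identity by relation~$(7)$.

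The main obstacle I anticipate is the exact combinatorial accounting in the second step: showing that the collapse from $2^{2k}$ to $2^{k+1}$ is neither more nor less, i.e. simultaneously producing $2^{k+1}$ genuinely distinct actions (a lower bound, by exhibiting representatives whose induced permutations or imaginary-supports differ) and proving no two of the remaining classes coincide (an upper bound, by tracking precisely which scalar ambiguities the relations of Remark~\ref{dim 1 2-torsion points} do and do not absorb). The subtlety is that $-1$ and $-i$ are invisible only when applied coordinatewise, whereas a single global sign on $\rho(e_\mu)$ need not be of that form once the matrix is non-scalar; disentangling "which sign patterns are coordinatewise-trivial" from "which are not" is where the real work lies, and I expect it to be handled by an explicit induction on $k$ using the $2 \times 2$ to $2^{k} \times 2^{k}$ Kronecker step, exactly the inductive structure the paper sets up for Theorem~\ref{that corollary}.
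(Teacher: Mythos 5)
Your overall strategy is the paper's: exploit the slot-by-slot Kronecker structure of $\rho(e_\mu)$ from Proposition \ref{Dirac representations} and Corollary \ref{tensor corollary 1}, reduce modulo the componentwise relations of Remark \ref{dim 1 2-torsion points}, and count the surviving data as a permutation shape together with one scalar bit. But the decisive counting step is both absent and, in the one place you state it precisely, wrong. You claim that after reducing each tensor slot to one of the four classes $\{I_2\},\{E_1\},\{E_2\},\{E_{12}\}$, ``two generators give the same action iff they agree in every slot after this reduction.'' Four genuinely distinct classes per slot would leave $4^k=2^{2k}$ distinct actions --- no collapse at all --- which contradicts the theorem and your own subsequent count of $2^{k+1}$. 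The mechanism that actually produces the collapse, which your ``careful bookkeeping shows'' elides, is the pair of slot-level equivalences on $2$-torsion points
\[
E_1 \sim i\cdot I_2, \qquad E_2 \sim i\cdot E_{12} = B,
\]
valid because $E_1=\operatorname{diag}(i,-i)$ acts componentwise as multiplication by $i$ (entrywise signs are invisible on $J_2^{S_{\Delta_0}}$), and likewise for $E_2$ versus $E_{12}$. These identifications reduce each slot to one of only \emph{two} shapes, $I_2$ or $B$, at the cost of extracting a scalar $i$ from that slot; the extracted scalars accumulate into a single global factor $i^m$, and since $i^2=-1$ acts trivially componentwise, only the parity of $m$ survives. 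That yields exactly $2^k$ shapes times one residual bit, i.e.\ $2^{k+1}$, and it simultaneously shows that the ``imaginary support'' in your (permutation, subset) description is always empty or everything --- a fact your more general framework leaves open and which is needed for the upper bound.

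Two further points. Your worry that a non-componentwise global sign might survive is unfounded: $\rho(e_\mu)$ is a monomial matrix, so its action multiplies each component of $\vec{v}$ by a single scalar entry, and every sign is absorbed by $-w=w$ on $J_2^{S_{\Delta_0}}$; only the real-versus-imaginary status of each entry matters. You also explicitly defer the lower bound that the $2^{k+1}$ candidate actions are pairwise distinct and all realized; the paper gets realization because the even vector generators reduce to $I_2^{\otimes k-j}\otimes B^{\otimes j}$, whose $B$-supports are linearly independent over $\mathbb{F}_2$ and hence generate all $2^k$ shapes under multiplication, while any odd generator supplies the global $i$. (Your justification that each action is an involution, via $e_\mu^2=\pm1$ and relation (6), is correct and cleaner than the paper's.) As written, the proposal is a plan whose central step is missing and whose one precise formulation of that step would not prove the theorem.
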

\begin{proof}
From Lemma \ref{Clifford multiplication descends to ...}, it follows that Clifford multiplication by a generic element in $Cl(\C^{2k})_{\mathbb{Z}}$ descends to $\mathbb{F}_2$ linear combination of  generators $e_\mu$ and $i e_\mu$ (where $\mu$ is an increasing subsequence of $\{1, \ldots, 2k \}$). From Proposition \ref{Dirac representations} we see that the vector generators $e_1, \ldots, e_{2k}$ of our Clifford algebra are given by matrix representations which can be constructed by taking $k$-Kronecker products of combinations of $E_1,E_2,B,I_2\in \C(2)$. By taking products of the $k$-Kronecker product combinations of these matrices, we obtain the $2^{2k}$ generators $e_\mu$ of $Cl(\mathbb{C}^{2k})$.

Using the notation $T_{a_1, \ldots ,a_{2^{k}}}=\left( \begin{array}{c} v_{a_1}\\ \vdots \\v_{a_{2^{k}}}\ \end{array} \right)$ to denote an element of $J_2^{S_{\Delta_{2k}}}$, where $v_{a_{l}}\in J_2^{S_{\Delta_0}}$ for $1 \leq l \leq 2^k$, we get the following relations  on $J_2^{S_{\Delta_2}}$ by our generating matrices $E_1,E_2,B,I_2$. 

\[
\begin{array}{lcccr}
I_2\cdot T_{ab} & = & i \cdot E_1\cdot T_{ab} & = & T_{ab} \\
i \cdot E_2\cdot T_{ab} & = & i \cdot B\cdot T_{ab} & = & T_{ba}
\end{array}
\]

Defining an equivalence relation $\sim$ on $\hat{\Gamma}_2$ by $A\sim B$ if and only if $A\cdot \vec{v}=B\cdot \vec{v}$ for all $\vec{v}\in J_2^{S_{\Delta_2}}$, we get the following equivalences from our generating matrices: 

\[
\begin{array}{rcl}
I_2 & \sim & i \cdot E_1 \\
i \cdot E_2 & \sim & i \cdot B \\ 
i \cdot I_2 & \sim & E_1 \\
E_2 & \sim & B 
\end{array}
\]

Thus the representation matrices that act uniquely on $J_2^{S_{\Delta_2}}$ are generated by $I_2$ and $E_2$, as well as by $i \cdot I_2$ and $i \cdot E_2$. From these equivalences we conclude that for the even vector generators, given by $e_{2j}\xrightarrow{\cong} I_2^{\otimes k-j  }\otimes E_2\otimes B^{\otimes j-1} $ for $j=1, \ldots, k$, the representative matrices as they pertain to acting on $J_2^{S_{\Delta_{2k}}}$ are equivalent to
$I_2^{\otimes k-j} \otimes B^{\otimes j}$. For the odd vector generators, given by $e_{2j-1}\xrightarrow{\cong} I_2^{\otimes k-j  }\otimes E_1\otimes B^{\otimes j-1} $ for $j=1, \ldots, k$, we have the equivalent matrices $i\cdot (I_2^{\otimes k-j+1}\otimes B^{\otimes j-1})$ on $J_2^{S_{\Delta_{2k}}}$. By this construction, we see that all $2k$ vector generators $e_1, \ldots, e_{2k}$ of our Clifford algebra are unique. Now since the rest of the $2^{2k}$ representations of the  generators of the Clifford algebra $Cl(\C^{2k})$ are products of the $2k$ vector generators $e_1, \ldots, e_{2k}$, their representative matrices are products of $I_2^{\otimes k-j} \otimes B^{\otimes j}$ and  $i\cdot (I_2^{\otimes k-j+1}\otimes B^{\otimes j-1})$ on $J_2^{S_{\Delta_{2k}}}$. According to Lemma \ref{tensor lemma 1} and Corollary \ref{tensor corollary 1}, the products of the generators as they act on $J_2^{S_{\Delta_{2k}}}$ are of the form $C_1\otimes \cdots \otimes C_{k}$ or  $i \cdot (C_1\otimes \cdots \otimes C_{k})$, where each $C_{j}$ is a string of matrix  products of $I_2$s or $B$s. Noting that  $B^{2}=I_2$ acting on $J_2^{S_{\Delta_{2k}}}$, we have that each component $C_j$ is one of two options: $I_2$ or $B$. Hence there are a total of $2^{k}$ resulting products of the form $ C_1\otimes \cdots \otimes C_{k}$, and $2^{k}$ resulting products of the form $i \cdot (C_1\otimes \cdots \otimes C_{k}).$ Hence on $J_2^{S_{\Delta_{2k}}}$ we have a total of $2^{k}+2^{k}=2\cdot 2^{k}=2^{k+1}$ unique involutions acting on the 2-torsion points (where we include the identity in this count) induced from our $2^{2k}$ generators of our Clifford algebra $Cl(\C^{2k})$.

\end{proof}

Next we analyze the relationship between matrix representations of generators in $\hat{\Gamma}_{2k}$ and of generators in $\hat{\Gamma}_{2k+2}$. For this we introduce the following notation. 
\begin{itemize}
	\item Since, for a given $k$, a sequence $\mu$ might be an increasing subsequence of both $\{1, \ldots, 2k \}$ and $\{ 1, \ldots, 2k+2 \}$, we  write $\ek_\mu$ for the associated element of $\hat{\Gamma}_{2k}$, and $\ekp_\mu$ for the associated element of $\gh_{2k+2}$. 
 \item For ease of notation, we usually omit the set brackets and commas from sequences, so that, for example, the sequence $\mu = \{1, 3, 4\}$ is written as simply $\mu=134$. 
	\item If $\mu$ and $\nu$ are sequences from $\{1, \ldots, 2k \}$, we denote the concatenation of $\mu$ and $\nu$ (that is, $\mu$ followed by $\nu$) by $\mu \upSmallFrown \nu$. For example, if $\mu = 2$ and $\nu = 467$, then $\mu \upSmallFrown \nu = 2467$.
	
	\item If $\mu$ is a sequence in $\{ 1, \ldots, 2k \}$ and $n \in \mathbb{N}$, we denote by $\mu + n$ (respectively $\mu - n$) the sequence formed by replacing each $l \in \mu$ with $l+n$ (respectively $l-n$). 
\end{itemize}

The following lemma provides a description of how the representations of vector generators in $\gh_{2k+2}$ arise from those of vector generators in $\gh_{2k}$ via Kronecker products.

\begin{lem}\label{new vector generators in terms of old}
	The representations of the vector generators $\ekp_1, \ldots, \ekp_{2k+2} \in \gh_{2k+2}$ are formed from the representations of the vector generators $\ek_1, \ldots, \ek_{2k} \in \gh_{2k}$ as follows:
	\[
	\rho(\ekp_i) = \left\{
	\begin{array}{ll}
		I_{2^k} \otimes E_i & \textrm{ if i = $1$ or $2$} \\
		\rho({\ek_{i-2}}) \otimes B & \textrm{ if $i=3, 4, \ldots, $ or $2k+2$}
	\end{array}
	\right.
	\]
	where $I_{2^k}$ denotes the $2^k \times 2^k$ identity matrix.
\end{lem}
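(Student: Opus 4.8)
The plan is to verify the claimed formula directly from the explicit Kronecker-product description in Proposition \ref{Dirac representations}, by comparing the defining tensor expressions at level $2k+2$ with those at level $2k$. Recall from Proposition \ref{Dirac representations} that for the Clifford algebra $Cl(\C^{2(k+1)})$ the vector generators are represented by
\[
\rho(\ekp_{2j-1}) = I_2^{\otimes (k+1)-j} \otimes E_1 \otimes B^{\otimes j-1}, \qquad
\rho(\ekp_{2j}) = I_2^{\otimes (k+1)-j} \otimes E_2 \otimes B^{\otimes j-1},
\]
for $j = 1, \ldots, k+1$, while at level $2k$ we have $\rho(\ek_{2j-1}) = I_2^{\otimes k-j} \otimes E_1 \otimes B^{\otimes j-1}$ and $\rho(\ek_{2j}) = I_2^{\otimes k-j} \otimes E_2 \otimes B^{\otimes j-1}$ for $j = 1, \ldots, k$.

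First I would treat the cases $i = 1$ and $i = 2$. Setting $j = k+1$ in the level-$(k+1)$ formulas gives $\rho(\ekp_1) = I_2^{\otimes 0} \otimes E_1 \otimes B^{\otimes k} = E_1 \otimes B^{\otimes k}$ and similarly $\rho(\ekp_2) = E_2 \otimes B^{\otimes k}$ — but this is the opposite tensor order from what the lemma asserts ($I_{2^k} \otimes E_i$). Here I expect the resolution is a convention issue: the paper is implicitly reading the Kronecker product in the order that places the "new" $\C(2)$ factor first, i.e.\ $Cl(\C^{2k+2}) \cong \C(2) \otimes Cl(\C^{2k})$, so that $\ekp_1, \ekp_2$ — the two genuinely new vector directions — get $E_1, E_2$ in the new slot and the trivial factor $I_{2^k} = I_2^{\otimes k}$ on the old slots. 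So the statement $\rho(\ekp_i) = I_{2^k} \otimes E_i$ for $i=1,2$ is exactly the $j$-maximal case once one fixes the tensor-ordering convention consistently; I would state this convention explicitly and then the identity is immediate.

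Next, for $i = 3, 4, \ldots, 2k+2$, write $i = i' + 2$ with $i' \in \{1, \ldots, 2k\}$, and split into the subcases $i' = 2j'-1$ (odd) and $i' = 2j'$ (even) with $1 \le j' \le k$. Then $i = 2j'+1 = 2(j'+1)-1$ or $i = 2j'+2 = 2(j'+1)$, so in the level-$(k+1)$ formula the relevant index is $j = j'+1$, giving
\[
\rho(\ekp_{i}) = I_2^{\otimes (k+1)-(j'+1)} \otimes E_{i'_0} \otimes B^{\otimes j'} = \bigl(I_2^{\otimes k-j'} \otimes E_{i'_0} \otimes B^{\otimes j'-1}\bigr) \otimes B,
\]
where $E_{i'_0}$ is $E_1$ or $E_2$ according to the parity of $i'$, and the parenthesized factor is precisely $\rho(\ek_{i'}) = \rho(\ek_{i-2})$ by the level-$k$ formula. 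By associativity of the Kronecker product this is $\rho(\ek_{i-2}) \otimes B$, as claimed. The only genuine subtlety — and the step I would be most careful about — is pinning down and consistently applying the tensor-ordering convention so that the "$I_{2^k} \otimes E_i$" and "$\rho(\ek_{i-2}) \otimes B$" forms both come out with the new factor on the correct side; once that bookkeeping is fixed, the proof is a one-line substitution in each of the two cases, with no real computation.
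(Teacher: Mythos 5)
Your treatment of the case $i\geq 3$ is correct and is essentially the paper's own computation (reindexing $j=j'+1$ and peeling off one factor of $B$ by associativity). But your handling of $i=1,2$ contains a genuine error. In Proposition \ref{Dirac representations} the generator $e_{2j-1}$ corresponds to the index $j$ determined by $2j-1=i$, so $\ekp_1$ corresponds to $j=1$, not to $j=k+1$. Taking $j=1$ in the level-$(k+1)$ formula gives directly
\[
\rho(\ekp_1)=I_2^{\otimes (k+1)-1}\otimes E_1\otimes B^{\otimes 0}=I_2^{\otimes k}\otimes E_1=I_{2^k}\otimes E_1,
\]
exactly as claimed, with no reordering needed. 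What you computed by setting $j=k+1$, namely $E_1\otimes B^{\otimes k}$, is $\rho(\ekp_{2(k+1)-1})=\rho(\ekp_{2k+3-2})$, i.e.\ the representation of $\ekp_{2k+1}$ — a different generator.

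Because of this mis-indexing, the ``tensor-ordering convention issue'' you invoke to reconcile the discrepancy does not exist, and invoking it actually makes your argument internally inconsistent: in the $i\geq 3$ case you (correctly) read the Kronecker products in the standard left-to-right order of Proposition \ref{Dirac representations}, with the factor $B$ appended on the right, yet for $i=1,2$ you propose that the paper silently places the new $\C(2)$ factor on the left. You cannot use both readings in the same proof. The repair is one line — replace $j=k+1$ by $j=1$ (and $j=1$ in the $e_{2j}$ formula for $i=2$) — after which your argument coincides with the paper's direct substitution.
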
	

\begin{proof}
	By Proposition \ref{Dirac representations}, we have 
	\[
	\begin{array}{lcl}
		
        \rho(\ekp_1) & = & I_2^{\otimes (k+1)-1} \otimes E_1 \otimes B^{\otimes 1-1} = I_2^{\otimes k} \otimes E_1 = I_{2^k} \otimes E_1 \\
		
		\rho(\ekp_2) & = & I_2^{\otimes (k+1)-1} \otimes E_2 \otimes B^{\otimes 1-1} = I_2^{\otimes k} \otimes E_2 = I_{2^k} \otimes E_2 \\
		\ \\
		
		\rho(\ekp_3) & = & I_2^{\otimes (k+1)-2} \otimes E_1 \otimes B^{\otimes 2-1} = (I_2^{\otimes k-1} \otimes E_1) \otimes B = \rho(\ek_1) \otimes B\\
		
		\rho(\ekp_4) & = & I_2^{\otimes (k+1)-2} \otimes E_2 \otimes B^{\otimes 2-1} = (I_2^{\otimes k-1} \otimes E_2) \otimes B = \rho(\ek_2) \otimes B\\
		\ \\
		
		\rho(\ekp_5) & = & I_2^{\otimes (k+1)-3} \otimes E_1 \otimes B^{\otimes 3-1} = (I_2^{\otimes k-2} \otimes E_1 \otimes B) \otimes B = \rho(\ek_3) \otimes B\\
		
		\rho(\ekp_6) & = & I_2^{\otimes (k+1)-3} \otimes E_2 \otimes B^{\otimes 3-1} = (I_2^{\otimes k-2} \otimes E_2 \otimes B) \otimes B = \rho(\ek_4) \otimes B\\
		
		& \vdots & \\
		
		\rho(\ekp_{2j-1}) & = & I_2^{\otimes k+1-j} \otimes E_1 \otimes B^{\otimes j-1} = (I_2^{\otimes k+1-j} \otimes E_1 \otimes B^{\otimes j-2}) \otimes B \\
  & = & \rho(\ek_{2j-3}) \otimes B\\
 
		\rho(\ekp_{2j}) & = & I_2^{\otimes k+1-j} \otimes E_2 \otimes B^{\otimes j-1} = (I_2^{\otimes k+1-j} \otimes E_2 \otimes B^{\otimes j-2}) \otimes B \\
  & = & \rho(\ek_{2j-2}) \otimes B\\
		
		& \vdots & \\
	
		\rho(\ekp_{2(k+1)-1}) & = & I_2^{\otimes k+1-(k+1)} \otimes E_1 \otimes B^{\otimes k+1-1} = E_1 \otimes B^{\otimes k} = (E_1 \otimes B^{\otimes k-1}) \otimes B \\
  & = & \rho(\ek_{2k-1}) \otimes B\\
		
		\rho(\ekp_{2(k+1)}) & = & I_2^{\otimes k+1-(k+1)} \otimes E_2 \otimes B^{\otimes k+1-1} = E_2 \otimes B^{\otimes k} = (E_2 \otimes B^{\otimes k-1}) \otimes B \\
  & = & \rho(\ek_{2k}) \otimes B\\
		
	\end{array}
	\]
\end{proof}

We next generalize the equivalence relation $\sim$, defined on $\gh_2$ in the proof of Theorem \ref{number of unique Clifford actions}, to $\gh_{2k}$ for any $k$.

\begin{defn}\label{equivalence mod two torsion points}
	Let $k \in \mathbb{N},\, k \geq 1$. For $\ek_{\mu}, \ek_{\eta} \in \gh_{2k}$, define the equivalence relation $\sim$ by $\ek_{\mu} \sim \ek_{\eta}$ if for all $\vec{v} \in J_2^{S_{\Delta_{2k}}}$, $\ek_{\mu} \cdot \vec{v} = \ek_{\eta} \cdot \vec{v}$. 
\end{defn}

One easily verifies that $\sim$ is an equivalence relation on $\gh_{2k}$. If $\ek_{\mu} \in \gh_{2k}$, we denote by $[\ek_{\mu}]$ the equivalence class of $\ek_{\mu}$ under the relation $\sim$; and we denote by $\faktor{\gh_{2k}}{\sim}$ the equivalence classes under $\sim$ considered as a group. Note that by Theorem \ref{number of unique Clifford actions}, this group has a total of $2^{k+1}$ classes. 

What we must remark  here is that the generators of the group $\faktor{\gh_{2k}}{\sim}$ are being viewed as operators on $J_2^{S_{\Delta_{2k}}}$, and not necessarily as multiplicative generators from a Clifford algebra setting. Moreover, this group is commutative, since all negatives are equivalent  to their positives when quotiented-out by our relation $\sim$ on $J_2^{S_{\Delta_{2k}}}$.  

\begin{lem}\label{new e mu in terms of old}
	Suppose $\mu = \{ i_1, \ldots, i_p \}$ is an increasing subsequence of $\{ 1, \ldots, 2k+2 \}$. Set $\underline{\mu} = \mu \setminus \{1, 2\}$, and denote by $\underline{\mu}-2$ the set obtained by subtracting $2$ from every element in the increasing subsequence $\underline{\mu}$. That is, if $ \mu \setminus \{1, 2\}=(i_{l_1}, \ldots ,i_{l_r})$, where $3\leq i_{l_1} \leq \cdots \leq i_{l_r}\leq 2k+2$, then  $ \underline{\mu}-2=(i_{l_1}-2,\ldots,i_{l_r}-2)$. (Note that $\mu$ or $\underline{\mu}$, and hence $\underline{\mu}-2$, could be the empty sequence.) Then we have the following: 
	
	\begin{enumerate}
		\item If $1, 2 \not\in \mu$, then $\rho(\ekp_\mu) = \left\{ \begin{array}{ll}
			\rho(\ek_{\underline{\mu} - 2}) \otimes I_2 & \textrm{ if $|\mu|$ is even} \\
			\rho(\ek_{\underline{\mu} - 2}) \otimes B & \textrm{ if $|\mu|$ is odd}
		\end{array} \right.$
		
		\item If $1 \in \mu$ and $2 \not\in \mu$, then $\rho(\ekp_\mu) = \left\{ \begin{array}{ll}
			\rho(\ekp_1) \cdot (\rho(\ek_{\underline{\mu} - 2}) \otimes B) & \textrm{ if $|\mu|$ is even} \\
			\rho(\ekp_1) \cdot (\rho(\ek_{\underline{\mu} - 2}) \otimes I_2) & \textrm{ if $|\mu|$ is odd}
		\end{array} \right.$
		
		\item If $1 \not\in \mu$ and $2 \in \mu$, then $\rho(\ekp_\mu) = \left\{ \begin{array}{ll}
			\rho(\ekp_2) \cdot (\rho(\ek_{\underline{\mu} - 2}) \otimes B) & \textrm{ if $|\mu|$ is even} \\
			\rho(\ekp_2) \cdot (\rho(\ek_{\underline{\mu} - 2}) \otimes I_2) & \textrm{ if $|\mu|$ is odd}
		\end{array} \right.$
		
		\item If $1, 2 \in \mu$, then $\rho(\ekp_\mu) = \left\{ \begin{array}{ll}
			\rho(\ekp_{12}) \cdot (\rho(\ek_{\underline{\mu} - 2}) \otimes I_2) & \textrm{ if $|\mu|$ is even} \\
			\rho(\ekp_{12}) \cdot (\rho(\ek_{\underline{\mu} - 2}) \otimes B) & \textrm{ if $|\mu|$ is odd}
		\end{array} \right.$
		
	\end{enumerate}
\end{lem}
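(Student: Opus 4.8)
The plan is to prove all four cases by the same strategy: combine Proposition \ref{Dirac representations}, the concatenation identity $e_\mu = e_{i_1} \cdots e_{i_p}$, and Corollary \ref{tensor corollary 1}. First I would fix an increasing subsequence $\mu = \{i_1, \ldots, i_p\}$ of $\{1, \ldots, 2k+2\}$ and write $\rho(\ekp_\mu) = \rho(\ekp_{i_1}) \cdots \rho(\ekp_{i_p})$ as a matrix product of the vector-generator representations. For each factor I would substitute the description from Lemma \ref{new vector generators in terms of old}: every index $i_j \geq 3$ contributes a factor of the form $\rho(\ek_{i_j - 2}) \otimes B$, while an index $1$ contributes $I_{2^k} \otimes E_1 = \rho(\ekp_1)$ and an index $2$ contributes $I_{2^k} \otimes E_2 = \rho(\ekp_2)$. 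The indices $1$ and $2$, when present, always come first in the increasing sequence $\mu$, so $\rho(\ekp_\mu)$ factors as $\big(\text{the } 1,2\text{-part}\big) \cdot \big(\rho(\ek_{i_{l_1}-2})\otimes B\big)\cdots\big(\rho(\ek_{i_{l_r}-2})\otimes B\big)$, where $\underline{\mu}-2 = (i_{l_1}-2, \ldots, i_{l_r}-2)$ and $r = |\underline{\mu}|$.

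Next I would collapse the product of the $r$ factors $\rho(\ek_{i_{l_m}-2}) \otimes B$ using Corollary \ref{tensor corollary 1} (with $m=2$ tensor slots, the first of size $2^k$ and the second of size $2$): this product equals $\big(\rho(\ek_{i_{l_1}-2}) \cdots \rho(\ek_{i_{l_r}-2})\big) \otimes B^r = \rho(\ek_{\underline{\mu}-2}) \otimes B^r$, using that $\ek_{\underline{\mu}-2} = \ek_{i_{l_1}-2}\cdots \ek_{i_{l_r}-2}$ since $\underline{\mu}-2$ is increasing. Now $B^2 = I_2$ (a direct $2\times 2$ computation, already noted in the proof of Theorem \ref{number of unique Clifford actions}), so $B^r = I_2$ when $r$ is even and $B^r = B$ when $r$ is odd. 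This is exactly the parity dichotomy in the statement, once one observes that $|\mu|$ and $|\underline{\mu}| = r$ have the same parity precisely when $|\mu \cap \{1,2\}|$ is even — i.e.\ in cases (1) and (4) the parity of $r$ matches that of $|\mu|$, while in cases (2) and (3) it is opposite. That accounts for the fact that "even $|\mu|$" pairs with $I_2$ in cases (1),(4) but with $B$ in cases (2),(3).

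Finally I would handle the $1,2$-prefix in each of the four cases. In case (1) there is no prefix, giving $\rho(\ekp_\mu) = \rho(\ek_{\underline{\mu}-2}) \otimes B^r$ directly. In case (2) the prefix is $\rho(\ekp_1)$, in case (3) it is $\rho(\ekp_2)$, and in case (4) it is $\rho(\ekp_1)\rho(\ekp_2) = \rho(\ekp_{12})$; in each case left-multiplying the collapsed tensor gives the asserted formula, after noting that in case (4) the two prefix factors genuinely multiply in the order $1$ then $2$ since $1 < 2$. I expect the only real subtlety — the "main obstacle," though it is more bookkeeping than difficulty — to be getting the parity correspondence between $|\mu|$ and $r = |\underline{\mu}|$ exactly right across the four cases, since removing zero, one, or two elements from $\mu$ shifts the parity accordingly; I would state this as an explicit sub-observation ($r = |\mu| - |\mu \cap \{1,2\}|$) before concluding. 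No convergence or subtle algebraic input is needed beyond Lemma \ref{new vector generators in terms of old}, Corollary \ref{tensor corollary 1}, and $B^2 = I_2$.
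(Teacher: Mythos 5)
Your proposal is correct and follows essentially the same route as the paper's proof: expand $\rho(\ekp_\mu)$ as a product of vector-generator representations, substitute via Lemma \ref{new vector generators in terms of old}, collapse the tensor factors with Corollary \ref{tensor corollary 1}, and reduce $B^r$ using $B^2=I_2$. Your explicit parity bookkeeping $r=|\mu|-|\mu\cap\{1,2\}|$ is exactly the observation the paper uses implicitly when it writes $B^{p}$, $B^{p-1}$, or $B^{p-2}$ in the respective cases.
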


\begin{proof}
	For (1): suppose $1, 2 \not\in \mu$. Then 
	\[
	\begin{array}{lll}
		\rho(\ekp_\mu) & = & \rho(\ekp_{i_1}) \cdots \rho(\ekp_{i_p}) \\
		& = & (\rho(\ek_{i_1-2}) \otimes B) \cdots  (\rho(\ek_{i_p - 2}) \otimes B) \textrm{ (by Lemma \ref{new vector generators in terms of old}, as $i_1 \geq 3$)}\\
		& = & (\rho(\ek_{i_1 - 2}) \cdots \rho(\ek_{i_p-2})) \otimes B^p \textrm{ (by Corollary \ref{tensor corollary 1}) }\\
		& = & \rho(\ek_{\mu - 2}) \otimes B^{|\mu|} \\
		& = & \rho(\ek_{\underline{\mu} - 2}) \otimes B^{|\mu|} \textrm{ (as $1, 2 \not\in \{i_1, \ldots, i_p\}$) }\\
		& = & \left\{\begin{array}{ll}
			\rho(\ek_{\underline{\mu}-2}) \otimes I_2 & \textrm{ if $|\mu|$ is even} \\
			\rho(\ek_{\underline{\mu}-2}) \otimes B & \textrm{ if $|\mu|$ is odd}
		\end{array} \right.
	\end{array}
	\]
	For (2): suppose $1 \in\mu$ and $2 \not\in \mu$. Then
	\[
	\begin{array}{lll}
		\rho(\ekp_\mu) & = & \rho(\ekp_1) \cdot \rho(\ekp_{i_2}) \cdots \rho(\ekp_{i_p}) \\
		& = & \rho(\ekp_1) \cdot (\rho(\ek_{i_2-2}) \otimes B) \cdots (\rho(\ek_{i_p-2}) \otimes B) \textrm{ (by Lemma \ref{new vector generators in terms of old}, as $i_2 \geq 3$)}\\
		& = & \rho(\ekp_1) \cdot [(\rho(\ek_{i_2-2}) \cdots \rho(\ek_{i_p-2})) \otimes B^{p-1}] \textrm{ (by Corollary \ref{tensor corollary 1})}\\
		& = & \left\{\begin{array}{ll}
			\rho(\ekp_1) \cdot (\rho(\ek_{\underline{\mu}-2}) \otimes B) & \textrm{ if $|\mu|$ is even} \\
			\rho(\ekp_1) \cdot (\rho(\ek_{\underline{\mu}-2}) \otimes I_2) & \textrm{ if $|\mu|$ is odd}
		\end{array} \right.
	\end{array}
	\]
	
	For (3): suppose $1 \not\in\mu$ and $2 \in \mu$. Then
	\[
	\begin{array}{lll}
		\rho(\ekp_\mu) & = & \rho(\ekp_2) \cdot \rho(\ekp_{i_2}) \cdots \rho(\ekp_{i_p}) \\
		& = & \rho(\ekp_2) \cdot (\rho(\ek_{i_2-2}) \otimes B) \cdots (\rho(\ek_{i_p-2}) \otimes B) \textrm{ (by Lemma \ref{new vector generators in terms of old}, as $i_2 \geq 3$)}\\
		& = & \rho(\ekp_2) \cdot [(\rho(\ek_{i_2-2}) \cdots \rho(\ek_{i_p-2})) \otimes B^{p-1}] \textrm{ (by Corollary \ref{tensor corollary 1})}\\
		& = & \left\{\begin{array}{ll}
			\rho(\ekp_2) \cdot (\rho(\ek_{\underline{\mu}-2}) \otimes B) & \textrm{ if $|\mu|$ is even} \\
			\rho(\ekp_2) \cdot (\rho(\ek_{\underline{\mu}-2}) \otimes I_2) & \textrm{ if $|\mu|$ is odd}
		\end{array} \right.
	\end{array}
	\]
	
	For (4): suppose $1, 2 \in \mu$. Then
	\[
	\begin{array}{lll}
		\rho(\ekp_\mu) & = & \rho(\ekp_{12}) \cdot \rho(\ekp_{i_3}) \cdots \rho(\ekp_{i_p}) \\
		& = & \rho(\ekp_{12}) \cdot (\rho(\ek_{i_3-2}) \otimes B) \cdots (\rho(\ek_{i_p-2}) \otimes B) \textrm{ (by Lemma \ref{new vector generators in terms of old}, as $i_3 \geq 3$)}\\
		& = & \rho(\ekp_{12}) \cdot [(\rho(\ek_{i_3-2}) \cdots \rho(\ek_{i_p-2})) \otimes B^{p-2}] \textrm{ (by Corollary \ref{tensor corollary 1})}\\
		& = & \left\{\begin{array}{ll}
			\rho(\ekp_{12}) \cdot (\rho(\ek_{\underline{\mu}-2}) \otimes I_2) & \textrm{ if $|\mu|$ is even} \\
			\rho(\ekp_{12}) \cdot (\rho(\ek_{\underline{\mu}-2}) \otimes B) & \textrm{ if $|\mu|$ is odd}
		\end{array} \right.
	\end{array}
	\]
\end{proof}

With a general understanding of what these matrix representations look like and how their negatives are quotiented-away when acting on 2-torsion points, we turn our interest toward the general shape of the matrices. 

\begin{defn}\label{matrix shape}
	For $n \in \mathbb{N}, \, n \geq 1$ and $M = (m_{ij}) \in \mathbb{C}(n)$, define the \textbf{shape} of $M$ to be $\textrm{Sh}(M)=(s_{ij})$ where for $1 \leq i, j \leq n$, \[s_{ij} = \left\{\begin{array}{ll}
		1 & \textrm{ if } m_{ij} \neq 0\\
		0 & \textrm{ if } m_{ij} = 0
	\end{array} \right.
	\]
	That is, for each $M$, $\textrm{Sh}(M)$ is the matrix obtained from $M$ by replacing each of its nonzero entries with a {\rm 1}.
\end{defn}

Now recall that a \textit{permutation matrix} is an $n \times n$ matrix (for some $n \in \mathbb{N}, n > 0$) with exactly one 1 in each row and each column, and zeros elsewhere. Any permutation matrix $P \in \mathbb{C}(n)$ is the result of switching the rows of the $n \times n$ identity matrix $I_n$ according to some permutation $\sigma$ on $\{1, \ldots, n\}$. The result of applying $P$ to an $n$-vector $\vec{v} \in \mathbb{C}^n$ is a permutation of the entries of $\vec{v}$ according to  $\sigma$. The product of permutation matrices, being equivalent to the composition of permutations on $\{1, \ldots, n\}$, is another permutation matrix. Also, observe that the Kronecker product of permutation matrices is again a permutation matrix. One can check that if both $\operatorname{Sh}(M)$ and $\operatorname{Sh}(N)$ are permutation matrices, then 

(i) $\textrm{Sh}(M \cdot N) = \textrm{Sh}(M) \cdot \textrm{Sh}(N)$ if $M, N \in \mathbb{C}(n)$, 

and 

(ii) $\textrm{Sh}(M \otimes N) = \textrm{Sh}(M) \otimes \textrm{Sh}(N)$.

\begin{lem}\label{e1, e2, and e12 have permutation shapes}
	For any $k \in \mathbb{N},\, k \geq 1$, $\textrm{Sh}(\rho(\ek_1)), \textrm{Sh}(\rho(\ek_2)), $ and $\textrm{Sh}(\rho(\ek_{12}))$ are $2^k \times 2^k$ permutation matrices. 
\end{lem}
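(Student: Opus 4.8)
The plan is to prove the statement by induction on $k$, using the recursive description of the vector-generator representations supplied by Lemma \ref{new vector generators in terms of old} together with properties (i) and (ii) of the $\operatorname{Sh}$ operator on matrices whose shapes are permutation matrices. For the base case $k=1$, I would simply read off the shapes from the explicit matrices $E_1, E_2, E_{12}$ given before Proposition \ref{Dirac representations}: since
\[
E_1 = \begin{bmatrix} i & 0 \\ 0 & -i \end{bmatrix}, \quad E_2 = \begin{bmatrix} 0 & i \\ i & 0 \end{bmatrix}, \quad E_{12} = \begin{bmatrix} 0 & -1 \\ 1 & 0 \end{bmatrix},
\]
we get $\operatorname{Sh}(\rho(\stackrel{1}{e}_1)) = I_2$, $\operatorname{Sh}(\rho(\stackrel{1}{e}_2)) = \operatorname{Sh}(\rho(\stackrel{1}{e}_{12})) = \left[\begin{smallmatrix} 0 & 1 \\ 1 & 0 \end{smallmatrix}\right]$, all of which are $2 \times 2$ permutation matrices. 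I also note that $\operatorname{Sh}(B) = \left[\begin{smallmatrix} 0 & 1 \\ 1 & 0 \end{smallmatrix}\right]$ is a permutation matrix, a fact needed repeatedly below.

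For the inductive step, assume $\operatorname{Sh}(\rho(\stackrel{k}{e}_1))$, $\operatorname{Sh}(\rho(\stackrel{k}{e}_2))$, $\operatorname{Sh}(\rho(\stackrel{k}{e}_{12}))$ are $2^k \times 2^k$ permutation matrices. By Lemma \ref{new vector generators in terms of old}, $\rho(\stackrel{k+1}{e}_1) = I_{2^k} \otimes E_1$ and $\rho(\stackrel{k+1}{e}_2) = I_{2^k} \otimes E_2$; applying property (ii) of $\operatorname{Sh}$ and the fact that $\operatorname{Sh}(I_{2^k}) = I_{2^k}$ is a permutation matrix, we get $\operatorname{Sh}(\rho(\stackrel{k+1}{e}_1)) = I_{2^k} \otimes \operatorname{Sh}(E_1) = I_{2^k} \otimes I_2 = I_{2^{k+1}}$ and $\operatorname{Sh}(\rho(\stackrel{k+1}{e}_2)) = I_{2^k} \otimes \operatorname{Sh}(E_2)$, both $2^{k+1} \times 2^{k+1}$ permutation matrices. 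For $\rho(\stackrel{k+1}{e}_{12})$, I would write $\rho(\stackrel{k+1}{e}_{12}) = \rho(\stackrel{k+1}{e}_1)\rho(\stackrel{k+1}{e}_2) = (I_{2^k} \otimes E_1)(I_{2^k} \otimes E_2) = I_{2^k} \otimes (E_1 E_2) = I_{2^k} \otimes E_{12}$ using Lemma \ref{tensor lemma 1}; then $\operatorname{Sh}(\rho(\stackrel{k+1}{e}_{12})) = I_{2^k} \otimes \operatorname{Sh}(E_{12})$, again a permutation matrix.

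In fact, the cleanest route may bypass induction entirely: Proposition \ref{Dirac representations} gives $\rho(\stackrel{k}{e}_1) = I_2^{\otimes k-1} \otimes E_1$, $\rho(\stackrel{k}{e}_2) = I_2^{\otimes k-1} \otimes E_2$, and (via Lemma \ref{tensor lemma 1}) $\rho(\stackrel{k}{e}_{12}) = \rho(\stackrel{k}{e}_1)\rho(\stackrel{k}{e}_2) = I_2^{\otimes k-1} \otimes E_{12}$, so by repeated application of property (ii) each shape is a $(k-1)$-fold Kronecker product of $I_2$ with $\operatorname{Sh}(E_1)$, $\operatorname{Sh}(E_2)$, or $\operatorname{Sh}(E_{12})$ respectively — a Kronecker product of permutation matrices, hence a permutation matrix of the correct size $2^{k-1} \cdot 2 = 2^k$. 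I would present this direct argument as the main proof and perhaps remark on the inductive alternative. The only real subtlety — and the one point worth stating carefully — is the justification that $\operatorname{Sh}$ commutes with Kronecker products in this setting, i.e. property (ii); this holds because $E_1, E_2, E_{12}$ (and $I_2$) have at most one nonzero entry per row and column, so no cancellation can occur when forming the Kronecker product, and the shape of a Kronecker product of such matrices genuinely factors. This is the step I would be most careful to invoke correctly, though it is already asserted in the text preceding the lemma.
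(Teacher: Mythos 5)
Your ``direct'' argument is correct and is essentially the paper's own proof: the paper likewise reads $\rho(\ek_1)=I_{2^{k-1}}\otimes E_1$ and $\rho(\ek_2)=I_{2^{k-1}}\otimes E_2$ off the explicit Kronecker decomposition and applies property (ii) of $\operatorname{Sh}$, the only cosmetic difference being that the paper handles $\ek_{12}$ via $\operatorname{Sh}(MN)=\operatorname{Sh}(M)\operatorname{Sh}(N)$ (property (i)) rather than first rewriting $\rho(\ek_{12})$ as $I_2^{\otimes k-1}\otimes E_{12}$. The inductive wrapper you offer first is sound but unnecessary, as you yourself observe.
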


\begin{proof} 
	Fix $k \in \mathbb{N},\, k \geq 1$. By inspection, $\textrm{Sh}(E_1), \textrm{Sh}(E_2), $ and $\textrm{Sh}(E_{12})$ are $2 \times 2$ permutation matrices, and $\textrm{Sh}(I_{2^{k-1}})$ is a $2^{k-1} \times 2^{k-1}$ permutation matrix. By Lemma \ref{new vector generators in terms of old}, $\rho(\ek_1) = I_{2^{k-1}} \otimes E_1$, and hence $\textrm{Sh}(\rho(\ek_1)) = \textrm{Sh}(I_{2^{k-1}} \otimes E_1) = \textrm{Sh}(I_{2^{k-1}}) \otimes \textrm{Sh}(E_1)$ is a $2^k \times 2^k$ permutation matrix. Similarly, $\textrm{Sh}(\rho(\ek_2)) = \textrm{Sh}(I_{2^{k-1}} \otimes E_2) = \textrm{Sh}(I_{2^{k-1}}) \otimes \textrm{Sh}(E_2)$ is a $2^k \times 2^k$ permutation matrix. Then $\textrm{Sh}(\rho(\ek_{12})) = \textrm{Sh}(\rho(\ek_1) \cdot \rho(\ek_2)) = \textrm{Sh}(\rho(\ek_1)) \cdot \textrm{Sh}(\rho(\ek_2))$ is a $2^k \times 2^k$ permutation matrix.
\end{proof}

\begin{prop}\label{e mu have permutation shapes} For all $k \in \mathbb{N}, \, k \geq 1$, and for all generators $e_\mu \in \gh_{2k}$, $\textrm{Sh}(\rho(e_\mu))$ is a $2^k \times 2^k$ permutation matrix. 
\end{prop}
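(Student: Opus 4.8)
The plan is to induct on $k$. The base case $k=1$ is handled by Lemma~\ref{e1, e2, and e12 have permutation shapes} together with the observation that the remaining generators in $\gh_2$ (namely $e_\emptyset$, whose representation is $I_2$) obviously have permutation shape, and that $\mathrm{Sh}$ is insensitive to sign, so the negatives in $\gh_2$ contribute nothing new. For the inductive step, assume that for every generator $\ek_\nu \in \gh_{2k}$, $\mathrm{Sh}(\rho(\ek_\nu))$ is a $2^k \times 2^k$ permutation matrix; I want to show the same for every $\ekp_\mu \in \gh_{2k+2}$.

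The key tool is Lemma~\ref{new e mu in terms of old}, which expresses $\rho(\ekp_\mu)$, in each of the four cases according to whether $1$ and/or $2$ belong to $\mu$, as a product of one of the four "small" matrices $\rho(\ekp_1)$, $\rho(\ekp_2)$, $\rho(\ekp_{12})$, or (in case (1)) nothing, with a Kronecker product $\rho(\ek_{\underline\mu - 2}) \otimes C$ where $C \in \{I_2, B\}$. First I would note that $\underline\mu - 2$ is an increasing subsequence of $\{1, \ldots, 2k\}$, so $\ek_{\underline\mu - 2} \in \gh_{2k}$ and the inductive hypothesis gives that $\mathrm{Sh}(\rho(\ek_{\underline\mu-2}))$ is a $2^k\times 2^k$ permutation matrix. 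Since $\mathrm{Sh}(I_2)$ and $\mathrm{Sh}(B)$ are $2\times 2$ permutation matrices, property (ii) above gives that $\mathrm{Sh}(\rho(\ek_{\underline\mu-2}) \otimes C) = \mathrm{Sh}(\rho(\ek_{\underline\mu-2})) \otimes \mathrm{Sh}(C)$ is a $2^{k+1}\times 2^{k+1}$ permutation matrix. Next I would record that $\mathrm{Sh}(\rho(\ekp_1))$, $\mathrm{Sh}(\rho(\ekp_2))$, and $\mathrm{Sh}(\rho(\ekp_{12}))$ are $2^{k+1}\times 2^{k+1}$ permutation matrices — this is exactly Lemma~\ref{e1, e2, and e12 have permutation shapes} applied with $k+1$ in place of $k$. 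Finally, in each of the four cases, $\rho(\ekp_\mu)$ is a product of two matrices each of whose shape is a permutation matrix, so by property (i), $\mathrm{Sh}(\rho(\ekp_\mu))$ is the product of two $2^{k+1}\times 2^{k+1}$ permutation matrices, hence itself a $2^{k+1}\times 2^{k+1}$ permutation matrix. (In case (1) with $|\mu|$ even there is no leading factor, but then $\rho(\ekp_\mu)$ is directly a Kronecker product already shown to have permutation shape.)

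I do not anticipate a serious obstacle: the proof is a bookkeeping argument chaining together Lemma~\ref{new vector generators in terms of old}, Lemma~\ref{new e mu in terms of old}, Lemma~\ref{e1, e2, and e12 have permutation shapes}, and the two shape-compatibility properties (i) and (ii) for products and Kronecker products. The one point requiring a little care is to make sure that $\rho(\ek_{\underline\mu-2})$ genuinely lies in the scope of the inductive hypothesis — i.e.\ that deleting $1,2$ from $\mu$ and shifting down by $2$ yields a bona fide increasing subsequence of $\{1,\dots,2k\}$, which is clear since $\mu$ is an increasing subsequence of $\{1,\dots,2k+2\}$ and all its elements exceeding $2$ lie in $\{3,\dots,2k+2\}$. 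It is also worth remarking once that, because $\mathrm{Sh}$ ignores scalar factors of modulus $1$ (in particular factors of $i$ and signs), the statement for generators immediately covers their negatives in $\gh_{2k}$, so nothing extra is needed there.
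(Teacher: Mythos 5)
Your proposal is correct and follows essentially the same route as the paper's own proof: induction on $k$ with the base case by inspection, and the inductive step obtained by combining Lemma \ref{new e mu in terms of old}, Lemma \ref{e1, e2, and e12 have permutation shapes}, and the compatibility of $\operatorname{Sh}$ with matrix products and Kronecker products. The extra care you take in checking that $\underline{\mu}-2$ is a genuine increasing subsequence of $\{1,\dots,2k\}$ and that $\operatorname{Sh}$ ignores signs is implicit in the paper but worth making explicit.
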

\begin{proof}
	It is sufficient to prove the proposition for all positive $\ek_\mu \in \gh_{2k}$, i.e.\ all $\ek_\mu$ for which $\mu$ is an increasing subsequence of $\{ 1, \ldots, 2k \}$, as it is clear that $\textrm{Sh}(\rho(\ek_\mu)) = \textrm{Sh}(\rho(-\ek_\mu))$.
	
	By inspection, the claim holds for each $\stackrel{1}{e}_\mu \in \gh_2$. Suppose it holds for some $k \geq 1$, and let $\ekp_\mu \in \gh_{2k+2}$. Let $\underline{\mu}$ denote $\mu \setminus \{1,2\}$. We know from Lemma \ref{new e mu in terms of old} that $\rho(\ekp_\mu)$ has been formed from the element $\rho(\ek_{\underline{\mu}-2})$ of $\gh_{2k}$ in one of eight ways, by tensoring $\rho(\ek_{\underline{\mu}-2})$ by either $I_2$ or $B$  and then possibly matrix-multiplying on the left by $\rho(\ekp_1), \rho(\ekp_2), $ or $\rho(\ekp_{12})$. Both $I_2$ and $B$ have the shapes of permutation matrices; $\textrm{Sh}(\rho(\ek_{\underline{\mu}-2}))$ is a permutation matrix by the induction hypothesis; and $\rho(\ekp_1), \rho(\ekp_2)$, and $\rho(\ekp_{12})$ have the shapes of permutation matrices by Lemma \ref{e1, e2, and e12 have permutation shapes}. In all cases, $\rho(\ekp_\mu)$ has the same dimensions as either $\rho(\ek_{\underline{\mu}-2}) \otimes I_2$ or $\rho(\ek_{\underline{\mu}-2}) \otimes B$, namely $(2^k \cdot 2) \times (2^k \cdot 2) = 2^{k+1} \times 2^{k+1}$, and  $\textrm{Sh}(\rho(\ekp_\mu))$ is a permutation matrix.
\end{proof}

The next three lemmas describe the effect of matrix multiplying on the left by the representation of either $\ek_1$, $\ek_2$, or $\ek_{12}$ (for some $k \in \mathbb{N}, k \geq 1$). Recall that the presence of negative signs in matrices acting on 2-torsion points has no effect on the action. That is, if $M \in \mathbb{C}^{2^k}$ is a complex matrix acting on $J_2^{S_{\Delta_{2k}}}$ and $M'$ is a matrix obtained by replacing some or all of the entries $m_{ij}$ in $M$ with $-m_{ij}$, then $M \cdot \vec{v} = M' \cdot \vec{v}$ for all $\vec{v} \in J_2^{S_{\Delta_{2k}}}$.

\begin{lem}\label{what multiplication by e1 does}
	Let $M$ be any $2^k \times 2^k$ matrix. The effect of matrix multiplying $M$ on the left by $\rho(\ek_1)$ is to replace each entry $m_{ij}$ of $M$ by either $i m_{ij}$ or $-i m_{ij}$. 
	
\end{lem}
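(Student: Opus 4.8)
The plan is to reduce the statement to the simple observation that $\rho(\ek_1)$ is a diagonal matrix all of whose diagonal entries are $\pm i$, and that left multiplication by such a matrix rescales rows. First I would recall, from Proposition \ref{Dirac representations} (or equivalently from Lemma \ref{new vector generators in terms of old}), that
\[
\rho(\ek_1) = I_2^{\otimes k-1} \otimes E_1 = I_{2^{k-1}} \otimes E_1, \qquad E_1 = \begin{bmatrix} i & 0 \\ 0 & -i \end{bmatrix}.
\]
Since $E_1$ is diagonal and the Kronecker product of diagonal matrices is diagonal, $\rho(\ek_1)$ is a $2^k \times 2^k$ diagonal matrix; explicitly, writing its diagonal as $(d_1, \ldots, d_{2^k})$, the block structure $I_{2^{k-1}} \otimes E_1$ gives $d_l = i$ when $l$ is odd and $d_l = -i$ when $l$ is even, so in every case $d_l \in \{i, -i\}$.

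Next I would invoke the elementary fact that for a diagonal matrix $D = \operatorname{diag}(d_1, \ldots, d_{2^k})$ and any $2^k \times 2^k$ matrix $M = (m_{ij})$, the product $D \cdot M$ has $(i,j)$ entry equal to $d_i m_{ij}$; that is, left multiplication by $D$ multiplies the $i$-th row of $M$ by the scalar $d_i$. Applying this with $D = \rho(\ek_1)$ and using $d_i \in \{i, -i\}$ yields that each entry $m_{ij}$ of $M$ is replaced by $d_i m_{ij}$, which is $i m_{ij}$ if $i$ is odd and $-i m_{ij}$ if $i$ is even; in particular it is always either $i m_{ij}$ or $-i m_{ij}$, as claimed. (If desired, one can record the sharper statement that the choice of sign depends only on the parity of the row index $i$.)

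There is essentially no obstacle here: the only thing to be careful about is correctly reading off the diagonal of the Kronecker product $I_{2^{k-1}} \otimes E_1$, which follows immediately from the definition of the Kronecker product. The statement then falls out of the standard description of how a diagonal matrix acts by left multiplication, and no induction or appeal to the deeper structural lemmas is needed.
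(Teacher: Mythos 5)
Your proposal is correct and follows essentially the same route as the paper: both identify $\rho(\ek_1) = I_{2^{k-1}} \otimes E_1$ as a diagonal matrix with diagonal entries in $\{i,-i\}$ and then observe that left multiplication by such a matrix rescales each row by $\pm i$. Your version merely spells out the row-scaling step and the parity pattern of the signs a bit more explicitly than the paper does.
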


\begin{proof}
	Fix $k \in \mathbb{N}$, $k \geq 1$, and let $M \in \mathbb{C}(2^k)$. Note that it is sufficient to show that $\rho(\ek_1)$ has the following form:
	\[
	(\star) \hspace{5mm}\left[
	\begin{array}{cccc}
		\pm i & & &  \\
		& \pm i & &  \\
		& & \ddots  & \\
		& & & \pm i
	\end{array}
	\right]
	\] 
	(with zeros off the main diagonal). By Lemma \ref{new vector generators in terms of old}, $\rho(\ek_1) = I_{2^{k-1}} \otimes E_1 = I_{2^{k-1}} \otimes \left[ \begin{array}{ll}
		i & 0 \\
		0 & -i \\
	\end{array} \right]$, which clearly is a $2^k \times 2^k$ matrix of the form $(\star)$.
	
\end{proof}

\begin{lem}\label{what e2 does}
	Let $M$ be any $2^k \times 2^k$ matrix. The effect of matrix multiplying $M$ on the left by $\rho(\ek_2)$ is to replace each entry $m_{ij}$ of $M$ by $im_{ij}$, and then to interchange rows $1$ with $2$, $3$ with $4, \ldots, $ and $2^k - 1$ with $2^k$. 
\end{lem}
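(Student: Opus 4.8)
The plan is to reduce everything to the explicit description of $\rho(\ek_2)$ furnished by Lemma \ref{new vector generators in terms of old}, and then to read off the effect of left multiplication directly from the block structure of that matrix. By Lemma \ref{new vector generators in terms of old} we have $\rho(\ek_2) = I_{2^{k-1}} \otimes E_2$, where $E_2 = \left[\begin{smallmatrix} 0 & i \\ i & 0\end{smallmatrix}\right]$. Unwinding the Kronecker product, this is the $2^k \times 2^k$ block-diagonal matrix whose $l$-th diagonal block (for $l = 1, \ldots, 2^{k-1}$) equals $E_2$ and occupies rows and columns $2l-1$ and $2l$, with zeros elsewhere.

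Next I would compute the product $\rho(\ek_2)\,M$ for an arbitrary $M = (m_{ij}) \in \mathbb{C}(2^k)$ with rows $R_1, \ldots, R_{2^k}$. Since $\rho(\ek_2)$ is block diagonal with $2 \times 2$ blocks aligned to the row-pairs $\{2l-1, 2l\}$, left multiplication decouples across those pairs: rows $2l-1$ and $2l$ of $\rho(\ek_2)\,M$ are obtained by applying $E_2$ to $\left[\begin{smallmatrix} R_{2l-1} \\ R_{2l}\end{smallmatrix}\right]$, which yields $\left[\begin{smallmatrix} i R_{2l} \\ i R_{2l-1}\end{smallmatrix}\right]$. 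Collecting these identities over all $l = 1, \ldots, 2^{k-1}$ shows that $\rho(\ek_2)\,M$ is precisely the matrix obtained from $M$ by multiplying every entry by $i$ and interchanging rows $1 \leftrightarrow 2$, $3 \leftrightarrow 4, \ldots, 2^k - 1 \leftrightarrow 2^k$, which is the assertion. Equivalently, one can argue entrywise: writing $\rho(\ek_2) = (p_{st})$ with $p_{st} = i$ precisely when $\{s, t\} = \{2l-1, 2l\}$ for some $l$ and $p_{st} = 0$ otherwise, the $(s,j)$ entry of $\rho(\ek_2)\,M$ is $\sum_t p_{st} m_{tj} = i\, m_{\sigma(s)\,j}$, where $\sigma$ is the fixed-point-free involution of $\{1, \ldots, 2^k\}$ swapping $2l-1$ with $2l$ for each $l$.

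I expect no substantive obstacle here; the only point requiring care is the indexing bookkeeping — confirming that $I_{2^{k-1}} \otimes E_2$ really has the stated block-diagonal form (immediate from the definition of the Kronecker product, with one copy of $E_2$ per diagonal entry of $I_{2^{k-1}}$), and that left multiplication by such a block-diagonal matrix acts independently on each corresponding pair of rows of $M$. Since multiplication by the scalar $i$ is uniform across all entries, the order in which the scaling and the row interchanges are performed is immaterial.
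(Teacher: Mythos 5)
Your proposal is correct and follows essentially the same route as the paper: both reduce to the identity $\rho(\ek_2) = I_{2^{k-1}} \otimes E_2$ from Lemma \ref{new vector generators in terms of old} and read off the block-diagonal form. You simply make explicit the final step (that left multiplication by this block-diagonal matrix scales every entry by $i$ and swaps adjacent row pairs), which the paper leaves as an immediate observation.
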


\begin{proof}
	Fix $k \in \mathbb{N}$, $k \geq 1$, and let $M \in \mathbb{C}(2^k)$. Note that it is sufficient to show that $\rho(\ek_2)$ has the following form:
	\[
	(\star) \hspace{5mm}\left[
	\begin{array}{ccccccccc}
		0 &  i&  & & & & & \\
		i & 0 & & & & & &\\
		& & 0 &  i & & & &\\
		& &  i & 0 & & & & \\
		& & & & \ddots & & & & \\
		& & & & & & 0 &  i & \\
		& & & & & &  i & 0 & 
	\end{array}
	\right]
	\] 
	(with zeros other than the entries shown). By Lemma \ref{new vector generators in terms of old}, $\rho(\ek_2) = I_{2^{k-1}} \otimes E_2 = I_{2^{k-1}} \otimes \left[ \begin{array}{ll}
		0 & i \\
		i & 0 \\
	\end{array} \right]$, which clearly is a $2^k \times 2^k$ matrix of the form $(\star)$.
\end{proof}

\begin{lem}\label{what e12 does}
	Let $M$ be any $2^k \times 2^k$ matrix. The effect of multiplying $M$ on the left by $\rho(\ek_{12})$ is to replace each entry $m_{ij}$ of $M$ by $m_{ij}$ or $-m_{ij}$, and then to interchange rows $1$ with $2$, $3$ with $4, \ldots, $ and $2^k - 1$ with $2^k$.
\end{lem}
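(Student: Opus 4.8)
The plan is to reduce everything to the identity $\rho(\ek_{12}) = \rho(\ek_1)\cdot\rho(\ek_2)$, which holds because $\ek_{12} = \ek_1\ek_2$ and $\rho$ is a ring isomorphism, and then to compute this product explicitly using the descriptions of $\rho(\ek_1)$ and $\rho(\ek_2)$ already in hand. First I would recall, as in the proofs of Lemmas \ref{what multiplication by e1 does} and \ref{what e2 does} (via Lemma \ref{new vector generators in terms of old}), that $\rho(\ek_1) = I_{2^{k-1}} \otimes E_1$ and $\rho(\ek_2) = I_{2^{k-1}} \otimes E_2$. Then Lemma \ref{tensor lemma 1} gives
\[
\rho(\ek_{12}) \;=\; (I_{2^{k-1}} \otimes E_1)\cdot(I_{2^{k-1}} \otimes E_2) \;=\; (I_{2^{k-1}}\cdot I_{2^{k-1}}) \otimes (E_1 E_2) \;=\; I_{2^{k-1}} \otimes E_{12},
\]
where $E_{12} = \left[\begin{array}{cc} 0 & -1 \\ 1 & 0 \end{array}\right]$ as recorded at the start of the section.

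Next I would observe that $I_{2^{k-1}} \otimes E_{12}$ is the $2^k \times 2^k$ block-diagonal matrix with $2^{k-1}$ copies of $E_{12}$ down the diagonal, so that left-multiplying an arbitrary $M = (m_{ij}) \in \mathbb{C}(2^k)$ by it acts independently on each pair of rows $\{2l-1, 2l\}$: the new row $2l-1$ equals $-1$ times the old row $2l$, and the new row $2l$ equals the old row $2l-1$. Hence the net effect is to interchange rows $1$ with $2$, $3$ with $4, \ldots,$ and $2^k-1$ with $2^k$, while multiplying the entries that land in the odd-numbered rows by $-1$. Since this is exactly a replacement of each $m_{ij}$ by $\pm m_{ij}$ followed by the stated row interchanges, the claim follows.

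As an alternative route, one can compose the two preceding lemmas directly: because $\rho(\ek_{12})\cdot M = \rho(\ek_1)\cdot(\rho(\ek_2)\cdot M)$, Lemma \ref{what e2 does} first replaces each $m_{ij}$ by $i m_{ij}$ and performs the row interchanges, and then Lemma \ref{what multiplication by e1 does} multiplies each resulting entry by $\pm i$, so that each original entry is ultimately scaled by $i \cdot (\pm i) = \mp 1$ and the row interchanges remain. I do not anticipate any genuine obstacle here; the only thing requiring care is the bookkeeping of signs and the order of "scale then swap" versus "swap then scale," which is harmless since the statement only asserts that each entry acquires some sign $\pm 1$.
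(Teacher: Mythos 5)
Your proof is correct and follows essentially the same route as the paper: both compute $\rho(\ek_{12}) = (I_{2^{k-1}}\otimes E_1)(I_{2^{k-1}}\otimes E_2) = I_{2^{k-1}}\otimes E_{12}$ via the Kronecker product identity and then read off the block-diagonal action on pairs of rows. Your alternative argument by composing Lemmas \ref{what multiplication by e1 does} and \ref{what e2 does} is also valid, but the main line of reasoning matches the paper's proof.
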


\begin{proof}
	Fix $k \in \mathbb{N}$, $k \geq 1$, and let $M \in \mathbb{C}(2^k)$. Note that it is sufficient to show that $\rho(\ek_{12})$ has the following form:
	\[
	(\star) \hspace{5mm}\left[
	\begin{array}{ccccccccc}
		0 & \pm 1 &  & & & & & \\
		\pm 1 & 0 & & & & & &\\
		& & 0 & \pm 1 & & & &\\
		& &  \pm 1 & 0 & & & & \\
		& & & & \ddots & & & & \\
		& & & & & & 0 &  \pm 1 & \\
		& & & & & & \pm 1 & 0 & 
	\end{array}
	\right]
	\]
	(with zeros other than the entries shown).  Using  Lemmas \ref{tensor corollary 1} and \ref{new vector generators in terms of old}, we have 
 \begin{align*}
\rho(\ek_{12}) &= \rho(\ek_1) \cdot \rho(\ek_2) \\
&= (I_{2^{k-1}} \otimes E_1) \cdot (I_{2^{k-1}} \otimes E_2) \\ &= (I_{2^{k-1}} \cdot I_{2^{k-1}})\otimes (E_1 \cdot E_2) \\ &= I_{2^{k-1}} \otimes \left[ \begin{array}{cc}
		0 & -1 \\
		1 & 0 \\
	\end{array} \right],
 \end{align*}
 which clearly is a $2^k \times 2^k$ matrix of the form $(\star)$.
\end{proof}

In the following lemma, we show that the generators $e_\mu \in \gh_{2k}$ are divided into two different \textit{types}: real (with all nonzero entries in $\rho(e_\mu)$ being $\pm 1$), and imaginary (with all nonzero entries in $\rho(e_\mu)$ being $\pm i$).

\begin{lem}\label{flavors: real or complex}
	Let $e_\mu$ be any generator in $\gh_{2k}$,  for some $k \in \mathbb{N}, \, k \geq 1$. Then either each nonzero entry in $\rho(e_\mu)$ is in $\{-1,1\}$, or each nonzero entry in $\rho(e_\mu)$ is in $\{i, -i\}$.
\end{lem}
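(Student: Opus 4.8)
The plan is to induct on $k$, using the inductive structure provided by Lemma \ref{new e mu in terms of old} together with the three multiplication lemmas (Lemmas \ref{what multiplication by e1 does}, \ref{what e2 does}, and \ref{what e12 does}). First I would verify the base case $k=1$ directly: by inspection, $\rho(\stackrel{1}{e}_\emptyset) = I_2$, $\rho(\stackrel{1}{e}_1) = E_1$, $\rho(\stackrel{1}{e}_2) = E_2$ each have all nonzero entries in $\{i,-i\}$ except the identity which has entries in $\{1,-1\}$, wait — here I need to be careful: $E_1, E_2$ have entries $\pm i$, while $I_2$ and $E_{12}$ have entries $\pm 1$. So each of $\rho(\stackrel{1}{e}_\mu)$ for $\mu \in \{\emptyset, 1, 2, 12\}$ is either "real type" (entries in $\{-1,1\}$) or "imaginary type" (entries in $\{i,-i\}$), establishing the base case. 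By the observation that $\textrm{Sh}(\rho(e_\mu)) = \textrm{Sh}(\rho(-e_\mu))$ and that negating entries preserves both types, it suffices to treat positive $\ek_\mu$ with $\mu$ an increasing subsequence of $\{1,\ldots,2k\}$.

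For the inductive step, assume the claim holds for $\gh_{2k}$ and let $\ekp_\mu \in \gh_{2k+2}$ with $\underline{\mu} = \mu \setminus \{1,2\}$. By Proposition \ref{e mu have permutation shapes}, $\textrm{Sh}(\rho(\ekp_\mu))$ is a permutation matrix, so every row and column of $\rho(\ekp_\mu)$ has exactly one nonzero entry; I just need to show those nonzero entries are either all in $\{-1,1\}$ or all in $\{i,-i\}$. By the induction hypothesis, $\rho(\ek_{\underline{\mu}-2})$ is of one of the two types. Tensoring with $I_2$ preserves the type (since $I_2$ has entries in $\{1\}$ and the Kronecker product multiplies entries); tensoring with $B = \left[\begin{smallmatrix} 0 & -i \\ i & 0\end{smallmatrix}\right]$ flips the type (real $\times$ imaginary $=$ imaginary, imaginary $\times$ imaginary $=$ real — since $i \cdot i = -1$). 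So after the tensoring step in Lemma \ref{new e mu in terms of old}, the matrix $\rho(\ek_{\underline{\mu}-2}) \otimes I_2$ or $\rho(\ek_{\underline{\mu}-2}) \otimes B$ is still of one pure type. Then I would invoke the three multiplication lemmas: left-multiplying by $\rho(\ekp_1)$ multiplies every entry by $\pm i$ (Lemma \ref{what multiplication by e1 does}), hence flips the type; left-multiplying by $\rho(\ekp_2)$ multiplies every entry by $i$ and permutes rows (Lemma \ref{what e2 does}), hence flips the type; left-multiplying by $\rho(\ekp_{12})$ multiplies every entry by $\pm 1$ and permutes rows (Lemma \ref{what e12 does}), hence preserves the type. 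In every one of the eight cases of Lemma \ref{new e mu in terms of old}, $\rho(\ekp_\mu)$ is obtained from a pure-type matrix by operations each of which either preserves or flips the type, so $\rho(\ekp_\mu)$ is again of pure type. This completes the induction.

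The main subtlety — not really an obstacle, but the point requiring the most care — is bookkeeping the parity: in Lemma \ref{new e mu in terms of old} whether one tensors with $I_2$ or $B$ depends on $|\mu|$, and whether one left-multiplies by $\rho(\ekp_1)$, $\rho(\ekp_2)$, $\rho(\ekp_{12})$, or nothing depends on which of $1,2$ lie in $\mu$. One could actually track exactly which type results as a function of the type of $\rho(\ek_{\underline{\mu}-2})$, the parity of $|\mu|$, and the membership of $1,2$ in $\mu$, but for the statement as given it suffices to note that each building operation is type-preserving or type-flipping, and the set $\{\text{real}, \text{imaginary}\}$ is closed under "preserve" and "flip." I would also remark that one should double-check the claim that tensoring a real-type matrix with $B$ gives an imaginary-type matrix and tensoring an imaginary-type matrix with $B$ gives a real-type matrix, which is immediate since entries of a Kronecker product are products of entries of the factors and $\{1,-1\} \cdot \{i,-i\} = \{i,-i\}$ while $\{i,-i\} \cdot \{i,-i\} = \{1,-1\}$.
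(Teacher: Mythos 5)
Your proposal is correct and follows essentially the same route as the paper: induction on $k$, using Lemma \ref{new e mu in terms of old} to decompose $\rho(\ekp_\mu)$ and Lemmas \ref{what multiplication by e1 does}, \ref{what e2 does}, and \ref{what e12 does} to track the effect of each building step on the entries. Your observation that every operation either preserves or flips the type is just a cleaner packaging of the paper's case-by-case verification.
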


\begin{proof}
	We prove this by induction on $k$. The base case ($k=1$) holds by inspection. Suppose it holds for some $k \geq 1$. Let $\ekp_\mu \in \gh_{2k+2}$. As in Lemma \ref{new e mu in terms of old}, we set $\underline{\mu} = \mu \setminus \{1, 2\}$. By induction hypothesis, $\rho(\ek_{\underline{\mu}-2})$ either has all nonzero entries in $\{-1,1\}$ or has all nonzero entries in $\{-i, i\}$. Also, $|\mu|$ is either even or odd; so there are four main cases to consider. We present here the case for when $\rho(\ek_{\underline{\mu}-2})$ has all nonzero entries in $\{-1,1\}$ and $|\mu|$ is even (the proofs of the other three cases follow similar steps). By Lemma \ref{new e mu in terms of old}, within this case there are four subcases, depending on which of the numbers 1 and/or 2 are elements of the permutation $\mu$. 
	
	If $1, 2 \not\in \mu$, then $\rho(\ekp_\mu) = \rho(\ek_{\underline{\mu}-2}) \otimes I_2$; and Kronecker multiplying $\rho(\ek_{\underline{\mu}-2})$ by $I_2$ results in a matrix all of whose nonzero entries are still in $\{-1,1\}$.
	
	If $1 \in \mu$ and $2 \not\in \mu$, then $\rho(\ekp_\mu) = \rho(\ekp_1) \cdot  (\rho(\ek_{\underline{\mu}-2}) \otimes B)$. Kronecker multiplying $\rho(\ek_{\underline{\mu}-2})$ by $B$ results in a matrix all of whose nonzero entries are in $\{-i,i\}$. By Lemma \ref{what multiplication by e1 does}, $\rho(\ekp_1) \cdot  (\rho(\ek_{\underline{\mu}-2}) \otimes B)$ is a matrix all of whose nonzero entries are in $\{-1,1\}$.
	
	If $1 \not\in \mu$ and $2 \in \mu$, then $\rho(\ekp_\mu) = \rho(\ekp_2) \cdot  (\rho(\ek_{\underline{\mu}-2}) \otimes B)$. Kronecker multiplying $\rho(\ek_{\underline{\mu}-2})$ by $B$ results in a matrix all of whose nonzero entries are in $\{-i,i\}$. By Lemma \ref{what e2 does}, $\rho(\ekp_2) \cdot  (\rho(\ek_{\underline{\mu}-2}) \otimes B)$ is a matrix all of whose nonzero entries are in $\{-1,1\}$.
	
	If $1, 2 \in \mu$, then $\rho(\ekp_\mu) = \rho(\ekp_{12}) \cdot  (\rho(\ek_{\underline{\mu}-2}) \otimes I_2)$. Kronecker multiplying $\rho(\ek_{\underline{\mu}-2})$ by $I_2$ results in a matrix all of whose nonzero entries are in $\{-1,1\}$. By Lemma \ref{what e12 does}, $\rho(\ekp_{12}) \cdot  (\rho(\ek_{\underline{\mu}-2}) \otimes I_2)$ is a matrix all of whose nonzero entries are in $\{-1,1\}$.
	
	Then we have that when $\rho(\ek_{\underline{\mu}-2})$ has all nonzero entries in $\{-1,1\}$ and $|\mu|$ is even, all of the nonzero entries in $\rho(\ekp_\mu)$ are in $\{-1,1\}$. 
	
	Hence by similar induction steps in all cases, either $\rho(\ekp_\mu)$ has all of its nonzero entries in $\{-1,1\}$, or it has all of its nonzero entries in $\{-i,i\}$.
\end{proof}

Observe that if $P, P'$ are distinct $2^k \times 2^k$ permutation matrices, then there is a $\vec{v} \in J_2^{S_{\Delta_{2k}}}$ such that $P \cdot \vec{v} \neq P' \cdot \vec{v}$. By this and Lemma \ref{flavors: real or complex}, we have:

\begin{lem}\label{things in same class have same shape}
	Any two elements of a given equivalence class mod $\sim$ have the same shape. \hfill $\square$
\end{lem}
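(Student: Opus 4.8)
The plan is to argue as follows. Let $\ek_\mu, \ek_\eta \in \gh_{2k}$ with $\ek_\mu \sim \ek_\eta$. By Proposition \ref{e mu have permutation shapes}, both $\mathrm{Sh}(\rho(\ek_\mu))$ and $\mathrm{Sh}(\rho(\ek_\eta))$ are $2^k \times 2^k$ permutation matrices; call them $P$ and $P'$. By Lemma \ref{flavors: real or complex}, each of $\rho(\ek_\mu)$ and $\rho(\ek_\eta)$ is either \emph{real} (all nonzero entries in $\{-1,1\}$, so that $\rho(\ek_\mu)$ agrees with $P$ up to signs) or \emph{imaginary} (all nonzero entries in $\{-i,i\}$, so that $\rho(\ek_\mu)$ agrees with $iP$ up to signs). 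The claim is that $P = P'$.

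The key step is the observation stated just before the lemma: if $P, P'$ are distinct $2^k \times 2^k$ permutation matrices, then there is a $\vec v \in J_2^{S_{\Delta_{2k}}}$ with $P \cdot \vec v \neq P' \cdot \vec v$ — concretely, take the permutations $\sigma \neq \sigma'$ that $P, P'$ effect on $\{1,\dots,2^k\}$, pick an index $m$ with $\sigma(m) \neq \sigma'(m)$, and let $\vec v$ be the $2^k$-vector with $v_1 = \tfrac12$ in position $m$ and $0$ elsewhere; then $P\vec v$ has its nonzero entry in position $\sigma^{-1}$-image of $m$ while $P'\vec v$ has it elsewhere, so they differ. First I would record this. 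Then I would handle signs: since negating entries of a matrix acting on $J_2^{S_{\Delta_{2k}}}$ does not change the action (as noted in the paragraph preceding Lemma \ref{what multiplication by e1 does}, using relation (6) of Remark \ref{dim 1 2-torsion points}), the action of $\rho(\ek_\mu)$ on 2-torsion points equals that of $P$ (if $\ek_\mu$ is real) or of $iP$ (if imaginary), and similarly for $\ek_\eta$ with $P'$. Next I would rule out a real/imaginary mismatch: if, say, $\ek_\mu$ is real and $\ek_\eta$ is imaginary, then for the vector $\vec v$ above (with $v_1$ in some position) $P\vec v$ has a nonzero entry equal to $v_1 = \tfrac12$, while $iP'\vec v$ has that nonzero entry equal to $i v_1 = \tfrac i2 = v_2 \neq v_1$; choosing a position on which $P$ and $P'$ act so as to keep the entry's location fixed (or simply comparing the multiset of entry-values, which is $\{v_1, 0, \dots\}$ versus $\{v_2, 0, \dots\}$) shows $\ek_\mu \not\sim \ek_\eta$, a contradiction. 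Hence $\ek_\mu$ and $\ek_\eta$ have the same type. Finally, in the common-type case, $\ek_\mu \sim \ek_\eta$ forces $P\vec v = P'\vec v$ (type real) or $iP\vec v = iP'\vec v$, hence $P\vec v = P'\vec v$ (type imaginary), for all $\vec v \in J_2^{S_{\Delta_{2k}}}$; by the distinctness observation this gives $P = P'$, i.e.\ $\mathrm{Sh}(\rho(\ek_\mu)) = \mathrm{Sh}(\rho(\ek_\eta))$.

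I expect the only real subtlety to be the bookkeeping in the real/imaginary mismatch case — making sure one compares entries at a position where the permutation behavior does not itself obscure the factor-of-$i$ discrepancy; the cleanest route is to compare, for a suitable single-nonzero-entry test vector $\vec v$, the \emph{set of values appearing among the entries} of $P\vec v$ versus $iP'\vec v$, since permuting positions does not change that set, and $\{0,v_1\} \neq \{0, v_2\}$. Everything else is immediate from the cited results.
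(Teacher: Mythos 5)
Your proof is correct and follows essentially the same route as the paper, which likewise rests on the observation that distinct permutation matrices act differently on some 2-torsion point together with Lemma \ref{flavors: real or complex}; your write-up merely makes explicit the sign-irrelevance, the single-nonzero-entry test vector, and the exclusion of a real/imaginary mismatch (the last of which the paper isolates as Lemma \ref{things in same class have same flavor}). The only nitpick is the passing claim that $\sigma(m)\neq\sigma'(m)$ locates the discrepancy at the $\sigma^{-1}$-image of $m$; since $\sigma\neq\sigma'$ if and only if $\sigma^{-1}\neq\sigma'^{-1}$, one should simply choose $m$ so that the inverse images differ, which does not affect the argument.
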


Next we show that all representations in a given equivalence class have the same type, either real or imaginary.

\begin{lem}\label{things in same class have same flavor}
	Any two elements of a given equivalence class mod $\sim$ have the same kind of nonzero entries: either all matrices in the class have nonzero entries in $\{1, -1\}$, or all matrices in the class have nonzero entries in $\{i, -i\}$. That is, within a given class, either all $\rho(e_\mu)$ have real type, or all $\rho(e_\mu)$ have imaginary type.
\end{lem}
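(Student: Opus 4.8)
The plan is to reduce the statement to a single numerical invariant that is manifestly preserved under the equivalence $\sim$. Given a generator $e_\mu \in \gh_{2k}$, consider the scalar $\rho(e_\mu) \cdot \rho(e_\mu) \in \C(2^k)$; since $e_\mu$ is a (signed) Clifford generator, $e_\mu^2 = \pm 1$, so $\rho(e_\mu)^2 = \pm I_{2^k}$. First I would observe that, by Lemma \ref{flavors: real or complex}, if $\rho(e_\mu)$ has real type then $\rho(e_\mu)^2$ has all nonzero entries in $\{1,-1\}$, whereas if $\rho(e_\mu)$ has imaginary type then $\rho(e_\mu)^2$ has all nonzero entries in $\{-1, 1\}$ as well — so squaring alone does not separate the types, and I would instead track the sign of $i$ more carefully. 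The cleaner route: by Lemmas \ref{things in same class have same shape}, all members of a class share a common shape $\operatorname{Sh}(\rho(e_\mu))$, a fixed permutation matrix $P$. So within a class, each $\rho(e_\mu)$ equals $P$ with its nonzero entries multiplied by scalars from $\{1,-1\}$ (real type) or $\{i,-i\}$ (imaginary type); writing $\rho(e_\mu) = P \circ D$ for a diagonal sign-or-$i$-scaling is the key normal form.

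Next I would pin down the type via the action on $2$-torsion. Fix a class $[\ek_\mu]$ with common shape $P$, the permutation matrix of some $\sigma \in \operatorname{Sym}(2^k)$. Take $\vec v = T_{0,\ldots,0,3,0,\ldots,0}$, i.e. the $2$-torsion point with $v_3 = \tfrac{1+i}{2}$ in some component $\ell$ with $\sigma^{-1}(\ell) \neq \ell$ if such an $\ell$ exists, and also the point $\vec w$ with $v_1 = \tfrac12$ in a component fixed by $\sigma$ if one exists. The point is that $i \cdot v_1 = v_2 \neq v_1$ but $i \cdot v_3 = v_3$ and $i \cdot v_0 = v_0$ (relation (6) and the fixed-point observation in the proof of Lemma \ref{Clifford multiplication descends to ...}). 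So if $\rho(e_\mu)$ has imaginary type, applying it to a $2$-torsion point whose nonzero components are all $v_3$ returns $P \cdot \vec v$ (no visible factor of $i$), but applying it to one with a $v_1$ component produces a $v_2$; whereas if $\rho(e_\mu)$ has real type, a $v_1$ component stays $v_1$. Concretely: I would show that for real type, $\ek_\mu \cdot \vec u = P \cdot \vec u$ for every $\vec u \in J_2^{S_{\Delta_{2k}}}$, while for imaginary type, $\ek_\mu \cdot \vec u = P \cdot (i \star \vec u)$ where $i \star$ denotes componentwise multiplication by $i$. Since $i \star (-)$ is \emph{not} the identity on $J_2^{S_{\Delta_{2k}}}$ (it swaps $v_1 \leftrightarrow v_2$), and since $P$ is invertible on $2$-torsion points, these two maps $\vec u \mapsto P\vec u$ and $\vec u \mapsto P(i\star \vec u)$ are distinct. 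Therefore a real-type generator and an imaginary-type generator with the same shape $P$ act differently on $J_2^{S_{\Delta_{2k}}}$, hence are not $\sim$-equivalent — which is the contrapositive of the claim.

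So the proof structure is: (a) by Lemma \ref{things in same class have same shape}, reduce to a fixed shape $P$; (b) by Lemma \ref{flavors: real or complex}, each class member is real type or imaginary type; (c) establish the two normal forms for the action on $2$-torsion ($\vec u \mapsto P\vec u$ versus $\vec u \mapsto P(i \star \vec u)$), which follows directly from the relations in Remark \ref{dim 1 2-torsion points} applied componentwise after the permutation; (d) exhibit a single $\vec u$ — e.g. $T_{1,0,\ldots,0}$ — on which the two normal forms disagree, since $i \star v_1 = v_2 \neq v_1$ and $P$ is a bijection; conclude that no class can contain both a real-type and an imaginary-type element. The main obstacle I anticipate is step (c): one must be slightly careful that the factor of $i$ "commutes past" the permutation $P$ correctly — i.e. that multiplying $\rho(e_\mu)$'s entries by $i$ and then applying the permutation is the same as permuting first and then scaling by $i$ componentwise — but this is immediate because scalar multiplication by $i$ is applied uniformly to all $2^k$ components, so it commutes with any row permutation. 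Everything else is bookkeeping with the seven relations of Remark \ref{dim 1 2-torsion points}.
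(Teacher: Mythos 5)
Your argument is correct and is essentially the paper's own proof: both reduce a class member's action on $J_2^{S_{\Delta_{2k}}}$ to ``apply the common shape permutation $P$, optionally followed by componentwise multiplication by $i$,'' and then distinguish the two cases by evaluating on a test vector with a $v_1$ component (the paper uses the constant $v_1$ vector, you use $T_{1,0,\ldots,0}$; either works since $i\cdot v_1=v_2\neq v_1$ and $P$ acts bijectively). The only difference is cosmetic, so nothing further is needed.
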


\begin{proof}
	Suppose by contradiction that for some $\ek_\mu, \ek_\eta \in \gh_{2k}$ with $\ek_\mu \sim \ek_\eta$, $\rho(\ek_\mu)$ had real type while $\rho(\ek_\eta)$ had imaginary type. Set $\vec{v}$ to be the constant $v_1=\frac{1}{2}$ vector in $J_2^{S_{\Delta_{2k}}}$. Then $\rho(\ek_\mu) \cdot \vec{v} = \vec{v}$, but $\rho(\ek_\eta) \cdot \vec{v} = i \textrm{Sh}(\rho(\ek_\eta)) \cdot \vec{v} = i \textrm{Sh}(\rho(\ek_\mu)) \cdot \vec{v}= i \vec{v}$, which is the constant $v_2=\frac{i}{2}$ vector; but this is a contradiction since $\rho(\ek_\mu)$ and $\rho(\ek_\eta)$ should act identically on $\vec{v}$.
\end{proof}

Observe that each strictly increasing subsequence $\mu \subseteq \{ 1, \ldots, 2k+2 \}$ can be obtained from a strictly increasing subsequence $\nu \subseteq \{ 3, \ldots, 2k+2 \}$ by prepending either $\emptyset$, $\{1\}$, $\{2\}$, or $\{1,2\}$ to $\nu$. (We speak of prepending $\emptyset$ to a sequence because even though doing so does not change the sequence itself, when $\ek_\mu \in \gh_{2k}$, $\emptyset \upSmallFrown (\mu+2)$ will be one of four sequences obtained from $\mu$ and used in Lemma \ref{e mu hats split into 4 new e mu} to construct new generators in $\gh_{2k+2}$.) Each such sequence $\nu$, in turn, can be obtained from a strictly increasing subsequence $\eta \subseteq \{ 1, \ldots, 2k \}$ by adding 2 to each element of $\eta$. 

This means that we have the following four bijections:

\begin{enumerate}
	\item A bijection between strictly increasing subsequences of $\{ 1, \ldots, 2k \}$ and strictly increasing subsequences of $\{ 1, \ldots, 2k+2 \}$ that contain neither 1 nor 2;
	\item A bijection between strictly increasing subsequences of $\{ 1, \ldots, 2k \}$ and strictly increasing subsequences of $\{ 1, \ldots, 2k+2 \}$ that contain 1 but not 2;
	\item A bijection between strictly increasing subsequences of $\{ 1, \ldots, 2k \}$ and strictly increasing subsequences of $\{ 1, \ldots, 2k+2 \}$ that contain 2 but not 1; and 
	\item A bijection between strictly increasing subsequences of $\{ 1, \ldots, 2k \}$ and strictly increasing subsequences of $\{ 1, \ldots, 2k+2 \}$ that contain both 1 and 2.
\end{enumerate}

Therefore we have the corresponding four bijections between $\gh_{2k}$ and subsets of $\gh_{2k+2}$:

\begin{enumerate}
	\item A bijection between $\gh_{2k}$ and $\{e_\mu \in \gh_{2k+2}: 1, 2 \not\in \mu\}$;
	\item A bijection between $\gh_{2k}$ and $\{e_\mu \in \gh_{2k+2}: 1 \in \mu, 2 \not\in \mu\}$;
	\item A bijection between $\gh_{2k}$ and $\{e_\mu \in \gh_{2k+2}: 1 \not\in \mu, 2 \in \mu\}$; and 
	\item A bijection between $\gh_{2k}$ and $\{e_\mu \in \gh_{2k+2}: 1, 2 \in \mu\}$.
\end{enumerate}

Since these four subsets of $\gh_{2k+2}$ are all disjoint and since their union is all of $\gh_{2k+2}$, we have (again) that $|\gh_{2k+2}| = 4 |\gh_{2k}|$.

\begin{exmp}\label{ex: 4 new elements}
Consider the sequence $\mu = 46 \subseteq \{ 1, 2, 3, 4, 5, 6 \} $ and the corresponding generator $\stackrel{3}{e}_{46}$ in $\gh_6$. (Recall that for ease of notation, we are writing sequences without  brackets or commas.) The four subsequences of $\{ 1, 2, 3, 4, 5, 6, 7, 8 \}$  that are formed from $\mu$ by first adding $2$ to each element in $\mu$ and then prepending either $\emptyset, \{1\}, \{2\}, $ or $\{12\}$ are $\emptyset \upSmallFrown 68 = 68, \, 1 \upSmallFrown68 = 168, \, 2 \upSmallFrown 68 = 268$, and $12 \upSmallFrown 68 = 1268$. The corresponding four new elements of $\gh_8$ that are formed from $\stackrel{3}{e}_{46} \in \gh_6$ are $\stackrel{4}{e}_{68}, \, \stackrel{4}{e}_{168}, \, \stackrel{4}{e}_{268}$, and $\stackrel{4}{e}_{1268}$.
\end{exmp}

\begin{lem}\label{e mu hats split into 4 new e mu}
	Let $\ek_{\mu}$ be a generator in $\gh_{2k}$, and set $\mu' = \mu+2$.  There are exactly four elements of $\gh_{2k+2}$ that correspond to the increasing sequences $\emptyset \upSmallFrown \mu'$, $1 \upSmallFrown \mu'$, $2 \upSmallFrown \mu'$, and $12 \upSmallFrown \mu'$:
	\[ \ekp_{\emptyset \upSmallFrown \mu' } = \ekp_\emptyset  \ekp_{\mu'}, \ \ekp_{1 \upSmallFrown \mu' } = \ekp_1  \ekp_{\mu'}, \]
	
	\[\ekp_{2 \upSmallFrown \mu' } = \ekp_2  \ekp_{\mu'}, \textrm{ and } \ekp_{12 \upSmallFrown \mu' } = \ekp_{12}  \ekp_{\mu'}. \]
	
	Also, $  \rho(\ekp_{\mu'}) = \left\{ 
	\begin{array}{ll}
		\rho(\ek_{\mu}) \otimes I_2 & \textrm{ if } |\mu| \textrm{ is even} \\
		\rho(\ek_{\mu}) \otimes B & \textrm{ if } |\mu| \textrm{ is odd} 
	\end{array}
	\right. $
	
\end{lem}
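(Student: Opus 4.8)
The plan is to treat this lemma as essentially a bookkeeping statement that repackages the index-shift bijections recorded just before it, together with case~(1) of Lemma~\ref{new e mu in terms of old}, into a form convenient for the inductive arguments to come. First I would establish the four product identities. Since $\mu$ is an increasing subsequence of $\{1,\ldots,2k\}$, the sequence $\mu' = \mu+2$ is an increasing subsequence of $\{3,\ldots,2k+2\}$, so prepending $\emptyset$, $1$, $2$, or $12$ yields an increasing subsequence of $\{1,\ldots,2k+2\}$; each of these therefore names a genuine generator of $Cl(\C^{2k+2})$. The identities $\ekp_{\emptyset\upSmallFrown\mu'} = \ekp_\emptyset\ekp_{\mu'}$, $\ekp_{1\upSmallFrown\mu'} = \ekp_1\ekp_{\mu'}$, $\ekp_{2\upSmallFrown\mu'} = \ekp_2\ekp_{\mu'}$, and $\ekp_{12\upSmallFrown\mu'} = \ekp_{12}\ekp_{\mu'}$ then follow directly from the convention $e_{i_1,\ldots,i_l} = e_{i_1}\cdots e_{i_l}$, under which concatenation of sequences corresponds to multiplication of the associated elements (with $\ekp_\emptyset = 1$ handling the first case).

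Next I would argue that these are exactly the four elements of $\gh_{2k+2}$ arising from $\ek_\mu$. This is immediate from the four bijections between $\gh_{2k}$ and the subsets $\{e_\nu \in \gh_{2k+2} : 1,2 \notin \nu\}$, $\{e_\nu : 1 \in \nu,\, 2 \notin \nu\}$, $\{e_\nu : 1 \notin \nu,\, 2 \in \nu\}$, and $\{e_\nu : 1,2 \in \nu\}$ recorded just before the lemma: under those bijections $\ek_\mu$ is sent to $\ekp_{\emptyset\upSmallFrown\mu'}$, $\ekp_{1\upSmallFrown\mu'}$, $\ekp_{2\upSmallFrown\mu'}$, and $\ekp_{12\upSmallFrown\mu'}$ respectively, and since the four target subsets partition $\gh_{2k+2}$ these are four distinct generators.

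Finally, for the tensor formula I would apply case~(1) of Lemma~\ref{new e mu in terms of old} with $\mu'$ in place of $\mu$. Since $\mu'$ contains neither $1$ nor $2$, we have $\underline{\mu'} = \mu'$ and hence $\underline{\mu'}-2 = \mu$, while $|\mu'| = |\mu|$; substituting into that case gives $\rho(\ekp_{\mu'}) = \rho(\ek_\mu)\otimes I_2$ when $|\mu|$ is even and $\rho(\ek_\mu)\otimes B$ when $|\mu|$ is odd, exactly as claimed. Alternatively one can derive this directly: writing $\mu = \{i_1,\ldots,i_p\}$, one has $\rho(\ekp_{\mu'}) = \rho(\ekp_{i_1+2})\cdots\rho(\ekp_{i_p+2})$, each factor equals $\rho(\ek_{i_j})\otimes B$ by Lemma~\ref{new vector generators in terms of old} (legitimate since every $i_j+2 \geq 3$), Corollary~\ref{tensor corollary 1} collapses the product to $\rho(\ek_\mu)\otimes B^{|\mu|}$, and $B^2 = I_2$ finishes it. Since the whole statement is just a matter of unwinding the product notation and invoking results already in hand, I do not anticipate a genuine obstacle; the only point requiring care is keeping the index shift $\mu \mapsto \mu+2$ and the parity of $|\mu|$ tracked consistently throughout.
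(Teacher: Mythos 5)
Your proposal is correct and follows essentially the same route as the paper: the identification of the four elements via the bijections recorded just before the lemma, and the tensor formula by applying case (1) of Lemma \ref{new e mu in terms of old} to $\mu'$ (using $1,2\notin\mu'$, $\underline{\mu'}-2=\mu$, and $|\mu'|=|\mu|$). The extra detail you supply on the concatenation-to-product identities and the alternative direct derivation via Lemma \ref{new vector generators in terms of old} and Corollary \ref{tensor corollary 1} are both consistent with what the paper leaves implicit.
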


\begin{proof}
	The first claim follows from the preceding discussion. For the second claim: we have defined $\mu'$ as $\mu+2$, so $1, 2 \not\in \mu'$. Then in this case $(\mu'\setminus \{1,2\})-2 = \mu'-2 = (\mu+2)-2 = \mu$. Then by Lemma \ref{new e mu in terms of old}, 
	\[
	\begin{array}{lll}
		\rho(\ekp_{\mu'}) & = & \left\{ \begin{array}{ll}
			\rho(\ek_{(\mu' \setminus \{1,2\})-2}) \otimes I_2 & \textrm{ if } |\mu'| \textrm{ is even} \\
			\rho(\ek_{(\mu' \setminus \{1,2\})-2}) \otimes B & \textrm{ if } |\mu'| \textrm{ is odd}
		\end{array}
		\right. \\
		
		& = & \left\{ \begin{array}{ll}
			\rho(\ek_{\mu}) \otimes I_2 & \textrm{ if } |\mu'| \textrm{ is even} \\
			\rho(\ek_{\mu}) \otimes B & \textrm{ if } |\mu'| \textrm{ is odd}
		\end{array}
		\right. \\
		
		& = & \left\{ \begin{array}{ll}
			\rho(\ek_{\mu}) \otimes I_2 & \textrm{ if } |\mu| \textrm{ is even} \\
			\rho(\ek_{\mu}) \otimes B & \textrm{ if } |\mu| \textrm{ is odd}
		\end{array}
		\right. 	
	\end{array}
	\]
\end{proof}

\begin{lem}\label{2 shapes, in 2 flavors each}
Let $k \in \mathbb{N}, \, k \geq 1$.	
 Let $\ek_{\mu} \in \gh_{2k}$ be a generator, and set $\mu' = \mu + 2$. The four generators  $\ekp_{\emptyset \upSmallFrown {\mu}'}$, $\ekp_{1 \upSmallFrown {\mu}'}$, $\ekp_{2 \upSmallFrown {\mu}'}$, and $\ekp_{12 \upSmallFrown {\mu}'}$ have matrix representations that occur in  two new shapes in $\gh_{2k+2}$, each occurring in real and imaginary types.
\end{lem}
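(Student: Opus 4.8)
The plan is to reduce everything to the behaviour of four fixed $2\times 2$ matrices, using the Kronecker-product descriptions already assembled in Lemmas~\ref{new vector generators in terms of old} and~\ref{e mu hats split into 4 new e mu}.

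First I would unwind the four generators. By Lemma~\ref{e mu hats split into 4 new e mu}, their representations are $\rho(\ekp_{\emptyset \upSmallFrown \mu'}) = \rho(\ekp_{\mu'})$, $\rho(\ekp_{1 \upSmallFrown \mu'}) = \rho(\ekp_1)\rho(\ekp_{\mu'})$, $\rho(\ekp_{2 \upSmallFrown \mu'}) = \rho(\ekp_2)\rho(\ekp_{\mu'})$, and $\rho(\ekp_{12 \upSmallFrown \mu'}) = \rho(\ekp_{12})\rho(\ekp_{\mu'})$, where $\rho(\ekp_{\mu'}) = \rho(\ek_\mu)\otimes C_0$ with $C_0 = I_2$ when $|\mu|$ is even and $C_0 = B$ when $|\mu|$ is odd. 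By Lemma~\ref{new vector generators in terms of old}, $\rho(\ekp_1) = I_{2^k}\otimes E_1$ and $\rho(\ekp_2) = I_{2^k}\otimes E_2$, so by Lemma~\ref{tensor lemma 1}, $\rho(\ekp_{12}) = \rho(\ekp_1)\rho(\ekp_2) = I_{2^k}\otimes E_{12}$. Applying Lemma~\ref{tensor lemma 1} once more (and using that $I_{2^k}\cdot\rho(\ek_\mu) = \rho(\ek_\mu)$), each of the four representations has the form $\rho(\ek_\mu)\otimes C$ with $C \in \{\,C_0,\ E_1 C_0,\ E_2 C_0,\ E_{12} C_0\,\}$.

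Next I would carry out the routine $2\times 2$ arithmetic in each parity case. When $C_0 = I_2$ the list of $C$'s is $I_2, E_1, E_2, E_{12}$; when $C_0 = B$ it is $B, E_1 B, E_2 B, E_{12} B$. In both cases one checks directly that exactly two of the four $C$'s are diagonal, hence have shape $I_2$, and the other two are antidiagonal, hence have shape $\left[\begin{smallmatrix}0&1\\1&0\end{smallmatrix}\right]$; moreover, within each of these two shape classes, one $C$ has all nonzero entries in $\{1,-1\}$ while the other has all nonzero entries in $\{i,-i\}$. I would record this in a short table. This bookkeeping, together with tracking whether $\rho(\ek_\mu)$ has real or imaginary type, is the only point that needs care; the computations themselves are elementary.

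Finally I would assemble the conclusion. By Proposition~\ref{e mu have permutation shapes}, $\textrm{Sh}(\rho(\ek_\mu))$ is a $2^k\times 2^k$ permutation matrix, so the Kronecker-shape identity (ii) stated just before Lemma~\ref{e1, e2, and e12 have permutation shapes} gives $\textrm{Sh}(\rho(\ek_\mu)\otimes C) = \textrm{Sh}(\rho(\ek_\mu))\otimes \textrm{Sh}(C)$; since $\textrm{Sh}(C)$ takes exactly the two distinct values $I_2$ and $\left[\begin{smallmatrix}0&1\\1&0\end{smallmatrix}\right]$, the four generators realize exactly two distinct $2^{k+1}\times 2^{k+1}$ shapes occurring at the level of $\gh_{2k+2}$. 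For the types: by Lemma~\ref{flavors: real or complex}, $\rho(\ek_\mu)$ has a well-defined type, and Kronecker-multiplying by $C$ multiplies its entries by those of $C$, so $\rho(\ek_\mu)\otimes C$ has the same type as $\rho(\ek_\mu)$ when $C$ has $\pm 1$ entries and the opposite type when $C$ has $\pm i$ entries. Combining this with the table shows that, whichever type $\rho(\ek_\mu)$ has, each of the two shapes is attained once with real type and once with imaginary type, which is precisely the assertion. I do not expect a genuine obstacle — the argument is a disciplined case analysis over the two parities of $|\mu|$ and the two possible types of $\rho(\ek_\mu)$ — but the step most likely to hide a sign error or a misclassification is the $2\times 2$ computation of $E_1 B$, $E_2 B$, $E_{12} B$ and their shapes, so that is where I would be most careful.
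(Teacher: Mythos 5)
Your proof is correct, and it reaches the same conclusion by a cleaner mechanism than the paper's. The paper also starts from Lemma \ref{e mu hats split into 4 new e mu}, but it then handles the left factors $\rho(\ekp_1),\rho(\ekp_2),\rho(\ekp_{12})$ through the ``effect of left multiplication'' Lemmas \ref{what multiplication by e1 does}, \ref{what e2 does}, and \ref{what e12 does}, describing each of the four new matrices as $\rho(\ek_\mu)$ with its nonzero entries replaced by a $2\times 2$ block identified only up to equivalence mod $\sim$; this forces a case analysis over the (type, parity) combinations, of which the paper writes out one in full and asserts the rest. You instead note that $\rho(\ekp_1)=I_{2^k}\otimes E_1$, $\rho(\ekp_2)=I_{2^k}\otimes E_2$, $\rho(\ekp_{12})=I_{2^k}\otimes E_{12}$, and apply the mixed-product property (Lemma \ref{tensor lemma 1}) a second time to obtain the \emph{exact} identities $\rho(\ekp_{j\upSmallFrown\mu'})=\rho(\ek_\mu)\otimes(E_jC_0)$ with $C_0\in\{I_2,B\}$. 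This collapses everything into a single $2\times 2$ table: in either parity the four matrices $C_0,\,E_1C_0,\,E_2C_0,\,E_{12}C_0$ split as two diagonal and two antidiagonal, with one real and one imaginary of each, and the identity $\operatorname{Sh}(M\otimes N)=\operatorname{Sh}(M)\otimes\operatorname{Sh}(N)$ together with multiplicativity of types under $\otimes$ transfers this directly to $\gh_{2k+2}$. Both arguments are sound; yours buys exact Kronecker factorizations and no ``mod $\sim$'' block bookkeeping, at the cost of the one $2\times 2$ computation you rightly flag, and that computation checks out: $E_1B=\left[\begin{smallmatrix}0&1\\1&0\end{smallmatrix}\right]$, $E_2B=\left[\begin{smallmatrix}-1&0\\0&1\end{smallmatrix}\right]$, and $E_{12}B=-iI_2$.
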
 

\begin{proof}
	Let $\ek_{\mu} \in \gh_{2k}$. Suppose that each nonzero entry in $\rho(\ek_{\mu})$ is in $\{1, -1\}$ -- that is, that $\rho(\ek_{\mu})$ is of real type -- and that $|\mu|$ is even. This is the first of four cases; the remaining three (depending on whether the type of $\rho(\ek_{\mu})$ is real or imaginary, and whether the length of the sequence $\mu$ is even or odd) are similar to the first.
	
	Since we have assumed $|\mu|$ is even and each nonzero entry in $\rho(\ek_{\mu})$ is in $\{1, -1\}$, by Lemma \ref{e mu hats split into 4 new e mu}, $\ekp_{\emptyset \upSmallFrown \mu'} = \rho(\ek_{\mu}) \otimes I_2$; and this is a matrix in which each nonzero entry of $\rho(\ek_{\mu})$ has been replaced by a matrix equivalent mod $\sim$ to $\left( \begin{array}{cc} 1 & 0 \\ 0 & 1 \end{array} \right)$ (while each zero entry has been replaced by a $2 \times 2$ block of zeros). Also by Lemma \ref{e mu hats split into 4 new e mu}, $\rho(\ekp_{1 \upSmallFrown \mu'}) = \rho(\ekp_1) \cdot (\rho(\ek_{\mu}) \otimes I_2)$. By Lemma \ref{what multiplication by e1 does}, the resulting matrix is one in which each nonzero entry of $\rho(\ek_{\mu})$ has been replaced by a matrix equivalent mod $\sim$ to $\left( \begin{array}{cc} i & 0 \\ 0 & -i \end{array} \right)$. Therefore $\rho(\ekp_{\emptyset \upSmallFrown \mu'})$ has the same shape as $\rho(\ekp_{1 \upSmallFrown \mu'})$, but the first of these matrices has real type while the second has imaginary type. Next: by Lemma \ref{e mu hats split into 4 new e mu}, $\rho(\ekp_{2\upSmallFrown \mu'}) = \rho(\ekp_2) \cdot (\rho(\ek_{\mu}) \otimes I_2)$. By Lemma \ref{what e2 does}, the resulting matrix is one in which every nonzero entry of $\rho(\ek_{\mu})$ has been replaced by a matrix equivalent mod $\sim$ to $\left( \begin{array}{cc} 0 & i \\ i & 0 \end{array} \right)$. Finally, by Lemma \ref{e mu hats split into 4 new e mu}, $\rho(\ekp_{12\upSmallFrown \mu'}) = \rho(\ekp_{12}) \cdot (\rho(\ek_{\mu}) \otimes I_2)$. By Lemma \ref{what e12 does}, the resulting matrix is one in which every nonzero entry of $\rho(\ek_{\mu})$ has been replaced by a matrix equivalent mod $\sim$ to $\left( \begin{array}{cc} 0 & -1 \\ 1 & 0 \end{array} \right)$. Therefore $\rho(\ekp_{2 \upSmallFrown \mu'})$ has the same shape as $\rho(\ekp_{12 \upSmallFrown \mu'})$, but the first of these matrices has imaginary type while the second has real type. Thus the four representative matrices of generators in $\gh_{2k+2}$ that arise from those of the generator $\ek_\mu \in \gh_{2k}$ come in two shapes, and each of those shapes comes in one real and one imaginary type.
	
	The three remaining cases are handled similarly, using Lemmas \ref{e mu hats split into 4 new e mu}, \ref{what multiplication by e1 does}, \ref{what e2 does}, and \ref{what e12 does}.
\end{proof}

The following theorem proves that matrix representations of elements of $\gh_{2k}$ with the same shape provide us with the same equivalence classes in $\hat{\Gamma}_{2k+2}$ with respect to action on the 2-torsion points. 

\begin{thm}\label{same shapes give rise to same equivalence classes}
	Let $\ek_{\mu}, \ek_{\eta} \in \gh_{2k}$ for some $k \in \mathbb{N}, k \geq 1$. Then $\ek_{\mu}$ and $\ek_{\eta}$ give rise to the same four equivalence classes mod $\sim$ in $\gh_{2k+2}$ if and only if they have the same shape. That is,
	\[
	\begin{array}{ll}
		&\left\{ \left[ \ekp_{\emptyset \upSmallFrown \mu'} \right], \left[ \ekp_{1 \upSmallFrown \mu'} \right], \left[ \ekp_{2 \upSmallFrown \mu'} \right], \left[ \ekp_{12 \upSmallFrown \mu'} \right] \right\} \\[15pt]
		
		= & \left\{\left[ \ekp_{\emptyset \upSmallFrown \eta'} \right], \left[ \ekp_{1 \upSmallFrown \eta'} \right], \left[ \ekp_{2 \upSmallFrown \eta'} \right], \left[ \ekp_{12 \upSmallFrown \eta'} \right] \right\}
	\end{array}
	\]
	if and only if $\operatorname{Sh}(\rho(\ek_{\mu})) = \operatorname{Sh}(\rho(\ek_{\eta}))$.
	
	Moreover, the sets $\left\{ \left[ \ekp_{\emptyset \upSmallFrown \mu'} \right], \left[ \ekp_{1 \upSmallFrown \mu'} \right], \left[ \ekp_{2 \upSmallFrown \mu'} \right], \left[ \ekp_{12 \upSmallFrown \mu'} \right] \right\}$ and \\ $\left\{\left[ \ekp_{\emptyset \upSmallFrown \eta'} \right], \left[ \ekp_{1 \upSmallFrown \eta'} \right], \left[ \ekp_{2 \upSmallFrown \eta'} \right], \left[ \ekp_{12 \upSmallFrown \eta'} \right] \right\}$ are disjoint if $\operatorname{Sh}(\rho(\ek_{\mu})) \neq \operatorname{Sh}(\rho(\ek_{\eta}))$.
	
\end{thm}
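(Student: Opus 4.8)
The plan is to exploit the fact that, for every $m$, an equivalence class mod $\sim$ in $\gh_{2m}$ is completely determined by the pair consisting of the \emph{shape} and the \emph{type} (real or imaginary, in the sense of Lemma \ref{flavors: real or complex}) of the matrix representations of its members. That a class determines \emph{at most} this pair is the content of Lemmas \ref{things in same class have same shape} and \ref{things in same class have same flavor}. Conversely, if $\rho(e_\mu)$ and $\rho(e_\eta)$ share both shape and type, then they act identically on $J_2^{S_{\Delta_{2m}}}$: by Proposition \ref{e mu have permutation shapes} their common shape is a permutation matrix, and since $-v_l = v_l$ and $-i v_l = i v_l$ on 2-torsion points, a real-type generator acts exactly as the permutation $\operatorname{Sh}(\rho(e_\mu))$ and an imaginary-type one acts exactly as $i\cdot\operatorname{Sh}(\rho(e_\mu))$. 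Hence $[e_\mu]\mapsto(\operatorname{Sh}(\rho(e_\mu)),\ \text{type of }\rho(e_\mu))$ is injective on $\faktor{\gh_{2m}}{\sim}$; in fact, by the count $2^{m+1}$ from Theorem \ref{number of unique Clifford actions}, it is a bijection onto the realized pairs, but only injectivity is used below.

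Second, I would read off from the proof of Lemma \ref{2 shapes, in 2 flavors each}, sharpened by Lemmas \ref{e mu hats split into 4 new e mu}, \ref{what multiplication by e1 does}, \ref{what e2 does}, and \ref{what e12 does}, the precise (shape, type) pairs of the four generators built from $\ek_\mu$. Writing $\hat A=\operatorname{Sh}(\rho(\ek_\mu))$, a short case check over the parity of $|\mu|$ and the type of $\rho(\ek_\mu)$ (four cases; each of $\rho(\ekp_1),\rho(\ekp_2),\rho(\ekp_{12})$ multiplies a Kronecker factor $I_2$ or $B$ by $E_1,E_2,$ or $E_{12}$, and $E_1B,E_2B,E_{12}B$ are again diagonal or antidiagonal) shows that in every case the collection of (shape, type) pairs of $\rho(\ekp_{\emptyset\upSmallFrown\mu'}),\rho(\ekp_{1\upSmallFrown\mu'}),\rho(\ekp_{2\upSmallFrown\mu'}),\rho(\ekp_{12\upSmallFrown\mu'})$ is exactly
\[
\bigl\{(\hat A\otimes\operatorname{Sh}(I_2),\ \text{real}),\ (\hat A\otimes\operatorname{Sh}(I_2),\ \text{imaginary}),\ (\hat A\otimes\operatorname{Sh}(E_2),\ \text{real}),\ (\hat A\otimes\operatorname{Sh}(E_2),\ \text{imaginary})\bigr\}.
\]
These four pairs are distinct, since $\operatorname{Sh}(I_2)\neq\operatorname{Sh}(E_2)$ and $M\mapsto M\otimes N$ is injective for $N\neq 0$; and, crucially, the set depends on $\ek_\mu$ only through $\hat A=\operatorname{Sh}(\rho(\ek_\mu))$, not through $|\mu|$ or the type of $\rho(\ek_\mu)$.

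With these two facts the theorem follows quickly. If $\operatorname{Sh}(\rho(\ek_\mu))=\operatorname{Sh}(\rho(\ek_\eta))$, then by Step 2 the $\mu$-family and the $\eta$-family of four generators realize the same four (shape, type) pairs; pairing the generator from each family realizing a given pair and invoking the injectivity from Step 1 puts them in the same class, so the two sets of four equivalence classes coincide. For the converse I would argue the contrapositive together with the asserted disjointness: assume $A:=\operatorname{Sh}(\rho(\ek_\mu))\neq A':=\operatorname{Sh}(\rho(\ek_\eta))$. Then $\{A\otimes\operatorname{Sh}(I_2),A\otimes\operatorname{Sh}(E_2)\}$ and $\{A'\otimes\operatorname{Sh}(I_2),A'\otimes\operatorname{Sh}(E_2)\}$ are disjoint: an equality $A\otimes\operatorname{Sh}(I_2)=A'\otimes\operatorname{Sh}(I_2)$ or $A\otimes\operatorname{Sh}(E_2)=A'\otimes\operatorname{Sh}(E_2)$ forces $A=A'$; and $A\otimes\operatorname{Sh}(I_2)=A'\otimes\operatorname{Sh}(E_2)$ forces, $2\times 2$ block by block, $a_{ij}\operatorname{Sh}(I_2)=a'_{ij}\operatorname{Sh}(E_2)$ for all $i,j$, whence (linear independence of $\operatorname{Sh}(I_2)$ and $\operatorname{Sh}(E_2)$) $A=A'=0$, impossible for a permutation-matrix shape. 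Since the class-to-(shape, type) assignment is well defined, disjointness of the (shape, type) sets yields disjointness of the corresponding sets of four equivalence classes, and in particular they are unequal.

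The main obstacle I anticipate is Step 2. Lemma \ref{2 shapes, in 2 flavors each} is stated only as ``two new shapes, each occurring in real and imaginary types,'' without naming the shapes, so some care is needed to extract from its proof and from the block-replacement descriptions in Lemmas \ref{what multiplication by e1 does}, \ref{what e2 does}, and \ref{what e12 does} that the two shapes are precisely $\hat A\otimes\operatorname{Sh}(I_2)$ and $\hat A\otimes\operatorname{Sh}(E_2)$, and that this identification is uniform across the four parity/type subcases. This uniformity is exactly what makes the shape, and nothing else about $\ek_\mu$, the invariant that governs which four equivalence classes appear in $\gh_{2k+2}$.
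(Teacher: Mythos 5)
Your proof is correct, and while it rests on the same computational core as the paper's — the block-replacement descriptions from Lemmas \ref{e mu hats split into 4 new e mu}, \ref{what multiplication by e1 does}, \ref{what e2 does}, and \ref{what e12 does} — it is organized around a different pivot. The paper proves the backward direction by an explicit six-case matching (pairing $\ekp_{x\upSmallFrown\mu'}$ with the specific $\ekp_{y\upSmallFrown\eta'}$ it is equivalent to, for each combination of types and parities), and then proves disjointness separately by a contradiction argument that multiplies a hypothetical equivalence $\ekp_{\emptyset\upSmallFrown\mu'}\sim\ekp_{12\upSmallFrown\eta'}$ through by $\ekp_1$, $\ekp_2$, and $\ekp_{12}$ to force full equality of the two four-element sets. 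You instead isolate the classifying invariant: a class mod $\sim$ is exactly a (shape, type) pair (well-definedness from Lemmas \ref{things in same class have same shape} and \ref{things in same class have same flavor}, injectivity from the observation — also present in the paper — that same shape plus same type forces identical action on $2$-torsion), and then verify once that the four children of $\ek_\mu$ always realize the pair-set $\{\hat A\otimes\operatorname{Sh}(I_2),\hat A\otimes\operatorname{Sh}(E_2)\}\times\{\text{real},\text{imaginary}\}$ uniformly over the four type/parity subcases. This collapses the paper's case analysis and makes the disjointness statement an immediate consequence of the injectivity of $M\mapsto M\otimes N$ and the disjoint supports of $\operatorname{Sh}(I_2)$ and $\operatorname{Sh}(E_2)$; the price is that you must actually carry out the uniformity check you flag at the end (your table of sixteen subcases), which is where all the content of the paper's Cases 1--6 reappears in compressed form. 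Both arguments are sound; yours is the more economical statement of why the shape alone is the governing invariant.
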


\begin{proof}
Let $\ek_{\mu} \in \gh_{2k}$, and set $\mu' = \mu + 2$. We begin by making some observations about the matrix representations of the four elements  $\ekp_{\emptyset \upSmallFrown \mu'},$  $\ekp_{1 \upSmallFrown \mu'}, \ekp_{2 \upSmallFrown \mu'},$ and $\ekp_{12 \upSmallFrown \mu'}$ of $\gh_{2k+2}$ that arise from $\ek_{\mu}$ as in Lemma \ref{e mu hats split into 4 new e mu}.
	
	First consider $\ekp_{\emptyset \upSmallFrown \mu'}$. By Lemma \ref{e mu hats split into 4 new e mu}, 
	\[
	\rho(\ekp_{\emptyset \upSmallFrown \mu'}) = \rho(\ekp_{\mu'}) = \left\{
	\begin{array}{lll}
		\rho(\ek_{\mu}) \otimes I_2 & \textrm{ if } & |\mu| \textrm{ is even } \\
		\rho(\ek_{\mu}) \otimes B & \textrm{ if } & |\mu| \textrm{ is odd }
	\end{array}
	\right.
	\]
	Taking the Kronecker product of $\rho(\ek_{\mu})$ with $I_2$ on the right replaces each nonzero entry of $\rho(\ek_{\mu})$ with a block equivalent mod $\sim$ to either $\left[ \begin{array}{ll} 1 & 0 \\ 0 & 1  \end{array} \right]$ (if $\rho(\ek_{\mu})$ is of real type) or $\left[ \begin{array}{ll} i & 0 \\ 0 & i  \end{array} \right]$ (if $\rho(\ek_{\mu})$ is of imaginary type). Taking the Kronecker product of $\rho(\ek_{\mu})$ with $B$ on the right replaces each nonzero entry of $\rho(\ek_{\mu})$ with a block equivalent mod $\sim$ to either $\left[ \begin{array}{ll} 0 & i \\ i & 0  \end{array} \right]$ (if $\rho(\ek_{\mu})$ is of real type) or $\left[ \begin{array}{ll} 0 & 1 \\ 1 & 0  \end{array} \right]$ (if $\rho(\ek_{\mu})$ is of imaginary type). All of the zero entries of $\rho(\ek_{\mu})$ get replaced by $\left[ \begin{array}{ll} 0 & 0 \\ 0 & 0  \end{array} \right]$ in forming $\rho(\ekp_{\mu'})$, in either case.
	
	That is: in forming the matrix representation of $\ekp_{\emptyset \upSmallFrown \mu'} = \ekp_{\mu'}$ from that of $\ek_{\mu}$, all of the zero entries of $\rho(\ek_{\mu})$ get replaced by $\left[ \begin{array}{ll} 0 & 0 \\ 0 & 0  \end{array} \right]$, and all of the nonzero entries of $\rho(\ek_{\mu})$ get replaced by a block equivalent mod $\sim$ to:
	\[
	\left\{
	\begin{array}{l}
		\left[ \begin{array}{ll} 1 & 0 \\ 0 & 1  \end{array} \right], \textrm{ if $\rho(\ek_{\mu})$ is of real type and $|\mu|$ is even,} \\[7mm]
		\left[ \begin{array}{ll} i & 0 \\ 0 & i  \end{array} \right], \textrm{ if $\rho(\ek_{\mu})$ is of imaginary type and $|\mu|$ is even,} \\[7mm]
		\left[ \begin{array}{ll} 0 & i \\ i & 0  \end{array} \right], \textrm{ if $\rho(\ek_{\mu})$ is of real type and $|\mu|$ is odd,} \\[7mm]
		\left[ \begin{array}{ll} 0 & 1 \\ 1 & 0  \end{array} \right], \textrm{ if $\rho(\ek_{\mu})$ is of imaginary type and $|\mu|$ is odd.}
	\end{array}
	\right.
	\]
	
	Next consider $\ekp_{1 \upSmallFrown \mu'}$. By Lemma \ref{e mu hats split into 4 new e mu}, 
	\[
	\rho(\ekp_{1 \upSmallFrown \mu'}) = \rho(\ekp_1) \cdot  \rho(\ekp_{\mu'}) = \left\{
	\begin{array}{lll}
		\rho(\ekp_1) \cdot  (\rho(\ek_{\mu}) \otimes I_2) & \textrm{ if } & |\mu| \textrm{ is even } \\
		\rho(\ekp_1) \cdot (\rho(\ek_{\mu}) \otimes B) & \textrm{ if } & |\mu| \textrm{ is odd }
	\end{array}
	\right.
	\]
	By Lemma \ref{what multiplication by e1 does}, we have that in forming $\rho(\ekp_{1 \upSmallFrown \mu'})$ from $\rho(\ek_{\mu})$, all of the zero entries of $\rho(\ek_{\mu})$ get replaced by $\left[ \begin{array}{ll} 0 & 0 \\ 0 & 0  \end{array} \right]$, and all of the nonzero entries of $\rho(\ek_{\mu})$ get replaced by a block equivalent mod $\sim$ to:
	\[
	\left\{
	\begin{array}{l}
		\left[ \begin{array}{ll} i & 0 \\ 0 & i  \end{array} \right], \textrm{ if $\rho(\ek_{\mu})$ is of real type and $|\mu|$ is even,} \\[7mm]
		\left[ \begin{array}{ll} 1 & 0 \\ 0 & 1  \end{array} \right], \textrm{ if $\rho(\ek_{\mu})$ is of imaginary type and $|\mu|$ is even,} \\[7mm]
		\left[ \begin{array}{ll} 0 & 1 \\ 1 & 0  \end{array} \right], \textrm{ if $\rho(\ek_{\mu})$ is of real type and $|\mu|$ is odd,} \\[7mm]
		\left[ \begin{array}{ll} 0 & i \\ i & 0  \end{array} \right], \textrm{ if $\rho(\ek_{\mu})$ is of imaginary type and $|\mu|$ is odd.}
	\end{array}
	\right.
	\]
	
	Next consider $\ekp_{2 \upSmallFrown \mu'}$. By Lemma \ref{e mu hats split into 4 new e mu}, 
	\[
	\rho(\ekp_{2 \upSmallFrown \mu'}) = \rho(\ekp_2) \cdot  \rho(\ekp_{\mu'}) = \left\{
	\begin{array}{lll}
		\rho(\ekp_2) \cdot  (\rho(\ek_{\mu}) \otimes I_2) & \textrm{ if } & |\mu| \textrm{ is even } \\
		\rho(\ekp_2) \cdot (\rho(\ek_{\mu}) \otimes B) & \textrm{ if } & |\mu| \textrm{ is odd }
	\end{array}
	\right.
	\]
	By Lemma \ref{what e2 does}, we have that in forming $\rho(\ekp_{2 \upSmallFrown \mu'})$ from $\rho(\ek_{\mu})$, all of the zero entries of $\rho(\ek_{\mu})$ get replaced by $\left[ \begin{array}{ll} 0 & 0 \\ 0 & 0  \end{array} \right]$, and all of the nonzero entries of $\rho(\ek_{\mu})$ get replaced by a block equivalent mod $\sim$ to:
	\[
	\left\{
	\begin{array}{l}
		\left[ \begin{array}{ll} 0 & i \\ i & 0  \end{array} \right], \textrm{ if $\rho(\ek_{\mu})$ is of real type and $|\mu|$ is even,} \\[7mm]
		\left[ \begin{array}{ll} 0 & 1 \\ 1 & 0  \end{array} \right], \textrm{ if $\rho(\ek_{\mu})$ is of imaginary type and $|\mu|$ is even,} \\[7mm]
		\left[ \begin{array}{ll} 1 & 0 \\ 0 & 1  \end{array} \right], \textrm{ if $\rho(\ek_{\mu})$ is of real type and $|\mu|$ is odd,} \\[7mm]
		\left[ \begin{array}{ll} i & 0 \\ 0 & i  \end{array} \right], \textrm{ if $\rho(\ek_{\mu})$ is of imaginary type and $|\mu|$ is odd.}
	\end{array}
	\right.
	\]
	
	Next consider $\ekp_{12 \upSmallFrown \mu'}$. By Lemma \ref{e mu hats split into 4 new e mu}, 
	\[
	\rho(\ekp_{12 \upSmallFrown \mu'}) = \rho(\ekp_{12}) \cdot  \rho(\ekp_{\mu'}) = \left\{
	\begin{array}{lll}
		\rho(\ekp_{12}) \cdot  (\rho(\ek_{\mu}) \otimes I_2) & \textrm{ if } & |\mu| \textrm{ is even } \\
		\rho(\ekp_{12}) \cdot (\rho(\ek_{\mu}) \otimes B) & \textrm{ if } & |\mu| \textrm{ is odd }
	\end{array}
	\right.
	\]
	By Lemma \ref{what e12 does}, we have that in forming $\rho(\ekp_{12 \upSmallFrown \mu'})$ from $\rho(\ek_{\mu})$, all of the zero entries of $\rho(\ek_{\mu})$ get replaced by $\left[ \begin{array}{ll} 0 & 0 \\ 0 & 0  \end{array} \right]$, and all of the nonzero entries of $\rho(\ek_{\mu})$ get replaced by a block equivalent mod $\sim$ to:
	\[
	\left\{
	\begin{array}{l}
		\left[ \begin{array}{ll} 0 & 1 \\ 1 & 0  \end{array} \right], \textrm{ if $\rho(\ek_{\mu})$ is of real type and $|\mu|$ is even,} \\[7mm]
		\left[ \begin{array}{ll} 0 & i \\ i & 0  \end{array} \right], \textrm{ if $\rho(\ek_{\mu})$ is of imaginary type and $|\mu|$ is even,} \\[7mm]
		\left[ \begin{array}{ll} i & 0 \\ 0 & i  \end{array} \right], \textrm{ if $\rho(\ek_{\mu})$ is of real type and $|\mu|$ is odd,} \\[7mm]
		\left[ \begin{array}{ll} 1 & 0 \\ 0 & 1  \end{array} \right], \textrm{ if $\rho(\ek_{\mu})$ is of imaginary type and $|\mu|$ is odd.}
	\end{array}
	\right.
	\]
	Next, observe that if $\ek_{\mu}, \ek_{\eta} \in \gh_{2k}$ and their matrix representations have the same shape and the same type, then they act identically on elements of $J_2^{S_{\Delta_{2k}}}$ and so are equivalent mod $\sim$. 
	
	Now to prove the forward direction of the first claim: suppose $\ek_{\mu}, \ek_{\eta} \in \gh_{2k}$ and we have $\operatorname{Sh}(\rho(\ek_{\mu})) \neq \operatorname{Sh}(\rho(\ek_{\eta}))$. Then we can find some $1 \leq i, j \leq 2^k$ such that $\rho(\ek_{\mu})$ has a 0 in the $(i,j)$th entry, but $\rho(\ek_{\eta})$ has a nonzero value in the $(i,j)$th entry. Then we claim that $\ekp_{\emptyset \upSmallFrown \mu'}$ is not equivalent mod $\sim$ to any of $\ekp_{\emptyset \upSmallFrown \eta'}, \ekp_{1 \upSmallFrown \eta'}, \ekp_{2 \upSmallFrown \eta'}$, or $\ekp_{12 \upSmallFrown \eta'}$.
	
	To prove the claim: we note by Lemma \ref{e mu hats split into 4 new e mu} that in forming $\rho(\ekp_{\emptyset \upSmallFrown \mu'})$, we take the Kronecker product of $\rho(\ek_{\mu})$ with either $I_2$ or $B$. Also $\rho(\ekp_{\emptyset \upSmallFrown \eta'})$ is formed similarly from $\ek_\eta$. Using Lemmas \ref{what multiplication by e1 does}, \ref{what e2 does}, and \ref{what e12 does}, in forming any of  $\rho(\ekp_{1 \upSmallFrown \eta'}), \rho(\ekp_{2 \upSmallFrown \eta'})$, or $\rho(\ekp_{12 \upSmallFrown \eta'})$, we first take the Kronecker product with $I_2$ or $B$, and then scalar multiply by $i$ and/or interchange adjacent rows (that is, rows 1 and 2, 3 and 4, etc.). This means that $\rho(\ekp_{\emptyset \upSmallFrown \mu'})$ has a $2 \times 2$ block of zeros in the location corresponding to where  $\rho(\ekp_{\emptyset \upSmallFrown \eta'}), \rho(\ekp_{1 \upSmallFrown \eta'}), \rho(\ekp_{2 \upSmallFrown \eta'})$, and $\rho(\ekp_{12 \upSmallFrown \eta'})$ have a block equivalent mod $\sim$ to either $\left[ \begin{array}{ll} 1 & 0 \\ 0 & 1  \end{array} \right]$, $\left[ \begin{array}{ll} 0 & 1 \\ 1 & 0  \end{array} \right]$, $\left[ \begin{array}{ll} i & 0 \\ 0 & i  \end{array} \right]$, or $\left[ \begin{array}{ll} 0 & i \\ i & 0  \end{array} \right]$. Then $\operatorname{Sh}(\rho(\ekp_{\emptyset \upSmallFrown \mu'}))$ is different from any of the shapes of $\rho(\ekp_{\emptyset \upSmallFrown \eta'})$, $\rho(\ekp_{1 \upSmallFrown \eta'})$, $\rho(\ekp_{2 \upSmallFrown \eta'})$, and $\rho(\ekp_{12 \upSmallFrown \eta'})$, so that $\ekp_{\emptyset \upSmallFrown \mu'}$ is not  equivalent mod $\sim$ to any of the four elements arising from $\ek_{\eta}$.
	
	For the backward direction of the first claim: suppose $\ek_{\mu}, \ek_{\eta} \in \gh_{2k}$ with $\operatorname{Sh}(\rho(\ek_{\mu})) = \operatorname{Sh}(\rho(\ek_{\eta}))$. If $\rho(\ek_{\mu})$ and $\rho(\ek_{\eta})$ had the same type (real or imaginary) and $|\mu|$ and $|\eta|$ had the same parity (even or odd), then we would have $[\ek_{\mu}] = [\ek_{\eta}]$, so that the conclusion would hold by Lemma \ref{e mu hats split into 4 new e mu}. Then we need only consider cases where the types (real or imaginary) of $\rho(\ekp_{\mu})$ and $\rho(\ekp_{\eta})$ are different, and/or where the parities of $|\mu|$ and $|\eta|$ are different.
	
	\textit{Case 1}: Suppose $\rho(\ek_{\mu})$ is of real type, $\rho(\ek_{\eta})$ is of  imaginary type, and both $|\mu|$ and $|\eta|$ are even. We show that in this case 
\[
 \left[ \rho(\ekp_{\emptyset \upSmallFrown\mu'}) \right] = \left[ \rho(\ekp_{1 \upSmallFrown\eta'}) \right], \quad \left[ \rho(\ekp_{1 \upSmallFrown\mu'}) \right] = \left[ \rho(\ekp_{\emptyset \upSmallFrown\eta'}) \right],  \]   
 \[
 \left[ \rho(\ekp_{2 \upSmallFrown\mu'}) \right] = \left[ \rho(\ekp_{12 \upSmallFrown\eta'}) \right], \quad \textrm{ and } \quad  \left[ \rho(\ekp_{12 \upSmallFrown\mu'}) \right] = \left[ \rho(\ekp_{2 \upSmallFrown\eta'}) \right].
 \]
We use the observations about the matrix representations of these elements of $\gh_{2k+2}$ that we made at the beginning of the present proof.
	
	We have that $\rho(\ekp_{\emptyset \upSmallFrown \mu'})$ is a matrix in which all zero entries of $\rho(\ek_{\mu})$ have been replaced by $\left[ \begin{array}{ll} 0 & 0 \\ 0 & 0  \end{array} \right]$, and all nonzero entries have been replaced by a block equivalent mod $\sim$ to $\left[ \begin{array}{ll} 1 & 0 \\ 0 & 1  \end{array} \right]$. Also, $\rho(\ekp_{1 \upSmallFrown \eta'})$ is a matrix in which all nonzero entries of $\rho(\ek_{\eta})$ have been replaced by a block equivalent mod $\sim$ to $\left[ \begin{array}{ll} 1 & 0 \\ 0 & 1  \end{array} \right]$. But since $\operatorname{Sh}(\rho(\ek_{\mu})) = \operatorname{Sh}(\rho(\ek_{\eta}))$, the nonzero entries of $\rho(\ek_{\mu})$ and $\rho(\ek_{\eta})$ are in the same places, so that in fact  $\rho(\ekp_{1 \upSmallFrown \eta'})$ is a matrix in which all nonzero entries of $\rho(\ek_{\mu})$ have been replaced by a block equivalent mod $\sim$ to $\left[ \begin{array}{ll} 1 & 0 \\ 0 & 1  \end{array} \right]$. Therefore $\ekp_{\emptyset \upSmallFrown \mu'} \sim \ekp_{1 \upSmallFrown \eta'}$, and so $\left[\ekp_{\emptyset \upSmallFrown \mu'}\right] = \left[\ekp_{1 \upSmallFrown \eta'}\right]$.
	
	$\rho(\ekp_{1 \upSmallFrown\mu'})$ is a matrix in which all zeros of $\rho(\ek_{\mu})$ have been replaced by $2 \times 2$ zero blocks and each nonzero entry has been replaced by a block equivalent mod $\sim$ to $\left[ \begin{array}{ll} i & 0 \\ 0 & i  \end{array} \right]$. Also, $\rho(\ekp_{\emptyset \upSmallFrown\eta'})$ is a matrix in which zeros in $\rho(\ek_{\eta})$ have been replaced by $2 \times 2$ zero blocks and each nonzero entry has been replaced by a block equivalent mod $\sim$ to $\left[ \begin{array}{ll} i & 0 \\ 0 & i  \end{array} \right]$. Since $\operatorname{Sh}(\rho(\ek_{\mu})) = \operatorname{Sh}(\rho(\ek_{\eta}))$, we have $\ekp_{1 \upSmallFrown \mu'} \sim \ekp_{\emptyset \upSmallFrown \eta'}$, so that $\left[ \ekp_{1 \upSmallFrown \mu'} \right] = \left[ \ekp_{\emptyset \upSmallFrown \eta'} \right]$.
	
	$\rho(\ekp_{2 \upSmallFrown\mu'})$ is a matrix in which all zeros of $\rho(\ek_{\mu})$ have been replaced by $2 \times 2$ zero blocks and each nonzero entry has been replaced by a block equivalent mod $\sim$ to $\left[ \begin{array}{ll} 0 & i \\ i & 0  \end{array} \right]$. Also, $\rho(\ekp_{12 \upSmallFrown\eta'}\rho)$ is a matrix in which zeros in $\rho(\ek_{\eta})$ have been replaced by $2 \times 2$ zero blocks and each nonzero entry in $\rho(\ek_{\eta})$ has been replaced by a block equivalent mod $\sim$ to $\left[ \begin{array}{ll} 0 & i \\ i & 0  \end{array} \right]$. Since $\operatorname{Sh}(\rho(\ek_{\mu})) = \operatorname{Sh}(\rho(\ek_{\eta}))$, we have $\ekp_{2 \upSmallFrown \mu'} \sim \ekp_{12 \upSmallFrown \eta'}$, so that $\left[ \ekp_{2 \upSmallFrown \mu'} \right] = \left[ \ekp_{12 \upSmallFrown \eta'} \right]$.
	
	Finally, $\rho(\ekp_{12 \upSmallFrown\mu'})$ is a matrix in which all zeros of $\rho(\ek_{\mu})$ have been replaced by $2 \times 2$ zero blocks and each nonzero entry has been replaced by a block equivalent mod $\sim$ to $\left[ \begin{array}{ll} 0 & 1 \\ 1 & 0  \end{array} \right]$. Also, $\rho(\ekp_{2 \upSmallFrown\eta'})$ is a matrix in which zeros in $\rho(\ek_{\eta})$ have been replaced by $2 \times 2$ zero blocks and each nonzero entry in $\rho(\ek_{\eta})$ has been replaced by a block equivalent mod $\sim$ to $\left[ \begin{array}{ll} 0 & 1 \\ 1 & 0  \end{array} \right]$. Since $\operatorname{Sh}(\rho(\ek_{\mu})) = \operatorname{Sh}(\rho(\ek_{\eta}))$, we have $\ekp_{12 \upSmallFrown \mu'} \sim \ekp_{2 \upSmallFrown \eta'}$, so that $\left[ \ekp_{12 \upSmallFrown \mu'} \right] = \left[ \ekp_{2 \upSmallFrown \eta'} \right]$.
	
	This completes Case 1. 
\

\
	
	Below we show proofs of Cases 2 through 6, which are similar to Case 1 and which also follow from the characterization of representations of the elements $\ekp_{\emptyset \upSmallFrown \mu'}$, $\ekp_{1 \upSmallFrown \mu'}$, \\ $\ekp_{2 \upSmallFrown \mu'},$ and $\ekp_{12 \upSmallFrown \mu'}$ found at the beginning of the proof above.
	
	\textit{Case 2}: Suppose $\rho(\ek_{\mu})$ has real type, $\rho(\ek_{\eta})$ has imaginary type, $|\mu|$ is even, and $|\eta|$ is odd. In this case one can show:
	
	(i) $\ekp_{\emptyset \upSmallFrown \mu'} \sim \ekp_{12 \upSmallFrown \eta'}$, and the representations of both of these generators are the result of replacing each nonzero entry of $\rho(\ek_{\mu})$ with a block equivalent mod $\sim$ to $\left[ \begin{array}{ll} 1 & 0 \\ 0 & 1  \end{array} \right]$.
	
	(ii) $\ekp_{1 \upSmallFrown \mu'} \sim \ekp_{2 \upSmallFrown \eta'}$, and the representations of both of these generators are the result of replacing each nonzero entry of $\rho(\ek_{\mu})$ with a block equivalent mod $\sim$ to $\left[ \begin{array}{ll} i & 0 \\ 0 & i  \end{array} \right]$.   
	
	(iii) $\ekp_{2 \upSmallFrown \mu'} \sim \ekp_{1 \upSmallFrown \eta'}$, and the representations of both of these generators are the result of replacing each nonzero entry of $\rho(\ek_{\mu})$ with a block equivalent mod $\sim$ to $\left[
	\begin{array}{ll} 0 & i \\ i & 0
	\end{array}
	\right].$
	
	(iv) $\ekp_{12 \upSmallFrown \mu'} \sim \ekp_{\emptyset \upSmallFrown \eta'}$, and the representations of both of these generators are the result of replacing each nonzero entry of $\rho(\ek_{\mu})$ with a block equivalent mod $\sim$ to $\left[
	\begin{array}{ll} 0 & 1 \\ 1 & 0
	\end{array}
	\right].$
	
	\textit{Case 3}: Suppose $\rho(\ek_{\mu})$ has real type, $\rho(\ek_{\eta})$ has imaginary type, $|\mu|$ is odd, and $|\eta|$ is even. In this case one can show:
	
	(i) $\ekp_{\emptyset \upSmallFrown \mu'} \sim \ekp_{12 \upSmallFrown \eta'}$, and the representations of both of these generators are the result of replacing each nonzero entry of $\rho(\ek_{\mu})$ with a block equivalent mod $\sim$ to $\left[ \begin{array}{ll} 0 & i \\ i & 0  \end{array} \right]$.
	
	(ii) $\ekp_{1 \upSmallFrown \mu'} \sim \ekp_{2 \upSmallFrown \eta'}$, and the representations of both of these generators are the result of replacing each nonzero entry of $\rho(\ek_{\mu})$ with a block equivalent mod $\sim$ to $\left[ \begin{array}{ll} 0 & 1 \\ 1 & 0  \end{array} \right]$.   
	
	(iii) $\ekp_{2 \upSmallFrown \mu'} \sim \ekp_{1 \upSmallFrown \eta'}$, and the representations of both of these generators are the result of replacing each nonzero entry of $\rho(\ek_{\mu})$ with a block equivalent mod $\sim$ to $\left[
	\begin{array}{ll} 1 & 0 \\ 0 & 1
	\end{array}
	\right].$
	
	(iv) $\ekp_{12 \upSmallFrown \mu'} \sim \ekp_{\emptyset \upSmallFrown \eta'}$, and the representations of both of these generators are the result of replacing each nonzero entry of $\rho(\ek_{\mu})$ with a block equivalent mod $\sim$ to $\left[
	\begin{array}{ll} i & 0 \\ 0 & i
	\end{array}
	\right].$
	
	\textit{Case 4}: Suppose $\rho(\ek_{\mu})$ has real type, $\rho(\ek_{\eta})$ has imaginary type, and both $|\mu|$ and $|\eta|$ are odd. In this case one can show:
	
	(i) $\ekp_{\emptyset \upSmallFrown \mu'} \sim \ekp_{1 \upSmallFrown \eta'}$, and the representations of both of these generators are the result of replacing each nonzero entry of $\rho(\ek_{\mu})$ with a block equivalent mod $\sim$ to $\left[ \begin{array}{ll} 0 & i \\ i & 0  \end{array} \right]$.
	
	(ii) $\ekp_{1 \upSmallFrown \mu'} \sim \ekp_{\emptyset \upSmallFrown \eta'}$, and the representations of both of these generators are the result of replacing each nonzero entry of $\rho(\ek_{\mu})$ with a block equivalent mod $\sim$ to $\left[ \begin{array}{ll} 0 & 1 \\ 1 & 0  \end{array} \right]$.   
	
	(iii) $\ekp_{2 \upSmallFrown \mu'} \sim \ekp_{12 \upSmallFrown \eta'}$, and the representations of both of these generators are the result of replacing each nonzero entry of $\rho(\ek_{\mu})$ with a block equivalent mod $\sim$ to $\left[
	\begin{array}{ll} 1 & 0 \\ 0 & 1
	\end{array}
	\right].$
	
	(iv) $\ekp_{12 \upSmallFrown \mu'} \sim \ekp_{2 \upSmallFrown \eta'}$, and the representations of both of these generators are the result of replacing each nonzero entry of $\rho(\ek_{\mu})$ with a block equivalent mod $\sim$ to $\left[
	\begin{array}{ll} i & 0 \\ 0 & i
	\end{array}
	\right].$
	
	\textit{Case 5}: Suppose both $\rho(\ek_{\mu})$ and $\rho(\ek_{\eta})$ have real type, $|\mu|$ is even, and $|\eta|$ odd. In this case one can show:
	
	(i) $\ekp_{\emptyset \upSmallFrown \mu'} \sim \ekp_{2 \upSmallFrown \eta'}$, and the representations of both of these generators are the result of replacing each nonzero entry of $\rho(\ek_{\mu})$ with a block equivalent mod $\sim$ to $\left[ \begin{array}{ll} 1 & 0 \\ 0 & 1  \end{array} \right]$.
	
	(ii) $\ekp_{1 \upSmallFrown \mu'} \sim \ekp_{12 \upSmallFrown \eta'}$, and the representations of both of these generators are the result of replacing each nonzero entry of $\rho(\ek_{\mu})$ with a block equivalent mod $\sim$ to $\left[ \begin{array}{ll} i & 0 \\ 0 & i  \end{array} \right]$.   
	
	(iii) $\ekp_{2 \upSmallFrown \mu'} \sim \ekp_{\emptyset \upSmallFrown \eta'}$, and the representations of both of these generators are the result of replacing each nonzero entry of $\rho(\ek_{\mu})$ with a block equivalent mod $\sim$ to $\left[
	\begin{array}{ll} 0 & i \\ i & 0
	\end{array}
	\right].$
	
	(iv) $\ekp_{12 \upSmallFrown \mu'} \sim \ekp_{1 \upSmallFrown \eta'}$, and the representations of both of these generators are the result of replacing each nonzero entry of $\rho(\ek_{\mu})$ with a block equivalent mod $\sim$ to $\left[
	\begin{array}{ll} 0 & 1 \\ 1 & 0
	\end{array}
	\right].$
	
	\textit{Case 6}: Suppose both $\rho(\ek_{\mu})$ and $\rho(\ek_{\eta})$ have imaginary type, $|\mu|$ is even, and $|\eta|$ odd. In this case one can show:
	
	(i) $\ekp_{\emptyset \upSmallFrown \mu'} \sim \ekp_{2 \upSmallFrown \eta'}$, and the representations of both of these generators are the result of replacing each nonzero entry of $\rho(\ek_{\mu})$ with a block equivalent mod $\sim$ to $\left[ \begin{array}{ll} i & 0 \\ 0 & i  \end{array} \right]$.
	
	(ii) $\ekp_{1 \upSmallFrown \mu'} \sim \ekp_{12 \upSmallFrown \eta'}$, and the representations of both of these generators are the result of replacing each nonzero entry of $\rho(\ek_{\mu})$ with a block equivalent mod $\sim$ to $\left[ \begin{array}{ll} 1 & 0 \\ 0 & 1  \end{array} \right]$.   
	
	(iii) $\ekp_{2 \upSmallFrown \mu'} \sim \ekp_{\emptyset \upSmallFrown \eta'}$, and the representations of both of these generators are the result of replacing each nonzero entry of $\rho(\ek_{\mu})$ with a block equivalent mod $\sim$ to $\left[
	\begin{array}{ll} 0 & 1 \\ 1 & 0
	\end{array}
	\right].$
	
	(iv) $\ekp_{12 \upSmallFrown \mu'} \sim \ekp_{1 \upSmallFrown \eta'}$, and the representations of both of these generators are the result of replacing each nonzero entry of $\rho(\ek_{\mu})$ with a block equivalent mod $\sim$ to $\left[
	\begin{array}{ll} 0 & i \\ i & 0
	\end{array}
	\right].$
	
	Thus in all cases where  $\operatorname{Sh}(\rho(\ek_{\mu})) = \operatorname{Sh}(\rho(\ek_{\eta}))$, we have
	\[
	\begin{array}{ll}
		&\left\{ \left[ \ekp_{\emptyset \upSmallFrown \mu'} \right], \left[ \ekp_{1 \upSmallFrown \mu'} \right], \left[ \ekp_{2 \upSmallFrown \mu'} \right], \left[ \ekp_{12 \upSmallFrown \mu'} \right] \right\} \\[15pt]
		
		= & \left\{\left[ \ekp_{\emptyset \upSmallFrown \eta'} \right], \left[ \ekp_{1 \upSmallFrown \eta'} \right], \left[ \ekp_{2 \upSmallFrown \eta'} \right], \left[ \ekp_{12 \upSmallFrown \eta'} \right] \right\}.
	\end{array}
	\] 
	We now show that the sets $\mathcal{J}_\mu := \left\{ \left[ \ekp_{\emptyset \upSmallFrown \mu'} \right], \left[ \ekp_{1 \upSmallFrown \mu'} \right], \left[ \ekp_{2 \upSmallFrown \mu'} \right], \left[ \ekp_{12 \upSmallFrown \mu'} \right] \right\}$ and \\ $\mathcal{J}_\eta := \left\{\left[ \ekp_{\emptyset \upSmallFrown \eta'} \right], \left[ \ekp_{1 \upSmallFrown \eta'} \right], \left[ \ekp_{2 \upSmallFrown \eta'} \right], \left[ \ekp_{12 \upSmallFrown \eta'} \right] \right\}$ are disjoint if we have $\operatorname{Sh}(\rho(\ek_{\mu})) \neq \operatorname{Sh}(\rho(\ek_{\eta}))$. 
	
	Suppose $\ek_{\mu}, \ek_{\eta} \in \gh_{2k}$ and  $\operatorname{Sh}(\rho(\ek_{\mu})) \neq \operatorname{Sh}(\rho(\ek_{\eta}))$, and suppose by way of contradiction that $\mathcal{J}_\mu \cap \mathcal{J}_\eta \neq \emptyset$. We show that a contradiction arises if $\ekp_{\emptyset \upSmallFrown \mu'} \sim \ekp_{12 \upSmallFrown \eta'}$; the other cases are similar.
	\[
	\begin{array}{llll}
		& \ekp_{\emptyset \upSmallFrown \mu'} & \sim & \ekp_{12 \upSmallFrown \eta'} \\[3mm] \implies & \ekp_{\mu'} & \sim & \ekp_{12} \ekp_{\eta'} \\[3mm]
		\implies & \ekp_1 \ekp_{\mu'} & \sim & \ekp_1 \ekp_{12} \ekp_{\eta'} \textrm{ (multiplying on both sides by $\ekp_1$) } \\[3mm]
		\implies & \ekp_1 \ekp_{\mu'} & \sim & \ekp_1 \ekp_1 \ekp_2 \ekp_{\eta'} \\[3mm]
		\implies & \ekp_1 \ekp_{\mu'} & \sim & \ekp_2 \ekp_{\eta'} \\[3mm]
		\implies & \ekp_{1 \upSmallFrown \mu'} & \sim & \ekp_{2 \upSmallFrown \eta'}; \\[3mm]
	\end{array}
	\]
	also 
	\[
	\begin{array}{llll}
		& \ekp_{\mu'} & \sim & \ekp_{12} \ekp_{\eta'} \\[3mm]
		\implies & \ekp_{12} \ekp_{\mu'} & \sim & \ekp_{12} \ekp_{12} \ekp_{\eta'} \textrm{ (multiplying on both sides by $\ekp_{12}$) } \\[3mm]
		\implies & \ekp_{12} \ekp_{\mu'} & \sim & \ekp_{\eta'} \\[3mm]
		\implies & \ekp_{12 \upSmallFrown \mu'} & \sim & \ekp_{\eta'}; \\[3mm]
	\end{array}
	\]
	and finally 
	\[
	\begin{array}{llll}
		& \ekp_{\mu'} & \sim & \ekp_{12} \ekp_{\eta'} \\[3mm]
		\implies & \ekp_2 \ekp_{\mu'} & \sim & \ekp_2 \ekp_{12} \ekp_{\eta'} \textrm{ (multiplying on both sides by $\ekp_2$) } \\[3mm]
		\implies & \ekp_2 \ekp_{\mu'} & \sim & \ekp_1 \ekp_{\eta'} \\[3mm]
		\implies & \ekp_{2 \upSmallFrown \mu'} & \sim & \ekp_{1 \upSmallFrown \eta'}. \\[3mm]
	\end{array}
	\]
	But then we would have $\mathcal{J} = \mathcal{J'}$, which we have already shown is not the case when $\operatorname{Sh}(\rho(\ek_{\mu})) \neq \operatorname{Sh}(\rho(\ek_{\eta}))$.
\end{proof}

We are now ready to prove the structure theorem for Clifford multiplication on 2-torsion points on the Dirac spinor Abelian variety.

\begin{thm}\label{that corollary} The following hold, for all $k \in \mathbb{N}, \, k \geq 1$:
	\begin{enumerate}
		\item Each equivalence class mod $\sim$ in $\gh_{2k}$ has the same size: $2^{k-1}$. 
		\item There are a total of $2^{k+1}$ equivalence classes mod $\sim$: $2^k$ real equivalence classes and $2^k$ imaginary equivalence classes. 
		\item There are $2^k$  distinct shapes occurring among the equivalence classes mod $\sim$ in $\gh_{2k}$, and each one occurs in two classes: an imaginary class and a real class.
		\item There are exactly as many $\ek_{\mu}$ in a given class with even-length $\mu$ as with odd-length $\mu$. That is, for each equivalence class $[\ek_{\mu}]$ mod $\sim$ in $\gh_{2k}$, $|\{\ek_{\eta} \in [\ek_{\mu}]: |\eta| \textrm{ is even}\}| = |\{\ek_{\eta} \in [\ek_{\mu}]: |\eta| \textrm{ is odd}\}|$.
	\end{enumerate}
\end{thm}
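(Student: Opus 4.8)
The plan is to work directly with the group $\faktor{\gh_{2k}}{\sim}$ of $2^{k+1}$ equivalence classes (Theorem~\ref{number of unique Clifford actions}), using three ingredients: shape and type are constant on each class (Lemmas~\ref{things in same class have same shape} and~\ref{things in same class have same flavor}); the explicit diagonal/Kronecker forms of $\rho(e_1)$ and $\rho(e_{123})$ coming from Proposition~\ref{Dirac representations} and Lemma~\ref{new vector generators in terms of old}; and the fact that $Cl(\C^{2k}) = Cl^0 \oplus Cl^1$ is $\Z_2$-graded, so that $\ell(g) := $ (length parity) is a well-defined homomorphism $\gh_{2k} \to \Z_2$ with $\ell(\pm e_\mu) = |\mu| \bmod 2$ (independent of which sequence represents $g$). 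For part~(1): the relation $\sim$ is a two-sided congruence, since if $g\vec v = g'\vec v$ for all $\vec v \in J_2^{S_{\Delta_{2k}}}$ then $gh$ and $g'h$, as well as $hg$ and $hg'$, act identically on $J_2^{S_{\Delta_{2k}}}$. Hence the quotient map $q:\gh_{2k}\to\faktor{\gh_{2k}}{\sim}$ is a surjective group homomorphism, so all of its fibers have the same cardinality $|\gh_{2k}|/2^{k+1}=2^k$; and since $g\sim -g$ while negation interchanges generators with their negatives, exactly $2^{k-1}$ elements of each class are (positive) generators $e_\mu$.

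For parts~(2) and~(3): since $\rho(e_1)=I_{2^{k-1}}\otimes E_1=\operatorname{diag}(i,-i,\ldots,i,-i)$, right multiplication of $\rho(e_\mu)$ by $\rho(e_1)$ merely rescales columns by $\pm i$, leaving the shape unchanged and swapping the type; thus $g\mapsto g e_1$ is a bijection of $\gh_{2k}$ carrying real classes to imaginary ones and back, giving $2^k$ of each and establishing~(2). For~(3): a real-type matrix of shape $S$ acts on the $2$-torsion points exactly as the permutation $S$ (signs being absorbed by $-v=v$), while an imaginary-type matrix of shape $S$ acts as $i$ times that permutation; hence all real-type generators of a given shape are $\sim$-equivalent, and likewise all imaginary-type ones, so each shape that occurs does so in exactly one real class and one imaginary class. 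With $2^{k+1}$ classes in total, this forces exactly $2^k$ distinct shapes.

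For part~(4): using $E_{12}B=-iI_2$ together with the Kronecker formulas, one computes $\rho(e_{123})=I_{2^{k-2}}\otimes\operatorname{diag}(1,1,-1,-1)$ for every $k\geq 2$; this matrix has shape $I_{2^k}$ and real type, so $e_{123}$ lies in the identity class $[e_\emptyset]$ while $\ell(e_{123})=1$. Consequently, for any class $\mathcal C$, right multiplication by $e_{123}$ is a bijection $\mathcal C\to\mathcal C$ (because $[ge_{123}]=[g][e_\emptyset]=[g]$) that reverses length parity (because $\ell$ is a homomorphism and $\ell(e_{123})=1$), so $\mathcal C$ contains equally many even-length and odd-length elements. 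The case $k=1$ is checked directly from the four equivalences listed in the proof of Theorem~\ref{number of unique Clifford actions}.

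The delicate points I expect are, in order of importance: (i) confirming that length-parity of $\mu$ genuinely descends from sequences to group elements of $\gh_{2k}$ — this is precisely where the $\Z_2$-grading of $Cl(\C^{2k})$ is needed, and it is what makes the count in part~(4) well-posed; (ii) exhibiting an odd-length representative of the identity class, for which $e_{123}$ works as soon as $k\geq 2$; and (iii) in part~(3), ruling out a shape occurring in two classes of the same type, which follows because any two real-type (resp.\ imaginary-type) generators of equal shape act identically on $J_2^{S_{\Delta_{2k}}}$. An alternative, more computational route to all four statements is induction on $k$, propagating the class structure through Lemma~\ref{2 shapes, in 2 flavors each} and Theorem~\ref{same shapes give rise to same equivalence classes} and tracking how the four children $\ekp_{\emptyset\upSmallFrown\mu'},\ekp_{1\upSmallFrown\mu'},\ekp_{2\upSmallFrown\mu'},\ekp_{12\upSmallFrown\mu'}$ of each generator distribute among the new classes; the argument above trades that casework for the congruence and translation structure of $\faktor{\gh_{2k}}{\sim}$.
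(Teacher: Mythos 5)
Your proposal is correct in substance but takes a genuinely different route from the paper. The paper proves all four statements by induction on $k$, propagating the class structure through the ``four children'' machinery (Lemma \ref{e mu hats split into 4 new e mu}, Lemma \ref{2 shapes, in 2 flavors each}, Theorem \ref{same shapes give rise to same equivalence classes}) --- exactly the alternative you sketch in your last paragraph. You instead exploit the fact that $\sim$ is a congruence, so $\faktor{\gh_{2k}}{\sim}$ is a group and the quotient map has fibers of equal size $2^{2k+1}/2^{k+1}=2^k$ (hence $2^{k-1}$ positive generators per class, which is what the theorem's count refers to); translation by $e_1$ (diagonal, shape-preserving, type-swapping) gives (2) and the ``each shape occurs in both types'' half of (3), with Lemmas \ref{things in same class have same shape} and \ref{things in same class have same flavor} plus Proposition \ref{e mu have permutation shapes} supplying the converse; and translation by $e_{123}$, whose representation $I_{2^{k-2}}\otimes\operatorname{diag}(1,1,-1,-1)$ you compute correctly via $E_{12}B=-iI_2$, gives (4). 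This buys a substantial reduction in casework, and your argument for (4) is in fact tighter than the paper's own induction step, which only observes that the four children of a \emph{single} parent split $2$--$2$ by parity without tracking how the children of the $2^k$ different parents contributing to one new class assemble.

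Two small points to make explicit. First, in (3) you should say out loud that the $e_1$-translation from (2) is what guarantees every occurring shape appears in \emph{both} a real and an imaginary class; the sentence ``all real-type generators of a given shape are $\sim$-equivalent'' only gives at most one class per (shape, type) pair. Second, and more importantly, the $k=1$ base case of (4) is not merely ``checked directly'': each class in $\gh_2$ contains exactly one positive generator ($2^{k-1}=1$), so the even and odd counts cannot both be nonzero and equal, and indeed $[e_{12}]=\{e_{12},-e_{12}\}$ consists entirely of even-length elements. Statement (4) therefore only holds for $k\geq 2$ (where $e_{123}$ exists and your argument applies); this is a defect of the statement itself --- the paper's ``by inspection, it is true for $k=1$'' glosses over it --- but your write-up should flag it rather than defer to a check that fails.
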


\begin{proof}
	We prove the theorem by induction on $k$. By inspection, it is true for $k=1$. Suppose properties (1) through (4) above hold for some $k \geq 1$. Consider an equivalence class $E=[\ekp_\mu] \subseteq \gh_{2k+2}$, for some representative $\ekp_\mu \in \gh_{2k+2}$. 
	
	By Theorem \ref{same shapes give rise to same equivalence classes}, two elements of $\gh_{2k}$ give rise to the same four equivalence classes mod $\sim$ if and only if they have the same shape. So, to count the number of generators in $E$, we count the number of $\ek_{\eta} \in \gh_{2k}$ whose representations have the same shape as that of $\ek_{\hat{\mu}}$, where (by Lemma \ref{e mu hats split into 4 new e mu})  $\ek_{\hat{\mu}}$ is the element of $\gh_{2k}$ such that $\ekp_\mu$ is either $\ekp_{\emptyset \upSmallFrown \hat{\mu}'}$, $\ekp_{1 \upSmallFrown \hat{\mu}'}$, $\ekp_{2 \upSmallFrown \hat{\mu}'}$, or $\ekp_{12 \upSmallFrown \hat{\mu}'}$ (where $\hat{\mu}' = \hat{\mu} + 2$). By conditions (1) and (3) of the induction hypotheses, the number of $\ek_{\eta} \in \gh_{2k}$ for which $\rho(\ek_{\eta})$ has the same shape as $\rho(\ek_{\mu})$ is $2^{k-1} + 2^{k-1}$: $2^{k-1}$ generators that are in the same class mod $\sim$ as $e_{\mu}$ (and so whose representations have the same type of nonzero entries (purely real or purely imaginary)), and another $2^{k-1}$ generators in a class $E'$ mod $\sim$ in which every $e$ has a representation with the same shape as $\rho(e_{\mu})$ but the opposite kind of nonzero entries (purely real rather than purely imaginary, or vice versa). 

	So, we have that the size of the equivalence class $E \subseteq \gh_{2k}$ is $2^{k-1} + 2^{k-1} = 2^k = 2^{(k+1)-1}$. Thus condition (1) continues to hold in $\gh_{2k+2}$.
	
	By condition (2) of the induction hypothesis, there are $2^{k+1}$ equivalence classes mod $\sim$ in $\gh_{2k}$. By Lemma \ref{2 shapes, in 2 flavors each} and Theorem \ref{same shapes give rise to same equivalence classes}, from each equivalence class mod $\sim$ in $\gh_{2k}$ come four new equivalence classes -- a real and an  imaginary class for each of two new shapes. However, the real and imaginary classes of each of the two new shapes will generate the same new equivalence classes mod $\sim$ in $\gh_{2k+2}$. Thus to count the number of equivalence classes mod $\sim$ in $\gh_{2k+2}$, we take 4 times the number of shapes occurring in representations of generators in $\gh_{2k}$, which by condition (3) of the induction hypothesis was $2^k$; so the number of equivalence classes mod $\sim$ in $\gh_{2k+2}$ is $4 \cdot 2^k = 2^2 \cdot 2^k = 2^{2+k} = 2^{(k+1)+1}$. Thus condition (2) continues to hold in $\gh_{2k+2}$.
	
	Suppose $E = [\ek_{\mu}]$ and $E'= [\ek_{\eta}]$ are, respectively, the real and imaginary equivalence classes in $\gh_{2k}$ with some shape $P = \operatorname{Sh}(\rho(\ek_{\mu})) = \operatorname{Sh}(\rho(\ek_{\eta}))$. By Lemmas \ref{2 shapes, in 2 flavors each} and \ref{same shapes give rise to same equivalence classes}, the same four new equivalence classes, representing two new shapes (one real and one imaginary class for each), are obtained from $E$ and $E'$. Thus to count the number of shapes among equivalence classes in $\gh_{2k+2}$, we take 2 times the number of shapes occurring among classes in $\gh_{2k}$: $2 \cdot 2^k = 2^{k+1}$. Thus condition (3) continues to hold in $\gh_{2k+2}$.
	
	Finally, condition (4) continues to hold: let $\ek_{\mu} \in \gh_{2k}$. If $|\mu|$ was even, then $|\emptyset \upSmallFrown {\mu}'|$ and $|12 \upSmallFrown {\mu}'|$ are even while $|1 \upSmallFrown {\mu}'|$ and $|2 \upSmallFrown {\mu}'|$ are odd; while if $|\mu|$ was odd, then $|1 \upSmallFrown {\mu}'|$ and $|2 \upSmallFrown {\mu}'|$ are even while $|\emptyset \upSmallFrown {\mu}'|$ and $|12 \upSmallFrown {\mu}'|$ are odd.
	
\end{proof}

With this structure theorem, we know exactly what Clifford multiplication on 2-torsion points looks like. Now that we have an understanding of the unique operators on $J_2^{S_{\Delta_{2k}}}$, we can proceed to view $\hat{\Gamma}_{2k}$ actions on the 2-torsion points of the Dirac spinor Abelian varieties in an entirely different manner, abandoning the matrix multiplication action and replacing it with permutations of order $2$.

\section{Representing the actions of our Clifford  generators on $J_2^{S_{\Delta_{2k}}}$ as permutations of order $2$}\label{section: Clifford permutations}

In this section, we will again represent our 2-torsion points $\vec{v} \in J_2^{S_{\Delta_{2k}}}$ with the notation  $$T_{a_1, \ldots ,a_{2^{k}}}= \vec{v} = \left( \begin{array}{c} v_{a_1}\\ \vdots \\v_{a_{2^{k}}}\ \end{array} \right),$$ where $a_l \in \{0, 1, 2, 3\}$ for $1 \leq l \leq 2^k$. (Recall that we have denoted $v_0=0$, $v_1=\frac{1}{2}$, $v_2 = \frac{i}{2}$, and $v_3 = \frac{1+i}{2}$.) We define the group action of the symmetric group $\mathcal{S}_{2^{k}}$ on $J_2^{S_{\Delta_{2k}}}$ in the usual manner, by permuting the components of the vector $T_{a_1, \ldots, a_{2^k}}$ as indicated by an element of the symmetric group. Since we will be discussing permutations of order 2, such an action takes the following form:

\[
\sigma\cdot T_{a_1, \ldots ,a_{2^{k}}}=T_{a_{\sigma(1)}, \ldots , a_{\sigma(2^k)}}=\left( \begin{array}{c} v_{a_{\sigma(1)}}\\ \vdots \\v_{a_{\sigma(2^{k})}}\ \end{array} \right) 
\]

\noindent for any $\sigma\in \mathcal{S}_{2^{k}}$ of order 2. As we know, there are a total of $2^{k+1}$ unique Clifford actions on our 2-torsion points, and each non-identity action is an involution on $J_2^{S_{\Delta_{2k}}}$.  We can now define the actions on $J_2^{S_{\Delta_{2k}}}$ in terms of permutations on $\{1, \ldots, 2^k\}$. 

\begin{defn}\label{switch permutations}
We define the \textbf{switch permutations} $A_{2^{j}}\in \mathcal{S}_{2^{k}}$, for $j=0,\ldots,k-1$, as the  permutations that pairwise interchange blocks of size $2^{j}$ in $\{1, \ldots, 2^k\}$. More explicitly: fix $j \in \{0, \ldots, k-1\}$. Write the ordered set $\langle 1, \ldots, 2^k\rangle$ as the disjoint union of $2^{k-j}$ blocks of size $2^j$, preserving the usual order:
\[
\begin{array}{lrcl}
\textrm{First block:} & X_1 &= & \langle1, \ldots, 1 \cdot 2^j\rangle \\
\textrm{Second block:} & X_2 & = & \langle1 \cdot 2^j + 1, \ldots, 2 \cdot 2^j\rangle \\
\textrm{Third block:} & X_3 & = & \langle 2 \cdot 2^j + 1, \ldots, 3 \cdot 2^j\rangle \\
\ & \  & \vdots & \ \\
(2^{k-j}-1)\textrm{th block:} & X_{2^{k-j}-1} & = & \langle (2^{k-j}-2) \cdot 2^j + 1, \ldots, (2^{k-j}-1) \cdot 2^j\rangle \\
(2^{k-j})\textrm{th block:} & X_{2^{k-j}} & = & \langle (2^{k-j}-1) \cdot 2^j + 1, \ldots, (2^{k-j}) \cdot 2^j\rangle 
\end{array}
\]
The permutation $A_{2^j}$ interchanges the first and second blocks, the third and fourth blocks, \ldots, and the $(2^{k-j}-1)$th and $(2^{k-j})$th blocks of size $2^j$, while preserving the existing order of the numbers within each block. 
\end{defn}

We quickly remark that all of the switch permutations $A_{2^{j}}\in\mathcal{S}_{2^{k}}$ are of order 2, and that they are even permutations and hence belong to the alternating subgroup $Alt(2^{k})$ of the symmetry group $\mathcal{S}_{2^k}$. 

\begin{exmp}
The switch permutation $A_{2^1}$ on the ordered set $\langle 1, \ldots, 8\rangle$ pairwise interchanges blocks of size $2$. Thus it maps  $\langle 1, 2, 3, 4, 5, 6, 7, 8 \rangle$ to $\langle 3, 4, 1, 2, 7, 8, 5, 6  \rangle$; that is, $A_{2^1}$ acting on $\langle 1, \ldots, 8\rangle$ is the even permutation $(1 \ 3)(2 \ 4)(5 \ 7)(6 \ 8)$.
\end{exmp}

We now connect our switch permutations $A_{2^{j}}\in Alt(2^{k})$ and the identity permutation $(1)$ with all of our $2^{k+1}$ unique Clifford actions given by generators of the standard basis for the Clifford algebra $Cl(\C^{2k})$ acting on $J_2^{S_{\Delta_{2k}}}$ established in Section \ref{Properties of the multiplicative group ...}.

\begin{defn}
Let $\langle i \rangle=\{1, -1, i, -i\}$ be the group of fourth roots of unity. Define the $\langle i \rangle \times \mathcal{S}_{2^k}$  action on $J_2^{S_{\Delta_{2k}}}$ as follows: $(z, \sigma)\cdot T_{a_1, \ldots ,a_{2^{k}}}=z\cdot (\sigma \cdot T_{a_1, \ldots ,a_{2^{k}}})$; that is, the scalar multiplication of the vector $\sigma \cdot T_{a_1, \ldots ,a_{2^{k}}}$ by the fourth root of unity $z$.
\end{defn}

With this new adapted action on $J_2^{S_{\Delta_{2k}}}$, we prove that we can represent all unique Clifford actions on the 2-torsion points in terms of the switch permutations $A_{2^{j}}\in Alt(2^{k})\subset \mathcal{S}_{2^{k}}$ and scalar multiplication by $i\in\C$.

\begin{defn}\label{S maps}
For $k \in \mathbb{N},\, k \geq 1$, we define two kinds of actions, denoted $S_{1, \ldots, k}$ and $i \cdot S_{1, \ldots, k}$, on the $2$-torsion points $J_2^{S_{\Delta_{2k}}}$ as follows: $S_{1, \ldots, k} := S_k \circ \cdots \circ S_1$ where $S_l \in \{(1), A_{2^{l - 1}}\}$ for $1 \leq l \leq k$. That is, a permutation $S_{1, \ldots, k}$ acts on $\{1, \ldots, 2^k\}$ (and thus on the entries of a $2$-torsion point $\vec{v} \in J_2^{S_{\Delta_{2k}}}$) by either pairwise switching or leaving fixed all blocks of size $2^{l-1}$ for each $1 \leq l \leq k$.) We denote by $i \cdot S_{1, \ldots, k}$ the action of one of these $S_{1, \ldots, k}$ followed by a scalar multiplication by $i$.
\end{defn}

Observe that since there are two choices for each of the $k$ components of an action $S_{1, \ldots, k}$, there are $2^k$ such maps. Similarly, there are $2^k$ maps of the form $i \cdot S_{1, \ldots, k}$. 

\begin{thm}\label{representing Clifford multiplication via permutations}
Let $k \in \mathbb{N},\ k \geq 1$, and let $\ek_\mu \in \gh_{2k}$ be a generator. Then the action of $\ek_\mu$ on an element $\vec{v} \in J_2^{S_{\Delta_{2k}}}$ is identical to that of some action $S_{1,\ldots, k}$ or $i \cdot S_{1,\ldots, k}$ on the entries of $\vec{v}$.

Thus the set $\{[\ek_\mu]:\ \ek_\mu \in \gh_{2k}\}$ of equivalence classes mod $\sim$ is isomorphic, as a set of actions on $J_2^{S_{\Delta_{2k}}}$, to the set $\{S_{1, \ldots, k},\ i \cdot S_{1, \ldots, k}: S_{l} \in \{A_{2^{l-1}}, (1)\} \textrm{ for } 1 \leq l \leq k\}$.

\end{thm}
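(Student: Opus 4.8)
The plan is to prove the first claim by induction on $k$, tracking explicitly how each generator $\ek_\mu$ acts on a $2$-torsion point via the recursive Kronecker structure already established, and then derive the set-isomorphism statement as a counting consequence. First I would dispose of the base case $k=1$ by direct inspection: using the relations $I_2 \sim i\cdot E_1$, $E_2 \sim B$, etc., from the proof of Theorem \ref{number of unique Clifford actions}, one checks that the four generators of $\gh_2$ acting on $J_2^{S_{\Delta_2}}$ are precisely $(1)$, $A_{2^0}$, $i\cdot(1)$, and $i\cdot A_{2^0}$, matching the four maps $S_1$, $i\cdot S_1$ with $S_1 \in \{(1), A_{2^0}\}$.

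For the inductive step, suppose the claim holds in $\gh_{2k}$: every generator $\ek_\eta$ acts on $J_2^{S_{\Delta_{2k}}}$ exactly as some $S_{1,\ldots,k}$ or $i\cdot S_{1,\ldots,k}$. Given $\ekp_\mu \in \gh_{2k+2}$, Lemma \ref{e mu hats split into 4 new e mu} writes it as $\ekp_\emptyset\ekp_{\hat\mu'}$, $\ekp_1\ekp_{\hat\mu'}$, $\ekp_2\ekp_{\hat\mu'}$, or $\ekp_{12}\ekp_{\hat\mu'}$ for some $\ek_{\hat\mu} \in \gh_{2k}$, with $\rho(\ekp_{\hat\mu'})$ equal to $\rho(\ek_{\hat\mu})\otimes I_2$ or $\rho(\ek_{\hat\mu})\otimes B$ according to the parity of $|\hat\mu|$. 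The key computation is to identify the action on $J_2^{S_{\Delta_{2k+2}}}$ of tensoring by $I_2$ versus $B$ and of left-multiplying by $\rho(\ekp_1),\rho(\ekp_2),\rho(\ekp_{12})$. Using Lemmas \ref{what multiplication by e1 does}, \ref{what e2 does}, \ref{what e12 does} together with the relations in Remark \ref{dim 1 2-torsion points} (so that $-1$ and $-i$ act as $1$ and $i$), one sees: tensoring by $I_2$ duplicates each entry into a block of size two and acts as the identity permutation on the new innermost block of size $2^0=1$ pairs (i.e.\ $S_{k+1}=(1)$), possibly with a scalar $i$; tensoring by $B$ does the same duplication followed by the switch $A_{2^0}$ on the $2^k$ blocks of size $1$ at the finest level (i.e.\ $S_{k+1}=A_{2^0}$), again possibly with a scalar $i$; and left-multiplication by $\rho(\ekp_1)$ merely introduces an overall scalar $i$, while $\rho(\ekp_2)$ and $\rho(\ekp_{12})$ introduce the switch permutation $A_{2^k}$ on the two outermost blocks of size $2^k$ (i.e.\ they toggle $S_{k+1}$... more precisely they act as $A_{2^{(k+1)-1}}$ on the top-level pair), possibly with a scalar $i$. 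Since by the induction hypothesis the action of $\ek_{\hat\mu}$ on the $2^k$-component vectors is a composite $S_k\circ\cdots\circ S_1$ (times $i$ or not), and the new operations adjust only the coarsest switch $S_{k+1}$ and the overall scalar, the composite action of $\ekp_\mu$ on $J_2^{S_{\Delta_{2k+2}}}$ is of the form $S_{k+1}\circ S_k\circ\cdots\circ S_1$ or $i\cdot(S_{k+1}\circ\cdots\circ S_1)$, as required. Here I would be careful that the switch permutations at different scales commute and that the scalars $i$ from the various factors multiply to give either $1$ or $i$ (modulo the $i\cdot i = 1$ relation on $2$-torsion points), so that no genuinely new ``$-1$'' or ``$-i$'' scalar appears.

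For the second claim: the first part shows the map sending a class $[\ek_\mu]$ to its representative action among $\{S_{1,\ldots,k},\, i\cdot S_{1,\ldots,k}\}$ is well-defined (by definition of $\sim$) and surjective onto this set of $2^{k+1}$ maps. By Theorem \ref{number of unique Clifford actions} there are exactly $2^{k+1}$ equivalence classes mod $\sim$, and by the remark following Definition \ref{S maps} there are exactly $2^{k+1}$ maps of the form $S_{1,\ldots,k}$ or $i\cdot S_{1,\ldots,k}$; a surjection between finite sets of equal cardinality is a bijection. Hence $\{[\ek_\mu]:\ek_\mu\in\gh_{2k}\}$ and $\{S_{1,\ldots,k},\, i\cdot S_{1,\ldots,k}\}$ are in bijective correspondence as sets of actions on $J_2^{S_{\Delta_{2k}}}$.

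The main obstacle I anticipate is the bookkeeping in the inductive step: precisely matching ``tensor by $B$ on the right'' and ``left-multiply by $\rho(\ekp_2)$'' with the correct switch permutations $A_{2^j}$ at the correct block-size level, and confirming that the block-switch operations at distinct scales commute so that the order of composition in $S_k\circ\cdots\circ S_1$ is unambiguous. One must also verify carefully that collecting the scalar factors never produces anything outside $\{1,i\}$ once the relation $i\cdot i\cdot v_k = v_k$ is applied componentwise, which is where the reduction to $\langle i\rangle/\{\pm 1\}\cong\Z_2$ implicit in Theorem \ref{number of unique Clifford actions} does the work.
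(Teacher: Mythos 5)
Your overall strategy (induction on $k$, tracking the action through the four-fold decomposition of Lemma \ref{e mu hats split into 4 new e mu} and the effect of tensoring and of left-multiplication by $\rho(\ekp_1),\rho(\ekp_2),\rho(\ekp_{12})$) is a legitimate alternative to the paper's argument, which instead proves by induction a structural property of the shapes $\operatorname{Sh}(\rho(\ek_\mu))$ (a nested block-diagonal/anti-block-diagonal form) and then reads off the switch permutations level by level. Your counting argument for the second claim is also fine and matches the paper's. However, the inductive step as you have written it contains a concrete error: you have the two ends of the block hierarchy reversed. By Lemma \ref{new vector generators in terms of old}, $\rho(\ekp_2)=I_{2^{k}}\otimes E_2$, and by Lemma \ref{what e2 does} left-multiplication by it interchanges rows $1$ with $2$, $3$ with $4$, \ldots\ -- that is, it performs the \emph{innermost} switch $A_{2^0}$, not the outermost switch $A_{2^{k}}$ on the two halves as you claim; the same goes for $\rho(\ekp_{12})$. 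Likewise, tensoring $\rho(\ek_{\hat\mu})$ by $I_2$ or $B$ \emph{on the right} installs the new finest level: it determines whether adjacent entries within each new block of size $2$ are swapped, so it fixes the new $S_1\in\{(1),A_{2^0}\}$, not $S_{k+1}$. Correspondingly, the inherited action of $\ek_{\hat\mu}$ does not remain at levels $1,\ldots,k$; it now permutes the $2^{k}$ blocks of size $2$, so each old $A_{2^{j-1}}$ is promoted to $A_{2^{j}}$ and the old $S_1,\ldots,S_k$ become the new $S_2,\ldots,S_{k+1}$.

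So the statement "the new operations adjust only the coarsest switch $S_{k+1}$ and the overall scalar" is false as written, and a reader checking it against Lemmas \ref{new vector generators in terms of old}--\ref{what e12 does} would get a contradiction. The repair is mechanical -- the new factors ($I_2$ vs.\ $B$ on the right, combined with the possible $A_{2^0}$ from $\rho(\ekp_2)$ or $\rho(\ekp_{12})$ on the left, which may cancel or combine) determine $S_1^{\mathrm{new}}$, and the induction hypothesis supplies $S_2^{\mathrm{new}},\ldots,S_{k+1}^{\mathrm{new}}$ -- and with that correction your induction, together with the scalar bookkeeping via $i\cdot i\cdot v=v$ (which you handle correctly), does yield the theorem. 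But as submitted, the key step of the inductive argument asserts the wrong permutations at the wrong levels, so the proof is not correct as written.
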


\begin{proof}

Say that a $2^k \times 2^k$ permutation matrix $P$ has Property ($\star$) if: 
\begin{itemize}
	\item $P$ is of form $\left[\begin{array}{c|c}
		C_{k-1} & 0 \\
		\hline 
		0 & C_{k-1}
	\end{array}\right]$ or $\left[\begin{array}{c|c}
	0 & C_{k-1} \\
	\hline 
	C_{k-1} & 0
	\end{array}\right]$, where $C_{k-1}$ is a $2^{k-1} \times 2^{k-1}$ permutation matrix;
	\item  $C_{k-1}$ is of form $\left[\begin{array}{c|c}
		C_{k-2} & 0 \\
		\hline 
		0 & C_{k-2}
	\end{array}\right]$ or $\left[\begin{array}{c|c}
		0 & C_{k-2} \\
		\hline 
		C_{k-2} & 0
	\end{array}\right],$ where $C_{k-2}$ is a $2^{k-2} \times 2^{k-2}$ permutation matrix;
	\[\vdots\]
	\item and $C_1 = C_{k-(k-1)}$ is one of the $2 \times 2$ permutation matrices $\left[\begin{array}{cc}
		1 & 0 \\
		0 & 1
	\end{array}\right]$ or $\left[\begin{array}{cc}
	0 & 1 \\
	1 & 0
	\end{array}\right]$.
\end{itemize}

We claim that for every $\ek_\mu \in \gh_{2k}$, $\operatorname{Sh}(\rho(\ek_\mu))$ has property ($\star$). The claim holds by inspection when $k=1$. Suppose it holds for some $k \in \mathbb{N},\ k \geq 1$, and let $\ekp_\mu \in \gh_{2k+2}$. By Lemma \ref{new e mu in terms of old}, $\rho(\ekp_\mu)$ was formed from $\rho(\ek_{\underline{\mu}-2})$ (where $\underline{\mu} = \mu \setminus \{1, 2\}$) by taking the Kronecker product with $I_2$ or $B$ on the right, and then matrix multiplying by $I_{2^{k+1}}, \rho(\ekp_1), \rho(\ekp_2), $ or $\rho(\ekp_{12})$. By the induction hypothesis, $\operatorname{Sh}(\rho(\ek_{\underline{\mu}-2}))$ has property ($\star$). By Lemmas \ref{what multiplication by e1 does}, \ref{what e2 does}, and \ref{what e12 does} and properties of the Kronecker product, in forming $\operatorname{Sh}(\rho(\ekp_\mu))$, either each nonzero entry of $\operatorname{Sh}(\rho(\ek_{\underline{\mu}-2}))$ is replaced by $\left[\begin{array}{cc} 1 & 0 \\ 0 & 1 \end{array} \right]$, or each nonzero entry of $\operatorname{Sh}(\rho(\ek_{\underline{\mu}-2}))$ is replaced by $\left[\begin{array}{cc} 0 & 1 \\ 1 & 0 \end{array} \right]$, so that ($\star$) continues to hold for $\operatorname{Sh}(\rho(\ekp_\mu))$. Thus the claim holds.

Now let $\ek_\mu \in \gh_{2k}$. By the Claim, $\operatorname{Sh}(\rho(\ek_\mu))$ is of the form either (i) $\left[\begin{array}{c|c}
	C_{k-1} & 0 \\
	\hline 
	0 & C_{k-1}
\end{array}\right]$ or (ii)  $\left[\begin{array}{c|c}
0 & C_{k-1}  \\
\hline 
C_{k-1} & 0
\end{array}\right]$ where $C_{k-1}$ is a $2^{k-1} \times 2^{k-1}$ permutation matrix. Note that if $\operatorname{Sh}(\rho(\ek_\mu))$ has form (ii), then the action of $\rho(\ek_\mu)$ on any $\vec{v} \in J_2^{S_{\Delta_{2k}}}$ interchanges the first $2^{k-1}$ entries in $\vec{v}$ with the last $2^{k-1}$ entries in $\vec{v}$; but if $\operatorname{Sh}(\rho(\ek_\mu))$ has form (i), then it does not: that is, in this case the action of $\rho(\ek_\mu)$ on any $\vec{v} \in J_2^{S_{\Delta_{2k}}}$ might rearrange the first $2^{k-1}$ entries of $\vec{v}$, but does not interchange any of the first $2^{k-1}$ entries of $\vec{v}$ with any of the last $2^{k-1}$ entries. Thus if $\operatorname{Sh}(\rho(\ek_\mu))$ has form (i), we set $S_k=(1)$; and if $\operatorname{Sh}(\rho(\ek_\mu))$ has form (ii), we set $S_k=A_{2^{k-1}}$. 

Next: again by the Claim, $C_{k-1}$ is of the form either (i) $\left[\begin{array}{c|c}
	C_{k-2} & 0 \\
	\hline 
	0 & C_{k-2}
\end{array}\right]$ or (ii)  $\left[\begin{array}{c|c}
	0 & C_{k-2}  \\
	\hline 
	C_{k-2} & 0
\end{array}\right]$ where $C_{k-2}$ is a $2^{k-2} \times 2^{k-2}$ permutation matrix. If $C_{k-1}$ has form (ii), then the action of $\rho(\ek_\mu)$ on any $\vec{v} \in J_2^{S_{\Delta_{2k}}}$ interchanges the first and second blocks in $\vec{v}$ of size $2^{k-2}$, and also interchanges the third and fourth blocks of size $2^{k-2}$, so that we set $S_{k-1} = A_{2^{k-2}}$. If $C_{k-1}$ has form (i), then the action of $\rho(\ek_\mu)$ on $\vec{v}$ might rearrange the entries within these four blocks of size $2^{k-2}$; but within a block of size $2^{k-1}$, it will not switch the order among the two blocks of size $2^{k-2}$, so that we set $S_{k-1} = (1)$.

Continuing in this way, for $0 \leq j \leq k-1$, we set $S_{k-j}=(1)$ if $C_{k-j}$ has the form $\left[\begin{array}{c|c}
	C_{k-j-1} & 0 \\
	\hline 
	0 & C_{k-j-1}
\end{array}\right]$, and we set $S_{k-j}=A_{2^{k-j-1}}$ if $C_{k-j}$ has the form $\left[\begin{array}{c|c}
0 & C_{k-j-1}  \\
\hline 
C_{k-j-1} & 0
\end{array}\right]$ (where $C_k = \operatorname{Sh}(\rho(\ek_\mu))$). Note that $C_1$ will be either $\left[\begin{array}{cc}
1 & 0  \\
0 & 1
\end{array}\right]$ or $\left[\begin{array}{cc}
0 & 1  \\
1 & 0
\end{array}\right]$.

By construction, if $\ek_\mu$ had real type, then the action of $\rho(\ek_\mu)$ on any point $\vec{v} \in J_2^{S_{\Delta_{2k}}}$ is identical to that of $S_k \circ S_{k-1} \circ \cdots \circ S_1$; and if $\ek_\mu$ had imaginary type, then the action of $\rho(\ek_\mu)$ on any point $\vec{v} \in J_2^{S_{\Delta_{2k}}}$ is identical to that of $i \cdot (S_k \circ S_{k-1} \circ \cdots \circ S_1)$.

Thus we have an embedding of the set $\{[\ek_\mu]:\ \ek_\mu \in \gh_{2k}\}$ of equivalence classes mod $\sim$ into the set $\{S_{1, \ldots, k},\ i \cdot S_{1, \ldots, k}: S_{i_l} \in \{A_{2^{l-1}}, (1)\} \textrm{ for } 1 \leq l \leq k\}$. Since both of these sets have size $2^{k+1}$, they are in fact isomorphic as sets of actions on $J_2^{S_{\Delta_{2k}}}$.
\end{proof}

Thus we can represent the $2^{k+1}$ classes of Clifford actions on $J_2^{S_{\Delta_{2k}}}$ in a way that is entirely dependent on the switch permutations $A_{2^{j}}\in Alt(2^{k})\subset \mathcal{S}_{2^{k}}$. The benefit in doing this is that we gain insight into the structure of Clifford multiplication on $J_2^{S_{\Delta_{2k}}}$ without establishing which generators are associated to which equivalence classes. 

\begin{defn}\label{induced CLifford permutations}
For $\ek_\mu \in \gh_{2k}$, we will refer to the map $S_{1 , \ldots, k}$ or $i \cdot S_{1 , \ldots, k}$ whose action on $J_2^{S_{\Delta_{2k}}}$ is identical to that of $\ek_\mu$ as the \textbf{induced Clifford permutation} for $\ek_\mu$. Specifically, we will call maps of the form $S_{1, \ldots, k}$ \textbf{strictly real induced Clifford permutations}, and we will call maps of the form $i \cdot S_{1, \ldots, k}$ \textbf{strictly imaginary induced Clifford permutations}. 
\end{defn}

\begin{remark}\label{mu4 X S action on J2}
It is clear that scalar multiplication by $i$ commutes with all $A_{2^{j}}$ and $(1)$ operators. Also, by properties of multiplication on $J^{S_{\Delta_0}}_2\subset \dfrac{\C}{\mathbb{Z}\oplus i \mathbb{Z}}$ by $i$, we have  $i \cdot i \cdot S_{1,\ldots,k}=S_{1,\ldots,k}$. This means that all of the $i \cdot S_{1, \ldots, k}$ maps are involutions; and that in taking into account scalar multiplication by $i$, we need only consider maps of the form $i \cdot S_{1,\ldots,k}$ for each strictly real induced Clifford permutation $S_{1,\ldots,k}$.
That is, the $\langle i \rangle \times \mathcal{S}_{2^k}$ action reduces to a $\{1, i\} \times \mathcal{S}_{2^k}$ action on the 2-torsion points $J_2^{S_{\Delta_{2k}}}$.
\end{remark}

To illustrate how these induced Clifford permutations work, we present the following example.

\begin{exmp}
On $J_2^{S_{\Delta_6}} \subset S_{\Delta_{6}}$ ($k=3$), we have $2^4=16$ induced Clifford permutations, $8$ real and $8$ imaginary. (Recall that the imaginary ones are the same permutations with multiplication by $i$  after we have permuted the indices.) Each strictly real induced Clifford permutation acts on an element $\vec{v}$ of $J_2^{S_{\Delta_6}}$ by reordering the entries of the vector $\vec{v}$.

For example, the map $S_{1,2,3} = A_4 \circ (1) \circ A_1$ acts on an element $\vec{v} \in J_2^{S_{\Delta_6}}$ as follows, where $\vec{v}$ is represented by a vector of the form $$\vec{v} = \langle \vec{v}[1], \vec{v}[2], \vec{v}[3], \vec{v}[4], \vec{v}[5], \vec{v}[6], \vec{v}[7], \vec{v}[8] \rangle$$ with $\vec{v}[j] \in J_2^{S_{\Delta_0}}$ for $1 \leq j \leq 8$: 

\[ S_{1,2,3} \cdot \vec{v} = A_4 \circ (1) \circ A_1 \cdot 
\left(
\begin{array}{c}
\\[-9pt]
\vec{v}[1]\\[2pt]
\vec{v}[2]\\[2pt]
\vec{v}[3]\\[2pt]
\vec{v}[4]\\[2pt]
\vec{v}[5]\\[2pt]
\vec{v}[6]\\[2pt]
\vec{v}[7]\\[2pt]
\vec{v}[8]\\[3pt]
\end{array}
\right) = A_4 \circ (1) \cdot \left(
\begin{array}{c}
\\[-9pt]
\vec{v}[2]\\[2pt]
\vec{v}[1]\\[2pt]
\vec{v}[4]\\[2pt]
\vec{v}[3]\\[2pt]
\vec{v}[6]\\[2pt]
\vec{v}[5]\\[2pt]
\vec{v}[8]\\[2pt]
\vec{v}[7]\\[3pt]
\end{array}
\right) = A_{4} \cdot \left(
\begin{array}{c}
\\[-9pt]
\vec{v}[2]\\[2pt]
\vec{v}[1]\\[2pt]
\vec{v}[4]\\[2pt]
\vec{v}[3]\\[2pt]
\vec{v}[6]\\[2pt]
\vec{v}[5]\\[2pt]
\vec{v}[8]\\[2pt]
\vec{v}[7]\\[3pt]
\end{array}
\right) = \left(
\begin{array}{c}
\\[-9pt] 
\vec{v}[6]\\[2pt]
\vec{v}[5]\\[2pt]
\vec{v}[8]\\[2pt]
\vec{v}[7]\\[2pt]
\vec{v}[2]\\[2pt]
\vec{v}[1]\\[2pt]
\vec{v}[4]\\[2pt]
\vec{v}[3]\\[3pt]
\end{array}
\right). \]
(That is, here $S_{1,2,3}$ is the permutation $(16)(25)(38)(47)\in \mathcal{S}_8$.)

\end{exmp}

We can also picture the strictly real Clifford permutations acting on $J_2^{S_{\Delta_{2k}}}$ using shoelace diagrams, as shown in Figure \ref{shoelace diagrams} for $k=3$ (dimension 8), where the numbers 1 through 8 in each column represent the entries $\vec{v}[1]$ through $\vec{v}[8]$ of a vector $\vec{v} \in J_2^{S_{\Delta_6}}$. (The strictly imaginary Clifford permutations can be similarly pictured after we introduce a different way of representing 2-torsion points in Section \ref{sec: fixed points and translation constants}.)

\begin{figure}[h]
\label{shoelace diagrams}
\centering

\begin{tikzpicture}[
	number/.style={rectangle, minimum size=6mm, draw=black}]
	
	\matrix[row sep=0mm,column sep=0mm] {
		\node (1) [number] {1}; \\
		\node (2) [number] {2}; \\
		\node (3) [number] {3}; \\
		\node (4) [number] {4}; \\
		\node (1) [number] {5}; \\
		\node (2) [number] {6}; \\
		\node (3) [number] {7}; \\
		\node (4) [number] {8}; \\
	};
	
	\node at (1,-3) {$(1) \circ (1) \circ (1)$};
	
	\begin{scope}[yshift=0cm,xshift=2cm]
		\matrix[row sep=0mm,column sep=0mm] {
			\node (r1) [number] {1}; \\
			\node (r2) [number] {2}; \\
			\node (r3) [number] {3}; \\
			\node (r4) [number] {4}; \\
			\node (r5) [number] {5}; \\
			\node (r6) [number] {6}; \\
			\node (r7) [number] {7}; \\
			\node (r8) [number] {8}; \\
		};
	\end{scope}
	
	
	\node (a8) at (.3,-2.14375) {};
	\node (b8) at (1.7,-2.14375) {};
	\node (a7) at (.3,-1.53125) {};
	\node (b7) at (1.7,-1.53125) {};
	\node (a6) at (.3,-0.91875) {};
	\node (b6) at (1.7,-0.91875) {};
	\node (a5) at (.3,-0.30625) {};
	\node (b5) at (1.7,-0.30625) {};
	\node (a4) at (.3,0.30625) {};
	\node (b4) at (1.7,0.30625) {};
	\node (a3) at (.3,0.91875) {};
	\node (b3) at (1.7,0.91875) {};
	\node (a2) at (.3,1.53125) {};
	\node (b2) at (1.7,1.53125) {};
	\node (a1) at (.3,2.14375) {};
	\node (b1) at (1.7,2.14375) {};
	
	\draw [-{Stealth[length=1mm]}] (a8) -- (b8);
	\draw [-{Stealth[length=1mm]}] (a7) -- (b7);
	\draw [-{Stealth[length=1mm]}] (a6) -- (b6);
	\draw [-{Stealth[length=1mm]}] (a5) -- (b5);
	\draw [-{Stealth[length=1mm]}] (a4) -- (b4);
	\draw [-{Stealth[length=1mm]}] (a3) -- (b3);
	\draw [-{Stealth[length=1mm]}] (a2) -- (b2);
	\draw [-{Stealth[length=1mm]}] (a1) -- (b1);
	
\end{tikzpicture}
\hspace{1mm}
\begin{tikzpicture}[
	number/.style={rectangle, minimum size=6mm, draw=black}]
	
	\matrix[row sep=0mm,column sep=0mm] {
		\node (1) [number] {1}; \\
		\node (2) [number] {2}; \\
		\node (3) [number] {3}; \\
		\node (4) [number] {4}; \\
		\node (1) [number] {5}; \\
		\node (2) [number] {6}; \\
		\node (3) [number] {7}; \\
		\node (4) [number] {8}; \\
	};
	
	\node at (1,-3) {$(1) \circ (1) \circ A_1$};
	
	\begin{scope}[yshift=0cm,xshift=2cm]
		\matrix[row sep=0mm,column sep=0mm] {
			\node (r1) [number] {2}; \\
			\node (r2) [number] {1}; \\
			\node (r3) [number] {4}; \\
			\node (r4) [number] {3}; \\
			\node (r5) [number] {6}; \\
			\node (r6) [number] {5}; \\
			\node (r7) [number] {8}; \\
			\node (r8) [number] {7}; \\
		};
	\end{scope}
	
	
	\node (a8) at (.3,-2.14375) {};
	\node (b8) at (1.7,-2.14375) {};
	\node (a7) at (.3,-1.53125) {};
	\node (b7) at (1.7,-1.53125) {};
	\node (a6) at (.3,-0.91875) {};
	\node (b6) at (1.7,-0.91875) {};
	\node (a5) at (.3,-0.30625) {};
	\node (b5) at (1.7,-0.30625) {};
	\node (a4) at (.3,0.30625) {};
	\node (b4) at (1.7,0.30625) {};
	\node (a3) at (.3,0.91875) {};
	\node (b3) at (1.7,0.91875) {};
	\node (a2) at (.3,1.53125) {};
	\node (b2) at (1.7,1.53125) {};
	\node (a1) at (.3,2.14375) {};
	\node (b1) at (1.7,2.14375) {};
	
	\draw [-{Stealth[length=1mm]}] (a8) -- (b7);
	\draw [-{Stealth[length=1mm]}] (a7) -- (b8);
	\draw [-{Stealth[length=1mm]}] (a6) -- (b5);
	\draw [-{Stealth[length=1mm]}] (a5) -- (b6);
	\draw [-{Stealth[length=1mm]}] (a4) -- (b3);
	\draw [-{Stealth[length=1mm]}] (a3) -- (b4);
	\draw [-{Stealth[length=1mm]}] (a2) -- (b1);
	\draw [-{Stealth[length=1mm]}] (a1) -- (b2);
	
\end{tikzpicture}
\hspace{1mm}
\begin{tikzpicture}[
	number/.style={rectangle, minimum size=6mm, draw=black}]
	
	\matrix[row sep=0mm,column sep=0mm] {
		\node (1) [number] {1}; \\
		\node (2) [number] {2}; \\
		\node (3) [number] {3}; \\
		\node (4) [number] {4}; \\
		\node (1) [number] {5}; \\
		\node (2) [number] {6}; \\
		\node (3) [number] {7}; \\
		\node (4) [number] {8}; \\
	};
	
	\node at (1,-3) {$(1) \circ A_2 \circ (1)$};
	
	\begin{scope}[yshift=0cm,xshift=2cm]
		\matrix[row sep=0mm,column sep=0mm] {
			\node (r1) [number] {3}; \\
			\node (r2) [number] {4}; \\
			\node (r3) [number] {1}; \\
			\node (r4) [number] {2}; \\
			\node (r5) [number] {7}; \\
			\node (r6) [number] {8}; \\
			\node (r7) [number] {5}; \\
			\node (r8) [number] {6}; \\
		};
	\end{scope}
	
	
	\node (a8) at (.3,-2.14375) {};
	\node (b8) at (1.7,-2.14375) {};
	\node (a7) at (.3,-1.53125) {};
	\node (b7) at (1.7,-1.53125) {};
	\node (a6) at (.3,-0.91875) {};
	\node (b6) at (1.7,-0.91875) {};
	\node (a5) at (.3,-0.30625) {};
	\node (b5) at (1.7,-0.30625) {};
	\node (a4) at (.3,0.30625) {};
	\node (b4) at (1.7,0.30625) {};
	\node (a3) at (.3,0.91875) {};
	\node (b3) at (1.7,0.91875) {};
	\node (a2) at (.3,1.53125) {};
	\node (b2) at (1.7,1.53125) {};
	\node (a1) at (.3,2.14375) {};
	\node (b1) at (1.7,2.14375) {};
	
	\draw [-{Stealth[length=1mm]}] (a8) -- (b6);
	\draw [-{Stealth[length=1mm]}] (a7) -- (b5);
	\draw [-{Stealth[length=1mm]}] (a6) -- (b8);
	\draw [-{Stealth[length=1mm]}] (a5) -- (b7);
	\draw [-{Stealth[length=1mm]}] (a4) -- (b2);
	\draw [-{Stealth[length=1mm]}] (a3) -- (b1);
	\draw [-{Stealth[length=1mm]}] (a2) -- (b4);
	\draw [-{Stealth[length=1mm]}] (a1) -- (b3);
	
\end{tikzpicture}
\hspace{1mm}
\begin{tikzpicture}[
	number/.style={rectangle, minimum size=6mm, draw=black}]
	
	\matrix[row sep=0mm,column sep=0mm] {
		\node (1) [number] {1}; \\
		\node (2) [number] {2}; \\
		\node (3) [number] {3}; \\
		\node (4) [number] {4}; \\
		\node (1) [number] {5}; \\
		\node (2) [number] {6}; \\
		\node (3) [number] {7}; \\
		\node (4) [number] {8}; \\
	};
	
	\node at (1,-3) {$A_4 \circ (1) \circ (1)$};
	
	\begin{scope}[yshift=0cm,xshift=2cm]
		\matrix[row sep=0mm,column sep=0mm] {
			\node (r1) [number] {5}; \\
			\node (r2) [number] {6}; \\
			\node (r3) [number] {7}; \\
			\node (r4) [number] {8}; \\
			\node (r5) [number] {1}; \\
			\node (r6) [number] {2}; \\
			\node (r7) [number] {3}; \\
			\node (r8) [number] {4}; \\
		};
	\end{scope}
	
	
	\node (a8) at (.3,-2.14375) {};
	\node (b8) at (1.7,-2.14375) {};
	\node (a7) at (.3,-1.53125) {};
	\node (b7) at (1.7,-1.53125) {};
	\node (a6) at (.3,-0.91875) {};
	\node (b6) at (1.7,-0.91875) {};
	\node (a5) at (.3,-0.30625) {};
	\node (b5) at (1.7,-0.30625) {};
	\node (a4) at (.3,0.30625) {};
	\node (b4) at (1.7,0.30625) {};
	\node (a3) at (.3,0.91875) {};
	\node (b3) at (1.7,0.91875) {};
	\node (a2) at (.3,1.53125) {};
	\node (b2) at (1.7,1.53125) {};
	\node (a1) at (.3,2.14375) {};
	\node (b1) at (1.7,2.14375) {};
	
	\draw [-{Stealth[length=1mm]}] (a8) -- (b4);
	\draw [-{Stealth[length=1mm]}] (a7) -- (b3);
	\draw [-{Stealth[length=1mm]}] (a6) -- (b2);
	\draw [-{Stealth[length=1mm]}] (a5) -- (b1);
	\draw [-{Stealth[length=1mm]}] (a4) -- (b8);
	\draw [-{Stealth[length=1mm]}] (a3) -- (b7);
	\draw [-{Stealth[length=1mm]}] (a2) -- (b6);
	\draw [-{Stealth[length=1mm]}] (a1) -- (b5);
	
\end{tikzpicture}
\vspace{5mm}

\begin{tikzpicture}[
	number/.style={rectangle, minimum size=6mm, draw=black}]
	
	\matrix[row sep=0mm,column sep=0mm] {
		\node (1) [number] {1}; \\
		\node (2) [number] {2}; \\
		\node (3) [number] {3}; \\
		\node (4) [number] {4}; \\
		\node (1) [number] {5}; \\
		\node (2) [number] {6}; \\
		\node (3) [number] {7}; \\
		\node (4) [number] {8}; \\
	};
	
	\node at (1,-3) {$(1) \circ A_2 \circ A_1$};
	
	\begin{scope}[yshift=0cm,xshift=2cm]
		\matrix[row sep=0mm,column sep=0mm] {
			\node (r1) [number] {4}; \\
			\node (r2) [number] {3}; \\
			\node (r3) [number] {2}; \\
			\node (r4) [number] {1}; \\
			\node (r5) [number] {8}; \\
			\node (r6) [number] {7}; \\
			\node (r7) [number] {6}; \\
			\node (r8) [number] {5}; \\
		};
	\end{scope}
	
	
	\node (a8) at (.3,-2.14375) {};
	\node (b8) at (1.7,-2.14375) {};
	\node (a7) at (.3,-1.53125) {};
	\node (b7) at (1.7,-1.53125) {};
	\node (a6) at (.3,-0.91875) {};
	\node (b6) at (1.7,-0.91875) {};
	\node (a5) at (.3,-0.30625) {};
	\node (b5) at (1.7,-0.30625) {};
	\node (a4) at (.3,0.30625) {};
	\node (b4) at (1.7,0.30625) {};
	\node (a3) at (.3,0.91875) {};
	\node (b3) at (1.7,0.91875) {};
	\node (a2) at (.3,1.53125) {};
	\node (b2) at (1.7,1.53125) {};
	\node (a1) at (.3,2.14375) {};
	\node (b1) at (1.7,2.14375) {};
	
	\draw [-{Stealth[length=1mm]}] (a8) -- (b5);
	\draw [-{Stealth[length=1mm]}] (a7) -- (b6);
	\draw [-{Stealth[length=1mm]}] (a6) -- (b7);
	\draw [-{Stealth[length=1mm]}] (a5) -- (b8);
	\draw [-{Stealth[length=1mm]}] (a4) -- (b1);
	\draw [-{Stealth[length=1mm]}] (a3) -- (b2);
	\draw [-{Stealth[length=1mm]}] (a2) -- (b3);
	\draw [-{Stealth[length=1mm]}] (a1) -- (b4);
	
\end{tikzpicture}
\hspace{1mm}
\begin{tikzpicture}[
	number/.style={rectangle, minimum size=6mm, draw=black}]
	
	\matrix[row sep=0mm,column sep=0mm] {
		\node (1) [number] {1}; \\
		\node (2) [number] {2}; \\
		\node (3) [number] {3}; \\
		\node (4) [number] {4}; \\
		\node (1) [number] {5}; \\
		\node (2) [number] {6}; \\
		\node (3) [number] {7}; \\
		\node (4) [number] {8}; \\
	};
	
	\node at (1,-3) {$A_4 \circ (1) \circ A_1$};
	
	\begin{scope}[yshift=0cm,xshift=2cm]
		\matrix[row sep=0mm,column sep=0mm] {
			\node (r1) [number] {6}; \\
			\node (r2) [number] {5}; \\
			\node (r3) [number] {8}; \\
			\node (r4) [number] {7}; \\
			\node (r5) [number] {2}; \\
			\node (r6) [number] {1}; \\
			\node (r7) [number] {4}; \\
			\node (r8) [number] {3}; \\
		};
	\end{scope}
	
	
	\node (a8) at (.3,-2.14375) {};
	\node (b8) at (1.7,-2.14375) {};
	\node (a7) at (.3,-1.53125) {};
	\node (b7) at (1.7,-1.53125) {};
	\node (a6) at (.3,-0.91875) {};
	\node (b6) at (1.7,-0.91875) {};
	\node (a5) at (.3,-0.30625) {};
	\node (b5) at (1.7,-0.30625) {};
	\node (a4) at (.3,0.30625) {};
	\node (b4) at (1.7,0.30625) {};
	\node (a3) at (.3,0.91875) {};
	\node (b3) at (1.7,0.91875) {};
	\node (a2) at (.3,1.53125) {};
	\node (b2) at (1.7,1.53125) {};
	\node (a1) at (.3,2.14375) {};
	\node (b1) at (1.7,2.14375) {};
	
	\draw [-{Stealth[length=1mm]}] (a8) -- (b3);
	\draw [-{Stealth[length=1mm]}] (a7) -- (b4);
	\draw [-{Stealth[length=1mm]}] (a6) -- (b1);
	\draw [-{Stealth[length=1mm]}] (a5) -- (b2);
	\draw [-{Stealth[length=1mm]}] (a4) -- (b7);
	\draw [-{Stealth[length=1mm]}] (a3) -- (b8);
	\draw [-{Stealth[length=1mm]}] (a2) -- (b5);
	\draw [-{Stealth[length=1mm]}] (a1) -- (b6);
	
\end{tikzpicture}
\hspace{1mm}
\begin{tikzpicture}[
	number/.style={rectangle, minimum size=6mm, draw=black}]
	
	\matrix[row sep=0mm,column sep=0mm] {
		\node (1) [number] {1}; \\
		\node (2) [number] {2}; \\
		\node (3) [number] {3}; \\
		\node (4) [number] {4}; \\
		\node (1) [number] {5}; \\
		\node (2) [number] {6}; \\
		\node (3) [number] {7}; \\
		\node (4) [number] {8}; \\
	};
	
	\node at (1,-3) {$A_4 \circ A_2 \circ (1)$};
	
	\begin{scope}[yshift=0cm,xshift=2cm]
		\matrix[row sep=0mm,column sep=0mm] {
			\node (r1) [number] {7}; \\
			\node (r2) [number] {8}; \\
			\node (r3) [number] {5}; \\
			\node (r4) [number] {6}; \\
			\node (r5) [number] {3}; \\
			\node (r6) [number] {4}; \\
			\node (r7) [number] {1}; \\
			\node (r8) [number] {2}; \\
		};
	\end{scope}
	
	
	\node (a8) at (.3,-2.14375) {};
	\node (b8) at (1.7,-2.14375) {};
	\node (a7) at (.3,-1.53125) {};
	\node (b7) at (1.7,-1.53125) {};
	\node (a6) at (.3,-0.91875) {};
	\node (b6) at (1.7,-0.91875) {};
	\node (a5) at (.3,-0.30625) {};
	\node (b5) at (1.7,-0.30625) {};
	\node (a4) at (.3,0.30625) {};
	\node (b4) at (1.7,0.30625) {};
	\node (a3) at (.3,0.91875) {};
	\node (b3) at (1.7,0.91875) {};
	\node (a2) at (.3,1.53125) {};
	\node (b2) at (1.7,1.53125) {};
	\node (a1) at (.3,2.14375) {};
	\node (b1) at (1.7,2.14375) {};
	
	\draw [-{Stealth[length=1mm]}] (a8) -- (b2);
	\draw [-{Stealth[length=1mm]}] (a7) -- (b1);
	\draw [-{Stealth[length=1mm]}] (a6) -- (b4);
	\draw [-{Stealth[length=1mm]}] (a5) -- (b3);
	\draw [-{Stealth[length=1mm]}] (a4) -- (b6);
	\draw [-{Stealth[length=1mm]}] (a3) -- (b5);
	\draw [-{Stealth[length=1mm]}] (a2) -- (b8);
	\draw [-{Stealth[length=1mm]}] (a1) -- (b7);
	
\end{tikzpicture}
\hspace{1mm}
\begin{tikzpicture}[
	number/.style={rectangle, minimum size=6mm, draw=black}]
	
	\matrix[row sep=0mm,column sep=0mm] {
		\node (1) [number] {1}; \\
		\node (2) [number] {2}; \\
		\node (3) [number] {3}; \\
		\node (4) [number] {4}; \\
		\node (1) [number] {5}; \\
		\node (2) [number] {6}; \\
		\node (3) [number] {7}; \\
		\node (4) [number] {8}; \\
	};
	
	\node at (1,-3) {$A_4 \circ A_2 \circ (1)$};
	
	\begin{scope}[yshift=0cm,xshift=2cm]
		\matrix[row sep=0mm,column sep=0mm] {
			\node (r1) [number] {8}; \\
			\node (r2) [number] {7}; \\
			\node (r3) [number] {6}; \\
			\node (r4) [number] {5}; \\
			\node (r5) [number] {4}; \\
			\node (r6) [number] {3}; \\
			\node (r7) [number] {2}; \\
			\node (r8) [number] {1}; \\
		};
	\end{scope}
	
	
	\node (a8) at (.3,-2.14375) {};
	\node (b8) at (1.7,-2.14375) {};
	\node (a7) at (.3,-1.53125) {};
	\node (b7) at (1.7,-1.53125) {};
	\node (a6) at (.3,-0.91875) {};
	\node (b6) at (1.7,-0.91875) {};
	\node (a5) at (.3,-0.30625) {};
	\node (b5) at (1.7,-0.30625) {};
	\node (a4) at (.3,0.30625) {};
	\node (b4) at (1.7,0.30625) {};
	\node (a3) at (.3,0.91875) {};
	\node (b3) at (1.7,0.91875) {};
	\node (a2) at (.3,1.53125) {};
	\node (b2) at (1.7,1.53125) {};
	\node (a1) at (.3,2.14375) {};
	\node (b1) at (1.7,2.14375) {};

	\draw [-{Stealth[length=1mm]}] (a8) -- (b1);
	\draw [-{Stealth[length=1mm]}] (a7) -- (b2);
	\draw [-{Stealth[length=1mm]}] (a6) -- (b3);
	\draw [-{Stealth[length=1mm]}] (a5) -- (b4);
	\draw [-{Stealth[length=1mm]}] (a4) -- (b5);
	\draw [-{Stealth[length=1mm]}] (a3) -- (b6);
	\draw [-{Stealth[length=1mm]}] (a2) -- (b7);
	\draw [-{Stealth[length=1mm]}] (a1) -- (b8);

\end{tikzpicture}

\caption{The 8 real classes of Clifford multiplication on $J_2^{S_{\Delta_{6}}}$}
\end{figure}
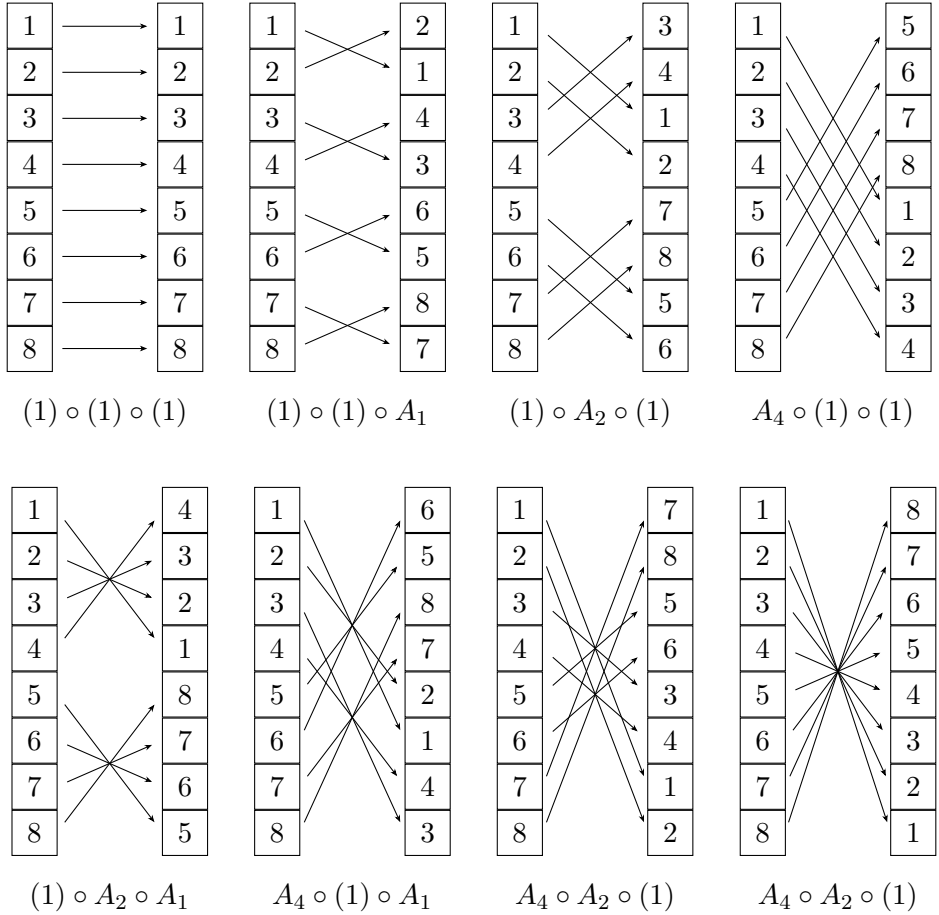

We now have three ways to view Clifford multiplication on $J_2^{S_{\Delta_{2k}}}$: as actions by Clifford algebra elements $\ek_\mu$, by representations $\rho(\ek_\mu)$, or by induced Clifford  permutations $S_{1, \ldots, k}$ or $i \cdot S_{1, \ldots, k}$.

A note on terminology: we have defined the maps $S_{1 , \ldots, k}$ as permutations on the set $\{1, \ldots, 2^k\}$. We will show in Section \ref{sec: fixed points and translation constants} that maps of the form $i \cdot S_{1 , \ldots, k}$ acting on $J_2^{S_{\Delta_{2k}}}$ can be seen as permutations on the set $\{1, \ldots, 2^{k+1}\}$, so that the term ``Clifford permutation'' is also appropriate for these maps.

We also note that each strictly real induced Clifford permutation is associated with a $\ek_\mu \in \gh_{2k}$ of real type, and each strictly imaginary induced Clifford permutation is associated with a $\ek_\mu \in \gh_{2k}$ of imaginary type.

\section{The group $\hat{\Gamma}_{2k}/J_2^{S_{\Delta_{2k}}}$}\label{section: the group of actions on 2-torsion points}

As we saw in Theorem \ref{number of unique Clifford actions}, the $2^{2k}$ generators $e_{\mu}$ acting on $J_2^{S_{\Delta_{2k}}}$ give us a total of $2^{k+1}$ representative permutations of the form $S_{1, \ldots ,k}$ or $i \cdot S_{1, \ldots ,k}$, which are all involutions. Let Cliff$(Alt(2^{k}))$ denote the set of strictly real induced Clifford permutations; that is, Cliff$(Alt(2^{k}))$ is the set of maps of the form $S_{1, \ldots, k}$. We will show that $\textrm{Cliff}(Alt(2^k))$ is an Abelian subgroup of $Alt(2^k)$. First we show that the switch permutations $A_{2^j}$ commute with each other.

For what follows, as in Definition \ref{switch permutations}, ``blocks'' of size $2^l$ within a linearly ordered set $(X, <)$ of size $2^m$ (for $0 \leq l < m$) will always refer to one of the following sets (see Figure \ref{blocks} for an illustration of these blocks):
	
	\[
	\begin{array}{lcl}
		\textrm{Block 1:} & &\textrm{the first $2^l$-many elements of $X$, in order} \\
		\textrm{Block 2:} & &\textrm{the second $2^l$-many elements of $X$, in order} \\
		& \vdots & \\
		\textrm{Block $2^{m-l}$:} & & \textrm{the last $2^l$-many elements of $X$, in order} 
	\end{array}
	\]
	
	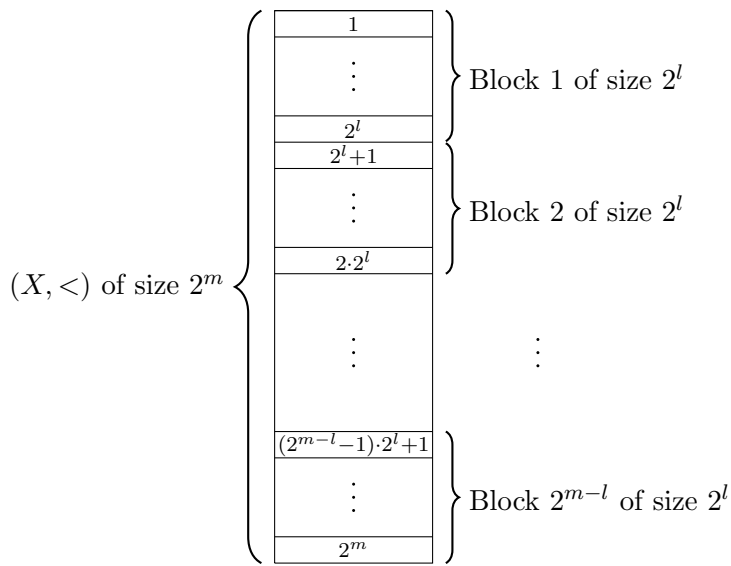
\begin{figure}[ht]
		
		\centering
	
	\begin{tikzpicture}[scale=0.7]
		\draw (0,0) -- (3,0) -- (3,10.5) -- (0,10.5) -- (0,0);
		
		\draw (0,10) -- (3,10);
		\draw (0,10.5) -- (3,10.5);
		\draw (0,8.5) -- (3,8.5);
		\draw (0,8) -- (3,8);
		
		\node[littledot] at (1.5,9.5){};
		\node[littledot] at (1.5,9.25){};
		\node[littledot] at (1.5,9.0){};
		
		\node at (1.5,10.25) {$\scriptstyle{1}$};
		\node at (1.5,8.25) {$\scriptstyle{2^l}$};
		
		\draw (0,7.5) -- (3,7.5);
		\draw (0,6) -- (3,6);
		\draw (0,5.5) -- (3,5.5);
		
		\node[littledot] at (1.5,7){};
		\node[littledot] at (1.5,6.75){};
		\node[littledot] at (1.5,6.5){};
		
		\node at (1.5,7.75) {$\scriptstyle{2^l + 1}$};
		\node at (1.5,5.75) {$\scriptstyle{2 \cdot 2^l}$};
		
		\node[littledot] at (1.5,4.25){};
		\node[littledot] at (1.5,4){};
		\node[littledot] at (1.5,3.75){};
		
		\draw (0,2.5) -- (3,2.5);
		\draw (0,2) -- (3,2);
		\draw (0,0.5) -- (3,0.5);
		
		\node[littledot] at (1.5,1.5){};
		\node[littledot] at (1.5,1.25){};
		\node[littledot] at (1.5,1){};
		
		\node at (1.5,2.25) {$\scriptstyle{(2^{m-l}-1) \cdot 2^l + 1}$};
		\node at (1.5,.25) {$\scriptstyle{2^m}$};
		
		\draw [thick, decorate,
		decoration = {brace, raise=5pt,amplitude=10pt}] (0,0) --  (0,10.5);
		\node [left] at (-.75,5.25) {$(X,<)$ of size $2^m$};
		
		\draw [thick, decorate,
		decoration = {brace, raise=5pt, amplitude=5pt, mirror}] (3,8.02) --  (3,10.5);
		\node [right] at (3.5,9.25) {Block 1 of size $2^l$};
		
		\draw [thick, decorate,
		decoration = {brace, raise=5pt, amplitude=5pt, mirror}] (3,5.5) --  (3,7.98);
		\node [right] at (3.5,6.75) {Block 2 of size $2^l$};
		
		\draw [thick, decorate,
		decoration = {brace, raise=5pt, amplitude=5pt, mirror}] (3,0) --  (3,2.5);
		\node [right] at (3.5,1.25) {Block $2^{m-l}$ of size $2^l$};
		
		\node[littledot] at (5,4.25){};
		\node[littledot] at (5,4){};
		\node[littledot] at (5,3.75){};
	\end{tikzpicture}
	
	\caption{Blocks of size $2^l$ within a linearly ordered set $(X, <)$ of size $2^m$ (where the $n$th element of $X$ is identified with the number $n$)}
	\label{blocks}
	\end{figure}

\begin{lem}\label{switch maps commute}
	Let $k \in \mathbb{N},\  k \geq 1$. For all $0 \leq i < j < k$, $A_{2^i} \circ A_{2^j} = A_{2^j} \circ A_{2^i}$.
\end{lem}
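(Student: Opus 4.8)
The plan is to give an explicit closed-form description of each switch permutation $A_{2^j}$ — one from which commutativity is immediate — by encoding the ground set in binary, and then to read off the lemma.

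First I would identify $\{1, \ldots, 2^k\}$ with the length-$k$ binary strings via $n \leftrightarrow (b_{k-1}, \ldots, b_0)$, where $n - 1 = \sum_{l=0}^{k-1} b_l 2^l$. The main step is to verify the claim that, under this identification, $A_{2^j}$ is exactly the involution that toggles the bit $b_j$ and fixes all other bits; equivalently $A_{2^j}(n) = m$, where $m - 1 = (n-1) \oplus 2^j$ and $\oplus$ denotes bitwise exclusive-or. This follows by unwinding Definition \ref{switch permutations}: writing $n - 1 = q\cdot 2^j + s$ with $0 \leq s < 2^j$, the integer $s$ records the bits $b_0, \ldots, b_{j-1}$ (the slot of $n$ inside its block of size $2^j$) and $q$ records the bits $b_j, \ldots, b_{k-1}$ (the index of that block); and $A_{2^j}$ sends the block with index $q$ to the block with index $q \oplus 1$ — that is, it swaps blocks $1\leftrightarrow 2$, $3 \leftrightarrow 4$, and so on — while preserving the slot $s$. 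Hence $A_{2^j}(n) - 1 = (q\oplus 1)2^j + s = (n-1)\oplus 2^j$, since the top $k-j$ bits of $n-1$ are exactly the bits of $q$ and $s < 2^j$ affects only the bottom $j$ bits.

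Granting the bit-flip description, the lemma is immediate: for all $0 \leq i < j < k$ and all $n$,
\[
(A_{2^i}\circ A_{2^j})(n) - 1 = (n-1)\oplus 2^j \oplus 2^i = (n-1)\oplus 2^i \oplus 2^j = (A_{2^j}\circ A_{2^i})(n)-1,
\]
by the associativity and commutativity of $\oplus$, so $A_{2^i}\circ A_{2^j} = A_{2^j}\circ A_{2^i}$. As an alternative route, one can observe that $A_{2^j}$ has permutation matrix $I_{2^{k-j-1}}\otimes\left[\begin{smallmatrix}0&1\\1&0\end{smallmatrix}\right]\otimes I_{2^j}$; splitting the identity factors so that both $A_{2^i}$ and $A_{2^j}$ are written as Kronecker products of five factors of sizes $(2^{k-j-1},\,2,\,2^{j-i-1},\,2,\,2^i)$, Corollary \ref{tensor corollary 1} shows that the two orders of multiplication produce the same matrix, hence the permutations commute.

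The only real work — and the one place to be careful — is the bookkeeping in the bit-flip claim, matching the ``swap adjacent pairs of blocks of size $2^j$'' prescription of Definition \ref{switch permutations} with ``toggle bit $b_j$''; once that identification is in place there is no genuine obstacle, since the conclusion reduces to the commutativity of XOR (or, in the alternative argument, of the Kronecker factors).
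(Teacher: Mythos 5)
Your proof is correct, but it takes a genuinely different route from the paper's. The paper argues positionally: it records four ``Observations'' about how $A_{2^i}$ and $A_{2^j}$ move and preserve blocks, then tracks an element $x$ sitting in the $p_j$th block of size $2^j$, the $q_i$th sub-block of size $2^i$, at position $r$, through both compositions, splitting into four parity cases (only one of which is written out). You instead give a closed-form description of $A_{2^j}$ — writing $n-1 = q\cdot 2^j + s$ with $0 \le s < 2^j$ and checking that the block-swap prescription sends $n-1$ to $(q\oplus 1)2^j + s = (n-1)\oplus 2^j$ — after which commutativity is a one-line consequence of the commutativity of bitwise XOR, with no case split. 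Your bookkeeping is right (note that your block index $q$ is $0$-based while the paper's $X_m$ is $1$-based, but swapping $q \leftrightarrow q\oplus 1$ is exactly the paper's $X_1\leftrightarrow X_2$, $X_3 \leftrightarrow X_4$, \dots). What the XOR description buys is more than the lemma: it shows at once that the $A_{2^j}$ generate an elementary abelian $2$-group, which feeds directly into Propositions \ref{derangement by disjoint transpositions} and \ref{Cliff(Alt(2^k)) is a subgroup}. Your alternative Kronecker-product argument, writing both permutation matrices as five-fold Kronecker products with the swap factor in different slots and invoking Corollary \ref{tensor corollary 1}, is also valid and has the virtue of reusing machinery the paper has already established; the paper's own proof, by contrast, is self-contained at the level of ordered sets but longer and leaves three of its four cases to the reader.
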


\begin{proof}

	Fix $0 \leq i < j < k$ and let $(X, < )$ be an ordered set of size $2^k$. Recall that the map $A_{2^l}$ pairwise switches adjacent blocks of size $2^l$, starting with the first and second blocks, and ending with the $(2^{k-l}-1)$th and $(2^{k-l})$th blocks.
	
	We make the following observations on the behavior of the maps $A_{2^i}$ and $A_{2^j}$.
	
	\textit{Observation 1:} The ordering \textit{among} the $2^{j-i}$ blocks of size $2^i$ within a block $Y$ of size $2^j$
	in $X$ is preserved by the map $A_{2^j}$, and so is the ordering \textit{within} each such block of size $2^i$. That is, the map $A_{2^j}$, which switches ``bigger'' blocks (of size $2^j$), will not change the ordering among the ``smaller'' blocks (of size $2^i$) within it; and it will not change the ordering of the elements within those ``smaller'' blocks either. 
	
	\textit{Observation 2:} The action of the map $A_{2^i}$ on $X$ will preserve blocks of size $2^j$ as sets, although it will not preserve the order of the elements within those blocks. That is, if $Y$ is one of the ``bigger'' blocks of size $2^j$ in $X$, then $A_{2^i}(Y) = Y$ as sets.
	
	\textit{Observation 3:} The action of the map $A_{2^i}$ on $X$ will preserve the ordering of the blocks of size $2^j$, although it will permute the order of the elements within the blocks of size $2^j$. That is, if $Y$ and $Y'$ are blocks of size $2^j$ and $Y < Y'$, then $A_{2^i}(Y) < A_{2^i}(Y)$.
	
	\textit{Observation 4:} Suppose for some $1 \leq p \leq 2^{k-j}$ that $Y$ is the $p$th block of size $2^j$ in $X$. If $p$ is odd, then $A_{2^j}(Y)$ is the $(p+1)$th block of size $2^j$ in $A_{2^j}(X)$ (with the new order); and if $p$ is even, then $A_{2^j}(Y)$ is the $(p-1)$th block of size $2^j$ in $A_{2^j}(X)$.
	
	We now show that $A_{2^j} \circ A_{2^i} = A_{2^i} \circ A_{2^j}$. Suppose that $x \in X$ is in the $p_j$th block of size $2^j$ within $X$; and that within that block of size $2^j$, $x$ is in the $q_i$th block of size $2^i$; and that within that block of size $2^i$, $x$ is the $r$th element. We need to check that $x$ gets moved to the same location by $A_{2^j} \circ A_{2^i}$ as by $A_{2^i} \circ A_{2^j}$. 
	
	There are four cases, depending on whether $p_j$ and $q_i$ are even or odd. We present the case where $p_j$ and $q_i$ are both odd; the other cases are similar.
	
	Supposing that $p_j$ and $q_i$ are odd, first consider $A_{2^i} \circ A_{2^j}$. After applying $A_{2^j}$, the $p_j$th block of size $2^j$ in $X$ will now be the $(p_j + 1)$th such block, by Observation 4. $x$ will be the $r$th element of the $q_i$th block of size $2^i$ among the blocks of size $2^i$ within that block of size $2^j$ (by Observation 1). Then after applying $A_{2^i}$, $x$ will be the $r$th element of the $(q_i + 1)$th block of size $2^i$ within the $(p_j + 1)$th block of size $2^j$, by Observations 1 and 4. 
	
	Next consider $A_{2^j} \circ A_{2^i}$. After applying $A_{2^i}$, $x$ will be the $r$th element of the $(q_i + 1)$th block of size $2^i$ within the $p_j$th block of size $2^j$, by Observations 2 and 3. Then, after applying $A_{2^j}$, $x$ will be the $r$th element of the $(q_i + 1)$th block of size $2^i$ within the $(p_j + 1)$th block of size $2^j$ (by Observations 1 and 4). 

Thus, in this case, under both $A_{2^i} \circ A_{2^j}$ and $A_{2^j} \circ A_{2^i}$, each $x$ in our ordered set $X$ of size $2^k$ gets sent to the same position within (the new ordering of) $X$, so that $A_{2^i} \circ A_{2^j} = A_{2^j} \circ A_{2^i}$.

 The other three cases (in which $p_j$ and $q_i$ are both even, or in which one of them is odd and the other even) are similar.
	
\end{proof}

\begin{defn}
	A permutation $\sigma\in\mathcal{S}_{2^{k}}$ is called a \textbf{derangement by disjoint transpositions} if 
	
	\begin{enumerate}
		\item  for every $n \in \{1, \ldots, 2^k\}$, $\sigma(n) \neq n$; and
		\item $\sigma$ is a product of disjoint transpositions. 
	\end{enumerate}
	
	Equivalently, a permutation $\sigma$ of $\{1, \ldots, 2^k\}$ is a derangement by disjoint transpositions if for each $n \in \{1, \ldots, 2^k\}$, there is an $m \in \{1, \ldots, 2^k\}$ with $m \neq n$ such that $\sigma(n)=m$ and $\sigma(m)=n$. 
\end{defn}

\begin{remark}\label{about ordered sets}
    Note that we are considering the numbers $1, \ldots, 2^k$ as indices for locations of elements under the current permutation's ordering of our original set $X$. So, for example, if $X$ is originally ordered as $\langle x_1, x_2, x_3, x_4 \rangle$ and we apply the switch permutation $A_1$, we get the new ordered set $A_1(X) = \langle x_2, x_1, x_4, x_3 \rangle$; and in this new order, $x_2$ is considered the first element (indexed by the number 1), etc. 
\end{remark}

Recall that every lattice Clifford action on $J_2^{S_{\Delta_{2k}}}$ is of order 2. We now show that the $2^{k}$ strictly real induced Clifford   permutations are derangements by disjoint transpositions of $\{1, \ldots, 2^k\}$ acting on the $2^k$ entries of a 2-torsion point $\vec{v} \in J_2^{S_{\Delta_{2k}}}$. (The other $2^{k}$ -- that is, the strictly imaginary induced Clifford permutations -- are the same permutations with a scalar multiplication by $i$.)

\begin{prop}\label{derangement by disjoint transpositions}
	Let $k \in \mathbb{N},\ k \geq 1$. Every non-identity strictly real induced Clifford permutation $S_{1, \ldots, k}$ is a derangement by disjoint transpositions of $\{1, \ldots, 2^k\}$ of order 
	$2$. 
\end{prop}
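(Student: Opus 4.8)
The plan is to exploit the structure of $S_{1,\ldots,k}$ as a composition of commuting involutions and then to pin down its fixed-point set via a ``bit-flipping'' description of the switch permutations. Write $L = \{l \in \{1,\ldots,k\} : S_l = A_{2^{l-1}}\}$, so that $S_{1,\ldots,k}$ equals the composition of the switch permutations $A_{2^{l-1}}$ for $l \in L$ (the factors equal to the identity contribute nothing). Since $S_{1,\ldots,k}$ is assumed to be non-identity, $L \neq \emptyset$.

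First I would dispose of the ``order $2$'' and ``product of disjoint transpositions'' parts. Each switch permutation $A_{2^j}$ has order $2$ (noted immediately after Definition \ref{switch permutations}), and by Lemma \ref{switch maps commute} the permutations $A_{2^{l-1}}$ pairwise commute; a product of pairwise-commuting involutions is again an involution, so $S_{1,\ldots,k}^2 = (1)$. Being non-identity, $S_{1,\ldots,k}$ then has order exactly $2$, and any involution of $\{1,\ldots,2^k\}$ is automatically a product of disjoint transpositions. Thus the only remaining content is to verify that $S_{1,\ldots,k}$ fixes no point of $\{1,\ldots,2^k\}$.

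For the fixed-point claim I would introduce binary coordinates on indices: identify $n \in \{1,\ldots,2^k\}$ with the bit string determined by $n-1 = \sum_{j=0}^{k-1} b_j(n)\,2^j$ with $b_j(n) \in \{0,1\}$. The key lemma to establish is that $A_{2^j}$ acts on indices exactly by flipping the $j$-th bit. Unwinding Definition \ref{switch permutations}: the position $n-1$ lies in the block of size $2^j$ whose $0$-indexed number is $\lfloor (n-1)/2^j \rfloor$ and it occupies position $(n-1)\bmod 2^j$ inside that block; and $A_{2^j}$ interchanges the $2t$-th block with the $(2t{+}1)$-th while preserving the within-block order, which is precisely the operation ``flip the least significant bit of $\lfloor (n-1)/2^j \rfloor$,'' i.e.\ flip $b_j(n)$ and leave all other bits unchanged. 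Granting this, $S_{1,\ldots,k} = \prod_{l\in L} A_{2^{l-1}}$ flips exactly the bits in positions $\{l-1 : l \in L\}$ and fixes the rest; since $L \neq \emptyset$ at least one bit is flipped, so the bit string of $S_{1,\ldots,k}(n)$ differs from that of $n$ for every $n$, whence $S_{1,\ldots,k}(n) \neq n$. (This also re-derives the involution property for free, since flipping a fixed set of bits twice is the identity.)

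The \textbf{main obstacle} I anticipate is the bit-flip lemma for a single $A_{2^j}$: carefully matching the block indexing of Definition \ref{switch permutations} to the arithmetic of $\lfloor (n-1)/2^j \rfloor$ and its parity, and confirming that ``switch the first and second blocks, the third and fourth, \ldots'' corresponds exactly to ``flip the bottom bit of the block number.'' An alternative, if one prefers to avoid binary notation, is a direct downward induction on $l$ through the composition $S_k \circ \cdots \circ S_1$, tracking how the within-block position of a given index evolves at each stage; but the bit-string bookkeeping is cleaner and makes both the disjoint-transposition and the order-$2$ conclusions transparent. Either way the proof is short once the action of a single $A_{2^j}$ on indices has been made explicit.
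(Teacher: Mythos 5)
Your proposal is correct, but it takes a genuinely different route from the paper. The paper proves the proposition by induction on $k$, using the recursive structure of a strictly real induced Clifford permutation in $\mathcal{S}_{2^{k+1}}$ as either $(1) \circ S_{1,\ldots,k}$ or $A_{2^{k}} \circ S_{1,\ldots,k}$: the induction hypothesis gives a derangement by disjoint transpositions on each half-block $\{1,\ldots,2^k\}$ and $\{2^k+1,\ldots,2^{k+1}\}$, the $A_{2^k}$ factor swaps the halves so nothing is fixed, and Lemma \ref{switch maps commute} shows the square is the identity. You instead give a closed-form, non-inductive description: identifying $n-1$ with its binary expansion, $A_{2^j}$ is exactly ``flip bit $j$,'' so $S_{1,\ldots,k}$ is addition of the nonzero mask $\sum_{l \in L} 2^{l-1}$ in $(\mathbb{Z}/2\mathbb{Z})^k$, which simultaneously yields the derangement property, order $2$, the disjoint-transposition structure, and even the commutativity that the paper proves separately in Lemma \ref{switch maps commute}. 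Your bit-flip lemma is the one point requiring care, and your unwinding of Definition \ref{switch permutations} (block number $\lfloor (n-1)/2^j \rfloor$ with preserved offset $(n-1) \bmod 2^j$, and pairwise swapping equal to flipping the block number's least significant bit) is correct and consistent with the paper's example $A_{2^1} = (1\ 3)(2\ 4)(5\ 7)(6\ 8)$ on $\{1,\ldots,8\}$. What your approach buys is a transparent identification of $\operatorname{Cliff}(Alt(2^k))$ with the translation group $(\mathbb{Z}/2\mathbb{Z})^k$ acting on indices, which also streamlines Proposition \ref{Cliff(Alt(2^k)) is a subgroup}; what the paper's induction buys is tighter alignment with the recursive Kronecker-product construction of the matrix representations used throughout Section \ref{Properties of the multiplicative group ...}.
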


\begin{proof}
	The claim is true by inspection in the case $k=1$. 
	
	Suppose the claim holds for some $k>1$, and consider a strictly real induced Clifford permutation in $\mathcal{S}_{2^{k+1}}$. Each such permutation is of the form $(1) \circ S_{1,\ldots, k}$ or $A_{2^{k}} \circ S_{1,\ldots ,k}$ where $ S_{1,\ldots ,k}$ is the induced Clifford permutation of some  $\ek_\mu \in J_2^{S_{\Delta_{2k}}}$ of real type. By the induction hypothesis, $S_{1,\ldots,k}$ is a derangement by disjoint transpositions separately on the blocks $\{1, \ldots, 2^k\}$ and $\{2^k + 1, \ldots, 2^{k+1}\}$. Then $(1) \circ S_{1,\ldots,k}$ would be a derangement by disjoint transpositions on $\{1, \ldots, 2^{k+1}\}$, since its action is the same as that of $ S_{1,\ldots,k}$. 
	We claim that also $A_{2^{k}} \circ S_{1,\ldots,{k}}$ would be a derangement by disjoint transpositions on $\{1, \ldots, 2^{k+1}\}$: let $n \in \{1, \ldots, 2^{k+1}\}$. First, $A_{2^{k}} \circ S_{1,\ldots,{k}}(n) \neq n$ since $S_{1,\ldots,{k}}(\{1, \ldots, 2^k\}) = \{1, \ldots, 2^k\}$ and $ S_{1,\ldots,{k}}(\{2^k + 1, \ldots, 2^{k+1}\}) = \{2^k + 1, \ldots, 2^{k+1}\}$, but $A_{2^{k}}(\{1, \ldots, 2^k\})=\{2^k + 1, \ldots, 2^{k+1}\}$ and vice versa.
 By Lemma \ref{switch maps commute} and since $S_l \in \{(1), A_{2^{l-1}}\}$ for each $1 \leq l \leq 2^k$, we have 
\[
\begin{array}{lll}
((A_{2^{k}} \circ S_{1,\ldots,{k}}) \circ (A_{2^{k}}\circ S_{1,\ldots,{k}})) (n) & = & ((A_{2^k} \circ S_k \circ \cdots \circ S_1) \circ (A_{2^k} \circ S_k \circ \cdots \circ S_1))(n) \\
& = & ((A_{2^k} \circ A_{2^k}) \circ (S_k \circ S_k) \circ \cdots \circ (S_1 \circ S_1))(n) \\
& = & n.
\end{array}
 \]
Then -- recalling that a permutation has order 1 or 2 if and only if it is a product of disjoint transpositions -- since $A_{2^k} \circ S_{1, \ldots, k}$ moves all $n \in \{1, \ldots, 2^{k+1}\}$ and has order 2, it is a derangement by disjoint transpositions. This completes the induction step.   
	
\end{proof}

\begin{prop}\label{Cliff(Alt(2^k)) is a subgroup}
	For all $k \in \mathbb{N}$ with $k \geq 2$, Cliff$(Alt(2^{k}))$ is an Abelian subgroup of order $2^{k}$ of the alternating group $Alt(2^{k})\subset\mathcal{S}_{2^{k}}$.
\end{prop}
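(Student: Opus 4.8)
The plan is to verify directly that $\operatorname{Cliff}(Alt(2^k))$, which by Definition \ref{S maps} is the set of the $2^k$ strictly real induced Clifford permutations $S_{1,\ldots,k}=S_k\circ\cdots\circ S_1$ with $S_l\in\{(1),A_{2^{l-1}}\}$ for $1\le l\le k$, is an Abelian subgroup of $Alt(2^k)$ of order $2^k$. Essentially all the work has been done in the preceding lemmas, so the proof amounts to assembling them. First I would establish the containment $\operatorname{Cliff}(Alt(2^k))\subseteq Alt(2^k)$; then closure under composition and inverses (which makes it a subgroup); then commutativity; and finally the order count.

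For the containment: each switch permutation $A_{2^j}$ (for $0\le j\le k-1$) moves every one of the $2^k$ positions and has order $2$, hence is a product of exactly $2^{k-1}$ disjoint transpositions of $\{1,\ldots,2^k\}$; since $k\ge 2$, the number $2^{k-1}$ is even, so $A_{2^j}\in Alt(2^k)$. This is exactly the point at which the hypothesis $k\ge 2$ is needed, since for $k=1$ the map $A_1=(1\,2)$ is an odd permutation. The identity permutation $(1)$ is even as well, so every $S_{1,\ldots,k}$, being a composition of the even permutations $A_{2^{l-1}}$ and $(1)$, is even; thus $\operatorname{Cliff}(Alt(2^k))\subseteq Alt(2^k)$. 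For the order, I would note that the $2^k$ maps $S_{1,\ldots,k}$ are pairwise distinct: this is part of what Theorem \ref{representing Clifford multiplication via permutations} establishes, since that theorem shows the set of all $S_{1,\ldots,k}$ together with all $i\cdot S_{1,\ldots,k}$ has exactly $2^{k+1}$ elements. Hence $|\operatorname{Cliff}(Alt(2^k))|=2^k$. (A direct argument is also available: identifying a position $n$ with the binary digits of $n-1$ shows that $A_{2^j}$ acts by flipping the $j$th bit, so the $A_{2^j}$ generate a regular $(\mathbb{Z}/2)^k$-action, whence distinctness.)

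For the group structure: the identity is the map obtained by taking $S_l=(1)$ for all $l$, so the set is nonempty. Given $S=S_k\circ\cdots\circ S_1$ and $T=T_k\circ\cdots\circ T_1$ in $\operatorname{Cliff}(Alt(2^k))$, Lemma \ref{switch maps commute} (together with the fact that $(1)$ commutes with everything) lets me reorganize the composition $S\circ T$ so that the two factors acting at ``level $l$'' become adjacent, giving $S\circ T=(S_k\circ T_k)\circ\cdots\circ(S_1\circ T_1)$; since $S_l,T_l\in\{(1),A_{2^{l-1}}\}$ and $A_{2^{l-1}}$ has order $2$, in each case $S_l\circ T_l\in\{(1),A_{2^{l-1}}\}$, so $S\circ T$ is again of the required form and lies in $\operatorname{Cliff}(Alt(2^k))$. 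By Proposition \ref{derangement by disjoint transpositions}, every non-identity element of $\operatorname{Cliff}(Alt(2^k))$ has order $2$ and so is its own inverse, and the identity is self-inverse; hence the set is closed under inverses. A nonempty subset of a finite group closed under the operation is a subgroup, so $\operatorname{Cliff}(Alt(2^k))$ is a subgroup of $Alt(2^k)$. The same rearrangement applied to $T\circ S$ yields $T\circ S=(S_k\circ T_k)\circ\cdots\circ(S_1\circ T_1)=S\circ T$, since at each level the two factors are powers of the single permutation $A_{2^{l-1}}$ and therefore commute; thus the subgroup is Abelian.

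I do not expect a serious obstacle. The only step requiring real care is the slot-by-slot reorganization of a composition of two such maps, and that rests entirely on the pairwise commutativity of the switch permutations (Lemma \ref{switch maps commute}) together with $A_{2^j}^2=(1)$; the single place the hypothesis $k\ge 2$ enters is the parity count showing $A_{2^j}$ is even. Everything else is bookkeeping on top of results already proved.
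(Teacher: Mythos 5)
Your proof is correct and follows essentially the same route as the paper's: containment in $Alt(2^k)$ via a parity count of disjoint transpositions (the paper applies this to the composite $S_{1,\ldots,k}$ using Proposition \ref{derangement by disjoint transpositions}, whereas you apply it to each switch permutation $A_{2^j}$ individually — both work), and closure plus commutativity via the slot-by-slot rearrangement furnished by Lemma \ref{switch maps commute} together with $A_{2^{l-1}}^2=(1)$. Your explicit justification that the $2^k$ maps are pairwise distinct is a small improvement over the paper, which asserts the order without comment.
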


\begin{proof}
Note that we require $k \geq 2$ because in the special case $k=1$, $\operatorname{Cliff}(Alt(2^k)) = \mathcal{S}_2$ but $Alt(2^k) = \{(1)\}$. To see that every element of $\textrm{Cliff}(Alt(2^k))$ is an even permutation for $k \geq 2$: by Proposition \ref{derangement by disjoint transpositions}, we know that each map $A = S_{1, \ldots, k}$ is a derangement by disjoint transpositions. Then there exists a $J \subseteq \{1, \ldots, 2^k\}$ of size $2^{k-1}$ such that $A(J) = \{1, \ldots, 2^k\} \setminus J$. Then there are enumerations $\langle j_1, \ldots, j_{2^{k-1}} \rangle$ of $J$ and $\langle l_1, \ldots, l_{2^{k-1}} \rangle$ of $\{1, \ldots, 2^k\} \setminus J$ such that for each $1 \leq p \leq 2^{k-1}$, $A(j_p) = l_p$. Thus $A=(j_1 l_1) \cdots (j_{2^{k-1}} l_{2^{k-1}})$, which is the product of an even number (namely $2^{k-1}$) of transpositions. Thus $\textrm{Cliff}(Alt(2^k)) \subseteq Alt(2^k)$.
	
	We have put the identity permutation $(1) \in  \textrm{Cliff}(Alt(2^{k}))$, and we have already that $|\textrm{Cliff}(Alt(2^k))|=2^k$ and that every element of $\textrm{Cliff}(Alt(2^k))$ is an even permutation. Every permutation in $\textrm{Cliff}(Alt(2^k))$ is its own inverse, since they are all involutions. 
	
	For closure: let $A, B \in \textrm{Cliff}(Alt(2^k))$. Write $A=S_{1, \ldots, k} = S_{k} \circ \cdots \circ S_{1}$ where $S_{l} \in \{(1), A_{2^{l-1}}\}$ for $1 \leq l \leq k$, and write $B=R_{1, \ldots, k} = R_{k} \circ \cdots \circ R_{1}$ where $R_{l} \in \{(1), A_{2^{l-1}}\}$ for $1 \leq l \leq k$. Then by Lemma \ref{switch maps commute}, 
	\[
	\begin{array}{lll}
		A \circ B & = & S_{1, \ldots, k} \circ R_{1, \ldots, k} \\
		& = & (S_{k} \circ \cdots \circ S_{1}) \circ ( R_{k} \circ \cdots \circ R_{1}) \\
		& = & (S_{k} \circ R_{k}) \circ \cdots \circ (S_{1} \circ R_{1})
	\end{array}
	\]
	Fix $1 \leq l \leq k$ and consider $Q_l := S_{l} \circ R_{l}$. If both $S_{l}$ and $R_{l}$ are $(1)$ or if both  $S_{l}$ and $R_{l}$ are $A_{2^{l-1}}$, then $Q_l = (1)$; and if exactly one of $S_{l}$ and $R_{l}$ is $(1)$ (and the other is $A_{2^{l-1}}$), then $Q_l = A_{2^{l-1}}$. In any case, each $Q_l$ is either $(1)$ or $A_{2^{l-1}}$, so that $A \circ B = Q_k \circ \cdots \circ Q_1 \in \textrm{Cliff}(Alt(2^k))$. 
	
	Thus $\textrm{Cliff}(Alt(2^k))$ is a subgroup of $Alt(2^k)$ of size $2^k$.
	
	Lastly, for commutativity: note that, with $A=S_{1, \ldots, k}$ and $B = R_{1, \ldots, k}$ as above, by Lemma \ref{switch maps commute} we have 
	\[
	\begin{array}{lll}
		A \circ B & = & S_{1, \ldots, k} \circ R_{1, \ldots, k} \\
		& = & (S_k \circ \cdots \circ S_1) \circ (R_k \circ \cdots \circ R_1) \\
		& = & (R_k \circ \cdots \circ R_1) \circ (S_k \circ \cdots \circ S_1) \\
		& = & B \circ A.
	\end{array}
	\]
	
\end{proof}

\begin{prop}
	We have the following isomorphism: $\faktor{\hat{\Gamma}_{2k}}{\sim}\cong \mathbb{Z}_2\times \operatorname{Cliff}(Alt(2^{k}))$. For the matrix representations $\hat{\Gamma}_{2k}\xrightarrow{\rho} Aut(\C(2^{k}))$, we have $\faktor{\hat{\Gamma}_{2k}}{\sim}\cong \faktor{\rho(\hat{\Gamma}_{2k})}{\mathbb{Z}_2}$, where by $\faktor{\rho(\hat{\Gamma}_{2k})}{\mathbb{Z}_2}$ we mean the group of representations of generators modulo replacement of any elements of the matrices by their negatives.
	
\end{prop}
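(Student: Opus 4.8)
The plan is to exhibit an explicit isomorphism built from the induced Clifford permutations of Section~\ref{section: Clifford permutations}. By Theorem~\ref{representing Clifford multiplication via permutations} each class $[\ek_\mu]\in\faktor{\hat{\Gamma}_{2k}}{\sim}$ has a well-defined induced Clifford permutation, which by Lemmas~\ref{things in same class have same shape} and~\ref{things in same class have same flavor} is strictly real (of the form $S_{1,\ldots,k}$) exactly when $\rho(\ek_\mu)$ has real type and strictly imaginary (of the form $i\cdot S_{1,\ldots,k}$) exactly when $\rho(\ek_\mu)$ has imaginary type; in either case write $\sigma_\mu\in\operatorname{Cliff}(Alt(2^k))$ for the underlying permutation $S_{1,\ldots,k}$. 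First I would define
\[
\Phi:\ \faktor{\hat{\Gamma}_{2k}}{\sim}\ \longrightarrow\ \mathbb{Z}_2\times\operatorname{Cliff}(Alt(2^k)),\qquad \Phi([\ek_\mu])=(\varepsilon_\mu,\ \sigma_\mu),
\]
where $\varepsilon_\mu\in\mathbb{Z}_2$ is $0$ if $\rho(\ek_\mu)$ has real type and $1$ if it has imaginary type. This is well defined because $\sim$ preserves both type (Lemma~\ref{things in same class have same flavor}) and induced Clifford permutation.

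Next I would check that $\Phi$ is a bijection. If $\Phi([\ek_\mu])=\Phi([\ek_\nu])$, then $\ek_\mu$ and $\ek_\nu$ have the same type and the same underlying switch permutation, hence (via the block description of $\operatorname{Sh}(\rho(\ek_\mu))$ in the proof of Theorem~\ref{representing Clifford multiplication via permutations}) the same shape; by the observation in the proof of Theorem~\ref{same shapes give rise to same equivalence classes} that two generators of the same shape and type act identically on $J_2^{S_{\Delta_{2k}}}$, this forces $[\ek_\mu]=[\ek_\nu]$, so $\Phi$ is injective. Since $\bigl|\faktor{\hat{\Gamma}_{2k}}{\sim}\bigr|=2^{k+1}$ by Theorem~\ref{number of unique Clifford actions} while $|\mathbb{Z}_2\times\operatorname{Cliff}(Alt(2^k))|=2\cdot 2^{k}$ by Proposition~\ref{Cliff(Alt(2^k)) is a subgroup}, $\Phi$ is a bijection.

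The main step, and the one I expect to require the most care, is showing $\Phi$ is a homomorphism. The group law on $\faktor{\hat{\Gamma}_{2k}}{\sim}$ is the one induced by Clifford multiplication, $[\ek_\mu]\cdot[\ek_\nu]=[\ek_\mu\ek_\nu]$ (well defined because $\sim$ records the action on $J_2^{S_{\Delta_{2k}}}$ and the action of a product is the composite of the actions). Acting on $J_2^{S_{\Delta_{2k}}}$, $\ek_\mu$ is $i^{\varepsilon_\mu}\sigma_\mu$ and $\ek_\nu$ is $i^{\varepsilon_\nu}\sigma_\nu$; since scalar multiplication by $i$ commutes with all switch permutations and $i\cdot i$ acts as the identity on $2$-torsion points (Remark~\ref{mu4 X S action on J2}), the composite action of $\ek_\mu\ek_\nu$ is $i^{\varepsilon_\mu+\varepsilon_\nu}(\sigma_\mu\circ\sigma_\nu)$ with the exponent read modulo $2$, and $\sigma_\mu\circ\sigma_\nu\in\operatorname{Cliff}(Alt(2^k))$ by closure (Proposition~\ref{Cliff(Alt(2^k)) is a subgroup}). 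Hence $[\ek_\mu\ek_\nu]$ has type $\varepsilon_\mu+\varepsilon_\nu$ in $\mathbb{Z}_2$ and underlying switch permutation $\sigma_\mu\circ\sigma_\nu$, so $\Phi([\ek_\mu\ek_\nu])=(\varepsilon_\mu+\varepsilon_\nu,\ \sigma_\mu\circ\sigma_\nu)=\Phi([\ek_\mu])\cdot\Phi([\ek_\nu])$. The crux is precisely that the scalar factor and the permutation factor do not interact, which is what yields a direct (rather than semidirect) product. (For $k=1$ one reads $\operatorname{Cliff}(Alt(2))=\mathcal{S}_2$ as in the proof of Proposition~\ref{Cliff(Alt(2^k)) is a subgroup}, and the argument is unchanged.)

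For the second assertion, $\faktor{\hat{\Gamma}_{2k}}{\sim}\cong\faktor{\rho(\hat{\Gamma}_{2k})}{\mathbb{Z}_2}$: since $\rho$ is an isomorphism of the whole Clifford algebra it is injective, so it restricts to a group isomorphism $\hat{\Gamma}_{2k}\xrightarrow{\cong}\rho(\hat{\Gamma}_{2k})$. By Proposition~\ref{e mu have permutation shapes} every $\rho(\ek_\mu)$ is a monomial matrix, and for monomial matrices ``replacing some entries by their negatives'' is the same as left multiplication by a diagonal $\pm1$ matrix; consequently the relation $M\sim_{\mathrm{sign}}M'$ (meaning $M'$ is obtained from $M$ by sign changes of entries) is a group congruence on $\rho(\hat{\Gamma}_{2k})$, so $\faktor{\rho(\hat{\Gamma}_{2k})}{\mathbb{Z}_2}$ is a group and the projection is a homomorphism. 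Finally, sign changes preserve the real/imaginary type of a generator's matrix (Lemma~\ref{flavors: real or complex}), so $\rho(\ek_\mu)\sim_{\mathrm{sign}}\rho(\ek_\nu)$ iff they have the same shape and type, which by Lemmas~\ref{things in same class have same shape} and~\ref{things in same class have same flavor} together with the converse noted in the proof of Theorem~\ref{same shapes give rise to same equivalence classes} is exactly the condition $\ek_\mu\sim\ek_\nu$. Therefore $\rho$ descends to the asserted isomorphism $\faktor{\hat{\Gamma}_{2k}}{\sim}\cong\faktor{\rho(\hat{\Gamma}_{2k})}{\mathbb{Z}_2}$.
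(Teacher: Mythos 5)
Your proposal is correct and follows essentially the same route as the paper's proof: both identify each class $[\ek_\mu]$ with a pair consisting of its type in $\mathbb{Z}_2$ and its underlying switch permutation in $\operatorname{Cliff}(Alt(2^k))$, obtained from Theorem \ref{representing Clifford multiplication via permutations}, and both treat the second isomorphism by passing to matrices modulo sign changes. The only difference is one of detail: you explicitly verify well-definedness, bijectivity, the homomorphism property (via Remark \ref{mu4 X S action on J2}), and that sign-change is a congruence on the group of monomial matrices, all of which the paper's proof leaves implicit.
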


\begin{proof}
	
	From Theorem \ref{representing Clifford multiplication via permutations}, we have that all classes of Clifford multiplication actions on the 2-torsion points are represented by the $2^{k}$ strictly real induced Clifford permutations $S_{1, \ldots ,k}$ and the $2^k$ strictly imaginary induced Clifford permutations $i \cdot S_{1, \ldots ,k}$, giving us the $2^{k+1}$ classes mod $\sim$ in $\faktor{\hat{\Gamma}_{2k}}{\sim}$.
 We can then identify each $S_{1, \ldots, k}$ with $(\overline{0}, S_{1, \ldots, k}) \in \mathbb{Z}_2 \times \operatorname{Cliff}(Alt(2^k))$, and each $i \cdot S_{1, \ldots, k}$ with $(\overline{1}, S_{1, \ldots, k}) \in \mathbb{Z}_2 \times \operatorname{Cliff}(Alt(2^k))$. The group operation on $\mathbb{Z}_2 \times \operatorname{Cliff}(Alt(2^k))$ is mod $2$ addition on the $\mathbb{Z}_2$ components and composition on the $\operatorname{Cliff}(Alt(2^{k}))$ components.
 
 Lastly, our representative matrix actions on $J_2^{S_{\Delta_{2k}}}$ are unchanged when any of the entries in the matrices are replaced by their negatives. Then we can identify all $2^{k+1}$ classes $[e_\mu]$ of generators $e_\mu$ with matrices modulo negative signs, which we denote by $\faktor{\rho(\gh_{2k})}{\mathbb{Z}_2}$. Hence, with these identifications, we have that  $\faktor{\hat{\Gamma}_{2k}}{\sim}\cong \mathbb{Z}_2\times  \operatorname{Cliff}(Alt(2^{k}))\cong \faktor{\rho(\hat{\Gamma}_{2k})}{\mathbb{Z}_2}$.
\end{proof}

From this identification, we can view the types of Clifford actions on 2-torsion points as subgroups of order 2 of permutation groups, or as generic types of matrices that make up our classes. The benefit of these group isomorphisms is that we have Abelian matrix subgroups and an Abelian group of permutations that encode all of the information of Clifford multiplication acting on the ever-important 2-torsion points on the Dirac spinor Abelian variety $S_{\Delta_{2k}}$.

\section{Fixed points and translation constants of Clifford multiplication actions on $J_2^{S_{\Delta_{2k}}}$}\label{sec: fixed points and translation constants}

We next use properties of the induced Clifford permutations of generators $e_\mu \in \gh_{2k}$ to find the number of fixed points of each of our Clifford actions, as well as the number of their translation constants. 
 
\begin{defn}
A $2$-torsion point $\vec{v} \in J_2^{S_{\Delta_{2k}}}$ is called a \textbf{fixed point} of the induced Clifford permutation $A$ (or of its associated generator $e_\mu$) if $A \cdot \vec{v} = \vec{v}$. We denote by $\textrm{FP}(A)$ the set of fixed points of $A$:
\[\textrm{FP}(A) = \{\vec{v} \in J_2^{S_{\Delta_{2k}}}: A\cdot \vec{v} = \vec{v}\}\]
\end{defn}

We first compute the number of fixed points in $J_2^{S_{\Delta_{2k}}}$ of a generator $e_\mu \in \gh_{2k}$ of real type by viewing $e_\mu$ as its induced Clifford permutation.

\begin{prop}\label{number of fixed points}
Let $k \in \mathbb{N}, \, k \geq 1$. For any non-identity generator $e_\mu \in \gh_{2k}$ of real type, the lattice Clifford multiplication action of $e_\mu$ on $J_2^{S_{\Delta_{2k}}}$ has as its fixed points a subset of $J_2^{S_{\Delta_{2k}}}$ of size $2^{(2^k)} = |J_2^{S_{\Delta_{2k-2}}}|$. \end{prop}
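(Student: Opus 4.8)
The plan is to reduce the claim to an elementary counting argument by passing from the matrix action of $e_\mu$ to its induced Clifford permutation. First I would invoke Theorem~\ref{representing Clifford multiplication via permutations}: since $e_\mu$ has real type, its action on $J_2^{S_{\Delta_{2k}}}$ coincides with that of some strictly real induced Clifford permutation $S_{1,\ldots,k} = S_k \circ \cdots \circ S_1$ with each $S_l \in \{(1), A_{2^{l-1}}\}$; and since $e_\mu$ is not the identity action, $S_{1,\ldots,k} \neq (1)$. (By Remark~\ref{dim 1 2-torsion points}, negative signs have no effect on $2$-torsion points, so for a real-type generator the action really is a bare permutation of the $2^k$ entries of $\vec{v}$, with no surviving scalar factor.)

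Next I would apply Proposition~\ref{derangement by disjoint transpositions}, which says that every non-identity strictly real induced Clifford permutation is a derangement by disjoint transpositions of $\{1,\ldots,2^k\}$. Hence $S_{1,\ldots,k}$ partitions $\{1,\ldots,2^k\}$ into exactly $2^{k-1}$ unordered pairs $\{a,b\}$ with $S_{1,\ldots,k}(a)=b$ and $S_{1,\ldots,k}(b)=a$, and no index is fixed.

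Then comes the counting step. Write $\vec{v} = T_{a_1,\ldots,a_{2^k}}$ with $a_j \in \{0,1,2,3\}$. Since $S_{1,\ldots,k}$ acts by permuting the entries $v_{a_1},\ldots,v_{a_{2^k}}$, the point $\vec{v}$ is fixed if and only if $v_{a_j} = v_{a_{S_{1,\ldots,k}(j)}}$ for every $j$, i.e.\ if and only if the entry of $\vec{v}$ is constant on each of the $2^{k-1}$ transposition pairs of $S_{1,\ldots,k}$. The common value on each pair may be chosen independently among the four elements of $J_2^{S_{\Delta_0}}$, so the number of fixed points is $4^{2^{k-1}} = 2^{2\cdot 2^{k-1}} = 2^{(2^k)}$. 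Finally I would compare with Lemma~\ref{number of points in J_2^{2^k}} applied with $k-1$ in place of $k$, which gives $|J_2^{S_{\Delta_{2k-2}}}| = 2^{(2^{(k-1)+1})} = 2^{(2^k)}$, matching the count above.

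The argument is essentially routine combinatorics, so there is no serious obstacle; the only point requiring care is the reduction in the first paragraph — making sure the real-type hypothesis genuinely removes any scalar factor so that $e_\mu$ acts as a plain entry permutation, which is exactly where Theorem~\ref{representing Clifford multiplication via permutations} and Remark~\ref{dim 1 2-torsion points} are used. The edge case $k=1$ is consistent: there $S_{1,\ldots,k}$ is the transposition $(1\ 2)$ and there are $4 = 2^{(2^1)}$ fixed points.
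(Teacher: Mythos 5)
Your proposal is correct and follows essentially the same route as the paper's proof: reduce to the induced Clifford permutation, invoke Proposition \ref{derangement by disjoint transpositions} to get a pairing of the $2^k$ indices into $2^{k-1}$ transpositions, and count $4^{2^{k-1}} = 2^{(2^k)}$ fixed points. Your version is slightly more careful in making the ``if and only if'' of the fixed-point condition explicit and in checking the $k=1$ case, but the argument is the same.
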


\begin{proof}
Let $e_\mu \in \gh_{2k}$ be a non-identity generator of real type, and consider a fixed point $\vec{v} \in J_2^{S_{\Delta_{2k}}}$ of the action by $e_{\mu}$. By Proposition \ref{derangement by disjoint transpositions}, the induced Clifford  permutation $S_{1, \ldots, k}$ associated with $e_\mu$ is a derangement by disjoint transpositions of the $2^k$ indices in $\vec{v}$; so the entries in just half of the positions in the vector $\vec{v}$ determine those in the other half. This is for the following reason: let $m, n \in \{1, \ldots, 2^k\}$. If $S_{1,\ldots,{k}}$ maps $m$ to $n$ (and so also $n$ to $m$), then it must be that $\vec{v}[m] = \vec{v}[n]$; i.e. the $m$th and $n$th components of $\vec{v}$ are the same in this situation.  Then the number of fixed points of $e_{\mu}$ is given by the following: 
\[
|\FP(e_\mu)| = |\FP(S_{1, \ldots, k})| = (|J_2^{S_{\Delta_0}}|)^{(2^{k-1})} = 4^{(2^{k-1})} = (2^2)^{(2^{k-1})} = 2^{(2 \cdot 2^{k-1})} = 2^{(2^k)}.
\]   

Recall from Lemma \ref{number of points in J_2^{2^k}} that $| J_2^{S_{\Delta_{2k}}}|= 2^{(2^{k+1})}$ for any $k \in \mathbb{N}, \, k \geq 0$. This means that the generator $e_\mu$ acting on the 2-torsion points has as its fixed points a subset of $J_2^{S_{\Delta_{2k}}}$ of size $2^{(2^k)}= |J_2^{S_{\Delta_{2k-2}}}|$. 
\end{proof}

Next we turn our attention to generators $e_\mu$ of imaginary type, and to their associated strictly imaginary induced Clifford permutations $i \cdot S_{1, \ldots, k}$. Note that maps of the form $i \cdot S_{1,\ldots,k}$ not only interchange entries of the vector form of an element $\vec{v} \in J_2^{S_{\Delta_{2k}}}$; such maps also interchange the real and imaginary parts of the elements of $J^{S_{\Delta_0}}_2$ in each component of the $2^k$-vector  representing $\vec{v}$:
\[
\begin{array}{lllll}
i \cdot v_0 & = & i \cdot (0 + 0 i) & = & 0 + 0i \\
i \cdot v_1 & = & i \cdot (\frac{1}{2} + 0i) & = & 0 + \frac{1}{2}i \\
i \cdot v_2 & = & i \cdot (0 + \frac{1}{2}i) & = & \frac{1}{2} + 0i \\
i \cdot v_3 & = & i \cdot (\frac{1}{2} + \frac{1}{2}i) & = & \frac{1}{2} + \frac{1}{2}i
\end{array}
\]

We now find the number of fixed points of strictly imaginary induced Clifford permutations $i \cdot S_{1,\ldots,k}$ on $J_2^{S_{\Delta_{2k}}}$. We again write elements of $J_2^{S_{\Delta_{2k}}}$ as $2^k$-vectors $\vec{v} = \langle \vec{v}[1], \ldots, \vec{v}[2^k]  \rangle$ where $\vec{v}[j] \in J_2^{S_{\Delta_0}}$ for $1 \leq j \leq 2^k$.

\begin{prop}\label{count fixed points for iA}
For any $k \in \mathbb{N},\ k \geq 1$ and any $e_\mu \in \gh_{2k}$ of imaginary type, the lattice Clifford multiplication action of $e_\mu$ on $J_2^{S_{\Delta_{2k}}}$ has as its fixed points a subset of $J_2^{S_{\Delta_{2k}}}$ of size $2^{(2^k)} = |J_2^{S_{\Delta_{2k-2}}}|$.
\end{prop}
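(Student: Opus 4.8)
The plan is to work in the representation of $2$-torsion points of $S_{\Delta_{2k}}$ as $2^k \times 2$ matrices over $\{0,1\}$ used in this section, so that an imaginary-type generator becomes an honest permutation of the $2^{k+1}$ matrix entries, and then simply to count the matrices left invariant by that permutation. Concretely, I would identify $\vec v \in J_2^{S_{\Delta_{2k}}}$ with the matrix whose $j$th row records the real and imaginary parts of $\vec v[j] \in J_2^{S_{\Delta_0}}$ (so $v_0 \leftrightarrow (0,0)$, $v_1 \leftrightarrow (1,0)$, $v_2 \leftrightarrow (0,1)$, $v_3 \leftrightarrow (1,1)$). By Theorem \ref{representing Clifford multiplication via permutations}, a generator $e_\mu \in \gh_{2k}$ of imaginary type acts on $\vec v$ as $i \cdot S_{1,\ldots,k}$ for some strictly real induced Clifford permutation $S_{1,\ldots,k}$; in the matrix picture this means: permute the rows according to $S_{1,\ldots,k}$ and then interchange the two columns, the column swap being exactly the effect of scalar multiplication by $i$ (compare the table of values of $i \cdot v_a$ displayed above and relations (6)--(7) of Remark \ref{dim 1 2-torsion points}, which also let us ignore all sign ambiguities).

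The first real step is to check that the resulting permutation $\pi$ of the $2^{k+1}$ matrix positions is a \emph{fixed-point-free involution}. Writing a position as $(j,c)$ with $1 \le j \le 2^k$ and $c \in \{1,2\}$, the row permutation acts only on the coordinate $j$ while the column swap sends $(j,c) \mapsto (j,3-c)$; these two operations commute, and $S_{1,\ldots,k}$ has order dividing $2$ by Proposition \ref{derangement by disjoint transpositions}, so $\pi^2 = \mathrm{id}$; moreover $\pi$ has no fixed position because the column swap alone already moves every position. Hence $\pi$ is a product of $2^{k+1}/2 = 2^k$ disjoint transpositions, i.e.\ it has exactly $2^k$ orbits, each of size $2$.

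It then follows that the fixed points of $e_\mu$ are precisely the matrices constant on the orbits of $\pi$: there are $2$ choices ($0$ or $1$) for the common value on each of the $2^k$ orbits, so $|\FP(e_\mu)| = 2^{2^k}$. Finally, by Lemma \ref{number of points in J_2^{2^k}} applied with $k-1$ in place of $k$ we have $2^{2^k} = 2^{2^{(k-1)+1}} = |J_2^{S_{\Delta_{2k-2}}}|$, which is the asserted count and also matches the real-type count of Proposition \ref{number of fixed points}. The step I would be most careful about is the degenerate possibility that $S_{1,\ldots,k}$ is the identity permutation — which genuinely occurs (for instance for the generator whose representation is $\pm iI_{2^k}$) — since there $e_\mu$ must still be recognized as a non-identity action on $J_2^{S_{\Delta_{2k}}}$; but once $\pi$ has been identified as a fixed-point-free involution on the $2^{k+1}$ entries, this case needs no separate treatment, as $\pi$ is then just the within-row column swap, with the same $2^k$ orbits and hence the same total $2^{2^k}$ of invariant matrices (equivalently, the condition forces $\vec v[j] \in \{v_0, v_3\}$ in each of the $2^k$ rows).
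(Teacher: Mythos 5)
Your proof is correct, and it takes a route that differs in a useful way from the one the paper uses for this proposition. The paper stays in the $2^k$-vector picture: it invokes Proposition \ref{derangement by disjoint transpositions} to split $\{1,\ldots,2^k\}$ into a set $J$ of size $2^{k-1}$ and its complement, characterizes fixed points of $i\cdot S_{1,\ldots,k}$ by the condition $\vec v[n]=i\cdot\vec v[S_{1,\ldots,k}(n)]$ for $n\in J$, and counts $4^{2^{k-1}}=2^{(2^k)}$ free choices on $J$. You instead pass immediately to the $2^k\times 2$ matrix form, observe that the action is the entry-permutation ``row permutation composed with column swap,'' verify that this is a fixed-point-free involution $\pi$ of the $2^{k+1}$ positions, and count the $2^{2^k}$ matrices constant on the $2^k$ orbits of $\pi$. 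This is essentially the $\eta_A$ machinery the paper only introduces later, in the proof of Proposition \ref{FP is Tr} and in Section \ref{entry permuting maps} (your count is the $n=2$, $p=2^k$, $q=0$ instance of Lemma \ref{size of FP(A)}). What your version buys is a genuinely uniform treatment of the case $S_{1,\ldots,k}=(1)$ (the generator acting as $\pm iI_{2^k}$): the paper's argument leans on $S_{1,\ldots,k}$ being a derangement, which Proposition \ref{derangement by disjoint transpositions} guarantees only for non-identity permutations, whereas your $\pi$ is fixed-point-free regardless because the column swap alone moves every position. What the paper's version buys is continuity with the vector-form argument already given for the real-type case in Proposition \ref{number of fixed points}, at the cost of handling the pairing between $J$ and its complement and the two-case verification that $\star$ exactly characterizes fixed points.
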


\begin{proof}

Let $e_\mu \in \gh_{2k}$ be of imaginary type, and suppose $i \cdot S_{1, \ldots, k}$ is the Clifford permutation induced by $e_\mu$. By Proposition \ref{derangement by disjoint transpositions}, $S_{1,\ldots,k}$ is a derangement by disjoint transpositions of $\{1, \ldots, 2^k\}$, so there is a $J \subseteq \{1, \ldots, 2^k\}$ of size $2^{k-1}$ so that $ S_{1,\ldots,k}(J) = \{1, \ldots, 2^k\} \setminus J$. 
Suppose $\vec{v} \in J_2^{S_{\Delta_{2k}}}$ satisfies the property \[\star \hspace{5mm} \textrm{ For all } n \in J, \ \vec{v}[n] = i \cdot \vec{v}[ S_{1,\ldots,k}(n)]\](equivalently, for all $n \in J$, $i \cdot \vec{v}[n] = \vec{v}[ S_{1,\ldots,k}(n)]$). We claim that $\vec{v}$ must then be a fixed point of $i \cdot S_{1,\ldots,k}$. Observe that 
 \[
 \begin{array}{lll}
 i \cdot S_{1,\ldots,k}\vec{v} & = & i \cdot S_{1,\ldots,k} \cdot \langle \vec{v}[n]: 1 \leq n \leq 2^k \rangle \\
 & = & i \cdot \langle \vec{v}[ S_{1,\ldots,k}(n)]: 1 \leq n \leq 2^k \rangle \\
 & = & \langle i \cdot \vec{v}[ S_{1,\ldots,k}(n)]: 1 \leq n \leq 2^k \rangle. 
 \end{array}
 \]
We show that $i \cdot \vec{v}[S_{1, \ldots, k}(n)] = \vec{v}[n]$ for all $1 \leq n \leq 2^k$. Fix $n \in \{1, \ldots, 2^k\}$. 

Case 1: $n \in J$. Then $i \cdot \vec{v}[ S_{1,\ldots ,k}(n)] = \vec{v}[n]$ by $\star$.

Case 2: $n \not\in J$. Choose $m \in J$ such that $ S_{1,\ldots,k}(m) = n$. Then 
\[
\begin{array}{lll}
i \cdot \vec{v}[S_{1, \ldots, k}(n)] & = & i \cdot \vec{v}[ S_{1,\ldots,k}( S_{1,\ldots,k}(m))] \\
& = & i \cdot \vec{v}[m] \textrm{ (as }  S_{1, \ldots ,k} \textrm{ is an involution)} \\
& = & i \cdot (i \cdot \vec{v}[ S_{1,\ldots,k}(m)]) \textrm{ (by $\star$, since $m \in J$)} \\
& = & \vec{v}[S_{1, \ldots, k}(m)] \textrm{ (as } \vec{v}[S_{1, \ldots, k}(m)] \in J_2^{S_{\Delta_0}}\textrm{)} \\
& = & \vec{v}[n].
\end{array}
\]
In either case, we have that $((i \cdot S_{1, \ldots, k}) \cdot \vec{v})[n] = i\cdot  \vec{v}[S_{1, \ldots, k}(n)] = \vec{v}[n]$ for all $1 \leq n \leq 2^k$, so that $(i \cdot S_{1, \ldots, k}) \cdot \vec{v} = \vec{v}$; that is, $\vec{v}$ is a fixed point of $i \cdot S_{1, \ldots, k}$, proving the claim.

Thus $\{\vec{v} \in J_2^{S_{\Delta_{2k}}}: \vec{v} \textrm{ satisfies } \star\} \subseteq \FP(i \cdot S_{1, \ldots, k})$. If $\vec{v}$ satisfies $\star$, then its entries in exactly half of the indices $1, \ldots, 2^k$ -- namely, the indices in the set $J$ -- determine the entries in the other half of the indices. This means, since each entry in the $2^k$-vector $\vec{v}$ is in $J_2^{S_{\Delta_{0}}}$ (which has 4 elements), that $|\{\vec{v} \in J_2^{S_{\Delta_{2k}}}: \vec{v} \textrm{ satisfies } \star\}| = 4^{|J|} = 4^{(2^{k-1})} = 2^{(2^k)}$. Therefore \[|\FP(i \cdot S_{1, \ldots, k})| \geq |\{\vec{v} \in J_2^{S_{\Delta_{2k}}}: \vec{v} \textrm{ satisfies } \star\}| = 2^{(2^k)}.\]

For the other inequality: now suppose $\vec{v}$ is a fixed point of $i \cdot S_{1, \ldots, k}$. Then for all $1 \leq n \leq 2^k$, $\vec{v}[n] = (i \cdot S_{1, \ldots, k} \cdot \vec{v})[n] =  i \cdot \vec{v}[S_{1, \ldots, k}(n)]$. In particular, this holds for all $n \in J$, so that $\vec{v}$ satisfies $\star$. Thus $\FP(i \cdot S_{1, \ldots, k}) \subseteq \{\vec{v} \in J_2^{S_{\Delta_{2k}}}: \vec{v} \textrm{ satisfies } \star\}$, and so \[|\FP(i \cdot S_{1, \ldots, k})| \leq |\{\vec{v} \in J_2^{S_{\Delta_{2k}}}: \vec{v} \textrm{ satisfies } \star\}| = 2^{(2^k)}.\] 
\end{proof}

\begin{defn}
Let $k \in \mathbb{N},\, k \geq 1,$ and let $A$ be the induced Clifford permutation of some generator $e_\mu \in \gh_{2k}$. A \textbf{translation constant} of $A$ (or of $e_\mu$) is a $\vec{v} \in J_2^{S_{\Delta_{2k}}}$ such that for some $\vec{w} \in J_2^{S_{\Delta_{2k}}}$, $A \cdot \vec{w} = \vec{v} + \vec{w}$. We denote by $\Tr(A)$ the set of translation constants of $A$: \[\Tr(A) = \{\vec{v} \in J_2^{S_{\Delta_{2k}}}: A \cdot \vec{w} = \vec{v} + \vec{w} \textrm{ for some } \vec{w} \in J_2^{S_{\Delta_{2k}}}\}.\]

For $\vec{w} \in J_2^{S_{\Delta_{2k}}}$, the (unique) translation constant for $\vec{w}$ of $A$ is denoted $\Tr_A(\vec{w})$; that is,  $\Tr_A(\vec{w})$ is the element of $J_2^{S_{\Delta_{2k}}}$ such that $A \cdot \vec{w} = \Tr_A(\vec{w}) + \vec{w}$. 

For $\vec{v} \in \Tr(A)$, we denote by $r_A(\vec{v})$ the \textbf{responsibility set} of $\vec{v}$ for $A$; that is, $$r_A(\vec{v}) =\{\vec{w} \in J_2^{S_{\Delta_{2k}}}: A \cdot \vec{w} = \vec{v} + \vec{w}\}.$$
\end{defn}

We show in Proposition \ref{FP is Tr} that $\FP(A) = \Tr(A)$ for any induced Clifford permutation $A$. To do this, we represent a vector $\vec{v} \in J_2^{S_{\Delta_{2k}}}$ as a $2^k \times 2$ matrix of 0s and 1s, where the $n$th row of the matrix contains the numerators of the real and imaginary parts, respectively, of the element of $J^{S_{\Delta_0}}_2\subset \dfrac{\C}{\mathbb{Z}\oplus i\mathbb{Z}}$ that is the $n$th component of $\vec{v}$. We will refer to this matrix as the \textit{matrix form} of the 2-torsion point $\vec{v}$.

\begin{exmp}\label{matrix form}
The matrix 
\[
\left[
\begin{array}{cc}
0 & 1 \\
1 & 1 \\
0 & 0 \\
1 & 0
\end{array}
\right]
\]
encodes the element
\[
\vec{v} = \left(
\begin{array}{c}
\\[-12pt]
\frac{0}{2} + \frac{1}{2}i \\[2pt]
\frac{1}{2} + \frac{1}{2}i \\[2pt]
\frac{0}{2} + \frac{0}{2}i \\[2pt]
\frac{1}{2} + \frac{0}{2}i \\[3pt]
\end{array}
\right)
=
\left(
\begin{array}{c}
	\\[-12pt]
	v_2 \\[2pt]
	v_3 \\[2pt]
	v_0 \\[2pt]
	v_1 \\[3pt]
\end{array}
\right)
\]
of $J_2^{S_{\Delta_4}}$. 
\end{exmp}

Recall that $J^{S_{\Delta_0}}_{2}$ is isomorphic to the Klein four-group: $J^{S_{\Delta_0}}_2\cong \mathbb{Z}_2\times \mathbb{Z}_2$. When two elements $\vec{v}$ and $\vec{w}$ of $J_2^{S_{\Delta_{2k}}}$ represented as matrices (as above) are added, the componentwise addition can be thought of as happening inside the group $\mathbb{Z}_2= \{\bar{0}, \bar{1}\} = \{0, 1\}$, with the addition rules $0 + 0 = 0$, $0 + 1 = 1$, $1 + 0 = 1$, and $1 + 1 = 0$. Also, considering scalar multiplication by $i$ as an action on the 2-torsion points $J_2^{S_{\Delta_0}}$, recall that this multiplication interchanges the real and imaginary parts of any $v \in J_2^{S_{\Delta_0}}$. This means that when a $\vec{v} \in J_2^{S_{\Delta_{2k}}}$ is represented in matrix form, scalar multiplication by $i$ is equivalent to interchanging the columns of that matrix form. Thus any strictly real induced Clifford permutation $S_{1, \ldots, k}$ can be thought of as an action that permutes only the rows of the matrix form of a $\vec{v} \in J_2^{S_{\Delta_{2k}}}$, while a strictly imaginary induced Clifford permutation $i \cdot S_{1, \ldots, k}$ both permutes the rows and interchanges the columns (or, in the case of the map $i \cdot (1)$, just interchanges the columns). We illustrate the 8 strictly imaginary induced Clifford permutations on $J_2^{S_{\Delta_6}}$ in Figure \ref{fig: complex, dimension 8}.  There the numbers 1 through 16 represent the entries $\vec{v}[1]$ through $\vec{v}[16]$ in the matrix form of an element $\vec{v} \in J_2^{S_{\Delta_6}}$, and the white and grey columns represent the real and imaginary entries, respectively, of the matrix form of $\vec{v}$.

\begin{figure}[h]
\centering

	
\begin{tikzpicture}[
	number/.style={rectangle, minimum size=5mm, inner sep = 1, draw=black, behind path}, grey/.style={rectangle, minimum size=5mm, inner sep = 1, draw=black, behind path, fill=black!10}]

	\matrix[row sep=0mm,column sep=0mm] {
		\node (l1) [number] {1}; &
		\node (r1) [grey] {2}; \\
		\node (l2) [number] {3}; &
		\node (r2) [grey] {4}; \\
		\node (l3) [number] {5}; &
		\node (r3) [grey] {6}; \\
		\node (l4) [number] {7}; &
		\node (r4) [grey] {8}; \\
		\node (l5) [number] {9}; &
		\node (r5) [grey] {10}; \\
		\node (l6) [number] {11}; &
		\node (r6) [grey] {12}; \\
		\node (l7) [number] {13}; &
		\node (r7) [grey] {14}; \\
		\node (l8) [number] {15}; &
		\node (r8) [grey] {16}; \\
	};

	\begin{scope}[yshift=0cm,xshift=2.5cm]
		\matrix[row sep=0mm,column sep=0mm] {
			\node (a1) [grey] {1}; &
			\node (b1) [number] {2}; \\
			\node (a2) [grey] {3}; &
			\node (b2) [number] {4}; \\
			\node (a3) [grey] {5}; &
			\node (b3) [number] {6}; \\
			\node (a4) [grey] {7}; &
			\node (b4) [number] {8}; \\
			\node (a5) [grey] {9}; &
			\node (b5) [number] {10}; \\
			\node (a6) [grey] {11}; &
			\node (b6) [number] {12}; \\
			\node (a7) [grey] {13}; &
			\node (b7) [number] {14}; \\
			\node (a8) [grey] {15}; &
			\node (b8) [number] {16}; \\
		};
	\end{scope}
	
	
	\node (x1) at (.51, 1.79375) {};
	\node (x2) at (.51, 1.28125) {};
	\node (x3) at (.51, 0.76875) {};
	\node (x4) at (.51, 0.25625) {};
	\node (x5) at (.51, -0.25625) {};
	\node (x6) at (.51, -0.76875) {};
	\node (x7) at (.51, -1.28125) {};
	\node (x8) at (.51, -1.79375) {};
	
	\node (y1) at (1.99, 1.79375) {};
	\node (y2) at (1.99, 1.28125) {};
	\node (y3) at (1.99, 0.76875) {};
	\node (y4) at (1.99, 0.25625) {};
	\node (y5) at (1.99, -0.25625) {};
	\node (y6) at (1.99, -0.76875) {};
	\node (y7) at (1.99, -1.28125) {};
	\node (y8) at (1.99, -1.79375) {};
	
	\draw [-{Stealth[length=1mm]}] (x1) -- (y1);
	\draw [-{Stealth[length=1mm]}] (x2) -- (y2);
	\draw [-{Stealth[length=1mm]}] (x3) -- (y3);
	\draw [-{Stealth[length=1mm]}] (x4) -- (y4);
	\draw [-{Stealth[length=1mm]}] (x5) -- (y5);
	\draw [-{Stealth[length=1mm]}] (x6) -- (y6);
	\draw [-{Stealth[length=1mm]}] (x7) -- (y7);
	\draw [-{Stealth[length=1mm]}] (x8) -- (y8);

	\node at (1.25,-2.7) {$i\, (1) \circ (1) \circ (1)$};
	
\end{tikzpicture}
\hspace{4mm}
\begin{tikzpicture}[
	number/.style={rectangle, minimum size=5mm, inner sep = 1, draw=black, behind path}, grey/.style={rectangle, minimum size=5mm, inner sep = 1, draw=black, behind path, fill=black!10}]

	\matrix[row sep=0mm,column sep=0mm] {
		\node (l1) [number] {1}; &
		\node (r1) [grey] {2}; \\
		\node (l2) [number] {3}; &
		\node (r2) [grey] {4}; \\
		\node (l3) [number] {5}; &
		\node (r3) [grey] {6}; \\
		\node (l4) [number] {7}; &
		\node (r4) [grey] {8}; \\
		\node (l5) [number] {9}; &
		\node (r5) [grey] {10}; \\
		\node (l6) [number] {11}; &
		\node (r6) [grey] {12}; \\
		\node (l7) [number] {13}; &
		\node (r7) [grey] {14}; \\
		\node (l8) [number] {15}; &
		\node (r8) [grey] {16}; \\
	};

	\begin{scope}[yshift=0cm,xshift=2.5cm]
		\matrix[row sep=0mm,column sep=0mm] {
			\node (a1) [grey] {4}; &
			\node (b1) [number] {3}; \\
			\node (a2) [grey] {2}; &
			\node (b2) [number] {1}; \\
			\node (a3) [grey] {8}; &
			\node (b3) [number] {7}; \\
			\node (a4) [grey] {6}; &
			\node (b4) [number] {5}; \\
			\node (a5) [grey] {12}; &
			\node (b5) [number] {11}; \\
			\node (a6) [grey] {10}; &
			\node (b6) [number] {9}; \\
			\node (a7) [grey] {16}; &
			\node (b7) [number] {15}; \\
			\node (a8) [grey] {14}; &
			\node (b8) [number] {13}; \\
		};
	\end{scope}
	
	
	\node (x1) at (.51, 1.79375) {};
	\node (x2) at (.51, 1.28125) {};
	\node (x3) at (.51, 0.76875) {};
	\node (x4) at (.51, 0.25625) {};
	\node (x5) at (.51, -0.25625) {};
	\node (x6) at (.51, -0.76875) {};
	\node (x7) at (.51, -1.28125) {};
	\node (x8) at (.51, -1.79375) {};
	
	\node (y1) at (1.99, 1.79375) {};
	\node (y2) at (1.99, 1.28125) {};
	\node (y3) at (1.99, 0.76875) {};
	\node (y4) at (1.99, 0.25625) {};
	\node (y5) at (1.99, -0.25625) {};
	\node (y6) at (1.99, -0.76875) {};
	\node (y7) at (1.99, -1.28125) {};
	\node (y8) at (1.99, -1.79375) {};
	
	\draw [-{Stealth[length=1mm]}] (x1) -- (y2);
	\draw [-{Stealth[length=1mm]}] (x2) -- (y1);
	\draw [-{Stealth[length=1mm]}] (x3) -- (y4);
	\draw [-{Stealth[length=1mm]}] (x4) -- (y3);
	\draw [-{Stealth[length=1mm]}] (x5) -- (y6);
	\draw [-{Stealth[length=1mm]}] (x6) -- (y5);
	\draw [-{Stealth[length=1mm]}] (x8) -- (y7);
	\draw [-{Stealth[length=1mm]}] (x7) -- (y8);

	\node at (1.25,-2.7) {$i\, (1) \circ (1) \circ A_1$};
	
\end{tikzpicture}
\hspace{4mm}
\begin{tikzpicture}[
	number/.style={rectangle, minimum size=5mm, inner sep = 1, draw=black, behind path}, grey/.style={rectangle, minimum size=5mm, inner sep = 1, draw=black, behind path, fill=black!10}]

	\matrix[row sep=0mm,column sep=0mm] {
		\node (l1) [number] {1}; &
		\node (r1) [grey] {2}; \\
		\node (l2) [number] {3}; &
		\node (r2) [grey] {4}; \\
		\node (l3) [number] {5}; &
		\node (r3) [grey] {6}; \\
		\node (l4) [number] {7}; &
		\node (r4) [grey] {8}; \\
		\node (l5) [number] {9}; &
		\node (r5) [grey] {10}; \\
		\node (l6) [number] {11}; &
		\node (r6) [grey] {12}; \\
		\node (l7) [number] {13}; &
		\node (r7) [grey] {14}; \\
		\node (l8) [number] {15}; &
		\node (r8) [grey] {16}; \\
	};

	\begin{scope}[yshift=0cm,xshift=2.5cm]
		\matrix[row sep=0mm,column sep=0mm] {
			\node (a1) [grey] {6}; &
			\node (b1) [number] {5}; \\
			\node (a2) [grey] {8}; &
			\node (b2) [number] {7}; \\
			\node (a3) [grey] {2}; &
			\node (b3) [number] {1}; \\
			\node (a4) [grey] {4}; &
			\node (b4) [number] {3}; \\
			\node (a5) [grey] {14}; &
			\node (b5) [number] {13}; \\
			\node (a6) [grey] {16}; &
			\node (b6) [number] {15}; \\
			\node (a7) [grey] {10}; &
			\node (b7) [number] {9}; \\
			\node (a8) [grey] {12}; &
			\node (b8) [number] {11}; \\
		};
	\end{scope}
	
	
	\node (x1) at (.51, 1.79375) {};
	\node (x2) at (.51, 1.28125) {};
	\node (x3) at (.51, 0.76875) {};
	\node (x4) at (.51, 0.25625) {};
	\node (x5) at (.51, -0.25625) {};
	\node (x6) at (.51, -0.76875) {};
	\node (x7) at (.51, -1.28125) {};
	\node (x8) at (.51, -1.79375) {};
	
	\node (y1) at (1.99, 1.79375) {};
	\node (y2) at (1.99, 1.28125) {};
	\node (y3) at (1.99, 0.76875) {};
	\node (y4) at (1.99, 0.25625) {};
	\node (y5) at (1.99, -0.25625) {};
	\node (y6) at (1.99, -0.76875) {};
	\node (y7) at (1.99, -1.28125) {};
	\node (y8) at (1.99, -1.79375) {};
	
	\draw [-{Stealth[length=1mm]}] (x1) -- (y3);
	\draw [-{Stealth[length=1mm]}] (x2) -- (y4);
	\draw [-{Stealth[length=1mm]}] (x3) -- (y1);
	\draw [-{Stealth[length=1mm]}] (x4) -- (y2);
	\draw [-{Stealth[length=1mm]}] (x5) -- (y7);
	\draw [-{Stealth[length=1mm]}] (x6) -- (y8);
	\draw [-{Stealth[length=1mm]}] (x7) -- (y5);
	\draw [-{Stealth[length=1mm]}] (x8) -- (y6);

	\node at (1.25,-2.7) {$i\, (1) \circ A_2 \circ (1)$};
	
\end{tikzpicture}
\vspace{4mm}

\begin{tikzpicture}[
	number/.style={rectangle, minimum size=5mm, inner sep = 1, draw=black, behind path}, grey/.style={rectangle, minimum size=5mm, inner sep = 1, draw=black, behind path, fill=black!10}]

	\matrix[row sep=0mm,column sep=0mm] {
		\node (l1) [number] {1}; &
		\node (r1) [grey] {2}; \\
		\node (l2) [number] {3}; &
		\node (r2) [grey] {4}; \\
		\node (l3) [number] {5}; &
		\node (r3) [grey] {6}; \\
		\node (l4) [number] {7}; &
		\node (r4) [grey] {8}; \\
		\node (l5) [number] {9}; &
		\node (r5) [grey] {10}; \\
		\node (l6) [number] {11}; &
		\node (r6) [grey] {12}; \\
		\node (l7) [number] {13}; &
		\node (r7) [grey] {14}; \\
		\node (l8) [number] {15}; &
		\node (r8) [grey] {16}; \\
	};

	\begin{scope}[yshift=0cm,xshift=2.5cm]
		\matrix[row sep=0mm,column sep=0mm] {
			\node (a1) [grey] {10}; &
			\node (b1) [number] {9}; \\
			\node (a2) [grey] {12}; &
			\node (b2) [number] {11}; \\
			\node (a3) [grey] {14}; &
			\node (b3) [number] {13}; \\
			\node (a4) [grey] {16}; &
			\node (b4) [number] {15}; \\
			\node (a5) [grey] {2}; &
			\node (b5) [number] {1}; \\
			\node (a6) [grey] {4}; &
			\node (b6) [number] {3}; \\
			\node (a7) [grey] {6}; &
			\node (b7) [number] {5}; \\
			\node (a8) [grey] {8}; &
			\node (b8) [number] {7}; \\
		};
	\end{scope}
	
	
	\node (x1) at (.51, 1.79375) {};
	\node (x2) at (.51, 1.28125) {};
	\node (x3) at (.51, 0.76875) {};
	\node (x4) at (.51, 0.25625) {};
	\node (x5) at (.51, -0.25625) {};
	\node (x6) at (.51, -0.76875) {};
	\node (x7) at (.51, -1.28125) {};
	\node (x8) at (.51, -1.79375) {};
	
	\node (y1) at (1.99, 1.79375) {};
	\node (y2) at (1.99, 1.28125) {};
	\node (y3) at (1.99, 0.76875) {};
	\node (y4) at (1.99, 0.25625) {};
	\node (y5) at (1.99, -0.25625) {};
	\node (y6) at (1.99, -0.76875) {};
	\node (y7) at (1.99, -1.28125) {};
	\node (y8) at (1.99, -1.79375) {};
	
	\draw [-{Stealth[length=1mm]}] (x1) -- (y5);
	\draw [-{Stealth[length=1mm]}] (x2) -- (y6);
	\draw [-{Stealth[length=1mm]}] (x3) -- (y7);
	\draw [-{Stealth[length=1mm]}] (x4) -- (y8);
	\draw [-{Stealth[length=1mm]}] (x5) -- (y1);
	\draw [-{Stealth[length=1mm]}] (x6) -- (y2);
	\draw [-{Stealth[length=1mm]}] (x7) -- (y3);
	\draw [-{Stealth[length=1mm]}] (x8) -- (y4);

	\node at (1.25,-2.7) {$i\, A_4 \circ (1) \circ (1)$};
	
\end{tikzpicture}
\hspace{4mm}
\begin{tikzpicture}[
	number/.style={rectangle, minimum size=5mm, inner sep = 1, draw=black, behind path}, grey/.style={rectangle, minimum size=5mm, inner sep = 1, draw=black, behind path, fill=black!10}]

	\matrix[row sep=0mm,column sep=0mm] {
		\node (l1) [number] {1}; &
		\node (r1) [grey] {2}; \\
		\node (l2) [number] {3}; &
		\node (r2) [grey] {4}; \\
		\node (l3) [number] {5}; &
		\node (r3) [grey] {6}; \\
		\node (l4) [number] {7}; &
		\node (r4) [grey] {8}; \\
		\node (l5) [number] {9}; &
		\node (r5) [grey] {10}; \\
		\node (l6) [number] {11}; &
		\node (r6) [grey] {12}; \\
		\node (l7) [number] {13}; &
		\node (r7) [grey] {14}; \\
		\node (l8) [number] {15}; &
		\node (r8) [grey] {16}; \\
	};

	\begin{scope}[yshift=0cm,xshift=2.5cm]
		\matrix[row sep=0mm,column sep=0mm] {
			\node (a1) [grey] {8}; &
			\node (b1) [number] {7}; \\
			\node (a2) [grey] {6}; &
			\node (b2) [number] {5}; \\
			\node (a3) [grey] {4}; &
			\node (b3) [number] {3}; \\
			\node (a4) [grey] {2}; &
			\node (b4) [number] {1}; \\
			\node (a5) [grey] {16}; &
			\node (b5) [number] {15}; \\
			\node (a6) [grey] {14}; &
			\node (b6) [number] {13}; \\
			\node (a7) [grey] {12}; &
			\node (b7) [number] {11}; \\
			\node (a8) [grey] {10}; &
			\node (b8) [number] {9}; \\
		};
	\end{scope}
	
	
	\node (x1) at (.51, 1.79375) {};
	\node (x2) at (.51, 1.28125) {};
	\node (x3) at (.51, 0.76875) {};
	\node (x4) at (.51, 0.25625) {};
	\node (x5) at (.51, -0.25625) {};
	\node (x6) at (.51, -0.76875) {};
	\node (x7) at (.51, -1.28125) {};
	\node (x8) at (.51, -1.79375) {};
	
	\node (y1) at (1.99, 1.79375) {};
	\node (y2) at (1.99, 1.28125) {};
	\node (y3) at (1.99, 0.76875) {};
	\node (y4) at (1.99, 0.25625) {};
	\node (y5) at (1.99, -0.25625) {};
	\node (y6) at (1.99, -0.76875) {};
	\node (y7) at (1.99, -1.28125) {};
	\node (y8) at (1.99, -1.79375) {};
	
	\draw [-{Stealth[length=1mm]}] (x1) -- (y4);
	\draw [-{Stealth[length=1mm]}] (x2) -- (y3);
	\draw [-{Stealth[length=1mm]}] (x3) -- (y2);
	\draw [-{Stealth[length=1mm]}] (x4) -- (y1);
	\draw [-{Stealth[length=1mm]}] (x5) -- (y8);
	\draw [-{Stealth[length=1mm]}] (x6) -- (y7);
	\draw [-{Stealth[length=1mm]}] (x7) -- (y6);
	\draw [-{Stealth[length=1mm]}] (x8) -- (y5);

	\node at (1.25,-2.7) {$i\, (1) \circ A_2 \circ A_1$};
	
\end{tikzpicture}
\hspace{4mm}
\begin{tikzpicture}[
	number/.style={rectangle, minimum size=5mm, inner sep = 1, draw=black, behind path}, grey/.style={rectangle, minimum size=5mm, inner sep = 1, draw=black, behind path, fill=black!10}]

	\matrix[row sep=0mm,column sep=0mm] {
		\node (l1) [number] {1}; &
		\node (r1) [grey] {2}; \\
		\node (l2) [number] {3}; &
		\node (r2) [grey] {4}; \\
		\node (l3) [number] {5}; &
		\node (r3) [grey] {6}; \\
		\node (l4) [number] {7}; &
		\node (r4) [grey] {8}; \\
		\node (l5) [number] {9}; &
		\node (r5) [grey] {10}; \\
		\node (l6) [number] {11}; &
		\node (r6) [grey] {12}; \\
		\node (l7) [number] {13}; &
		\node (r7) [grey] {14}; \\
		\node (l8) [number] {15}; &
		\node (r8) [grey] {16}; \\
	};

	\begin{scope}[yshift=0cm,xshift=2.5cm]
		\matrix[row sep=0mm,column sep=0mm] {
			\node (a1) [grey] {12}; &
			\node (b1) [number] {11}; \\
			\node (a2) [grey] {10}; &
			\node (b2) [number] {9}; \\
			\node (a3) [grey] {16}; &
			\node (b3) [number] {15}; \\
			\node (a4) [grey] {14}; &
			\node (b4) [number] {13}; \\
			\node (a5) [grey] {4}; &
			\node (b5) [number] {3}; \\
			\node (a6) [grey] {2}; &
			\node (b6) [number] {1}; \\
			\node (a7) [grey] {8}; &
			\node (b7) [number] {7}; \\
			\node (a8) [grey] {6}; &
			\node (b8) [number] {5}; \\
		};
	\end{scope}
	
	
	\node (x1) at (.51, 1.79375) {};
	\node (x2) at (.51, 1.28125) {};
	\node (x3) at (.51, 0.76875) {};
	\node (x4) at (.51, 0.25625) {};
	\node (x5) at (.51, -0.25625) {};
	\node (x6) at (.51, -0.76875) {};
	\node (x7) at (.51, -1.28125) {};
	\node (x8) at (.51, -1.79375) {};
	
	\node (y1) at (1.99, 1.79375) {};
	\node (y2) at (1.99, 1.28125) {};
	\node (y3) at (1.99, 0.76875) {};
	\node (y4) at (1.99, 0.25625) {};
	\node (y5) at (1.99, -0.25625) {};
	\node (y6) at (1.99, -0.76875) {};
	\node (y7) at (1.99, -1.28125) {};
	\node (y8) at (1.99, -1.79375) {};
	
	\draw [-{Stealth[length=1mm]}] (x1) -- (y6);
	\draw [-{Stealth[length=1mm]}] (x2) -- (y5);
	\draw [-{Stealth[length=1mm]}] (x3) -- (y8);
	\draw [-{Stealth[length=1mm]}] (x4) -- (y7);
	\draw [-{Stealth[length=1mm]}] (x5) -- (y2);
	\draw [-{Stealth[length=1mm]}] (x6) -- (y1);
	\draw [-{Stealth[length=1mm]}] (x7) -- (y4);
	\draw [-{Stealth[length=1mm]}] (x8) -- (y3);

	\node at (1.25,-2.7) {$i\, A_4 \circ (1) \circ A_1$};
	
\end{tikzpicture}
\vspace{4mm}

\begin{center}
\begin{tikzpicture}[
	number/.style={rectangle, minimum size=5mm, inner sep = 1, draw=black, behind path}, grey/.style={rectangle, minimum size=5mm, inner sep = 1, draw=black, behind path, fill=black!10}]

	\matrix[row sep=0mm,column sep=0mm] {
		\node (l1) [number] {1}; &
		\node (r1) [grey] {2}; \\
		\node (l2) [number] {3}; &
		\node (r2) [grey] {4}; \\
		\node (l3) [number] {5}; &
		\node (r3) [grey] {6}; \\
		\node (l4) [number] {7}; &
		\node (r4) [grey] {8}; \\
		\node (l5) [number] {9}; &
		\node (r5) [grey] {10}; \\
		\node (l6) [number] {11}; &
		\node (r6) [grey] {12}; \\
		\node (l7) [number] {13}; &
		\node (r7) [grey] {14}; \\
		\node (l8) [number] {15}; &
		\node (r8) [grey] {16}; \\
	};

	\begin{scope}[yshift=0cm,xshift=2.5cm]
		\matrix[row sep=0mm,column sep=0mm] {
			\node (a1) [grey] {14}; &
			\node (b1) [number] {13}; \\
			\node (a2) [grey] {16}; &
			\node (b2) [number] {15}; \\
			\node (a3) [grey] {10}; &
			\node (b3) [number] {9}; \\
			\node (a4) [grey] {12}; &
			\node (b4) [number] {11}; \\
			\node (a5) [grey] {6}; &
			\node (b5) [number] {5}; \\
			\node (a6) [grey] {8}; &
			\node (b6) [number] {7}; \\
			\node (a7) [grey] {2}; &
			\node (b7) [number] {1}; \\
			\node (a8) [grey] {4}; &
			\node (b8) [number] {3}; \\
		};
	\end{scope}
	
	
	\node (x1) at (.51, 1.79375) {};
	\node (x2) at (.51, 1.28125) {};
	\node (x3) at (.51, 0.76875) {};
	\node (x4) at (.51, 0.25625) {};
	\node (x5) at (.51, -0.25625) {};
	\node (x6) at (.51, -0.76875) {};
	\node (x7) at (.51, -1.28125) {};
	\node (x8) at (.51, -1.79375) {};
	
	\node (y1) at (1.99, 1.79375) {};
	\node (y2) at (1.99, 1.28125) {};
	\node (y3) at (1.99, 0.76875) {};
	\node (y4) at (1.99, 0.25625) {};
	\node (y5) at (1.99, -0.25625) {};
	\node (y6) at (1.99, -0.76875) {};
	\node (y7) at (1.99, -1.28125) {};
	\node (y8) at (1.99, -1.79375) {};
	
	\draw [-{Stealth[length=1mm]}] (x1) -- (y7);
	\draw [-{Stealth[length=1mm]}] (x2) -- (y8);
	\draw [-{Stealth[length=1mm]}] (x3) -- (y5);
	\draw [-{Stealth[length=1mm]}] (x4) -- (y6);
	\draw [-{Stealth[length=1mm]}] (x5) -- (y3);
	\draw [-{Stealth[length=1mm]}] (x6) -- (y4);
	\draw [-{Stealth[length=1mm]}] (x7) -- (y1);
	\draw [-{Stealth[length=1mm]}] (x8) -- (y2);

	\node at (1.25,-2.7) {$i\, A_4 \circ A_2 \circ (1)$};
	
\end{tikzpicture}
\hspace{4mm}
\begin{tikzpicture}[
	number/.style={rectangle, minimum size=5mm, inner sep = 1, draw=black, behind path}, grey/.style={rectangle, minimum size=5mm, inner sep = 1, draw=black, behind path, fill=black!10}]

	\matrix[row sep=0mm,column sep=0mm] {
		\node (l1) [number] {1}; &
		\node (r1) [grey] {2}; \\
		\node (l2) [number] {3}; &
		\node (r2) [grey] {4}; \\
		\node (l3) [number] {5}; &
		\node (r3) [grey] {6}; \\
		\node (l4) [number] {7}; &
		\node (r4) [grey] {8}; \\
		\node (l5) [number] {9}; &
		\node (r5) [grey] {10}; \\
		\node (l6) [number] {11}; &
		\node (r6) [grey] {12}; \\
		\node (l7) [number] {13}; &
		\node (r7) [grey] {14}; \\
		\node (l8) [number] {15}; &
		\node (r8) [grey] {16}; \\
	};

	\begin{scope}[yshift=0cm,xshift=2.5cm]
		\matrix[row sep=0mm,column sep=0mm] {
			\node (a1) [grey] {16}; &
			\node (b1) [number] {15}; \\
			\node (a2) [grey] {14}; &
			\node (b2) [number] {13}; \\
			\node (a3) [grey] {12}; &
			\node (b3) [number] {11}; \\
			\node (a4) [grey] {10}; &
			\node (b4) [number] {9}; \\
			\node (a5) [grey] {8}; &
			\node (b5) [number] {7}; \\
			\node (a6) [grey] {6}; &
			\node (b6) [number] {5}; \\
			\node (a7) [grey] {4}; &
			\node (b7) [number] {3}; \\
			\node (a8) [grey] {2}; &
			\node (b8) [number] {1}; \\
		};
	\end{scope}
	
	
	\node (x1) at (.51, 1.79375) {};
	\node (x2) at (.51, 1.28125) {};
	\node (x3) at (.51, 0.76875) {};
	\node (x4) at (.51, 0.25625) {};
	\node (x5) at (.51, -0.25625) {};
	\node (x6) at (.51, -0.76875) {};
	\node (x7) at (.51, -1.28125) {};
	\node (x8) at (.51, -1.79375) {};
	
	\node (y1) at (1.99, 1.79375) {};
	\node (y2) at (1.99, 1.28125) {};
	\node (y3) at (1.99, 0.76875) {};
	\node (y4) at (1.99, 0.25625) {};
	\node (y5) at (1.99, -0.25625) {};
	\node (y6) at (1.99, -0.76875) {};
	\node (y7) at (1.99, -1.28125) {};
	\node (y8) at (1.99, -1.79375) {};
	
	\draw [-{Stealth[length=1mm]}] (x1) -- (y8);
	\draw [-{Stealth[length=1mm]}] (x2) -- (y7);
	\draw [-{Stealth[length=1mm]}] (x3) -- (y6);
	\draw [-{Stealth[length=1mm]}] (x4) -- (y5);
	\draw [-{Stealth[length=1mm]}] (x5) -- (y4);
	\draw [-{Stealth[length=1mm]}] (x6) -- (y3);
	\draw [-{Stealth[length=1mm]}] (x7) -- (y2);
	\draw [-{Stealth[length=1mm]}] (x8) -- (y1);

	\node at (1.25,-2.7) {$i\, A_4 \circ A_2 \circ A_1$};
	
\end{tikzpicture}

\end{center}

\caption{The 8 imaginary classes of Clifford multiplication on $J_2^{S_{\Delta_{6}}}$}
\label{fig: complex, dimension 8}
\end{figure}
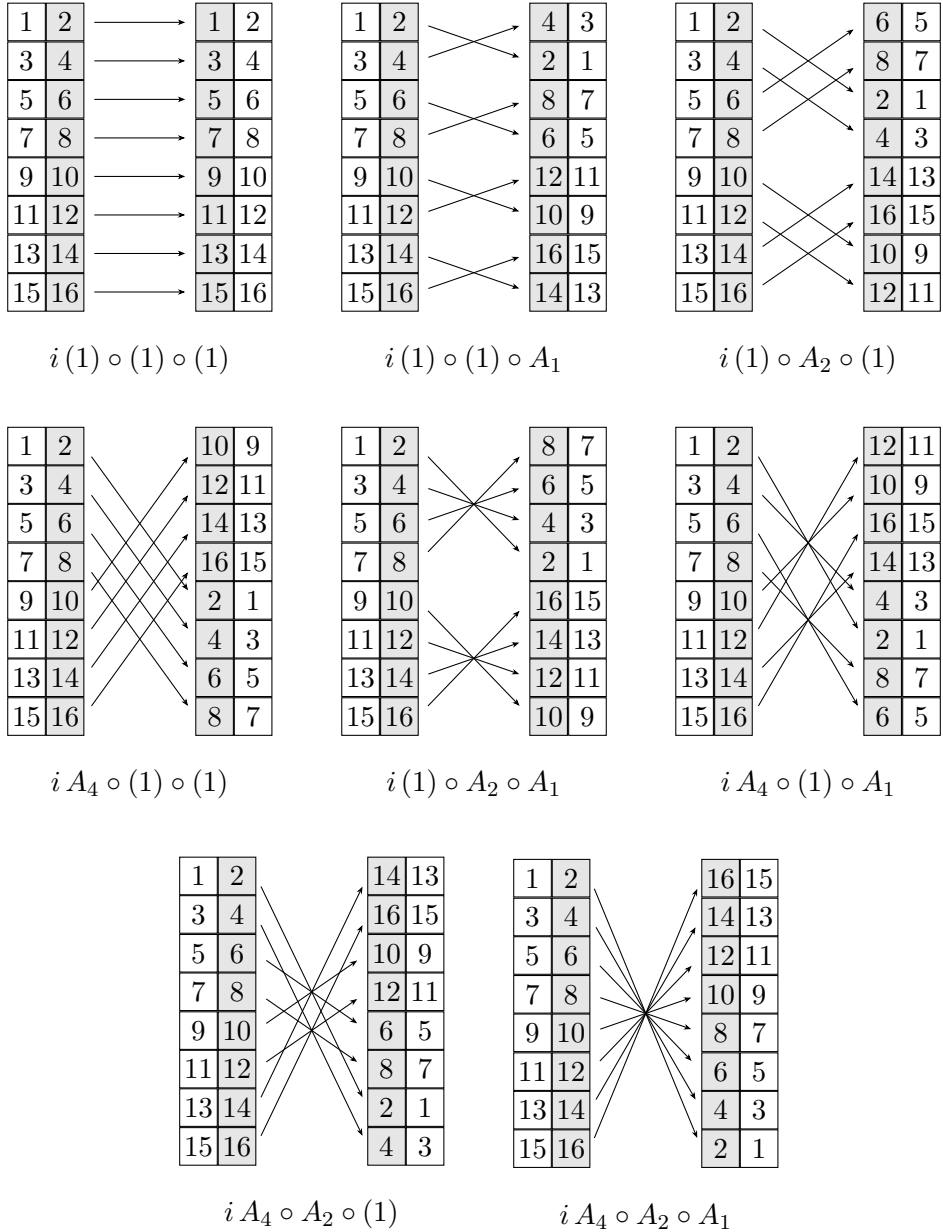

We are now ready to prove that the set of fixed points of a generator $e_\mu$ acting on the 2-torsion points is equal to the set of its translation constants.

\begin{prop}\label{FP is Tr}
For all $k \in \mathbb{N}, \, k \geq 1$, and all non-identity induced Clifford permutations $A$ acting on $J_2^{S_{\Delta_{2k}}}$, $$\Tr(A) = \textrm{FP}(A).$$

\end{prop}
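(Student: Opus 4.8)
The plan is to exploit the matrix-form description of Section \ref{sec: fixed points and translation constants}, under which $J_2^{S_{\Delta_{2k}}}$ is identified with $\mathbb{F}_2^{2^{k+1}}$ with group law the coordinatewise addition mod $2$ of the $2^{k+1}$ matrix entries, and every non-identity induced Clifford permutation $A$ acts simply by \emph{permuting} these $2^{k+1}$ entry-positions. First I would record the precise cycle structure of $A$ as such an entry-permutation. If $A=S_{1,\ldots,k}$ is strictly real and non-identity, it is a derangement by $2^{k-1}$ disjoint transpositions of the $2^k$ rows (Proposition \ref{derangement by disjoint transpositions}), and each transposed pair of rows contributes two transposed pairs of entries (one for each column). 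If $A=i\cdot S_{1,\ldots,k}$ is strictly imaginary, then $A$ also swaps the two columns; a short case split on whether $S_{1,\ldots,k}$ is the identity or a non-identity derangement shows that $A$ again acts on the $2^{k+1}$ entries as a derangement by $2^k$ disjoint transpositions (when $S_{1,\ldots,k}=(1)$ the transposed pairs are the two entries of each row; otherwise each transposed pair of rows $\{m,n\}$ crosses to give the pairs "real entry of $m$ with imaginary entry of $n$" and "imaginary entry of $m$ with real entry of $n$"). In all cases $A$ is a product of exactly $2^k$ disjoint transpositions of the $2^{k+1}$ entry-positions; in particular $A$ is $\mathbb{F}_2$-linear (it merely relabels coordinates) and $A^2=\mathrm{id}$.

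With that in hand, the inclusion $\Tr(A)\subseteq \FP(A)$ is immediate: if $\vec v = A\vec w + \vec w$ for some $\vec w \in J_2^{S_{\Delta_{2k}}}$, then, using additivity of $A$ and $A^2=\mathrm{id}$, $A\vec v = A^2\vec w + A\vec w = \vec w + A\vec w = \vec w + (\vec v + \vec w) = \vec v$, so $\vec v \in \FP(A)$.

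For the reverse inclusion $\FP(A)\subseteq \Tr(A)$, I would argue by an explicit construction. Write the $2^k$ transposition-pairs of $A$ as $\{p_1,q_1\},\dots,\{p_{2^k},q_{2^k}\}$; these partition the $2^{k+1}$ entry-positions. If $A\vec v = \vec v$, comparing entries on each pair gives $\vec v[p_j]=\vec v[q_j]$ for every $j$. Define $\vec w$ by $\vec w[p_j]=0$ and $\vec w[q_j]=\vec v[q_j]$ for each $j$. Then a direct entrywise check on each pair gives $(A\vec w)[p_j]=\vec w[q_j]=\vec v[q_j]=\vec v[p_j]=(\vec v+\vec w)[p_j]$ and $(A\vec w)[q_j]=\vec w[p_j]=0=\vec v[q_j]+\vec v[q_j]=(\vec v+\vec w)[q_j]$, hence $A\vec w=\vec v+\vec w$ and $\vec v\in\Tr(A)$. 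Combining the two inclusions yields $\Tr(A)=\FP(A)$.

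The only step requiring genuine care is the first paragraph: correctly reading off that $A$ acts on the $2^{k+1}$ matrix entries as a derangement by exactly $2^k$ disjoint transpositions in each of the three families of induced Clifford permutations (strictly real, $i\cdot(1)$, and $i\cdot S_{1,\ldots,k}$ with $S_{1,\ldots,k}$ a non-identity derangement). This is bookkeeping already essentially set up by Proposition \ref{derangement by disjoint transpositions} and the matrix-form discussion preceding the present proposition, after which the rest is a two-line $\mathbb{F}_2$ computation. As a side remark, this transposition count immediately recovers $|\FP(A)|=2^{(2^k)}$ from Propositions \ref{number of fixed points} and \ref{count fixed points for iA}, and one could alternatively finish by rank--nullity, since $\FP(A)=\ker(A+I)\supseteq \operatorname{im}(A+I)=\Tr(A)$ with $\dim\ker(A+I)=2^k=\dim\operatorname{im}(A+I)$, forcing equality.
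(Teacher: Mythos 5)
Your proof is correct, and while it rests on the same combinatorial foundation as the paper's argument -- the matrix-form representation of $2$-torsion points and the fact that every non-identity induced Clifford permutation acts on the $2^{k+1}$ entry-positions as a derangement by $2^k$ disjoint transpositions -- the logical route differs in a worthwhile way. The paper starts from an arbitrary $\vec{w}$, explicitly constructs its translation constant $\Tr_A(\vec{w})$ entry by entry, observes that this constant is by construction a fixed point (giving $\Tr(A) \subseteq \FP(A)$), and then closes the gap with a counting argument: the relations $\vec{z}[l] = \vec{z}[\eta_A(l)]$ can be satisfied in $2^{(2^k)}$ ways, all of which are realized, so $|\Tr(A)| \geq 2^{(2^k)} = |\FP(A)|$, forcing equality. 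You instead prove both inclusions directly: $\Tr(A) \subseteq \FP(A)$ follows in two lines from $\mathbb{F}_2$-linearity of $A$ together with $A^2 = \mathrm{id}$, and $\FP(A) \subseteq \Tr(A)$ follows by exhibiting, for each fixed point $\vec{v}$, an explicit $\vec{w}$ with $A\vec{w} = \vec{v} + \vec{w}$ (setting $\vec{w}$ to zero on one element of each transposed pair). Your version has the advantage of not invoking the fixed-point counts of Propositions \ref{number of fixed points} and \ref{count fixed points for iA} at all -- indeed, as you note, it recovers them -- and your rank--nullity remark makes the structural reason for the equality transparent: $A+I$ is an $\mathbb{F}_2$-linear involution-plus-identity whose kernel and image coincide. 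The one step you rightly flag as needing care, the verification that the strictly imaginary maps $i \cdot S_{1,\ldots,k}$ are still derangements by $2^k$ disjoint transpositions of the entries (the column swap crossing with the row transpositions), checks out and is exactly the content of the paper's permutation $\eta_A$.
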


\begin{proof}
Fix a generator $e_\mu \in \hat{\Gamma}_{2k}$ and let $A$ be the induced Clifford permutation for $e_\mu$. In the case where $A=S_{1, \ldots, k}$ is a strictly real induced Clifford permutation (i.e.\ when $e_\mu$ is of real type), we have characterized fixed points of $A$ by way of a permutation on the entries of a vector $\vec{w} \in J_2^{S_{\Delta_{2k}}}$. That is, in this case $A \in \mathcal{S}_{2^k}$. However, since here we wish to consider all induced Clifford permutations (both strictly real and imaginary ones) at once, it is convenient to characterize the fixed points of the map $A$ by way of a permutation $\eta_A$ on the $2^{k+1}$ indices of the $2^k \times 2$ matrix form of $A$; that is, by an element $\eta_A \in \mathcal{S}_{2^{k+1}}$. 

To do this, we label the $2^{k+1}$ indices of the matrix representing $\vec{w} \in J_2^{S_{\Delta_{2k}}}$ as follows:
\[
\left[
\begin{array}{cc}
\vec{w}[1] & \vec{w}[2] \\
\vec{w}[3] & \vec{w}[4] \\
\vdots & \vdots \\
\vec{w}[2^{k+1}-1] & \vec{w}[2^{k+1}]
\end{array}
\right]
\]
where each $\vec{w}[l]$ is either 0 or 1. If $e_\mu$ is of real type, then $A$ is a permutation of order 2 on $\{1, \ldots, 2^k\}$, since $e_\mu$ is an involution on $J_2^{S_{\Delta_{2k}}}$. In this case we define $\eta_A$ simply as $A$, but thought of as acting on the $2^k$ rows of the matrix form of an element $\vec{w}$ of $J_2^{S_{\Delta_{2k}}}$. If $e_\mu$ is of imaginary type, then $A$ will act as a permutation of the rows of the matrix form of an element $\vec{w}$ of $J_2^{S_{\Delta_{2k}}}$ followed by an interchange of the columns of that matrix form. In this case we define $\eta_A$ to be the composition of those two actions. Either way, $\eta_A$ can actually be considered as a permutation on $\{1, \ldots, 2^{k+1}\}$.

The permutation $\eta_A$ corresponding to the map $A$ then uniquely characterizes the form of a generic fixed point $\vec{z}$ of $A$: $\eta_A \in \mathcal{S}_{2^{k+1}}$ is the (unique) permutation such that whenever $\vec{z}$ is a fixed point of $A$ and $l \in \{1, \ldots, 2^{k+1}\}$, $\vec{z}[l] = \vec{z}[\eta_A(l)]$. It follows from Proposition \ref{derangement by disjoint transpositions} that $\eta_A$ is a derangement by disjoint transpositions, and so there are exactly $2^k$ such relations $\vec{z}[l] = \vec{z}[\eta_A(l)]$; each one is listed twice as $l$ ranges over $\{1, \ldots, 2^{k+1}\}$. Let $\vec{w} \in J_2^{S_{\Delta_{2k}}}$. We construct an element $\vec{v}$ (which will be the translation constant for $\vec{w}$ under $A$) as follows: for $l \in \{1, \ldots, 2^{k+1}\}$, define both $\vec{v}[l]$ and $\vec{v}[\eta_A(l)]$ by
\[
\star \hspace{10mm}
\vec{v}[l] = \vec{v}[\eta_A(l)] = \left\{
\begin{array}{lll}
0, & \textrm{ if } & \vec{w}[l] = \vec{w}[\eta_A(l)] \\
1, & \textrm{ if } & \vec{w}[l] \neq \vec{w}[\eta_A(l)]
\end{array}
\right.
\]
This is well-defined, since $\eta_A(\eta_A(l)) = l$ for each $l \in \{1, \ldots, 2^{k+1}\}$. The meaning of the two parts of the definition of $\vec{v}$ is this: if $\vec{w}$ behaved like a fixed point of 
$A$ at least at the indices $l$ and $\eta_A(l)$, then $\vec{v}$ is set to 0 at those indices; and if $\vec{w}$ did not behave like a fixed point of $A$ at the indices $l$ and $\eta_A(l)$, then $\vec{v}$ is set to 1 at those indices. 
By construction, $\vec{v}$ is a fixed point of $A$, as for every $l \in \{1, \ldots, 2^{k+1}\}$, either $\vec{v}[l] = \vec{v}[\eta_A(l)] = 0$, or $\vec{v}[l] = \vec{v}[\eta_A(l)] = 1$.

\noindent \textit{Claim 1:}  
$\vec{v}$ as constructed above is the translation constant for $\vec{w}$ of $A$; that is, $A \cdot \vec{w} = \vec{v} + \vec{w}$.

\noindent{\textit{Proof of Claim 1:}}
Fix $l \in \{1, \ldots, 2^{k+1}\}$. We need to show that $(A \cdot \vec{w})[l] = (\vec{v} + \vec{w})[l]$. We have  $(A \cdot \vec{w})[l] = \eta_A(\vec{w}[l]) = \vec{w}[\eta_A(l)]$. Also, $(\vec{v} + \vec{w})[l] = \vec{v}[l] + \vec{w}[l]$, where this lattice addition is based on the group $\mathbb{Z}_2=\{0,1\}$ as above. There are two cases to consider, depending on whether or not $\vec{w}$ acted like a fixed point of $A$ at the indices $l$ and $\eta_A(l)$.

Case 1: Suppose $\vec{w}[l] = \vec{w}[\eta_A(l)]$.  Then in $\star$ above, we put $\vec{v}[l] = \vec{v}[\eta_A(l)] = 0$, so that $\vec{w}[\eta_A(l)] = \vec{w}[l] = 0 + \vec{w}[l] = \vec{v}[l] + \vec{w}[l]$.

Case 2: Suppose $\vec{w}[l] \neq \vec{w}[\eta_A(l)]$. Then $\vec{w}[\eta_A(l)] = 1 + \vec{w}[l]$ (in $\mathbb{Z}_2$); and since we put $\vec{v}[l] = 1$ in $\star$ above, this means that $\vec{w}[\eta_A(l)] = \vec{v}[l] + \vec{w}[l]$.

This proves Claim 1. Thus for any $\vec{w} \in J_2^{S_{\Delta_{2k}}}$, the element $\vec{v}$ constructed in $\star$ above is the translation constant for $\vec{w}$ of $A$ -- that is, $\vec{v} = \Tr_A(\vec{w})$.

We can now prove that $\Tr(A) = \mathrm{FP}(A)$. Suppose $\vec{v} \in \Tr(A)$. Then $\vec{v} = \Tr_A(\vec{w})$ for some $\vec{w} \in J_2^{S_{\Delta_{2k}}}$, so that (by uniqueness of translation constants and by the construction above) $\vec{v}$ is a fixed point of $A$. Thus $\Tr(A) \subseteq \mathrm{FP}(A)$. Recall that any fixed point $\vec{z}$ of $A$ satisfies $2^k$-many relations of the form $\vec{z}[l] = \vec{z}[\eta_A(l)]$. Each of these relations must be satisfied by any translation constant for $A$, as $\Tr(A) \subseteq \mathrm{FP}(A)$. These $2^k$-many relations could be satisfied in $2^{(2^k)}$-many ways: a given translation constant $\vec{v}$ might have $\vec{v}[l] = \vec{v}[\eta_A(l)] = 0$ or $\vec{v}[l] = \vec{v}[\eta_A(l)] = 1$ (and all of those ways to satisfy the relations are necessary, since every $2^k \times 2$ matrix with entries in $\{0, 1\}$ represents a 2-torsion point in dimension $2^k$). Thus $|\Tr(A)| \geq 2^{(2^k)}$. Since $\Tr(A) \subseteq \mathrm{FP}(A)$ and we already know that $|\textrm{FP}(A)|= 2^{(2^k)}$, we have $\Tr(A) = \mathrm{FP}(A)$. 
\end{proof}

\begin{remark}
The only property of the induced Clifford permutations that was necessary in the proof of Proposition \ref{FP is Tr} is that they are derangements by disjoint transposition of $\{1, \ldots, 2^{k+1}\}$. Thus we get  that $\Tr(A) = \FP(A)$ whenever $A$ is a derangement by disjoint transpositions of $\{1, \ldots, 2^{k+1}\}$ acting on the matrix forms of 2-torsion points of $S_{\Delta_{2k}}$. 
\end{remark}

\begin{exmp}
In Table \ref{fixed points dimension 4} we list the $16$ fixed points in $J_2^{S_{\Delta_4}}$ for each of the $7$ non-identity classes $[e_\mu]$ acting on the $2$-torsion points in dimension $4$, where $e_\mu \in \gh_4$. An element of $J_2^{S_{\Delta_4}}$ is represented as a $4$-vector $\vec{v}$ each of whose entries is one of the elements $v_0, v_1, v_2,$ or $v_3$ of $J_2^{S_{\Delta_0}}$. We write $v_{abcd}$ to denote the vector $\left( \begin{array}{c}
v_a \\
v_b \\
v_c \\
v_d \\
\end{array}
\right)$ (where $a, b, c, d \in \{0, 1, 2, 3\}$). By Proposition \ref{FP is Tr}, the $16$ fixed points of each equivalence class mod $\sim$ are also the $16$ translation constants for that class. Note that $v_{0000}$ and $v_{3333}$, the constant $4$-vectors at $v_0=0$ and $v_3=\frac{1}{2} + \frac{1}{2}i$ respectively, are fixed points (and translation constants) of each class of map, and they are the only elements of $J_2^{S_{\Delta_4}}$ with this property.  

\begin{table}[h]
\centering

\begin{tabular}{c |c}
Equivalence class & Fixed points / translation constants in $J_2^{S_{\Delta_4}}$ \\
\cline{1-2}
$[e_1]$ & $v_{0000},v_{3030},v_{0003},v_{3033},v_{0030},v_{3300},v_{0033},v_{3303}$\\
& $v_{0300},v_{0303},v_{0330},v_{0333},v_{3000},v_{3003},v_{3330},v_{3333}$\\
\hline
$[e_2]$ & $v_{0000},v_{0012},v_{0021},v_{0321},v_{1200},v_{1212},v_{1221},v_{1233}$\\
& $v_{2100},v_{2112},v_{2121},v_{2133},v_{3300},v_{3312},v_{3321},v_{3333}$\\
\hline
$[e_3]$ & $v_{0000},v_{0011},v_{0022},v_{0033},v_{1100},v_{1111},v_{1122},v_{1133},$\\
& $v_{2200},v_{2211},v_{2222},v_{2233},v_{3300},v_{3311},v_{3322},v_{3333}$\\
\hline
$[e_4]$ & $v_{0000},v_{0011},v_{0110},v_{0210},v_{0220},v_{0330},v_{1111},v_{1221}$\\
& $v_{1331},v_{2002},v_{2112},v_{2222},v_{2332},v_{3003},v_{3113},v_{3333}$\\
\hline
$[e_{14}]$ & $v_{0000},v_{3333},v_{1002},v_{2001},v_{1122},v_{2211},v_{1212},v_{2121}$\\
& $v_{0210},v_{0120},v_{1332},v_{3213},v_{2331},v_{3123},v_{3003},v_{3030}$\\
\hline
$[e_{24}]$ & $v_{0000},v_{1020},v_{3333},v_{2010},v_{0102},v_{2211},v_{0201},v_{2112}$\\
& $v_{1122},v_{1221},v_{1323},v_{3132},v_{2313},v_{3231},v_{3030},v_{0303}$\\
\hline 
$[e_{34}]$ & $v_{0000},v_{0101},v_{3030},v_{3131},v_{0202},v_{3232},v_{0303},v_{3333}$\\
& $v_{1010},v_{1111},v_{1212},v_{1313},v_{2020},v_{2121},v_{2222},v_{2323}$\\

\hline

\end{tabular}
\caption{Fixed points of equivalence classes of elements of $\gh_4$ mod $\sim$ on $J_2^{S_{\Delta_{4}}}$}\label{fixed points dimension 4}
\end{table}
\end{exmp}

\section{Fixed points and translation constants of entry-permuting maps on $n$-torsion points}\label{entry permuting maps}

In this section we show that the result of Proposition \ref{FP is Tr} is related to a special case of a result that holds for a more general class of maps. 

\begin{defn}\label{n-torsion points}
For $n, k \in \mathbb{N}$ with $n \geq 2$ and $k \geq 1$, we define the $n$\textbf{-torsion points} of our spinor Abelian variety $S_{\Delta_{2k}}$ as $J_n^{S_{\Delta_{2k}}} = \{x \in S_{\Delta_{2k}}: n \cdot x = 0\}.$
\end{defn}

We can write any element of $J_n^{S_{\Delta_0}}$, the $n$-torsion points in dimension 1, as $\frac{a}{n} + \frac{b}{n}i$ where $a, b \in \{0, \ldots, n-1\}$. This means that $|J_n^{S_{\Delta_0}}| = n^2$. For any $k \geq 1$, we can write an $n$-torsion point $\vec{v}$ in dimension $2^k$ as a vector of length $2^k$ whose entries are elements of $J_n^{S_{\Delta_0}}$. Similarly to what we did in the case of 2-torsion points, we can also represent the $n$-torsion point $\vec{v}$ as a $2^k \times 2$ matrix with entries in $\{0, \ldots, n-1\}.$ The first and second entries in the $l$th row of this matrix are the numerators of the real and imaginary parts, respectively, of the element of $J_n^{S_{\Delta_0}}$ that is the $l$th component of $\vec{v}.$ 
Let $\vec{v}[j]$ denote the number in the $j$th entry of the matrix form of $\vec{v}$, for $j \in \{1, \ldots, 2^{k+1}\}$. We again number the entries in this matrix row-wise from top to bottom just as we did for 2-torsion points:
\[
\left[
\begin{array}{cc}
\vec{v}[1] & \vec{v}[2] \\
\vec{v}[3] & \vec{v}[4] \\
\vdots & \vdots \\
\vec{v}[2^{k+1}-1] & \vec{v}[2^{k+1}]
\end{array}
\right]
\]
where this time each $\vec{v}[j]$ is in $\{0, \ldots, n-1\}$.

We have that 
\[ |J_n^{S_{\Delta_{2k}}}| = (n^2)^{(2^k)} = n^{2 \cdot 2^k} = n^{(2^{k+1})}.\] 
(We can see this in two ways: to represent an $n$-torsion point $\vec{v}$ in dimension $2^k$ as a $2^k$-vector, we must make $2^k$ choices from among the $n^2$ elements of $J_n^{S_{\Delta_0}}$; or, to represent $\vec{v}$ as a $2^k \times 2$ matrix, we must make $2^{k+1}$ choices from among the numbers $\{0, \ldots, n-1\}$.)

\begin{exmp}
 Let $n=3$ and $k=2$, and consider a $3$-torsion point $\vec{v}$ in dimension $4$. Such an element of $J_3^{S_{\Delta_4}}$ is given by 
\[
\vec{v} = 
\left(
\begin{array}{c}
\\[-9pt]
\frac{a_1}{3} + \frac{b_1}{3}i\\[2pt]
\frac{a_2}{3} + \frac{b_2}{3}i\\[2pt]
\frac{a_3}{3} + \frac{b_3}{3}i\\[2pt]
\frac{a_4}{3} + \frac{b_4}{3}i\\[3pt]
\end{array}
\right)
\]
where $a_l, b_l \in \{0, 1, 2\}$ for $1 \leq l \leq 4$. The numbers $a_l, b_l$ can also be used to encode the $3$-torsion point $\vec{v}$ as a $4 \times 2$ matrix with entries in $\{0, 1, 2\}$:
\[
\left[
\begin{array}{cc}
a_1 & b_1 \\
a_2 & b_2 \\
a_3 & b_3 \\
a_4 & b_4
\end{array}
\right]
\]
For example, the $3$-torsion point 
\[
\vec{v} = 
\left(
\begin{array}{c}
\\[-9pt]
\frac{1}{3} + \frac{0}{3}i\\[2pt]
\frac{2}{3} + \frac{2}{3}i\\[2pt]
\frac{0}{3} + \frac{2}{3}i\\[2pt]
\frac{2}{3} + \frac{1}{3}i\\[3pt]
\end{array}
\right)
\]
is represented in matrix form as
\[
\left[
\begin{array}{cc}
1 & 0 \\
2 & 2 \\
0 & 2 \\
2 & 1
\end{array}
\right]
\]
\end{exmp}

Addition of $n$-torsion points modulo the integer lattice then corresponds to componentwise addition mod $n$ of the entries in the $2^k \times 2$ matrices representing those $n$-torsion points. 

Note that for an arbitrary map $A:J_n^{S_{\Delta_{2k}}} \to J_n^{S_{\Delta_{2k}}}$, the set of fixed points of $A$ is equal to the responsibility set of $\vec{v}_0$, the constant zero vector in $J_n^{S_{\Delta_{2k}}}$; that is, $\FP(A) = \resp_A(\vec{v}_0)$. We next show that distinct responsibility sets are disjoint for any map $A$ from $J_n^{S_{\Delta_{2k}}}$ to itself.

\begin{lem}\label{distinct resp sets disjoint}
Let $A: J_n^{S_{\Delta_{2k}}} \to J_n^{S_{\Delta_{2k}}}$ be any function and $\vec{v}, \vec{w} \in \Tr(A)$. If $\vec{v} \neq \vec{w}$, then $\resp_A(\vec{v}) \cap \resp_A(\vec{w}) = \emptyset$.
\end{lem}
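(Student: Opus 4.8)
The plan is to argue directly from the definition of the responsibility set. Recall that for $\vec{v}\in\Tr(A)$ we have $\resp_A(\vec{v})=\{\vec{w}\in J_n^{S_{\Delta_{2k}}}: A\cdot\vec{w}=\vec{v}+\vec{w}\}$, and that by the group structure on $J_n^{S_{\Delta_{2k}}}$ (componentwise addition mod $n$ of matrix entries) each element $\vec{w}$ has a unique additive inverse, so from $A\cdot\vec{w}=\vec{v}+\vec{w}$ we may solve $\vec{v}=A\cdot\vec{w}-\vec{w}$ (subtraction meaning addition of the inverse). The key point is that this displays $\vec{v}$ as a \emph{function} of $\vec{w}$: there is at most one $\vec{v}$ for which a given $\vec{w}$ lies in $\resp_A(\vec{v})$, namely $\vec{v}=\Tr_A(\vec{w})$.

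First I would suppose, toward a contradiction, that $\vec{v}\neq\vec{w}$ (here renaming to avoid clashing with the lemma's letters — say $\vec{v}_1\neq\vec{v}_2$ are the two distinct translation constants) yet $\resp_A(\vec{v}_1)\cap\resp_A(\vec{v}_2)\neq\emptyset$. Pick $\vec{u}$ in the intersection. Then by definition $A\cdot\vec{u}=\vec{v}_1+\vec{u}$ and also $A\cdot\vec{u}=\vec{v}_2+\vec{u}$. Subtracting $\vec{u}$ from both sides of each equation (i.e.\ adding the inverse of $\vec{u}$), and using associativity and commutativity of the group operation on $J_n^{S_{\Delta_{2k}}}$, gives $\vec{v}_1=(A\cdot\vec{u})-\vec{u}=\vec{v}_2$, contradicting $\vec{v}_1\neq\vec{v}_2$. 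Hence the responsibility sets are disjoint. This is really the whole argument — it is just the observation that the translation constant $\Tr_A(\vec{u})$ of a point $\vec{u}$ is uniquely determined by $\vec{u}$, which in turn follows from cancellation in the finite abelian group $J_n^{S_{\Delta_{2k}}}\cong(\mathbb{Z}/n\mathbb{Z})^{2^{k+1}}$.

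There is no serious obstacle here; the only thing to be careful about is to invoke the group structure on $n$-torsion points (which the excerpt has just set up: addition of $n$-torsion points corresponds to componentwise addition mod $n$) rather than any special property of $A$ — indeed the statement holds for an arbitrary function $A$, and the proof uses nothing about $A$ beyond the fact that $A\cdot\vec{u}$ is a well-defined single element. I would phrase the final write-up in three or four sentences: state the contradiction hypothesis, extract the common element, cancel, conclude. If desired I could also note explicitly that this immediately yields the partition of $J_n^{S_{\Delta_{2k}}}$ into the responsibility sets $\{\resp_A(\vec{v}):\vec{v}\in\Tr(A)\}$, which is presumably the use to which the lemma is put in deriving Corollary \ref{Tr times FP is ...}.
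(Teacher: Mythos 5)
Your proof is correct and is essentially identical to the paper's: both pick a common element $\vec{z}$ of the two responsibility sets, equate $\vec{v}+\vec{z}=\vec{w}+\vec{z}$, and cancel using the group structure of $J_n^{S_{\Delta_{2k}}}$ under addition modulo the lattice. No gaps.
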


\begin{proof}
Let $\vec{v}, \vec{w} \in \Tr(A)$ and suppose there is some $\vec{z} \in J_n^{S_{\Delta_{2k}}}$ with $\vec{z} \in \resp_A(\vec{v}) \cap \resp_A(\vec{w})$. Then $A \cdot \vec{z} = \vec{v} + \vec{z}$ and $A \cdot \vec{z} = \vec{w} + \vec{z}$, so that $\vec{v} + \vec{z} = \vec{w} + \vec{z}$. Since $J_n^{S_{\Delta_{2k}}}$ is a group under addition mod the integer lattice, $\vec{v} = \vec{w}$.
\end{proof}

\begin{defn}
Representing elements of $J_n^{S_{\Delta_{2k}}}$ as $2^k \times 2$ matrices with entries in $\{0, \ldots, n-1\}$ as above, call a function $A: J_n^{S_{\Delta_{2k}}} \to J_n^{S_{\Delta_{2k}}}$ an \textbf{entry-permuting map} if $A$ is the map induced by a permutation $\sigma \in \mathcal{S}_{2^{k+1}}$. That is, for all $1 \leq i, j \leq 2^{k+1}$, if $\sigma(i)=j$, then for any $\vec{v} \in J_n^{S_{\Delta_{2k}}}$, $(A \cdot \vec{v})[j]=\vec{v}[i]$. 
\end{defn}

\begin{remark}\label{S maps are entry-permuting}
The induced Clifford permutations $S_{1, \ldots, k}$ and $i \cdot S_{1, \ldots, k}$ on the $2$-torsion points in dimension $2^k$ are entry-permuting maps on $J_2^{S_{\Delta_{2k}}}$. However, most entry-permuting maps on $J_2^{S_{\Delta_{2k}}}$ are not induced Clifford permutations; there are only $2^{k+1}$ induced Clifford permutations, but there are as many entry-permuting maps on $J_2^{S_{\Delta_{2k}}}$ as there are permutations on $\{1, \ldots, 2^{k+1}\}$, which is $(2^{k+1})!$.
\end{remark}

Recall that any $\sigma \in \mathcal{S}_{2^{k+1}}$ can be decomposed as a product of disjoint cycles. A \textit{nontrivial} cycle is a cycle of length greater than one.

For what follows, we suppose that $A: J_2^{S_{\Delta_{2k}}} \to J_2^{S_{\Delta_{2k}}}$ is the entry-permuting map corresponding to the permutation $\sigma \in \mathcal{S}_{2^{k+1}}$ given by $\sigma = C_1 \cdots C_p D_1 \cdots D_q$ where for each $1 \leq l \leq p$, $C_l$ is the nontrivial cycle $C_l = (i_{l(1)} \cdots i_{l(t_l)})$; and where for each $1 \leq s \leq q$, $D_s$ is the trivial cycle $(i_s)$. 

\begin{lem}\label{size of FP(A)}
Let $A: J_n^{S_{\Delta_{2k}}} \to J_n^{S_{\Delta_{2k}}}$ be an entry-permuting map with $p$ nontrivial and $q$ trivial cycles as above, where $n, k \in \mathbb{N}$, $n \geq 2$, and $k \geq 1$. Then $|\FP(A)| = n^{p+q}$.
\end{lem}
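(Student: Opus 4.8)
The plan is to count fixed points of the entry-permuting map $A$ directly by analyzing when a matrix form of an $n$-torsion point is invariant under the permutation $\sigma$ of its $2^{k+1}$ entries. The key observation is that $\vec{v} \in \FP(A)$ if and only if $(A \cdot \vec{v})[j] = \vec{v}[j]$ for all $j \in \{1, \ldots, 2^{k+1}\}$, which by definition of the entry-permuting map means $\vec{v}[\sigma^{-1}(j)] = \vec{v}[j]$ for all $j$, i.e.\ $\vec{v}[i] = \vec{v}[\sigma(i)]$ for all $i$. Thus $\vec{v}$ is fixed precisely when the function $i \mapsto \vec{v}[i]$ is constant on each orbit of $\sigma$ acting on $\{1, \ldots, 2^{k+1}\}$.

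First I would record that the orbits of $\sigma$ are exactly the supports of the cycles in its disjoint cycle decomposition: the $p$ nontrivial cycles $C_1, \ldots, C_p$ contribute $p$ orbits (of sizes $t_1, \ldots, t_p \geq 2$), and the $q$ trivial cycles $D_1, \ldots, D_q$ contribute $q$ singleton orbits. Since the orbits partition $\{1, \ldots, 2^{k+1}\}$, there are $p + q$ orbits in total. Then a fixed point $\vec{v}$ is determined by, and corresponds bijectively to, an arbitrary choice of a value in $\{0, \ldots, n-1\}$ for each orbit (the common value of $\vec{v}[i]$ over that orbit), together with the observation that every $2^k \times 2$ matrix with entries in $\{0, \ldots, n-1\}$ is a legitimate matrix form of an element of $J_n^{S_{\Delta_{2k}}}$. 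This gives $n$ independent choices for each of the $p+q$ orbits, so $|\FP(A)| = n^{p+q}$.

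In writing this up I would be careful to phrase the correspondence cleanly: define a map from $\FP(A)$ to $\{0, \ldots, n-1\}^{p+q}$ by sending $\vec{v}$ to the tuple of its constant values on the orbits (listing the $p$ nontrivial-cycle orbits first, then the $q$ trivial-cycle orbits, in some fixed order), and check it is a bijection with inverse given by filling in each entry $\vec{v}[i]$ with the value assigned to the orbit containing $i$. The "fixed point iff constant on orbits" equivalence is a one-line argument from the definition of entry-permuting map plus the fact that being constant on the cyclic orbit of a cycle $(i_{l(1)} \cdots i_{l(t_l)})$ is the same as $\vec{v}[i_{l(1)}] = \vec{v}[i_{l(2)}] = \cdots = \vec{v}[i_{l(t_l)}]$, which is exactly the chain of equalities forced by $\vec{v}[i] = \vec{v}[\sigma(i)]$ traversing the cycle.

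I do not anticipate a serious obstacle here; this is essentially a bookkeeping lemma. The only point requiring a little care — and the one I would flag as the "main" step — is justifying that the choices across distinct orbits are genuinely independent, which rests on the fact (already noted in the excerpt for $n = 2$ and equally true in general) that there is no constraint linking the $2^{k+1}$ matrix entries of an $n$-torsion point beyond each lying in $\{0, \ldots, n-1\}$; so any assignment of orbit-values produces a valid element of $J_n^{S_{\Delta_{2k}}}$, and no element is counted twice since $\vec{v}$ is recovered entry-by-entry from its orbit-values.
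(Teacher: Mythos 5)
Your proposal is correct and follows essentially the same route as the paper's proof: identify that a fixed point must be constant on each cycle of $\sigma$, then count the $n^{p+q}$ independent choices of a value in $\{0,\ldots,n-1\}$ per cycle. Your write-up is somewhat more explicit about the bijection and the converse direction (that every orbit-constant assignment yields a fixed point), but the underlying argument is identical.
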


\begin{proof}
Suppose $\vec{v} \in \FP(A)$, and view $\vec{v}$ as a 
$2^k \times 2$ matrix with entries in $\{0, \ldots, n-1\}$. Then for each $l \in \{1, \ldots, p\}$, $\vec{v}$ must have the same value at all entries corresponding to the cycle $C_l = (i_{l(1)} \cdots i_{l(t_l)})$; that is, $\vec{v}[i_{l(1)}] = \cdots = \vec{v}[i_{l(t_l)}]$. 

Thus to specify a fixed point $\vec{v}$ of $A$, we have to choose its value (a number in $\{0, \ldots, n-1\}$) to be constant on the entries in each of the cycles  $C_1, \ldots, C_p, D_1, \ldots, D_q$; and there are $n^{p+q}$ ways to do this.
\end{proof}

\begin{lem}\label{values in resp sets within cycles}
Let $n, k \in \mathbb{N}$, $n \geq 2$, and $k \geq 1$, and let $A: J_n^{S_{\Delta_{2k}}} \to J_n^{S_{\Delta_{2k}}}$ be an entry-permuting map with $p$ nontrivial and $q$ trivial cycles as above. Let $\vec{v} \in \Tr(A)$ and let $\vec{w} \in \resp_A(\vec{v})$. Then for all $1 \leq l \leq p$ and any $j \in \{1, \ldots, t_l\}$, the value of $\vec{w}[i_{l(j)}]$ is completely determined by $\vec{v}$ and $\vec{w}[i_{l(1)}]$. 
\end{lem}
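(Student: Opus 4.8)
The plan is to unravel the definition of the responsibility set together with the structure of $A$ as an entry-permuting map, and then chase the value of $\vec{w}$ around the cycle $C_l = (i_{l(1)} \cdots i_{l(t_l)})$. First I would recall that $\vec{w} \in \resp_A(\vec{v})$ means precisely $A \cdot \vec{w} = \vec{v} + \vec{w}$, where addition is componentwise mod $n$ in the matrix form; and that $A$ being the entry-permuting map of $\sigma$ means $(A \cdot \vec{w})[\sigma(i)] = \vec{w}[i]$ for every index $i$, equivalently $(A \cdot \vec{w})[m] = \vec{w}[\sigma^{-1}(m)]$. Combining these, for each index $m$ we get the scalar identity $\vec{w}[\sigma^{-1}(m)] = \vec{v}[m] + \vec{w}[m] \pmod n$.

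Next I would specialize this to the indices in the cycle $C_l$. Writing $C_l = (i_{l(1)} \, i_{l(2)} \, \cdots \, i_{l(t_l)})$ means $\sigma(i_{l(j)}) = i_{l(j+1)}$ for $1 \le j < t_l$ and $\sigma(i_{l(t_l)}) = i_{l(1)}$; hence $\sigma^{-1}(i_{l(j+1)}) = i_{l(j)}$. Plugging $m = i_{l(j+1)}$ into the identity above yields
\[
\vec{w}[i_{l(j)}] = \vec{v}[i_{l(j+1)}] + \vec{w}[i_{l(j+1)}] \pmod n,
\]
i.e. $\vec{w}[i_{l(j+1)}] = \vec{w}[i_{l(j)}] - \vec{v}[i_{l(j+1)}] \pmod n$. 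Iterating this recursion starting from $\vec{w}[i_{l(1)}]$ gives, for $2 \le j \le t_l$,
\[
\vec{w}[i_{l(j)}] = \vec{w}[i_{l(1)}] - \sum_{r=2}^{j} \vec{v}[i_{l(r)}] \pmod n,
\]
which exhibits $\vec{w}[i_{l(j)}]$ as a function of $\vec{v}$ and the single value $\vec{w}[i_{l(1)}]$; the case $j=1$ is trivial. That is exactly the claimed determination. I would close by noting the consistency condition: going all the way around the cycle gives $\vec{w}[i_{l(1)}] = \vec{w}[i_{l(1)}] - \sum_{r=1}^{t_l} \vec{v}[i_{l(r)}]$, forcing $\sum_{r=1}^{t_l}\vec{v}[i_{l(r)}] \equiv 0 \pmod n$ — but this is a property of $\vec{v}$ alone and is automatically satisfied whenever $\resp_A(\vec{v})$ is nonempty, so it does not obstruct the statement as phrased (which assumes $\vec{w} \in \resp_A(\vec{v})$ exists).

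The only real subtlety — the "main obstacle," though it is minor — is bookkeeping the direction of the permutation correctly: whether $A$ sends position $i$ to $\sigma(i)$ or pulls from $\sigma(i)$, and correspondingly whether one should index the recursion by $\sigma$ or $\sigma^{-1}$. Since the paper's convention (from the definition of entry-permuting map: "if $\sigma(i)=j$, then $(A\cdot\vec{v})[j]=\vec{v}[i]$") is fixed, I would state the identity $\vec{w}[\sigma^{-1}(m)] = \vec{v}[m] + \vec{w}[m]$ once, carefully, and then the cycle chase is purely mechanical. No deep idea is needed beyond this; the lemma is essentially a restatement of "the recursion $\vec{w}[i_{l(j+1)}] = \vec{w}[i_{l(j)}] - \vec{v}[i_{l(j+1)}]$ propagates a single initial value around the cycle."
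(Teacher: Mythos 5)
Your proof is correct and follows essentially the same route as the paper's: both derive the scalar identity $\vec{w}[i_{l(j)}] = \vec{v}[i_{l(j+1)}] + \vec{w}[i_{l(j+1)}]$ from $A\cdot\vec{w} = \vec{v}+\vec{w}$ together with the cycle structure of $\sigma$, and then iterate the resulting recursion $\vec{w}[i_{l(j+1)}] = \vec{w}[i_{l(j)}] - \vec{v}[i_{l(j+1)}]$ to express every entry in the cycle in terms of $\vec{w}[i_{l(1)}]$ and $\vec{v}$, arriving at the same closed form $\vec{w}[i_{l(j)}] = \vec{w}[i_{l(1)}] - \sum_{r=2}^{j}\vec{v}[i_{l(r)}]$. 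Your closing remark on the consistency condition $\sum_{r=1}^{t_l}\vec{v}[i_{l(r)}]\equiv 0 \pmod n$ is a correct extra observation not present in the paper, but it is not needed for the statement as given.
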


\begin{proof}
Let $\vec{v} \in \Tr(A)$, $\vec{w} \in \resp_A(\vec{v})$, $1 \leq l \leq p$, and $j \in \{1, \ldots, t_l\}$. By definition of responsibility sets, $A \cdot \vec{w} = \vec{v} + \vec{w}$; so for each $j \in \{1, \ldots, t_l\}$, we have 

\[
(A \cdot \vec{w})[i_{l(j)}] = (\vec{v} + \vec{w})[i_{l(j)}] = \vec{v}[i_{l(j)}] + \vec{w}[i_{l(j)}]
\]

Since also $C_l = (i_{l(1)} \cdots i_{l(t_l)})$ is a cycle in $\sigma$, we have 

\[
\begin{array}{lclcl}
\vec{w}[i_{l(1)}] & = & (A \cdot \vec{w})[i_{l(2)}] & = & \vec{v}[i_{l(2)}] + \vec{w}[i_{l(2)}] \\
\vec{w}[i_{l(2)}] & = & (A \cdot \vec{w})[i_{l(3)}] & = & \vec{v}[i_{l(3)}] + \vec{w}[i_{l(3)}] \\
& \vdots & & \vdots & \\
\vec{w}[i_{l(t_l - 1)}] & = & (A \cdot \vec{w})[i_{l(t_l)}] & = & \vec{v}[i_{l(t_l)}] + \vec{w}[i_{l(t_l)}]\\
\vec{w}[i_{l(t_l)}] & = & (A \cdot \vec{w})[i_{l(1)}] & = & \vec{v}[i_{l(1)}] + \vec{w}[i_{l(1)}]
\end{array}
\]
(The left-hand set of equalities is true for the following reason: when $A$ acts on the vector $\vec{w}$, it produces a vector whose $i_{l(2)}$th entry was the $i_{l(1)}$th entry in $\vec{w}$, etc.)

Then we can express each entry of $\vec{w}$ that is contained in the cycle $C_l$ solely in terms of a single entry of $\vec{w}$ at one of the numbers in $C_l$ (and in terms of its translation constant $\vec{v}$). Here we express each $\vec{w}[i_{l(j)}]$ in terms of $\vec{w}[i_{l(1)}]$ and $\vec{v}$:

\[
\begin{array}{lcl}
\vec{w}[i_{l(2)}] & = & \vec{w}[i_{l(1)}] - \vec{v}[i_{l(2)}] \\
\vec{w}[i_{l(3)}] & = & \vec{w}[i_{l(2)}] - \vec{v}[i_{l(3)}] = (\vec{w}[i_{l(1)}] - \vec{v}[i_{l(2)}]) - \vec{v}[i_{l(3)}] \\
& \vdots & \\
\vec{w}[i_{l(t_l)}] & = & \vec{w}[i_{l(t_l - 1)}] - \vec{v}[i_{l(t_l)}] = \vec{w}[i_{l(1)}] - \displaystyle  \sum_{j=2}^{t_l} \vec{v}[i_{l(j)}]
\end{array}
\]
\end{proof}

(Note that although in the proof of Lemma \ref{values in resp sets within cycles} we expressed each $\vec{w}[i_{l(j)}]$ in terms of $\vec{w}[i_{l(1)}]$ and $\vec{v}$, we could just has well have expressed $\vec{w}[i_{l(j)}]$ in terms of $\vec{w}[i_{l(m)}]$ and $\vec{v}$ for any chosen $m$ with $1 \leq m \leq t_l$.)

\begin{lem}\label{size of resp sets}
Let $n, k \in \mathbb{N}$, $n \geq 2$, and $k \geq 1$; let $A: J_n^{S_{\Delta_{2k}}} \to J_n^{S_{\Delta_{2k}}}$ be an entry-permuting map with $p$ nontrivial and $q$ trivial cycles as above; and let $\vec{v}$ be a translation constant of $A$. Then $|\resp_A(\vec{v})| = n^{p+q}$.
\end{lem}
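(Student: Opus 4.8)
The plan is to show that for a fixed translation constant $\vec{v} \in \Tr(A)$, the responsibility set $\resp_A(\vec{v})$ has exactly $n^{p+q}$ elements, by explicitly parametrizing its members. The key idea is that a vector $\vec{w} \in \resp_A(\vec{v})$ is determined by its values on a single representative entry from each cycle of $\sigma$, together with a consistency condition arising from closing up each nontrivial cycle.

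First I would fix $\vec{v} \in \Tr(A)$ and analyze the nontrivial cycles. By Lemma \ref{values in resp sets within cycles}, if $\vec{w} \in \resp_A(\vec{v})$ then for each nontrivial cycle $C_l = (i_{l(1)} \cdots i_{l(t_l)})$, all entries $\vec{w}[i_{l(j)}]$ are determined by $\vec{w}[i_{l(1)}]$ and $\vec{v}$. But there is one more relation to use: wrapping around, we also have $\vec{w}[i_{l(1)}] = (A \cdot \vec{w})[i_{l(1)}] + \big(\text{something}\big)$ is not quite it; rather, combining all the equalities in that lemma's proof, substituting cyclically, yields $\vec{w}[i_{l(1)}] = \vec{w}[i_{l(1)}] - \sum_{j=1}^{t_l} \vec{v}[i_{l(j)}]$, i.e. $\sum_{j=1}^{t_l} \vec{v}[i_{l(j)}] \equiv 0 \pmod n$. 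Since $\vec{v} \in \Tr(A)$, there is at least one $\vec{w}$ with $A \cdot \vec{w} = \vec{v} + \vec{w}$, so this consistency condition necessarily holds for each nontrivial cycle; hence it imposes no further restriction, and $\vec{w}[i_{l(1)}]$ may be chosen freely in $\{0, \ldots, n-1\}$, after which the remaining entries in $C_l$ are forced. That gives $n$ choices per nontrivial cycle.

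Next I would handle the trivial cycles $D_s = (i_s)$: here the defining equation $A \cdot \vec{w} = \vec{v} + \vec{w}$ reads $\vec{w}[i_s] = \vec{v}[i_s] + \vec{w}[i_s]$, forcing $\vec{v}[i_s] = 0$ (again automatically satisfied since $\vec{v} \in \Tr(A)$), and placing no constraint on $\vec{w}[i_s]$, which may be chosen freely — $n$ choices per trivial cycle. Conversely, I would check that every such choice of representative values actually produces a $\vec{w} \in \resp_A(\vec{v})$: one reconstructs $\vec{w}$ on all $2^{k+1}$ entries using the formulas of Lemma \ref{values in resp sets within cycles}, and verifies that $A \cdot \vec{w} = \vec{v} + \vec{w}$ holds entry-by-entry, which is exactly the chain of equalities from that proof read in reverse, valid because the consistency conditions hold. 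Since the $p+q$ cycles partition $\{1, \ldots, 2^{k+1}\}$ and the choices on distinct cycles are independent, we get $|\resp_A(\vec{v})| = n^{p+q}$.

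The main obstacle — really a bookkeeping point rather than a deep one — is making the ``conversely'' direction airtight: one must confirm that the cyclic substitution is consistent (the wrap-around equation is implied, not an extra constraint) and that the reconstructed $\vec{w}$ genuinely satisfies $A \cdot \vec{w} = \vec{v} + \vec{w}$ at every index, including the ``last'' index of each cycle where the wrap-around relation is used. This hinges on the fact that $\vec{v} \in \Tr(A)$ guarantees $\sum_{j=1}^{t_l} \vec{v}[i_{l(j)}] \equiv 0 \pmod n$ for each nontrivial cycle and $\vec{v}[i_s] = 0$ for each trivial cycle; I would state these as a short preliminary observation (or invoke them as already implicit in the proof of Lemma \ref{values in resp sets within cycles}) before counting.
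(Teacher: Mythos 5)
Your proposal is correct and follows essentially the same route as the paper: use Lemma \ref{values in resp sets within cycles} to see that an element of $\resp_A(\vec{v})$ is determined by one free value in $\{0,\ldots,n-1\}$ per cycle, then count $n^{p+q}$. In fact you are somewhat more careful than the paper, which only argues the injective direction; your observation that $\vec{v}\in\Tr(A)$ forces $\sum_{j=1}^{t_l}\vec{v}[i_{l(j)}]\equiv 0 \pmod n$ on each nontrivial cycle and $\vec{v}[i_s]=0$ on each trivial cycle, so that every choice of representatives genuinely reconstructs a member of $\resp_A(\vec{v})$, supplies the surjectivity check the paper leaves implicit.
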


\begin{proof}
By Lemma \ref{values in resp sets within cycles}, in order to specify an element $\vec{w}$ of the responsibility set $\resp_A(\vec{v})$ of $\vec{v}$, we need only fix a value in $\{0, \ldots, n-1\}$ for $\vec{w}[i_{l(1)}]$ for each $1 \leq l \leq p$, and this will determine the behavior of $\vec{w}$ at all indices in each nontrivial cycle $C_l$ of $\sigma$; and we need also to fix a value in $\{0, \ldots, n-1\}$ for $\vec{w}$ in each of the trivial cycles $D_s$. There are then $n$ ways to pick a value for $\vec{w}[i_{l(1)}]$, for $1 \leq l \leq p$; and there are also $n$ ways to pick a value for $\vec{w}$ on $D_s$ for $1 \leq s \leq q$. Then the total number of ways to choose a $\vec{w} \in \resp_A(\vec{v})$ is 
\[
n^p n^q = n^{p+q}.
\]
\end{proof}

Now we can define $\resp(A)$ to be the common value of $|\resp_A(\vec{v})|$ for any $\vec{v} \in \Tr(A)$, and this quantity is well-defined for any entry-permuting map $A$ on $J_n^{S_{\Delta_{2k}}}$. By Lemmas \ref{size of FP(A)} and \ref{size of resp sets}, we have:

\begin{cor}\label{r(A)=Tr(A)}
Let $n, k \in \mathbb{N}$, $n \geq 2$, and $k \geq 1$. 
For any entry-permuting map $A$ on $J_n^{S_{\Delta_{2k}}}$, \[\resp(A) = n^{p+q} = |\FP(A)|\] where $p$ is the number of nontrivial cycles in the disjoint cycle decomposition of $A$, and $q$ is the number of trivial cycles. \hfill $\square$
\end{cor}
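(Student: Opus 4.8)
The statement to prove is Corollary \ref{r(A)=Tr(A)}, which is an essentially immediate consequence of the two preceding lemmas. The plan is as follows.

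First I would note that by Lemma \ref{size of resp sets}, for any translation constant $\vec{v} \in \Tr(A)$ we have $|\resp_A(\vec{v})| = n^{p+q}$, where $p$ and $q$ are the numbers of nontrivial and trivial cycles, respectively, in the disjoint cycle decomposition of the permutation $\sigma \in \mathcal{S}_{2^{k+1}}$ inducing $A$. Since this common value does not depend on the choice of $\vec{v} \in \Tr(A)$, the quantity $\resp(A)$ is well-defined and equals $n^{p+q}$. Second, by Lemma \ref{size of FP(A)}, $|\FP(A)| = n^{p+q}$ as well. Chaining these two equalities gives $\resp(A) = n^{p+q} = |\FP(A)|$, which is exactly the claim.

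There is really no main obstacle here — the corollary is a bookkeeping consequence once the two lemmas are in hand. The only point worth a sentence of care is the well-definedness of $\resp(A)$: I would remark that $\Tr(A)$ is nonempty (it contains, e.g., the translation constant of the zero vector, namely $\FP(A) = \resp_A(\vec{v}_0) \neq \emptyset$ since $\vec{v}_0$ is always a fixed point), so the "common value" referred to in the definition of $\resp(A)$ genuinely exists, and Lemma \ref{size of resp sets} shows it is independent of which $\vec{v} \in \Tr(A)$ is chosen. With that observation recorded, the proof is a one-line citation of Lemmas \ref{size of FP(A)} and \ref{size of resp sets}. Accordingly I would simply write: "By Lemma \ref{size of resp sets}, $|\resp_A(\vec{v})| = n^{p+q}$ for every $\vec{v} \in \Tr(A)$, so $\resp(A) = n^{p+q}$; and by Lemma \ref{size of FP(A)}, $|\FP(A)| = n^{p+q}$. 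Hence $\resp(A) = n^{p+q} = |\FP(A)|$." followed by the $\square$.

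(Indeed, the excerpt already terminates the statement with \hfill $\square$, signalling that the authors regard the proof as subsumed by the two lemmas and give no separate proof block; my proposal is to make that one-line deduction explicit.)
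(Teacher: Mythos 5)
Your proposal is correct and matches the paper's treatment exactly: the corollary is stated with no separate proof block, being an immediate consequence of Lemma \ref{size of resp sets} (giving $\resp(A) = n^{p+q}$) and Lemma \ref{size of FP(A)} (giving $|\FP(A)| = n^{p+q}$). Your added remark on the well-definedness of $\resp(A)$ via the nonemptiness of $\Tr(A)$ is a reasonable explicit touch but does not change the argument.
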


\begin{cor}\label{Tr times FP is ...}
Let $n, k \in \mathbb{N}$, $n \geq 2$, and $k \geq 1$.
For any entry-permuting map $A$ on $J_n^{S_{\Delta_{2k}}}$, $$| \Tr(A)| \cdot |\FP(A)| = | J_n^{S_{\Delta_{2k}}}| = n^{(2^{k+1})}.$$
\end{cor}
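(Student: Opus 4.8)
The plan is to exhibit the responsibility sets $\{\resp_A(\vec{v}) : \vec{v} \in \Tr(A)\}$ as a partition of $J_n^{S_{\Delta_{2k}}}$ into blocks all of the same size $|\FP(A)|$, and then simply count.

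First I would check that these responsibility sets cover all of $J_n^{S_{\Delta_{2k}}}$. This is immediate: given any $\vec{w} \in J_n^{S_{\Delta_{2k}}}$, set $\vec{v} := (A \cdot \vec{w}) - \vec{w}$, where the subtraction takes place in the group $J_n^{S_{\Delta_{2k}}}$ (addition of $n$-torsion points mod the integer lattice). Then $A \cdot \vec{w} = \vec{v} + \vec{w}$, so $\vec{v} \in \Tr(A)$ and $\vec{w} \in \resp_A(\vec{v})$. Combined with Lemma \ref{distinct resp sets disjoint}, which says distinct responsibility sets are disjoint, this shows that $\{\resp_A(\vec{v}) : \vec{v} \in \Tr(A)\}$ is a partition of $J_n^{S_{\Delta_{2k}}}$.

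Next I would invoke Corollary \ref{r(A)=Tr(A)}: every responsibility set $\resp_A(\vec{v})$ has size $\resp(A) = n^{p+q} = |\FP(A)|$, independently of $\vec{v}$. Summing over the blocks of the partition then gives
\[
\left| J_n^{S_{\Delta_{2k}}} \right| = \sum_{\vec{v} \in \Tr(A)} \left| \resp_A(\vec{v}) \right| = |\Tr(A)| \cdot |\FP(A)|.
\]
Finally, I would recall from earlier in this section that $\left| J_n^{S_{\Delta_{2k}}} \right| = (n^2)^{(2^k)} = n^{(2^{k+1})}$, which yields $|\Tr(A)| \cdot |\FP(A)| = n^{(2^{k+1})}$ and completes the proof.

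There is really no serious obstacle here — the corollary is essentially bookkeeping assembled from Lemma \ref{distinct resp sets disjoint}, Lemma \ref{size of FP(A)}, Lemma \ref{size of resp sets}, and Corollary \ref{r(A)=Tr(A)}. The only point requiring even a line of argument is the covering claim, i.e.\ that every $\vec{w}$ lies in some responsibility set, and that follows at once from the group structure on $J_n^{S_{\Delta_{2k}}}$. If I wanted to be maximally economical I could phrase the whole thing as: the map $\vec{w} \mapsto \Tr_A(\vec{w})$ is a function from $J_n^{S_{\Delta_{2k}}}$ onto $\Tr(A)$ whose fibers are exactly the responsibility sets, each of constant cardinality $|\FP(A)|$ by Corollary \ref{r(A)=Tr(A)}, so $|J_n^{S_{\Delta_{2k}}}| = |\Tr(A)| \cdot |\FP(A)|$.
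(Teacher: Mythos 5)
Your proposal is correct and follows essentially the same route as the paper: responsibility sets partition $J_n^{S_{\Delta_{2k}}}$ (disjointness from Lemma \ref{distinct resp sets disjoint}, covering from the group structure), each block has size $|\FP(A)|$ by Lemma \ref{size of resp sets} and Corollary \ref{r(A)=Tr(A)}, and counting finishes it. The only difference is that you explicitly verify the covering claim via $\vec{v} := (A\cdot\vec{w}) - \vec{w}$, a step the paper leaves implicit when it asserts the responsibility sets form a partition.
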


\begin{proof}
By Lemma \ref{distinct resp sets disjoint}, $\{r_A(\vec{v}): \vec{v} \in \Tr(A)\}$ is a partition of $J_n^{S_{\Delta_{2k}}}$; and by Lemma \ref{size of resp sets}, each responsibility set $r_A(\vec{v})$ has size $| \FP(A)|$.
\end{proof}

In the case $n=2$, by Propositions \ref{number of fixed points} and \ref{count fixed points for iA} we have that for every generator $e_\mu \in \gh_{2k}$ with induced Clifford permutation $A$, $|\FP(A)| = 2^{(2^k)}$. Then by Corollary \ref{Tr times FP is ...}, 
\[
|\Tr(A)| \cdot 2^{(2^k)} = 2^{(2^{k+1})}
\]
so that
\[
|\Tr(A)| = \frac{2^{(2^{k+1})}}{2^{(2^k)}} = 2^{(2^k)}
\]
That is, we get that $|\Tr(A)| = |\FP(A)|$, which we know already by Proposition \ref{FP is Tr}. Recall that Proposition \ref{FP is Tr} gives the stronger result  that $\Tr(A) = \FP(A)$ when $A$ is an induced Clifford permutation (acting on 2-torsion points).  

For $n > 2$, we do not always get this stronger result, even when we have equality of $|\Tr(A)|$ and $|\FP(A)|$: that is, even when there are as many translation constants as fixed points, these two sets need not be the same. For instance, define the map $A$ on $J_4^{S_{\Delta_{2}}}$, the 4-torsion points in dimension 2, by $A \cdot \vec{v} = A \cdot \left(
\begin{array}{c}
a\\
b
\end{array}
\right) = \left(
\begin{array}{c}
b\\
a
\end{array}
\right)$ where $a, b \in J_4^{S_{\Delta_0}}$. One can show that here $|\Tr(A)| = |\FP(A)| = 16$, but $|\Tr(A) \cap \FP(A)| = 4$.

In general, for $A$ an entry-permuting map acting on the matrix forms of elements of $J_2^{S_{\Delta_{2k}}}$, we do not necessarily have $|\Tr(A)| = |\FP(A)|$. For instance, consider the map $A: J_2^{S_{\Delta_4}} \to J_2^{S_{\Delta_4}}$ defined on the matrix form of points of $J_2^{S_{\Delta_4}}$  by the permutation $\sigma  = (17)(28) \in \mathcal{S}_8$. Note that $A$ is an involution, but not a derangement; it has the four trivial cycles $(3), (4), (5)$, and $(6)$, in addition to the two nontrivial cycles $(17)$ and $(28)$. Since here $p=2$, $q=4$, $k=2$, and $n=2$, $|\FP(A)| = r(A) = 2^{2+4}$ = 64 by Corollary \ref{r(A)=Tr(A)}; and then by Corollary \ref{Tr times FP is ...}, 
\[
|\Tr(A)| = \frac{2^{(2^{3})}}{64} = 4.
\]
By Corollary \ref{r(A)=Tr(A)}, each of the 4 translation constants has a responsibility set of size 
64.

Also in general, for generators $e_\mu \in \gh_{2k}$ acting on $n$-torsion points, the action of $e_\mu$ on the matrix form of an element $\vec{w} \in J_n^{S_{\Delta_{2k}}}$ is not an entry-permuting map. For instance, consider the generator $\ek_1$ acting on the $n$-torsion point $\vec{w} = 
\left(
\begin{array}{c}
\\[-9pt]
\frac{1}{n} + \frac{1}{n}i\\[2pt]
\frac{1}{n} + \frac{1}{n}i\\[2pt]
\vdots\\[2pt]
\frac{1}{n} + \frac{1}{n}i\\[3pt]
\end{array}
\right)$ in some dimension $k \in \mathbb{N}$, $k \geq 1$. In matrix form, $\vec{w} = 
\left[
\begin{array}{cc}
1 & 1\\
1 & 1\\
\vdots & \vdots\\
1 & 1\\
\end{array}
\right]$. Then $\ek_1 \cdot \vec{w} = \left(
\begin{array}{c}
\\[-9pt]
\frac{n-1}{n} + \frac{1}{n}i\\[2pt]
\frac{1}{n} + \frac{n-1}{n}i\\[2pt]
\vdots\\[2pt]
\frac{1}{n} + \frac{n-1}{n}i\\[3pt]
\end{array}
\right)$, which in matrix form is $\left[
\begin{array}{cc}
n-1 & 1\\
1 & n-1\\
\vdots & \vdots\\
n-1 & 1\\
1 & n-1\\
\end{array}
\right]$ -- and so $\ek_1$ acting on the matrix forms of elements of $J_n^{S_{\Delta_{2k}}}$ is clearly not an entry-permuting map on $\{1, \ldots, 2^{k+1}\}$ if $n>2$.

\vspace{15mm}

\textsc{Competing Interests:} The authors have no competing interests to declare.

\vspace{15mm}



\end{document}